\renewcommand{\geq}{\geqslant}
\renewcommand{\leq}{\leqslant}
\newcommand{\supp}{\operatorname{supp}}
\theoremstyle{plain} 
\newtheorem{thm}{Theorem}[section]
\newtheorem{cor}[thm]{Corollary} 
\newtheorem{lem}[thm]{Lemma}
\newtheorem{prop}[thm]{Proposition} 
\theoremstyle{definition}
\newtheorem{defn}{Definition}[section]
\newtheorem{definition}[defn]{Definition}
\theoremstyle{remark}
\newtheorem{example}[thm]{Example}
\newtheorem{remark}[thm]{Remark}
\newcommand{\myitem}[1]{{\em(\ref{#1})}}
\newcommand{\mytop}{\mathbf{t}}
\newcommand{\mybot}{\mathbf{b}}
\newcommand{\R}{\mathbb{R}}
\newcommand{\tmax}{\mathbb{R}_{\max}}
\newcommand{\tmin}{\mathbb{R}_{\min}}
\newcommand{\tmaxn}[1]{(\tmax)^{#1}}
\newcommand{\tminn}[1]{(\tmin)^{#1}}
\newcommand{\tangent}{\mathcal{T}}
\newcommand{\supc}{\operatorname{sup}^C}
\newcommand{\infc}{\operatorname{inf}^C}
\newcommand{\supe}{\operatorname{sup}^E}
\newcommand{\infe}{\operatorname{inf}^E}
\newcommand{\limsupc}{\operatorname{limsup}^C}
\newcommand{\liminfc}{\operatorname{liminf}^C}
\newcommand{\alcoved}[1]{\mathscr{A}(#1)}
\newcommand{\finitesets}[1]{\mathscr{P}_{\!f}(#1)}
\newcommand{\powerset}[1]{\mathscr{P}(#1)}
\newcommand{\sigmaset}{\mathscr{S}}
\newcommand{\tauset}{\mathscr{T}}
\newcommand{\piset}{\mathscr{P}}
\newcommand{\lowerclosure}[1]{\operatorname{clo}^{\downarrow} #1}
\newcommand{\upperclosure}[1]{\operatorname{clo}^{\uparrow}  #1}
\newcommand{\range}[1]{\operatorname{Im}#1}
\newcommand{\proj}{\operatorname{proj}}
\newcommand{\argmin}{\operatornamewithlimits{argmin}}
\newcommand{\argmax}{\operatornamewithlimits{argmax}}
\newcommand{\skeletton}{\operatorname{Sk}}
\newcommand{\pmax}[1]{P^{\max}_{#1}}
\newcommand{\pmin}[1]{P^{\min}_{#1}}
\newcommand{\barqm}[1]{\bar{Q}^-_{#1}}
\newcommand{\barqp}[1]{\bar{Q}^+_{#1}}
\newcommand{\qm}[1]{{Q}^-_{#1}}
\newcommand{\qp}[1]{{Q}^+_{#1}}
\newcommand{\qpm}[1]{{Q}^\pm_{#1}}
\newcommand{\tropicalbasisvector}{e^{\operatorname{trop}}}
\title{Ambitropical geometry, hyperconvexity and zero-sum games}
\author{Marianne Akian}
\address{INRIA and CMAP, IP Paris, \'Ecole polytechnique, CNRS. Postal address: CMAP, \'Ecole polytechnique, 91128 Palaiseau C\'edex}
\email{Marianne.Akian@inria.fr}
\author{St\'ephane Gaubert}
\address{INRIA and CMAP, IP Paris, \'Ecole polytechnique, CNRS. Postal address: CMAP, \'Ecole polytechnique, 91128 Palaiseau C\'edex}
\email{Stephane.Gaubert@inria.fr}
\author{Sara Vannucci}
\address{Czech Technical University in Prague}
\email{vanucsar@fel.cvut.cz}
\thanks{Version of \today. This work was initiated when Sara Vannucci was with Universit\`a di Salerno. This work was partially supported by the National Group for Algebraic and Geometric Structures, and their Applications (GNSAGA - INdAM)}
\subjclass[2020]{14T90; 91A25; 91A68}
\begin{document}
\begin{abstract}
  Shapley operators of undiscounted zero-sum two-player games are order-preserving maps that commute with the addition of a constant.
  We characterize the fixed point sets of Shapley operators, in finite dimension (i.e., for games with a finite state space). Some of these characterizations are of a lattice theoretical nature, whereas some other rely on metric or tropical geometry. More precisely, we show that fixed point sets of Shapley operators are special instances of hyperconvex spaces: they are sup-norm non-expansive retracts of $\R^n$, and also lattices in the induced partial order. Moreover, they retain properties of convex sets, with a notion of ``convex hull'' defined only up to isomorphism. This provides an effective construction of the injective hull or tight span, in the case of additive cones. For deterministic games with finite action spaces,  these fixed point sets are supports of polyhedral complexes, with a cell decomposition attached to stationary strategies of the players, in which each cell is an alcoved polyhedron of $A_n$ type. 
  We finally provide an explicit local representation of the latter fixed point sets, as polyhedral fans canonically associated to lattices included in the Boolean hypercube.
\end{abstract}
\maketitle
\section{Introduction}
\todo[inline]{mention the flip $X\to -X$ which preserves ambitropical sets, SV: mentioned at page 22}
\subsection{Motivation}
\label{subsec-motiv}
Shapley operators play a fundamental role in the study of zero-sum repeated games, see~\cite{Sha53,neymansurv,MSZ2015}. For infinite horizon problems, including the cases of a discounted payoff, of a total payoff up to a stopping time, or of a mean payoff (in which the payoff is given by a time average), the fixed point of a suitable Shapley operator determines
the value of the game as well as optimal stationary strategies.
The mean payoff problem is somehow the most difficult one. Then, the notion
of fixed point is defined in a projective sense (up the action
of additive constants), see~\cite[Th.~VI.1]{moulinsmf}.
The existence and uniqueness of such fixed points have been
investigated in the setting of non-linear Perron-Frobenius theory~\cite{arxiv1,nussbaumlemmens,1078-0947_2020_1_207}. The absence
of fixed points is tied to the time-dependent nature of the nearly optimal strategies~\cite{blackwellferguson,BK76}, whereas multiple fixed points yield multiple optimal stationary strategies~\cite{KY92}.

The structure of the fixed point set of Shapley operators
has been studied especially in the deterministic one player case,
as part of tropical spectral theory~\cite{BCOQ92,KM97,AGW09},
or in the setting of viscosity solutions
of ergodic Hamilton-Jacobi partial differential equations and weak-KAM theory,
see~\cite{Fathi2005,ishii2007,Fat}. The results there show in particular that
fixed points are uniquely determined by their restriction
to a suitable subset or ``boundary'' of the state space,
in a way somehow analogous to the Poisson-Martin representation of harmonic
functions~\cite{Dynkin}.
For one player deterministic problems, the role of the ``boundary'' is played either
by a distinguished subset of the state space (``critical nodes''~\cite{BCOQ92},
``projected Aubry set''~\cite{Fathi2005}) or by a metric boundary (the horoboundary) of the state space~\cite{ishii2007,AGW09}. Fixed point sets appear to be wilder objects in the two-player case, even when the space space
is finite. The geometry of these fixed point sets is the main subject of this paper.

A further motivation arises from
algebra in characteristic one~\cite{connesconsani} and tropical geometry~\cite{maclagan_sturmfels}. In this setting,
one needs to work in categories in which the objects
are tropical analogues of linear spaces,
and the arrows are linear maps.
However, tropical duality results
require to consider at the same time properties of linearity
in a primal and in a dual sense, and sometimes to compose
maps that are linear in each of these senses (e.g., compositions
of min-plus and max-plus linear maps). This is the case, for instance, of
projections onto linear spaces, which are generally
non-linear~\cite{CGQ96a,cgq02}, but which are still Shapley operators.
Hence, it may be desirable to develop a broader, ``self-dual'', framework, in which the objects include both tropical (max-plus) linear spaces and their duals,
and the arrows are Shapley operators.

Thinking of Shapley operators in abstract terms leads to a metric geometry approach, exploiting
a connection with the theory of nonexpansive mappings. Recall that a self-map
$T$ of a metric space $(X,d)$ is {\em nonexpansive} if $d(T(x),T(y))\leq d(x,y)$.
For undiscounted games
with state space $[n]$, the Shapley operators are precisely the self-maps $T$ of $\R^n$ that are nonexpansive with respect to the metric associated
with the sup-norm and that commute with the action of additive constants~\cite{crandall,KM97}. As shown in~\cite{gunawardenakeane}, this is equivalent to
$T$ being nonexpansive in 
the weak (non-symmetric) metric $d(x,y)=\mathsf{t}(y-x)$ where
$\mathsf{t}(z):=\max_{i\in n}z_i$.
The map $\mathsf{t}(\cdot)$ here is an example
of {\em weak Minkowski norm} or {\em hemi-norm}, and 
it is a special case of the local norm associated to the Finsler structure
of the ``Funk metric'', studied in Hilbert's geometry, see~\cite{papadopoulos,walshHilbert,GV10}.

From this perspective, the study of fixed point sets of Shapley operators becomes tied to the classical topic of fixed point sets of nonexpansive
mappings and nonexpansive retracts (i.e., fixed points of idempotent nonexpansive maps) in Banach spaces, see~\cite{reichsurvey} for background.
Recall that a nonexpansive retract $C$ of a Banach space $X$ that satisfies
a technical condition (the so called {\em fixed point property for spheres})
valid in particular in finite dimension,
must be {\em metrically convex}~\cite{Bruck73},
meaning that for all $x,y\in C$ and for all $\alpha,\beta\geq 0$
such that $\alpha+\beta=1$, there must exist a point $z$ in $C$
such that $d(x,z)=\alpha d(x,y)$ and $d(z,y)=\beta d(x,y)$. In particular,
nonexpansive retracts of strictly convex Banach spaces are closed
and convex. Conversely, any closed and convex subset $C$ of a Hilbert
space $X$ is a nonexpansive retract of this space, and indeed,
a retraction is given by the best approximation map,
which associates to a point of $X$
the nearest point in $C$, see e.g.~\cite[Th.~3.6]{goebelreich}.
Moreover, if $X$ is a Banach
space of dimension at least $3$, it is known that all the closed and convex
subsets of $X$ are nonexpansive retracts if and only if $X$ is a Hilbert space,
see~\cite[Prop.~2.2]{reich77}, and also~\cite{KAKUTANI1940,klee,Bruck1974}
for earlier results of this nature.

In this way, classical convexity appears to be linked to the geometry of Euclidean retractions. Then, one may wonder whether other types of (weak) metric spaces are tied
  to interesting  convexity theories. In particular, we may ask whether fixed point sets of Shapley operators may be thought of as ``convex sets'' in a useful sense. We give here a positive answer to this question, by establishing links between the theory of fixed point sets
  of Shapley operators, tropical geometry, order preserving retracts of lattices, and hyperconvexity. We focus on the finite dimensional case.

\subsection{Summary of results}
  We define a subset $C$ of $\R^n$ to be an {\em ambitropical} cone if $C$ is invariant by translation by constant vectors,
  and if $C$ is a lattice in the order induced by the standard partial order of $\R^n$ (but not necessarily a sublattice of $\R^n)$. This includes as special cases the max-plus and min-plus convex cones arising in
  idempotent analysis~\cite{litvinov} and tropical geometry~\cite{cgq02,TropConv}. In contrast with the classes of max-plus and min-plus cones, the class of ambitropical cones is self-dual since it is invariant by the ``flip'' (change of sign) operation. Hence, we use the name {\em ambitropical}, as it includes both tropical convexity and its dual. We call {\em Shapley retract}
  the image of an idempotent Shapley operator.

  \Cref{th-main0} below shows that Shapley retracts are precisely
  closed ambitropical cones.
    Further, we show that there are canonical retractions on a closed ambitropical
    cone $C$, characterized as the composition of nearest point projection
    mappings on the tropical convex cone and dual tropical convex cone
    generated by $C$, see~\Cref{prop-caracq}.
    This leads to a further characterization
    of ambitropical cones, by an analogue of the ``best co-approximation property''
    arising in the theory of Banach spaces~\cite{singer},
    see~\Cref{th-main1} below.
    We also show that the class of ambitropical cones admits an analogue
    of the ``convex hull'' operation: although the intersection
    of ambitropical cones may not be ambitropical, there is a well
    defined notion of ambitropical hull, the minimal closed ambitropical cone containing a given
    set, which is unique up to isomorphism (the morphisms being Shapley operators).
    One main result, \Cref{Hypambihull}, shows that ambitropical cones
    are precisely hyperconvex sets that are additive cones. Since we provide
    an effective construction of the ambitropical hull, in terms
    of the range of a tropical Petrov-Galerkin projector (\Cref{prop-ambihull}),
    this leads to an explicit construction of the hyperconvex hull of an additive cone.

    We subsequently study ambitropical cones with a semilinear structure, which
    we call {\em ambitropical polyhedra}. The building blocks of ambitropical
    polyhedra are the alcoved polyhedra of the root system $A_n$, studied
    in~\cite{AlcovedPolytopes}, which include as special cases
    Stanley's order polyhedra~\cite{stanley86}.
    Ambitropical polyhedra are defined as ambitropical cones that are finite
    unions of alcoved polyhedra. \Cref{th-ambitropicalpoly} shows that ambitropical polyhedra
    coincide with fixed point sets of Shapley operators
    of deterministic games with finite action spaces.
      Further, we show that ambitropical polyhedra are polyhedral complexes whose cells are associated
      to pairs of stationary policies of both players that are optimal in the mean payoff problem, see~\Cref{th-polycomplex}. We study, in particular, the
      case of {\em homogeneous} ambitropical polyhedra, i.e., ambitropical polyhedra
      that are invariant by the multiplicative action of positive scalars.
      We show that
      homogeneous ambitropical polyhedra arise when considering tangent cones
      of ambitropical polyhedra, so, they provide a {\em local} description of these      sets. \Cref{th-mainweyl} provides a 
      characterization of homogeneous
      ambitropical polyhedra, as polyhedral fans associated
      to subsets of $\{0,1\}^n$ that are lattices in the induced
      order.

The hierarchy of classes of sets considered in this paper is presented
on~\Cref{table-hierarchy}.
\begin{table}

  \begin{tikzpicture}
    \node (hyper) at (0,3/2) {\begin{tabular}{c}hyperconvex sets\\ \Cref{th-hyper} %
    \end{tabular}};
      \node (nonclosed) at (5.5*1.5,3/2) {\begin{tabular}{c}ambitropical cones
    \end{tabular}};

  \node (top) at (0,0) {\begin{tabular}{c}closed ambitropical cones\\ Theorems~\ref{th-main0}, \ref{th-main1} \end{tabular}};
  \node at (-2*1.5,-2/2) (left) {closed tropical cones};
  \node at (2*1.5,-2/2) (right) {closed dual tropical cones};
  \node at (0*1.5,-3.5/2) (mid) {\begin{tabular}{c}ambitropical polyhedral cones\\ \Cref{th-ambitropicalpoly}\end{tabular}};
  \node at (-2*1.5,-6/2) (botleft) {tropical polyhedral cones};
  \node[fill=white] at (2*1.5,-6/2) (botright) {dual tropical polyhedral cones};
   \node at (5.5*1.5,-5.5/2) (botrightright) {\begin{minipage}{6.5cm}\begin{tabular}{c} homogeneous ambitropical polyhedra\\ $\simeq$ lattices in $\{0,1\}^n$\\\Cref{th-mainweyl}\end{tabular}\end{minipage}};
   \node at (0,-8/2) (botmid) {alcoved polyhedra};%
   \node at (5.5*1.5,-10/2) (botbotright) {order polyhedra};%
   \draw[thick] (top) -- (hyper);
        \draw[thick] (top) -- (nonclosed);
  \draw[thick] (top) -- (left);
  \draw[thick] (top) -- (right);
  \draw[thick] (top) -- (mid);
  \draw[thick] (mid) -- (botright);
  \draw[thick] (mid) -- (botleft);
  \draw[thick] (botleft) -- (left);
  \draw[preaction={draw=white, -, line width=6pt},thick] (botright) -- (right);
  \draw[thick] (mid) -- (botmid);
  \draw[thick] (botleft) -- (botmid);
  \draw[thick] (botright) -- (botmid);
  \draw[thick] (mid) -- (botrightright);
  \draw[thick] (botmid) -- (botbotright);
  \draw[thick] (botrightright) -- (botbotright);
    \node[fill=white] at (2*1.5,-6/2) (botright) {dual tropical polyhedral cones};
\end{tikzpicture}
\caption{The hierarchy of ambitropical cones}\label{table-hierarchy}
\end{table}

\subsection{Related work}
As indicated in~\S\ref{subsec-motiv},
the present results are related to several series of works.
A first source of inspiration is the theory of ``best approximation''
in tropical geometry~\cite{cgq02,AGNS10}, in which (non-linear) projectors
onto max-plus / min-plus spaces have been studied.
In particular, the canonical retractions on ambitropical cones turn
out to coincide with the tropical analogue of Petrov-Galerkin
projectors, introduced in~\cite{CGQ96a} and applied in~\cite{a6}
to the numerical solution of optimal control problems.

The representation of ambitropical polyhedra (\Cref{th-polycomplex})
as the support of a polyhedral complex is somehow
inspired by the characterization of Develin and Sturmfels~\cite{TropConv}
of polyhedral complexes arising from
tropical polyhedral cones, in terms of the
duals to regular subdivisions of the product of two simplices.
The latter property makes use of (classical) convex duality, and so this does
not carry over to the ambitropical case given its ``minimax'' nature.
However, we still get a complete combinatorial characterization in the
special case of {\em homogeneous} ambitropical polyhedra,
see~\Cref{th-mainweyl}, showing these are equivalent to lattices
included in $\{0,1\}^n$ with the induced order. Such lattices
are precisely the order preserving retracts of $\{0,1\}^n$
studied by Crapo~\cite{crapo}.

\todo[inline]{SV : I removed the above part (you can find it in the source file with percentage symbol) because it refereed to the lattice section that we deleted}
Moreover, sup-norm nonexpansive retracts (not necessarily order preserving)
have been studied in the setting of {\em hyperconvexity}, a
notion introduced by Aronszajn and Panitchpakdi~\cite{aronszajn},
see~\cite{baillon,Espinola2001} for more information. Hyperconvex
spaces are metrically convex spaces in which the collection
of closed balls has Helly number two.
It follows from~\cite[Th.~9]{aronszajn}
that the sup-norm nonexpansive
retracts of $\R^n$ are precisely the closed subsets of $\R^n$
that are hyperconvex. Hence, closed ambitropical cones are special
instances of closed hyperconvex sets, with an additional structure
induced by the order and the additive homogeneity. Furthermore, an equivalence holds,  namely that an additive cone of $\R^n$ is a closed ambitropical cone if and only if it is hyperconvex for the sup-norm metric. This result allows us to compute the hyperconvex hull of an additive cone. The problem of computing the hyperconvex hull has received much attention, Isbell and Dress~\cite{Isbell1964,Dress1984} shows that this hull can be realized as a ``tight-span''. Even in dimension $2$, computations of hyperconvex hull are difficult~\cite{KILIC2016693}. Our results solve this problem for the special case of additive cones.

As mentioned above, it is an open problem to understand what Fathi's characterization of weak-KAM solutions, as spaces of Lipchitz functions on the ``projected Aubry set''~\cite{Fathi2005,Fat}, becomes in the ``two-player'' case. Our results answers the analogue of this question in the discrete, finite dimensional case: whereas one-player solution spaces are alcoved polyhedra, in the two-player case, we show that the solution spaces are precisely ambitropical sets, which in the finite action case are obtained by gluing alcoved polyhedra. This interpretation is elaborated in~\Cref{th-polycomplex}.

\todo[inline]{I added the above part regarding the results that we have in the hyperconvex section}

Finally, our treatment of Shapley operators is inspired by the ``operator approach'' of zero-sum games, early work in these directions include~\cite{Eve57,Koh74,BK76}, see~\cite{rosenbergsorin,Ren11,BGV15,ziliotto} for more recent developments.

\section{Preliminary results}\label{sec-prelim}
In this section, we establish, or recall,
basic results concerning tropical cones.
\subsection{Additive cones, tropical cones and semimodules}
The tropical semifield, $\tmax$, is the set
$\R\cup\{-\infty\}$ equipped with the addition
$(x,y) \mapsto x\vee y := \max(x,y)$ and with
the multiplication $(x,y) \mapsto x+y$.
It admits a zero element, equal to $-\infty$,
and a unit element, equal to $0$.
We shall also use the min-plus version
of the tropical semifield, $\tmin$, which is the set
$\R\cup\{+\infty\}$ equipped with the addition
$(x,y) \mapsto x\wedge y := \min(x,y)$ and with
the multiplication $(x,y) \mapsto x+y$.
The semifields $\tmax$ and $\tmin$ are isomorphic.
We denote by $\leq$ the partial coordinatewise order of $(\R\cup\{\pm\infty\})^n$, and we use the notation
$\lambda + x:=(\lambda+x_i)_{i\in [n]}$, 
for all $\lambda\in \tmax$ and $x\in \tmaxn{n}$, 
and similarly for  $\lambda\in \tmin$ and $x\in \tminn{n}$.
We extend the notation $\vee$ to denote the supremum of vectors of $(\R\cup\{\pm\infty\})^n$. Similarly, $\wedge$ denote
the infimum of vectors.

\begin{defn}
  An {\em additive cone} of $\R^n$ is a subset $C$ of $\R^n$ such that
  \begin{enumerate}
\item\label{it-def-t2} $x\in C,\lambda\in \R \implies \lambda + x\in C$.
  \end{enumerate}
  A {\em tropical cone} is an additive cone $C$ such that:
    \begin{enumerate}\setcounter{enumi}1
    \item\label{it-def-t1}    $x,y\in C\implies x\vee y\in C$.
      \end{enumerate}
  A {\em dual tropical cone} is defined similarly, by requiring that
  $x,y\in C\implies x\wedge y\in C$, instead of \eqref{it-def-t1}.
\end{defn}
Tropical cones are, essentially, special cases of semimodules over the tropical
semifield. Recall that semimodules (modules over semirings)
are defined in a way similar to modules over rings, see~\cite{cgq02}.
In particular, semimodules over idempotent semirings have been studied under the name of {\em idempotent spaces} in~\cite{litvinov}.
A simple example of semimodule over $\tmax$ is the $n$-fold Cartesian product of $\tmax$, $\tmaxn{n}$; the internal law is
$(x,y) \mapsto x\vee y:= (x_i\vee y_i)_{i\in [n]}$, for $x,y\in \tmaxn{n}$,
and the action of $\tmax$ on $\tmaxn{n}$ is defined by
$(\lambda,x) \mapsto \lambda + x$. This yields a free, finitely
generated semimodule. If $C\subset\R^{n}$ is a tropical cone,
then $C\cup\{(-\infty,\dots,-\infty)\}$ is a subsemimodule
  of $\tmaxn{n}$, and vice versa.

  We shall consider, in particular, tropical cones
 satisfying a topological assumption. We equip $\tmax$ 
with the topology defined by the metric $(a,b)\mapsto |e^a-e^b|$.
The semimodule $\tmaxn{n}$, equipped with the topology
of the metric $d_\infty(x,y) = \max_{i\in[n]}|e^{x_i}-e^{y_i}|$
is a topological semimodule (meaning that the structure laws are continuous).
Observe
that the induced topology on $\R^n\subset \tmaxn{n}$
is the Euclidean topology. Dual considerations
apply to $\tminn{n}$.

\begin{definition}\label{def-lowerclosure}
Given a subset $C\subset \R^n$,
we define the {\em lower closure} of $C$, $\lowerclosure{C}\subset \tmaxn{n}$, to be the set of limits of nonincreasing sequences
of elements of $C$.
Similarly, we define the {\em upper closure}
$\upperclosure{C}\subset \tminn{n}$ to be the set of limits of nondecreasing sequences of elements of $C$.
\end{definition}
For instance, if $C=\{x\in \R^2\mid |x_1-x_2|\leq 1\}$,
$\lowerclosure{C}=C\cup\{(-\infty,-\infty)\}$, whereas if
$C=\{x\in\R^2\mid x_1\geq x_2\}$, $\lowerclosure{C} = \{x\in \tmaxn{2}\mid x_1\geq x_2\}$. 
The following proposition shows a correspondence between closed tropical cones
and a class of closed tropical subsemimodules.
\begin{prop}\label{prop-related}
  The map $C\mapsto V:=\lowerclosure{C}$ establishes a bijective
  correspondence between the nonempty closed tropical cones $C\subset \R^n$ and the
  closed subsemimodules $V$ of $\tmaxn{n}$ such that $V\cap \R^n\neq\emptyset$.
  The inverse map is given
  by $V\mapsto V\cap \R^n$.
\end{prop}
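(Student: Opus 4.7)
The plan is to verify that $f : C \mapsto \lowerclosure{C}$ and $g : V \mapsto V \cap \R^n$ are mutually inverse maps between the two stated classes. The main technical ingredient, which I would establish first, is the identification $\lowerclosure{C} = \overline{C}$, where $\overline{C}$ denotes the topological closure of $C$ in $\tmaxn{n}$, whenever $C$ is a closed tropical cone in $\R^n$. The nontrivial inclusion proceeds via an approximation trick: given $y \in \overline{C}$ with $y = \lim x^{(k)}$, $x^{(k)} \in C$, fix any $z \in C$ and set $y^{(m)} := y \vee (z - m)$ for $m \in \N$. Each $y^{(m)}$ lies in $\R^n$ because its $i$-th coordinate is sandwiched between $z_i - m$ and $\max(y_i, z_i) < +\infty$. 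Moreover $x^{(k)} \vee (z - m) \in C$ and converges to $y^{(m)}$ as $k \to \infty$ inside a bounded subset of $\R^n$, where the $d_\infty$-topology of $\tmaxn{n}$ agrees with the Euclidean one; closedness of $C$ then gives $y^{(m)} \in C$. Finally $(y^{(m)})_m$ is a nonincreasing sequence converging to $y$ in $\tmaxn{n}$, placing $y$ in $\lowerclosure{C}$.

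From this identification it follows immediately that $\lowerclosure{C}$ is closed in $\tmaxn{n}$. Closure of $\lowerclosure{C}$ under $\vee$ and under translation by $\R$-scalars is inherited from $C$ by continuity of these operations, and the point $-\infty$ belongs to $\lowerclosure{C}$ because $z - m$ converges to the constant $-\infty$ vector for any fixed $z \in C$. Hence $f(C)$ is a closed subsemimodule of $\tmaxn{n}$. Next, I would check that $\lowerclosure{C} \cap \R^n = C$: the inclusion $\supset$ uses constant sequences, and for $\subset$, if $y \in \R^n$ is the limit of a nonincreasing sequence $x^{(k)} \in C$, then $y \leq x^{(k)} \leq x^{(1)}$ traps the sequence in a bounded subset of $\R^n$ where convergence is Euclidean, so closedness of $C$ yields $y \in C$. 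This establishes $g \circ f = \mathrm{id}$, and in particular that $f(C) \cap \R^n$ is nonempty.

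For the reverse composition, let $V$ be a closed subsemimodule of $\tmaxn{n}$ with $V \cap \R^n \neq \emptyset$. Then $C := V \cap \R^n$ inherits closure in $\R^n$, closure under $\vee$, and closure under $\R$-translations from $V$, so it is a nonempty closed tropical cone, confirming $g$ is well-defined. To show $f(g(V)) = V$: the inclusion $\lowerclosure{C} \subset V$ follows from $C \subset V$ and the closedness of $V$ in $\tmaxn{n}$; for $V \subset \lowerclosure{C}$ I would reapply the approximation trick, now observing that for any $y \in V$ and fixed $z \in C$, the element $y \vee (z - m)$ lies in $V$ by the subsemimodule operations and in $\R^n$ by the same coordinatewise bound as above, hence in $C$, and $(y \vee (z - m))_m$ is a nonincreasing sequence converging to $y$.

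The main obstacle is the identification $\lowerclosure{C} = \overline{C}$, equivalently the verification that the approximation $y^{(m)} := y \vee (z - m)$ produces elements of $C$; once this is in hand every other step reduces to continuity of the semimodule operations and the coincidence of the $d_\infty$ and Euclidean topologies on bounded subsets of $\R^n$.
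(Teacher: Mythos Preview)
Your proof is correct and uses the same core approximation device as the paper---replacing a point $y\in\tmaxn{n}$ by $y\vee(z+\lambda)$ for some fixed $z\in C$ and $\lambda\to-\infty$ to land in $C$---but you organize things a bit differently. The paper first argues that $\lowerclosure{C}$ is a semimodule by continuity, then proves it is closed by a somewhat delicate direct argument: given a convergent sequence in $\lowerclosure{C}$, it perturbs by $+\epsilon_k e$, lifts each perturbed point to a nearby element of $C$, and extracts a nonincreasing subsequence. You instead prove the single identity $\lowerclosure{C}=\overline{C}$ upfront, from which closedness is immediate and the subsequence extraction is avoided entirely. For the inclusion $V\subset\lowerclosure{(V\cap\R^n)}$ both proofs run the same $y\vee(z-m)$ trick. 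Your packaging is slightly cleaner; the paper's version is more hands-on but ultimately relies on the same ideas.
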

The dual property, concerning $\upperclosure{C}$ and $\tminn{n}$
instead of $\lowerclosure{C}$ and $\tmaxn{n}$,
also holds. 
We denote by $\operatorname{supp}y:=\{i\in[n]\mid y_i>-\infty\}$ the {\em support} of a vector $y \in\tmaxn{n}$.
\begin{proof}
  Suppose $C\subset \R^n$ is a tropical cone. Since $\tmaxn{n}$
  is a topological semimodule, $\lowerclosure{C}$ is a semimodule over $\tmax$.
  We next show that $\lowerclosure{C}$ is closed.
Let $x^k$ denote a sequence of elements of $\lowerclosure{C}$
converging to an element $x\in \tmaxn{n}$.
Let $\epsilon_k$
denote an arbitrary sequence of positive numbers decreasing to $0$.
Let $y^k:= x^k+\epsilon_k e\in \lowerclosure{C}$. Observe that
$\eta_k:=d_\infty(x^k,y^k) = (e^{\epsilon_k}-1) \max_{i\in[n]}e^{x^k_i}
\to 0$ as $k\to \infty$. Moreover, 
by definition of $\lowerclosure{C}$, we can find $z^k\in C$
such that $y^k\leq z^k$ and $d_\infty(y^k,z^k)\leq \eta_k$.
It follows that the sequence $z^k$ also converges to $x$.
We claim that there is a subsequence $z^{n_k}$ of $z^k$
that is nonincreasing. Indeed,
suppose by induction that $z^{n_1}\geq \cdots\geq z^{n_k}$
has already been selected. 
Let $I:=\supp x$. Since $z^l\to x$ as $l\to\infty$,
we get $z^{l}_i \to x_i\in\R$ for $i\in[n]$,
and $z^l_i\to-\infty$
for $i\in [n]\setminus I$. However, by construction,
$z^{n_k} \geq \epsilon_{n_k}e + x$, and so $z^{n_k}_i \geq \epsilon_{n_k}+x_i$
for all $i\in I$. We deduce that there is an index $l$ such that
$z^l\leq z^{n_k}$, and we set $n_{k+1}:=l$. This shows that
$x\in\lowerclosure{C}$, and so, $\lowerclosure{C}$ is closed.

Conversely, suppose that $V$ is a closed subsemimodule of $\tmaxn{n}$.
Then, it is immediate that $V\cap \R^n$ is a closed tropical cone.
It remains to show that the correspondence is bijective.
We have trivially $\lowerclosure{C}\cap \R^n=C$ for all nonempty closed
tropical cones $C\subset \R^n$. Conversely, if $V$ is a closed
tropical subsemimodule of $\tmaxn{n}$ such that $V\cap \R^n$ is nonempty,
we must show that $V=\lowerclosure{(V\cap\R^n)}$.
let $x\in V$, and let us choose an arbitrary
element $y\in V\cap \R^n$.
Then, the path $\lambda \mapsto \gamma(\lambda):=x\vee (\lambda +y)$,
defined for $\lambda\in (-\infty,0]$ is such that $\gamma(\lambda)\in V\cap\R^n$, and $\lim_{\lambda\to -\infty} \gamma(\lambda)= x$.
It follows that $x\in \lowerclosure{(V\cap\R^n)}$, hence,
$V\subset \lowerclosure{(V\cap\R^n)}$. The other inclusion
is immediate. 
\end{proof}

Recall that an element $u$ of a tropical subsemimodule $V\subset \tmaxn{n}$
is an {\em extreme generator} of $V$ if $u=v\vee w$ with $v,w\in V$
implies that $u=v$ or $u=w$. A {\em tropical linear combination}
of elements of $V$ is a vector of the form $\vee_{i\in I}( \lambda_i + a_i)$
where $(\lambda_i)_{i\in I}\subset \tmax$ and $(a_i)_{i\in I} \subset
V$ are finite families. We say that $G\subset V$ is a {\em tropical
generating set} if every element of $V$ is a tropical linear
combination of a family of elements of $G$.
We say also that two vectors are {\em tropically
proportional} if they differ by an additive constant.
The next result summarizes results
from \cite{GK06,BUTKOVIC2007394};
it shows that a closed tropical subsemimodule of $\tmaxn{n}$
is generated by its extreme rays.
 \begin{thm}[See Theorem~3.1 in~{\cite{GK06}} or Theorem~14 in~\cite{BUTKOVIC2007394}]\label{th-sum}
   Suppose that $V$ is a closed tropical subsemimodule of $\tmaxn{n}$.
   Then, every element of $V$ is a tropical linear combination
   of at most $n$ extreme generators of $V$. Moreover,
   these extreme generators are characterized as follows.
   For all $i\in [n]$, let $V_i:= \{x\in V\mid x_i = 0\}$,
    and let $\operatorname{Min}V_i$ denote the set of minimal
    elements of $V_i$. Then, every extreme generator of $V$
    is tropically proportional to an element of 
    $\cup_{i\in [n]} \operatorname{Min}V_i$.
 \end{thm}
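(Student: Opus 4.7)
The plan is to derive both assertions from a single construction. Fix $x\in V$, set $I:=\supp x\subseteq [n]$, and for each $i\in I$ introduce the slice
\begin{equation*}
W_i(x) := \{y\in V \mid y\leq x,\; y_i = x_i\}.
\end{equation*}
First I would establish that each $W_i(x)$ admits a coordinatewise-minimal element. The order interval $\{y\in \tmaxn{n}\mid y\leq x\}$ is compact for the metric $d_\infty$, being homeomorphic to $\prod_j [0,e^{x_j}]$ via coordinatewise exponentiation; hence $W_i(x)$, which is closed by closedness of $V$, is compact. Zorn's lemma then applies: any descending chain $\{y^\alpha\}\subset W_i(x)$ admits a lower bound in $W_i(x)$, since the sets $\{z\in W_i(x)\mid z\leq y^\alpha\}$ are closed subsets of a compact set with the finite intersection property, so their intersection is nonempty. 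A minimal element $y^i$ results.

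Next I would simultaneously verify the decomposition $x = \bigvee_{i\in I} y^i$ and the extremality of each $y^i$. The inequality $\bigvee_i y^i\leq x$ follows since each $y^i\leq x$; conversely, for $i\in I$ one has $(\bigvee_j y^j)_i\geq y^i_i=x_i$, while for $i\notin I$ both sides equal $-\infty$ (since $y^j\leq x$). Hence $x$ is a tropical combination of at most $|I|\leq n$ elements of $V$. For extremality of $y^i$: if $y^i=v\vee w$ with $v,w\in V$, then $v_i\vee w_i=x_i$, so, without loss of generality, $v_i=x_i$; together with $v\leq y^i\leq x$ this places $v$ in $W_i(x)$, and minimality forces $v=y^i$.

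Finally, the characterization of extreme generators as tropical multiples of elements of $\bigcup_i \operatorname{Min} V_i$ would follow by applying the construction to an extreme generator $u\in V$ itself. Writing $u=\bigvee_{j\in\supp u} z^j$ with each $z^j$ minimal in $W_j(u)$, iterated extremality forces some $z^j$ to equal $u$, so $u$ is minimal in that $W_j(u)$. Translating by $-u_j$, the vector $u-u_j e$ lies in $V_j$, and is in fact minimal there: if $z\in V$ satisfies $z_j=0$ and $z\leq u-u_j e$, then $z+u_j e\in W_j(u)$, hence $z+u_j e=u$. The main technical obstacle is the first step, as minimal elements need not exist in arbitrary closed subsets of a tropical semimodule; the argument crucially combines topological closedness of $V$ (used to close the Zorn induction at passage to infima of chains) with order-compactness of intervals in $\tmaxn{n}$.
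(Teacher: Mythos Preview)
The paper does not supply its own proof of this theorem: it is quoted from the literature (Gaubert--Katz and Butkovi\v{c}--Schneider--Sergeev), with the statement labeled ``See Theorem~3.1 in~[GK06] or Theorem~14 in~[BSS07]''. So there is no in-paper argument to compare against.

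Your proof is correct, and it is in fact the standard argument found in those references. A few remarks. The Zorn step is genuinely needed (or some equivalent): a compact subset of $\tmaxn{n}$ need not have a coordinatewise minimum, only minimal elements, so one cannot simply ``take the infimum of $W_i(x)$''. Your finite-intersection-property justification for lower bounds of chains is the clean way to handle this. In the final paragraph, the ``iterated extremality'' step is fine because $V$ is closed under $\vee$, so the partial join $z^{j_2}\vee\cdots\vee z^{j_k}$ lies in $V$ and the two-term definition of extremality can be applied inductively; it is worth making that closure explicit. Finally, your notation $u-u_j e$ presumes $e$ is the all-ones vector; this is consistent with the paper's usage elsewhere (e.g., in the proof of Proposition~2.1), but since the paper writes tropical scalar multiplication as $\lambda+x$, you could equally write $(-u_j)+u$ to stay closer to its conventions.
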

 This is reminiscent of the classical Carath\'eodory theorem
 for closed convex pointed cones.

\subsection{Alcoved polyhedra and metric closures}

An important class of tropical cones and dual tropical cones consists
of {\em alcoved polyhedra}.
The latter were introduced in~\cite{AlcovedPolytopes}:
in general, an alcoved
polyhedron associated to a root system is a polyhedron whose
facets have normals that are proportional to vectors of this root system.
Here, the root system is $A_n$, the collection of vectors $\{e_i-e_j\mid i,j\in[n], i\neq j\}$, where $e_i$ denotes the $i$th vector of the canonical
basis of $\R^n$.
\begin{defn}
  An {\em alcoved polyhedron}~\cite{AlcovedPolytopes}
  is a
  polyhedron
  of the form
    \begin{align}\label{e-def-alcoved}
  \alcoved{M}=\{x\in \R^n\mid   x_i \geq M_{ij} + x_j, \quad \forall 1\leq i,j\leq n\}
  \end{align}
  for some matrix $M=(M_{ij})\in \tmaxn{n\times n}$.
\end{defn}
    {\em Order polyhedra} are remarkable examples of alcoved polyhedra. They are of the form $\{x\in \R^n\mid x_i \geq x_j \text{ if } (i,j) \in E\}$ where $E\subset [n]\times [n]$ is a partial order relation on the set $[n]$.
    Intersection of order polyhedra with the hypercube $[0,1]^n$ are known as
    {\em order polytopes}, they were studied by Stanley~\cite{stanley86}.

We shall denote by $\vee$ the tropical
addition of matrices, so that, for all
$A,B\in \tmaxn{m\times n}$, $(A_{ij})\vee (B_{ij}):= (A_{ij}\vee B_{ij})\in \tmaxn{m\times n}$. The tropical multiplication of matrices will be denoted
by concatenation, i.e, for $A\in \tmaxn{m\times n}$ and $B\in \tmaxn{n\times p}$,
$AB\in \tmaxn{m\times p}$ is the matrix with $(i,j)$-entry $\vee_{k\in[n]} (A_{ik}+B_{kj})$.
Then, when $m=n$, for all $r$,
the {\em $r$th tropical power} of $A$ is denoted by 
$A^r:= A\cdots A$ ($A$ is repeated $r$ times).

There are well known relations between alcoved polyhedra
and operations of metric closures which we next
recall. The {\em tropical Kleene star} of $M$ is defined by
$M^* := I \vee M \vee M^2 \vee \cdots  $.
This supremum may be infinite,  i.e., in general
$(M^*)_{ij}$ may take the value $+\infty$.
Recall that to the matrix $M$ is associated
a digraph with set of nodes $[n]$ and an arc $i\to j$
of weight $M_{ij}$ whenever $M_{ij}>-\infty$. Then,
$(M^k)_{ij}$ yields the maximal weight of a path
of length $k$ from $i$ to $j$, and $(M^*)_{ij}$
yields the supremum of the weights of paths
from $i$ to $j$, of arbitrary length.
We have $(M^*)_{ij}< +\infty$
for all $i,j$ if and only if there is no circuit with positive weight
in the digraph of $M$. Then,
$%
M^* = M^0 \vee \dots \vee M^{n-1}$,
see e.g.\ Prop~2.2 of~\cite{agw04}.

The {\em critical circuits}
of the matrix $M^*$ are the circuits in the digraph
of $M^*$ with weight $0$. The union of the critical
circuits constitutes the {\em critical digraph}.
The following result is well known: the first
statements follow readily from the results recalled
above, whereas the characterization of generators
is a special case of the characterization of tropical
eigenspaces, see e.g.~\cite[Th.~3.100]{BCOQ92}, Theorem~6.4 of~\cite{agw04},
or~\cite[Th.~4.4.5]{butkovic}.

\begin{lem}\label{prop-def-alcoved}
  The polyhedron $\alcoved{M}$ is non-empty if and only there is no circuit of positive
  weight in the digraph of $M$. Then,
  \begin{align}
    \label{e-alcoved2}
    \alcoved{M} =\{x\in \R^n\mid M^* x\leq x\} = \{x\in \R^n\mid M^*x = x\}
= \{M^* y\mid y\in \R^n\}  
    \enspace .
  \end{align}
  Moreover, a tropical generating set of $\lowerclosure\alcoved{M}$
  is obtained as follows: denote by $C_1,\dots,C_s$ the strongly
  connected components of the critical digraph of $M^*$,
  and select indices $i_1\in C_1,\dots,i_s\in C_s$ in an arbitrary
  manner. Then, the set of columns of $M^*$ indexed
  by $i_1,\dots,i_s$ tropically generates $\lowerclosure\alcoved{M}$,
  and every generating set of $\lowerclosure\alcoved{M}$
  contains at least one scalar multiple of each of these
  columns.\hfill\qed
\end{lem}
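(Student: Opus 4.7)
My plan is to derive the nonemptiness criterion and the three equalities as direct consequences of the definition of the tropical Kleene star $M^*$, and then read off the generator description from the classical max-plus eigenspace theorem. For nonemptiness I would argue each direction separately: if $i_1\to i_2\to\cdots\to i_k\to i_1$ is a circuit in the digraph of $M$ of weight $w$, iterating the defining inequality $x_i\geq M_{ij}+x_j$ around the circuit for any $x\in\alcoved M$ yields $x_{i_1}\geq w+x_{i_1}$, forcing $w\leq 0$; conversely, if no positive circuit exists, the truncation identity $M^*=I\vee M\vee\cdots\vee M^{n-1}$ (Prop.~2.2 of~\cite{agw04}) shows that $M^*$ has only finite entries, and any column of $M^*$ will then be shown to lie in $\alcoved M$ via the equalities proved next.

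For the three equalities I would argue in a short chain. By monotonicity of $M$ and induction, $Mx\leq x$ is equivalent to $M^kx\leq x$ for every $k\geq 0$, hence, by taking suprema, to $M^*x\leq x$; and since $M^*\geq I$ gives $M^*x\geq x$ for free, this is in turn equivalent to $M^*x=x$. Moreover, from $M^*=\bigvee_{k\geq 0}M^k$ together with $M^iM^j=M^{i+j}$ one extracts $(M^*)^2=M^*$, so that the range of $M^*$ coincides with its fixed-point set $\{x\mid M^*x=x\}$, closing the three-way equivalence.

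The generator description I would obtain from the classical characterization of max-plus eigenspaces. By the equalities just proved, $\alcoved M$ is the eigenspace of $M^*$ at eigenvalue $0$, and the references cited in the statement (\cite[Th.~3.100]{BCOQ92}, \cite[Th.~6.4]{agw04}, \cite[Th.~4.4.5]{butkovic}) describe such an eigenspace as the tropical semimodule generated by the columns of $M^*$ indexed by critical nodes, with two such columns being tropically proportional exactly when their indices lie in the same strongly connected component of the critical digraph; the corresponding minimality statement (every generating family must contain a scalar multiple of each chosen column) is also part of those theorems. Picking one representative per component therefore yields the advertised minimal generating family for $\alcoved M$, and passage to the lower closure $\lowerclosure\alcoved M\subset\tmaxn{n}$ is handled by \Cref{prop-related}. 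The main obstacle is this generator characterization, whose nontrivial content is carried entirely by the cited spectral results; once these are granted, the only bookkeeping to verify is that the critical digraph of $M^*$ used in the statement matches the standard critical digraph of $M$, which reduces to observing that weight-$0$ circuits in $M^*$ correspond, on the level of vertices, to critical circuits of $M$.
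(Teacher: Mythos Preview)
Your approach is essentially the paper's own: the lemma is stated with a trailing \qed and no proof, the text preceding it saying that the first statements ``follow readily'' from the Kleene star properties recalled there, while the generator description is a special case of the tropical eigenspace theorems in \cite{BCOQ92,agw04,butkovic}. Your sketch expands exactly those two ingredients and cites the same sources, so there is no divergence in method.

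One slip to fix in the converse direction of nonemptiness: the truncation identity $M^*=I\vee M\vee\cdots\vee M^{n-1}$ only guarantees that $M^*$ has no $+\infty$ entries, not that all entries are finite; if the digraph of $M$ is not strongly connected, some $M^*_{ij}$ remain $-\infty$, and then the $j$-th column of $M^*$ does not lie in $\R^n$, hence not in $\alcoved{M}$. The cure is immediate: for any $y\in\R^n$ one has $M^*y\geq y$ (from $M^*\geq I$) and $M^*y$ has no $+\infty$ coordinate (from the absence of $+\infty$ entries in $M^*$), so $M^*y\in\R^n$; your third equality then places it in $\alcoved{M}$. This is in fact already what the set $\{M^*y\mid y\in\R^n\}$ in \eqref{e-alcoved2} encodes, so you only need to rephrase the nonemptiness argument to invoke $M^*y$ for a finite $y$ rather than a bare column.
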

\begin{remark}
  It follows from \Cref{prop-def-alcoved} that the minimum number of elements
  of a tropical generating family of the tropical semimodule
    $\lowerclosure\alcoved{M} \subset \tmaxn{n}$
never exceeds $n$. The same observation
applies to dual tropical generating families
of $\upperclosure\alcoved{M}$. In contrast, there
are finitely generated tropical subsemimodules
of $\tmaxn{n}$ with an arbitrarily large number
of generators, see e.g.~\cite{AlGG09}.
\end{remark}

\section{Shapley operators and Ambitropical Cones}
\subsection{Abstract Shapley Operators}
Shapley operators are dynamic programming operators allowing one
to compute the value function of zero-sum games. The typical example
of Shapley operator $T: \R^n\to \R^n$ is of the form
\begin{align}
  T_i(x) = \inf_{a\in A_i} \sup_{b\in B_i} (r_i^{ab} + \sum_{j\in [n]} P_{ij}^{ab} x_j) \enspace,
  \label{e-def-shap}
\end{align}
where $[n]=\{1,\dots,n\}$ is the state space, $A_i, B_i$ are the sets of actions
available in state $i$, of the two players, called ``Min'' and ``Max'',
$r_i^{ab}$ is a payment made by by Player Min to Player Max at a given stage,
assuming that Min selected action $a$ and that Max selected action
$b$, and $P_{ij}^{ab}\geq 0 $ is the probability of transition
from $i$ to $j$, so that $\sum_j P_{ij}^{ab}=1$. Such operators
capture zero-sum perfect or ``turned based'' information games, without
discount. In the original model considered by Shapley, 
the two players play simultaneously and the actions are randomized,
which can be cast as~\eqref{e-def-shap}, in which
$A_i$ and $B_i$ are simplices~\cite{rosenbergsorin,neymansurv}.\todo{give references}
Actually, many variants of Shapley operators (depending on the nature
of the turns and on the information structure) can be considered,
and so, it will be convenient to introduce a general
definition.

\begin{defn}[Shapley operator]\label{def-shapley}
An (abstract) \textit{Shapley operator} is a map $T : \mathbb{R}^{n} \to \mathbb{R}^{p}$ with the following properties:
\begin{enumerate}
\item\label{i-mon} $x \leq y$ implies $T(x) \leq T(y)$, for all $x,y\in \R^n$;
\item\label{i-hom} $T(\lambda + x)=\lambda + T(x)$, for all $x\in \R^n$ and $\lambda\in \R$.
\end{enumerate}
\end{defn}
The example~\eqref{e-def-shap} of Shapley operator obviously satisfies these properties.
Conversely, Kolokoltsov showed that every abstract Shapley operator $\R^n\to\R^n$ can be written as~\eqref{e-def-shap} (see~\cite{KM}) and Singer and Rubinov~\cite{RS01b} showed that the transition probabilities can even be chosen to be $0/1$. 

Note that, taking into account the semimodule structure of $\R^n$ and $\R^p$, the canonical choice of morphisms to consider would be tropically linear maps (maps that commute with the supremum and with the addition of a constant). {\em Shapley operators} constitute a larger class of maps and they are precisely canonical morphisms of additive cones (see Def.~\ref{def-shapley}). 

In the square case, i.e., when $n=p$, Shapley operators 
arise as dynamic programming operators of two-player zero-sum games, see e.g.~\cite{rosenbergsorin,neymansurv}.
It is known that Shapley operators are nonexpansive in the sup-norm;
this observation plays a key role in the ``operator approach'' of zero-sum games~\cite{crandall,rosenbergsorin,neymansurv,GV10}. Furthermore, the following
observation, made in~\cite[Prop.~1.1]{gunawardenakeane}, shows
that Shapley operators are characterized by a nonexpansiveness property.
Recall that $\mytop({x}):=\max_{i\in [n]} x_i$ denotes the ``top''
hemi-norm of a vector $x\in \R^n$. It will also be convenient
to use the notation $\mybot({x}):=-\mytop{(-x)}=\min_{i\in [n]}x_i$.
\begin{prop}[{\cite[Prop.\ 1.1]{gunawardenakeane}}]\label{prop-equiv}
  Let $T:\R^n\to\R^p$. The following
  assertions are equivalent:
  \begin{enumerate}
    \renewcommand{\theenumi}{\roman{enumi}}
  \item\label{i-shapley} $T$ is a Shapley operator;
  \item\label{i-nexp} $\mytop{(T(x)-T(y))}\leq \mytop{(x-y)}$ for all $x,y\in \R^n$;
      \item\label{i-nexp2} $\mybot{(T(x)-T(y))}\geq \mybot{(x-y)}$ for all $x,y\in \R^n$.
  \end{enumerate}
\end{prop}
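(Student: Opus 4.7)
The plan is to prove the cycle by establishing \ref{i-shapley}$\Leftrightarrow$\ref{i-nexp} directly, and then deducing \ref{i-nexp}$\Leftrightarrow$\ref{i-nexp2} by a duality argument, since the latter equivalence is essentially formal once we observe $\mybot(z)=-\mytop(-z)$.

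For \ref{i-shapley}$\Rightarrow$\ref{i-nexp}, I would exploit the following one-line trick: set $\lambda:=\mytop(x-y)$, so that $x\leq y+\lambda$ componentwise (where $\lambda$ denotes the constant vector). Applying monotonicity \ref{i-mon} and then additive homogeneity \ref{i-hom} yields $T(x)\leq T(y+\lambda)=T(y)+\lambda$, hence $\mytop(T(x)-T(y))\leq \lambda=\mytop(x-y)$. This is the standard argument that Shapley operators are nonexpansive for the ``top'' hemi-norm.

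For \ref{i-nexp}$\Rightarrow$\ref{i-shapley}, I would recover the two defining properties by two specializations of the hypothesis. To recover \ref{i-hom}, take $y=x+\lambda$: then $\mytop(x-y)=-\lambda$, so \ref{i-nexp} gives $T(x)-T(x+\lambda)\leq -\lambda$ componentwise, i.e.\ $T(x+\lambda)\geq T(x)+\lambda$. Swapping the roles of $x$ and $x+\lambda$ yields the reverse inequality, so $T(x+\lambda)=T(x)+\lambda$. To recover \ref{i-mon}, observe that if $x\leq y$ then $\mytop(x-y)\leq 0$, and \ref{i-nexp} gives $\mytop(T(x)-T(y))\leq 0$, that is $T(x)\leq T(y)$.

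Finally, for the equivalence \ref{i-nexp}$\Leftrightarrow$\ref{i-nexp2}, the identity $\mybot(z)=-\mytop(-z)$, applied to $z=T(x)-T(y)$ and to $z=x-y$, rewrites \ref{i-nexp2} as $\mytop(T(y)-T(x))\leq \mytop(y-x)$, which is precisely \ref{i-nexp} after renaming. No step here is a genuine obstacle: the only subtle point is keeping track of which direction uses which specialization, and making sure to read $\lambda$ both as a scalar and as the constant vector $\lambda+0=(\lambda,\dots,\lambda)$ consistently with the paper's conventions.
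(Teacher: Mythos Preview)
Your proof is correct and is exactly the standard argument for this fact. Note that the paper does not actually give a proof of this proposition: it is stated with a reference to \cite[Prop.~1.1]{gunawardenakeane} and left without proof, so there is nothing to compare against beyond confirming that your argument is the expected one.
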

The following result is a consequence of a general
result on \cite{burbanks} concerning the continuous extension
of positive homogeneous maps defined on the interior of polyhedral cones.
\begin{prop}[Corollary of \protect{\cite[Theorem 3.10]{burbanks}}]
\label{prop-extends}
  A Shapley operator $T: \R^n\to \R^p$ admits a unique
  continuous extension $T_-: \tmaxn{n}\to \tmaxn{p}$,
  given by
  \[
{T}_-(x) =\inf\{T(y)\mid y\geq x,\;y\in \R^n\} \enspace .
\]
Similarly, $T$ has a unique continuous extension
$\tminn{n}\to \tminn{p}$, given by
  \[
{T}_+(x) =\sup\{T(y)\mid y\leq x,\;y\in \R^n\} \enspace .
\]
\end{prop}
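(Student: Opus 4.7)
The plan is to deduce the statement from the cited theorem of Burbanks by a log--exp change of variables. Consider the homeomorphism $\Phi: \tmaxn{n} \to [0,\infty)^n$ defined componentwise by $\Phi_i(x) := \exp(x_i)$ with the convention $\exp(-\infty)=0$, and its analogue $\tmaxn{p}\to [0,\infty)^p$. By construction of the topology on $\tmaxn{n}$ recalled before Definition~\ref{def-lowerclosure}, $\Phi$ is a homeomorphism that restricts to a homeomorphism $\R^n\cong (0,\infty)^n$. The conjugate $\tilde T := \Phi \circ T\circ \Phi^{-1}:(0,\infty)^n\to(0,\infty)^p$ is order-preserving, by the monotonicity of $T$, and positively homogeneous of degree one, since the additive homogeneity $T(\lambda+x)=\lambda+T(x)$ translates into $\tilde T(\mu u)=\mu\tilde T(u)$ for $\mu=e^\lambda>0$. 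Since $[0,\infty)^n$ is a polyhedral cone with nonempty interior $(0,\infty)^n$, \cite[Theorem~3.10]{burbanks} yields a unique continuous extension $\tilde T_-:[0,\infty)^n\to[0,\infty)^p$ given by
\[
\tilde T_-(u) \;=\; \inf\{\tilde T(v) : v\in(0,\infty)^n,\ v\geq u\} \enspace .
\]
Transporting this back through $\Phi^{-1}$ produces the claimed formula for $T_-$.

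For the uniqueness statement, I would note that $\R^n$ is dense in $\tmaxn{n}$: any $x\in\tmaxn{n}$ is the $d_\infty$-limit of the sequence obtained by replacing each $-\infty$ entry of $x$ by $-k$ and keeping the other entries fixed. Consequently any two continuous extensions of $T|_{\R^n}$ to $\tmaxn{n}$ must coincide. The statement for $T_+$ follows by applying the same reduction on the min-plus side; equivalently, the flip $x\mapsto -x$ conjugates the max-plus and min-plus pictures and swaps $\inf$ with $\sup$ and the condition $y\geq x$ with $y\leq x$.

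The nontrivial step, absorbed into the Burbanks theorem, is continuity of the extension at boundary points $x\in\tmaxn{n}\setminus\R^n$, where the pattern of $-\infty$ entries forces one to use the metric $d_\infty$ rather than the ordinary sup-norm. I expect this to be the main obstacle in a self-contained proof. A direct argument would combine the sup-norm non-expansiveness of $T$ from Proposition~\ref{prop-equiv} with additive homogeneity: given a sequence $x^k\to x$ in $\tmaxn{n}$, for each $\epsilon>0$ one picks $y\in S(x):=\{z\in\R^n:z\geq x\}$ with $T(y)$ within $\epsilon\cdot\mathbf{1}$ of $T_-(x)$, constructs comparable elements $y^k\in S(x^k)$ for $k$ sufficiently large, and uses $\mytop(T(y^k)-T(y))\leq \mytop(y^k-y)$, together with the analogous estimate with $\mytop$ replaced by $\mybot$, to sandwich $T_-(x^k)$ between expressions converging to $T_-(x)$.
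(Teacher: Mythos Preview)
Your proposal is correct and matches the paper's treatment: the paper states the result as a corollary of \cite[Theorem~3.10]{burbanks} without giving a proof, and your log--exp conjugation is precisely the standard reduction that makes it such a corollary. The additional remarks on uniqueness via density and the direct continuity sketch are sound but go beyond what the paper records.
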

\begin{remark}
\Cref{prop-extends} provides only one-sided extensions of $T$,
either to $(\R\cup\{-\infty\})^n$ or
to $(\R\cup\{+\infty\})^n$.
A Shapley operator defined on $\R^n$ generally does not extend
canonically to $(\R\cup\{\pm\infty\})^n$ (consider $n=2$ and $T_i(x)= ((x_1+x_2)/2 )$ for $i=1,2$).
\end{remark}
A Shapley operator is said to be {\em tropically linear}
if $T(x\vee y) = T(x)\vee T(y)$ holds for all $x,y\in \R^n$.
It is said to be {\em dual tropically linear}
if $T(x\wedge y) = T(x)\wedge T(y)$.
Denoting by $\tropicalbasisvector_j:=(-\infty,\dots,-\infty,0,-\infty,\dots$, $-\infty)$ (with $0$ in the $j$th position) the $j$th vector of the tropical canonical basis
of $\tmaxn{n}$, and $M_{ij}:= (T_{-}(\tropicalbasisvector_j))_i$, we see that if $T$ is tropically linear, then
\[
(T(x) )_i = \vee_{j\in[n]}(M_{ij} + x_j ) \enspace ,
\]
i.e., $T$ is represented by a matrix product. A similar representation
holds for dual tropically linear Shapley operators.

\subsection{Ambitropical cones}\label{sec-ambitropical}
We next introduce our main object of study: ambitropical cones.
We are looking for a class of objects which includes tropical
cones and their duals. This leads to the following
definitions.
We recall that $\R^n$ is equipped with the standard partial order.
\begin{defn}\label{def-ambi}
  An {\em ambitropical cone} is a non-empty additive cone $C$ of $\R^n$ such that 
$C$ is a lattice in the induced order of $(\R^n,\leq)$.
\end{defn}
Recall that $C$ being a lattice means that every two elements $x,y$
of $C$ have a {\em least  upper bound}
\[ x\vee^C y := \min \{z\in C\mid z\geq x, \; z\geq y\},
\]
and a {\em greatest lower bound}
\[ 
x\wedge^C y = \max \{z\in C\mid z\leq x, \; z\leq y\},
\]
where the symbols ``max'' and ``min'' indicate
the greatest and smallest elements of a set.
Similarly, we will use the notation
$\supc X$ and $\infc X$ for the least upper bound and greatest
lower bound in $C$ of a subset $X\subset C$, when it exists. In particular,
$x\vee^c y = \supc \{x,y\}$ and
$x\wedge^c y = \infc \{x,y\}$. The operations $\supc$ and
$\infc$ defined for subsets of $C$
should not be confused with the restriction of the operations
$\sup=\operatorname{sup}^{\R^n}$ and $\inf=\operatorname{inf}^{\R^n}$
defined for subsets of $\R^n$: indeed, for all $X\subset C$ that has a least upper bound in $C$, $\supc X \geq \sup X$, and similarly $\infc X \leq \inf X$
if $X$ has a greatest lower bound in $C$.
In other words, an ambitropical cone is a lattice but it {\em may not be a sublattice} of $\R^n$.

We shall especially consider ambitropical cones
satisfying the following property.

\begin{defn}[Conditionally complete lattice]
A lattice $L$ is said \textit{conditionally complete} if every nonempty subset of $L$ that has an upper bound has a join (a least upper bound), and if every nonempty subset of $L$ that has a lower bound has a meet (a greatest lower bound).
\end{defn}

The following observation is elementary.
\begin{lem}\label{lem-arch}
  Let $C$ be an ambitropical cone.
  A subset of $\R^n$ is bounded from above by an element of $\R^n$
  if and only if it is bounded from above by an element of $C$.
  The dual statement holds for subsets bounded from below.
\end{lem}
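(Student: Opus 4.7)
The plan is to prove both directions of the first equivalence and then note the dual. One direction is trivial: since $C\subset\R^n$, any subset of $\R^n$ bounded from above by an element of $C$ is bounded from above by an element of $\R^n$.

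For the converse, I would exploit the fact that $C$ is a nonempty additive cone, so it is invariant under the addition of any constant $\lambda\in\R$. First I would pick an arbitrary element $c\in C$ (which exists as ambitropical cones are nonempty by \Cref{def-ambi}). Given a subset $X\subset \R^n$ bounded from above by some $x\in\R^n$, I would set $\lambda := \mytop(x-c)=\max_{i\in[n]}(x_i-c_i)$, which is a finite real number. Then $\lambda+c\in C$ by the additive cone property, and by construction $\lambda+c\geq x$ coordinatewise, hence $\lambda+c$ bounds $X$ from above.

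The dual statement, concerning lower bounds, follows by the same argument using $\lambda:=\mybot(x-c)$ and the fact that $\lambda+c\leq x$ for this choice. I do not expect any real obstacle: the proof is a one-line use of additive homogeneity of $C$, and is only recorded because it will be used repeatedly to move freely between ``bounded in $\R^n$'' and ``bounded in $C$'' when invoking the conditional completeness of ambitropical cones.
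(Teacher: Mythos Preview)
Your proof is correct and matches the paper's own argument essentially line for line: both pick an arbitrary element of $C$ and translate it by $\mytop(u-c)$ to produce an upper bound in $C$. The only cosmetic difference is that the paper omits the trivial direction and the explicit dual computation, which you spell out.
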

\begin{proof}
  Let $X\subset \R^n$ and suppose that there exists
  $u\in\R^n$ such that $x\leq u$ holds for all $x\in X$.
  Let $y\in C$. Then, $u\leq z:=y + \mytop(u-y)$. It follows
  that $x\leq z\in C$ holds for all $x\in X$.
\end{proof}

\begin{prop}\label{prop-closed}
  An ambitropical cone is a conditionally complete lattice if and only
  if it is closed in the Euclidean topology.
\end{prop}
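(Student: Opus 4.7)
The plan is to prove the two implications separately, exploiting the additive-cone structure together with the lattice structure already available in $C$.

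\medskip
\noindent\textbf{Closed implies conditionally complete.} Take a nonempty $X\subset C$ admitting an upper bound $u\in\R^n$. By \Cref{lem-arch}, the set $X$ also admits an upper bound $v\in C$. Since $C$ is a lattice, any finite subset $F\subset X$ has a least upper bound $\supc F\in C$ (obtained by induction on $|F|$), and $\supc F\le v$ because $v$ dominates every element of $F$. Directing the finite subsets of $X$ by inclusion produces an increasing net $F\mapsto \supc F$ in $\R^n$ which is coordinate-wise bounded above by $v$, and which therefore converges coordinate-wise to its coordinate-wise supremum $w\in\R^n$. Using a diagonal argument over the $n$ coordinates, one extracts an increasing sequence of finite subsets $(F_k)$ of $X$ such that $\supc F_k\to w$ in the Euclidean topology; the closedness of $C$ then gives $w\in C$. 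Finally, $w$ is an upper bound of $X$ in $C$ (for any $x\in X$ one has $w\ge \supc\{x\}=x$) and is the least such (any other upper bound $u'\in C$ satisfies $u'\ge\supc F$ for every finite $F$, hence $u'\ge w$). The corresponding property for lower-bounded subsets follows by the dual construction, using that $C$ is also stable for $\infc$.

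\medskip
\noindent\textbf{Conditionally complete implies closed.} Let $(x^k)$ be a sequence in $C$ converging in the Euclidean sense to some $x\in\R^n$, and set $\epsilon_k:=\mytop(|x-x^k|)\to 0$. Since $C$ is an additive cone, the vectors
\[
y^k:=x^k-\epsilon_k\mathbf 1,\qquad z^k:=x^k+\epsilon_k\mathbf 1
\]
belong to $C$ and satisfy $y^k\le x\le z^k$. The subset $\{y^k:k\ge 1\}\subset C$ is bounded above (for instance by $z^1$, because $z^1\ge x\ge y^k$), so by conditional completeness it admits a least upper bound $\tilde x=\supc\{y^k\}$ in $C$. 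From $y^k\to x$ in $\R^n$ and $y^k\le x$ one has $x=\sup_k y^k$ componentwise in $\R^n$, whence $\tilde x\ge x$. On the other hand each $z^\ell$ is an upper bound in $C$ of the whole family $\{y^k\}$ (since $z^\ell\ge x\ge y^k$), so $\tilde x\le z^\ell$; letting $\ell\to\infty$ gives $\tilde x\le x$. Hence $\tilde x=x$, so $x\in C$, and $C$ is closed.

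\medskip
\noindent\textbf{Where the difficulty lies.} The routine part is the construction of the candidate supremum/infimum; the delicate point is passing from the abstract least-upper-bound in $C$ to the Euclidean limit (and vice versa). For the forward direction this is handled by the coordinate-wise monotone-convergence argument that lets one replace the net of finite sups by a convergent sequence, so that Euclidean closedness applies. For the backward direction the key is the additive-cone ``sandwich'' $y^k\le x\le z^k$: it turns Euclidean convergence into a two-sided order-theoretic squeeze, which is exactly what conditional completeness can exploit to force $\tilde x=x$.
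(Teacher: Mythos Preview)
Your proof is correct. The forward direction (closed $\Rightarrow$ conditionally complete) is essentially the paper's argument: both take the increasing net of $\supc F$ over finite $F\subset X$, bound it by an element of $C$, and use closedness to place the limit in $C$; you are merely more explicit about extracting a sequence from the net.

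The backward direction (conditionally complete $\Rightarrow$ closed) takes a genuinely different and somewhat shorter route. The paper defines $\limsupc_k x_k := \infc_{k}\supc_{\ell\ge k} x_\ell$ and $\liminfc_k x_k$ in $C$, proves the general inequality $\limsupc\ge\liminfc$, and then sandwiches both around $x$ using $\|x_\ell-x_m\|\le 2\epsilon$. Your argument bypasses these nested lattice operations entirely: you shift the sequence by $\pm\epsilon_k\mathbf 1$ to manufacture the order sandwich $y^k\le x\le z^k$ directly inside $C$ (this is where the additive-cone hypothesis enters), take a single $\supc\{y^k\}$, and squeeze it between $x$ and $z^\ell$. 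What your approach buys is economy: one conditional-completeness application instead of an iterated $\infc$/$\supc$ construction plus a lemma. What the paper's approach buys is a reusable abstract $\limsupc$/$\liminfc$ machinery, though it is not used elsewhere in the paper.
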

\begin{proof}
  Suppose that the ambitropical cone $C$ is closed in the Euclidean topology,
  and let $X\subset C$ be a nonempty set bounded above by some element $y\in C$.
  Let $\finitesets{X}$ denote the set of nonempty finite subsets of $X$.
  For all $F\in \finitesets{X}$, let $u_F:= \supc F$. Then,
  $(u_F)_{F\in\finitesets{X}}$ is a nondecreasing net of elements of $C$, bounded above by $y$. Since $C$ is closed
  in the Euclidean topology, the limit of a net of elements of $C$ belongs to $C$,
  and so $u:=\lim_F u_F \in C$.
  By construction, $u\geq x$ holds for all $x\in X$. Moreover, if $z\in C$ is an
  upper bound of $X$, we get $z \geq u_F$ for all $F\in\finitesets{X}$, and so $z\geq u$. This
  shows that $u$ is the least upper bound of $X$. A dual
  argument works for greatest lower bounds. Hence, $C$ is
  a conditionally complete lattice.

  Conversely, suppose that $C$ is a conditionally complete lattice. Observe
  that for every bounded sequence $(x_k)$ of elements of $C$,
  the following ``liminf'' and ``limsup'' constructions both define elements
  that belong to $C$:
  \[
  \limsupc_{k\to\infty} x_k:= \infc_{k\geq 1}\supc_{\ell \geq k} x_\ell ,\qquad
  \liminfc_{k\to\infty}x_k:= \supc_{k\geq 1}\infc_{\ell \geq k} x_\ell
  \enspace .
  \]
  We shall use the fact that $  \limsupc_{k\to\infty} x_k \geq   \liminfc_{k\to\infty}x_k$.
  This inequality, which is standard when $C=\R^n$, is still valid
  in general. Indeed, for all $k,m\geq 1$, we have
  $\supc_{\ell \geq k} x_\ell \geq
  \infc_{\ell \geq m} x_\ell$, and so
  $\supc_{\ell \geq k} x_\ell \geq
  \supc_{m\geq 1} \inf_{\ell \geq m} x_\ell=\liminfc_{r\to\infty}x_r$.
Hence,
  \begin{align}
    \limsupc_{k\to\infty}x_k = \infc_{k\geq 1}\supc_{\ell\geq k} x_\ell
    \geq \liminfc_{r\to\infty}x_r
\enspace .\label{e-compar}
\end{align}
  Suppose
  that the sequence $(x_k)_{k\geq 1}$ of elements of $C$ converges
  to $x\in \R^n$. Then, for all $\epsilon>0$, there exists an index
  $m$ such that $\|x_\ell -x\|\leq \epsilon$ for all $\ell \geq m$.
  In particular, $x_\ell \leq \|x_\ell -x_{m}\| + x_{m} \leq  2\epsilon + x_{m}$.
  We deduce that $\limsupc_{\ell\to\infty} x_\ell \leq 2\epsilon + x_m \leq 3\epsilon +x $.
  Since the latter inequality holds for all $\epsilon>0$,
  we deduce that $\limsupc_{\ell\to\infty} x_\ell \leq x$. A dual
  argument sows that $\liminfc_{\ell\to\infty}x_\ell \geq x$.
  Using~\eqref{e-compar},
  we conclude that $x=   \limsupc_{\ell\to\infty} x_\ell= \liminfc_{\ell\to\infty}x_\ell
  \in C$, showing that $C$ is closed in the Euclidean topology.
\end{proof}
In the sequel, when writing that an ambitropical is {\em closed}, we shall
always refer to the Euclidean topology.

We define, for all closed ambitropical cones $C$, and for all $x\in\R^n$,
the following canonical retractions:
\begin{align}
   \qm{C}(x):= \supc \{y \in C\mid  y\leq  x\} \enspace ,
\qquad
\qp{C}(x):= \infc \{y \in C\mid  y\geq  x\} \enspace.
\label{e-def-proj}
\end{align}
We shall denote by $\range f:=\{f(x)\mid x \in X\}$ the {\em image} or {\em range} of a map $f:X\to Y$.
Recall that a {\em retraction} %
onto $C$ is a map $P:\R^n\to C$ that is a continuous, idempotent map from $\R^n$ to $\R^n$ with range $C$. 
Then, $C$ is a retract
of $\R^n$. The following result shows that any closed ambitropical
cone is a Shapley retract, i.e., the image of a retraction
that is a Shapley operator.

\begin{prop}\label{idem-shapley}
  Suppose that $C$ is a closed ambitropical cone of $\R^n$.
  Then, $\qm{C}$ is an idempotent Shapley operator,
  i.e., $(\qm{C})^2=\qm{C}$, and the range of $\qm{C}$ is
  $C$.   The same is true for $\qp{C}$.
\end{prop}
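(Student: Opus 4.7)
The plan is to verify in turn that $\qm{C}$ is well-defined on all of $\R^n$, that it satisfies the two axioms of \Cref{def-shapley}, and finally that it is idempotent with range $C$. The argument for $\qp{C}$ will then be entirely dual. No single step should be the hard part; the only thing to be careful about is to consistently distinguish $\supc$ from the pointwise supremum $\sup$ in $\R^n$, and to invoke at the right places the fact that a closed ambitropical cone is a conditionally complete lattice (\Cref{prop-closed}).

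First I would check that for every $x \in \R^n$ the set $S_x := \{y \in C \mid y \leq x\}$ admits a least upper bound in $C$. Since $C$ is an additive cone, picking any $y_0 \in C$ and any $\lambda \leq \mybot(x - y_0)$ gives $\lambda + y_0 \in S_x$, so $S_x$ is non-empty. It is clearly bounded above in $\R^n$ by $x$, hence bounded above in $C$ by \Cref{lem-arch}, and therefore $\supc S_x$ exists by \Cref{prop-closed}. This shows $\qm{C}: \R^n \to C$ is well-defined.

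Next I would verify properties~\myitem{i-mon} and~\myitem{i-hom} of \Cref{def-shapley}. Monotonicity is immediate: if $x \leq x'$ then $S_x \subset S_{x'}$, and $\supc$ is monotone with respect to inclusion of subsets of $C$, so $\qm{C}(x) \leq \qm{C}(x')$. For additive homogeneity, the map $y \mapsto \lambda + y$ is an order isomorphism of $C$ that maps $S_x$ bijectively onto $S_{\lambda+x}$, so it carries the least upper bound of $S_x$ in $C$ to the least upper bound of $S_{\lambda+x}$ in $C$, giving $\qm{C}(\lambda + x) = \lambda + \qm{C}(x)$. Thus $\qm{C}$ is a Shapley operator.

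Finally I would show that $\range \qm{C} = C$ and $(\qm{C})^2 = \qm{C}$ simultaneously, by proving that $\qm{C}$ fixes every element of $C$. Indeed, if $x \in C$, then $x \in S_x$ and $x$ is an upper bound of $S_x$ in $C$, so $\supc S_x = x$, that is, $\qm{C}(x) = x$. Since $\qm{C}(x) \in C$ for every $x \in \R^n$ by construction, $\range \qm{C} \subset C$, and the fixed point property just established gives the reverse inclusion and also $(\qm{C})^2(x) = \qm{C}(\qm{C}(x)) = \qm{C}(x)$. The argument for $\qp{C}$ is obtained from the present one by reversing the order (replacing $\supc$ by $\infc$ and $\leq$ by $\geq$), using that $C$ is also a conditionally complete lattice for the reverse order. \qed
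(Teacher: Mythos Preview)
Your proof is correct and follows essentially the same approach as the paper's: both invoke conditional completeness (\Cref{prop-closed}) for well-definedness, verify the Shapley axioms directly, and deduce idempotence and range from the fact that $\qm{C}$ fixes $C$. Your version is simply more explicit about the non-emptiness of $S_x$ and the appeal to \Cref{lem-arch}, which the paper leaves implicit.
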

\begin{proof}
  Since $C$ is conditionally complete, for all $x\in \R^n$,
  $\qm{C}(x)$ is well
  defined and $\qm{C}(x)\in C$. Moreover, $\qm{C}$ trivially
  fixes $C$, implying that $\range \qm{C}=C$ and $(\qm{C})^2=\qm{C}$.
  We also have, for all $x\in \R^n$ and $\lambda\in \R$,
  $   \qm{C}(\lambda + x)=
  \supc \{y \in C\mid  y\leq  \lambda + x\}
  =\supc \{y \in C\mid  -\lambda + y\leq  x\}
  =  \supc\{\lambda + z\in C\mid z\leq x\}
  = \lambda + \qm{C}(x)$. The operator $\qm{C}$
  is trivially order preserving, hence it is a Shapley operator.
    Dual arguments apply to $\qp{C}$.
  \end{proof}

\begin{thm}
Let $C$ be a closed ambitropical cone contained in $\R^n$. The set of Shapley retractions onto $C$ (i. e. idempotent Shapley operators with range $C$) constitutes a complete lattice, with bottom element $\qm{C}$ and top element $\qp{C}$.
\end{thm}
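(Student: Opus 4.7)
The plan is to first establish that $\qm{C}$ and $\qp{C}$ are pointwise extremes among all Shapley retractions onto $C$, and then to construct arbitrary joins and meets by applying the $C$-lattice operations $\supc$ and $\infc$ pointwise.

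For the extremal claim, let $P$ be any Shapley retraction onto $C$ and fix $x \in \R^n$. For any $y \in C$ with $y \leq x$, the monotonicity of $P$ combined with the identity $P(y)=y$ (since $P$ is idempotent with range $C$) gives $y \leq P(x)$; as $P(x) \in C$, this exhibits $P(x)$ as an upper bound in $C$ of $\{y \in C \mid y \leq x\}$, forcing $\qm{C}(x) \leq P(x)$ by definition of $\supc$. The dual argument yields $P(x) \leq \qp{C}(x)$. In view of \Cref{idem-shapley}, which places $\qm{C}$ and $\qp{C}$ inside the class of Shapley retractions, this identifies $\qm{C}$ as the pointwise bottom and $\qp{C}$ as the pointwise top.

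For a nonempty family $(P_i)_{i \in I}$ of Shapley retractions onto $C$, I would define
\[
P_\vee(x) := \supc_{i \in I} P_i(x), \qquad P_\wedge(x) := \infc_{i \in I} P_i(x).
\]
These are well defined because the values $P_i(x)$ lie in $C$ and, by the previous paragraph, are bounded above by $\qp{C}(x) \in C$ and below by $\qm{C}(x) \in C$; since $C$ is conditionally complete by \Cref{prop-closed}, the $\supc$ and $\infc$ exist in $C$. Verifying that $P_\vee$ is a Shapley retraction onto $C$ then proceeds by routine checks: if $x \leq y$, each $P_i(x) \leq P_i(y) \leq P_\vee(y) \in C$, making $P_\vee(y)$ an upper bound in $C$ of $\{P_i(x)\}_{i \in I}$ and hence $P_\vee(x) \leq P_\vee(y)$; additive homogeneity follows because translation by $\lambda \in \R$ is an order automorphism of $C$ that commutes with $\supc$; and since each $P_i$ fixes $C$ while $P_\vee(x) \in C$, the map $P_\vee$ is an idempotent with range $C$. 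The same arguments, dualized, apply to $P_\wedge$. The empty-family cases coincide with the extremes $\qm{C}$ and $\qp{C}$ just identified.

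It remains to check that $P_\vee$ is actually the join in the pointwise order on Shapley retractions. Since $\supc X \geq \sup X$ in $\R^n$, we have $P_\vee(x) \geq P_i(x)$ for every $i$, so $P_\vee \geq P_i$ pointwise. Conversely, if $Q$ is a Shapley retraction onto $C$ with $Q \geq P_i$ pointwise for all $i$, then $Q(x) \in C$ dominates each $P_i(x)$ in $\R^n$, hence is an upper bound in $C$ of $\{P_i(x)\}_{i \in I}$, and so $Q(x) \geq \supc_{i \in I} P_i(x) = P_\vee(x)$. The dual argument handles $P_\wedge$. The main subtlety, though mild, is that $\supc$ can strictly dominate $\sup$, so one must carefully distinguish the ambient order of $\R^n$ from the lattice operations inside $C$; the argument succeeds precisely because Shapley retractions take values in $C$, which lets dominance in $\R^n$ be upgraded to dominance in $C$.
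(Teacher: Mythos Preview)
Your proof is correct and follows essentially the same approach as the paper: establish that $\qm{C}\leq P\leq \qp{C}$ for any Shapley retraction $P$ by the same monotonicity argument, then build joins (and meets) pointwise via $\supc$ (and $\infc$) using conditional completeness from \Cref{prop-closed}. You supply more detail than the paper does---in particular you verify explicitly that $P_\vee$ is monotone, additively homogeneous, idempotent with range $C$, and is the least upper bound---whereas the paper declares these checks ``immediate'' and only writes out the sup side.
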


\begin{proof}
 First of all, we shall prove that for any Shapley retraction $P$ onto $C$ and for every $x \in \R^n$ $\qm{C}(x) \leq P(x) \leq \qm{C}(x)$ . 
 Let $y\in C$ such that $y\leq x$, we get $y=P(y) \leq P(x)\in C$,
  and so $\qm{C}(x) = \supc \{y\in C\mid y\leq x\} \leq P(x) $.  
 Similarly, $P(x) \leq \qp{C}(x)$.
 Let $(Q_{\alpha})_{\alpha \in A} : \R^n \to \R^n$ be a collection of Shapley retractions onto $C$.
 Since $\qm{C}(x) \leq Q_{\alpha}(x) \leq \qp{C}(x)$ for all $x \in \R^n$, the family 
 \[
 (Q_{\alpha}(x))_{\alpha \in A} \subset C
 \]
 is bounded from above and from below. 
 By \Cref{prop-closed} $C$ is conditionally complete, which allows us to define 
  \[
 Q(x) = \supc \{ Q_{\alpha}(x) \mid \alpha \in A\}.
 \]
 
 It is immediate that $Q$ is a Shapley operator that fixes $C$ and that $C$ is its range. So, $Q$ is the least upper bound of the family  $(Q_{\alpha})_{\alpha \in A}$ in the set of Shapley retractions.
\end{proof}

\Cref{idem-shapley} shows that closed ambitropical cone are
Shapley retracts of $\R^n$. The following result shows
that we have actually an equivalence.

\begin{thm}\label{th-main0}
  Let $E$ be a subset of $\R^n$. The following assertions are equivalent
  \begin{enumerate}    %
    \item\label{ambitropical} $E$ is a closed ambitropical cone;
    \item\label{shapleyretract}  $E$ is a Shapley retract of $\R^n$;
    \item\label{fpshapley} $E$ is the fixed point set of a Shapley operator; 
  \end{enumerate}
\end{thm}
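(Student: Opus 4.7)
The plan is to establish the cycle of implications $\eqref{ambitropical} \Rightarrow \eqref{shapleyretract} \Rightarrow \eqref{fpshapley} \Rightarrow \eqref{ambitropical}$.

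For $\eqref{ambitropical} \Rightarrow \eqref{shapleyretract}$, I would simply invoke \Cref{idem-shapley}: the map $\qm{E}$ is an idempotent Shapley operator with range $E$, so $E$ is a Shapley retract of $\R^n$. For $\eqref{shapleyretract} \Rightarrow \eqref{fpshapley}$, I would note that if $E = \range P$ for some idempotent Shapley operator $P$, then by idempotence $x \in E$ iff $P(x)=x$, so $E = \fixed P$.

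The main work is in $\eqref{fpshapley} \Rightarrow \eqref{ambitropical}$. Suppose $E = \fixed T$ for a Shapley operator $T : \R^n \to \R^n$. First, since $T$ is nonexpansive in the sup-norm (\Cref{prop-equiv}), it is continuous, hence $E$ is closed in the Euclidean topology. Second, if $x \in E$ and $\lambda \in \R$, then $T(\lambda + x) = \lambda + T(x) = \lambda + x$ by property~\eqref{i-hom} of \Cref{def-shapley}, so $\lambda + x \in E$; thus $E$ is a nonempty additive cone.

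The key step is showing $E$ is a lattice in the induced order. Given $x,y \in E$, set $z := x \vee y$ (the coordinatewise sup in $\R^n$). By monotonicity of $T$, we have $T(z) \geq T(x) \vee T(y) = x \vee y = z$. Since $E$ is a nonempty additive cone, \Cref{lem-arch} applies and provides an upper bound $u \in E$ of $\{x,y\}$ in $E$ (using that $\{x,y\}$ is trivially bounded above in $\R^n$). The iterates $z_k := T^k(z)$ form a nondecreasing sequence, and a straightforward induction using monotonicity of $T$ together with $T(u)=u$ shows that $z_k \leq u$ for all $k$. Hence $z_k$ converges in $\R^n$ to some $z^\star \leq u$, and by continuity $T(z^\star) = z^\star$, so $z^\star \in E$. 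Any other upper bound $u' \in E$ of $\{x,y\}$ satisfies $z_k \leq u'$ by the same induction, so $z^\star \leq u'$ in the limit. Therefore $z^\star$ is the least upper bound of $x,y$ in $E$, i.e.\ $x \vee^E y$. The existence of $x \wedge^E y$ is obtained by the dual argument (starting from $x \wedge y \geq T(x \wedge y)$ and iterating downward, using the dual form of \Cref{lem-arch} for lower bounds). This shows $E$ is an ambitropical cone, and combined with closedness, it is closed ambitropical.

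The only subtle point is ensuring that the iterates $z_k$ do not escape to $+\infty$; this is exactly what \Cref{lem-arch} guarantees, since it forces the existence of an upper bound inside $E$, serving as a cap for the monotone iteration. The same mechanism lies at the heart of the dual construction of meets.
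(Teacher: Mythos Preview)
Your proof is correct and follows essentially the same route as the paper: the cycle $\eqref{ambitropical} \Rightarrow \eqref{shapleyretract} \Rightarrow \eqref{fpshapley} \Rightarrow \eqref{ambitropical}$, with the lattice structure in the last step obtained as the limit of the monotone iterates $T^k(x \vee y)$, your version being somewhat more explicit about why this limit exists. One minor technicality: \Cref{lem-arch} is stated for ambitropical cones, whereas at the point you invoke it $E$ is only known to be a (nonempty) additive cone---but the proof of \Cref{lem-arch} uses only the additive cone property, so your argument goes through unchanged.
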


\begin{proof}
  \myitem{ambitropical} $\Rightarrow$ \myitem{shapleyretract}.
  If $E$ is a closed ambitropical cone, then, by~\Cref{idem-shapley},
  $E$ is the image of $\qm{E}$ and $\qm{E}$ is an idempotent Shapley operator.

  \myitem{shapleyretract} $\Rightarrow$ \myitem{fpshapley} is trivial.

  \myitem{fpshapley} $\Rightarrow$ \myitem{ambitropical}. Since $E$ is the fixed point set of a Shapley operator $T$, $E$ is a closed cone. We shall now show that it is also a lattice in the induced order.
Let $x, y \in E$ and we claim that $x \vee_E y = \lim_{k \to \infty} T^k (x \vee_{\R^n} y)$. We have that $x, y \leq x \vee_{\R^n}y$, so $x=T(x) \leq T(x \vee_{\R^n}y)$ and $y = T(y) \leq T(x \vee_{\R^n} y)$. 
Applying again $T$ to these inequalities and passing to the limit, we obtain that $\lim_{k \to \infty} T^k (x \vee_{\R^n} y)$ is an upper bound of $x$ and $y$. 
Let $z \in E$ such that $x, y \leq z$. Then $x \vee_{\R^n} y \leq z$ and $T(x \vee_{\R^n} y) \leq T(z)=z$; applying again $T$ to this inequality and passing to the limit, we obtain that $\lim_{k \to \infty} T^k (x \vee_{\R^n} y) \leq z$. 
We have now to prove that $\lim_{k \to \infty} T^k (x \vee_{\R^n} y) \in E$. $T(\lim_{k \to \infty}T^k (x \vee_{\R^n} y))=\lim_{k \to \infty} T^k (x \vee_{\R^n}y)$ by definition. So, $\lim_{k \to \infty} T^k (x \vee_{\R^n} y)$ is a fixed point of $T$ and it belongs to $E$.
The case of $\inf$ is dual.

\end{proof}

\section{From tropical hulls to ambitropical hulls}
\subsection{Decomposition of canonical retractions in terms of projections on tropical cones}
Appropriate tropical analogues of Hilbert's spaces are obtained by considering spaces that are closed
by taking suprema~\cite{litvinov,cgq02}.
Considering suprema of bounded sets
leads to the notion of {\em b-complete idempotent spaces}
in~\cite{litvinov}, whereas allowing unconditional
suprema leads to the notion
of {\em complete semimodules}~\cite{cgq02}.

Hence, we shall perform a (one sided) conditional
completion.
If $E$ is a nonempty subset of $\R^n$, we shall denote by $E^{\max}$ the
subset of $\R^n$ consisting of
tropical linear combinations of {\em possibly infinite} families
of elements of $E$,
i.e., the set of elements of the form
\begin{align}
\sup \{\lambda_f + f\mid f\in E\}
\label{e-complete-trop}
\end{align}
where the $\lambda_f\in\tmax$ are such that the family of elements
$(\lambda_f+f)_{f\in E}$ is bounded from above
and the $\lambda_f$ are not identically $-\infty$.
Up to the adjunction of a bottom element, the set $E^{\max}$ is the
b-complete idempotent
space generated by $E$ in the sense of~\cite{litvinov}.

We shall also need to consider the $\tmax$-semimodule obtained
by taking the {\em lower closure} of $E^{\max}$, a notion
already introduced in~\Cref{def-lowerclosure}:
\[
\bar{E}^{\max} := \lowerclosure{E^{\max}} \enspace .
\]
Similarly, we shall denote by $E^{\min}$ the
set of elements of the form
$\inf \{\lambda_f + f\mid f\in E\}$
where the $\lambda_f\in\tmin$ are such that the family of elements
$(\lambda_f+f)_{f\in E}$ is bounded from below,
and the $\lambda_f$ are not identically $+\infty$.
We also set $\bar{E}^{\min} := \upperclosure{E^{\min}}$.
\begin{prop}\label{prop-severalclosure}
  Let $C\subset \R^n$ be an additive cone. Then, the following statements
  are equivalent:
  \begin{enumerate}
  \item\label{ip-closed} $C$ is closed;
  \item\label{ip-nondec} $C$ is stable by limits of bounded nondecreasing sequences;
  \item\label{ip-noninc} $C$ is stable by limits of bounded nonincreasing sequences.
    \end{enumerate}
\end{prop}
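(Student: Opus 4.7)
The plan is to prove \eqref{ip-closed}~$\Rightarrow$~\eqref{ip-nondec} and \eqref{ip-closed}~$\Rightarrow$~\eqref{ip-noninc}, which are immediate, and then the substantive implication \eqref{ip-noninc}~$\Rightarrow$~\eqref{ip-closed}; the remaining arrow \eqref{ip-nondec}~$\Rightarrow$~\eqref{ip-closed} follows by an entirely dual argument. For the easy direction, a bounded monotone sequence in $\R^n$ converges coordinatewise by monotone convergence, and closedness of $C$ places the limit in $C$; no use of the additive cone structure is needed.

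For the hard direction \eqref{ip-noninc}~$\Rightarrow$~\eqref{ip-closed}, I would follow the pattern already used in the proof of \Cref{prop-related}. Given $(x_k)\subset C$ with $x_k\to x\in \R^n$, set $\epsilon_k:=\|x_k-x\|_\infty\to 0$ and $y_k:=2\epsilon_k+x_k$. The additive cone property \myitem{it-def-t2} gives $y_k\in C$; one still has $y_k\to x$ since $\|y_k-x\|_\infty\le 3\epsilon_k$, and the doubling gives the two-sided estimate
\[
\epsilon_k+x \;\le\; y_k \;\le\; 3\epsilon_k+x \enspace .
\]
From $(y_k)$ I would then extract a nonincreasing subsequence greedily: given $y_{n_k}$, choose any $l>n_k$ with $\epsilon_l\le \epsilon_{n_k}/3$; the estimate above yields $y_l\le 3\epsilon_l+x\le \epsilon_{n_k}+x\le y_{n_k}$, so setting $n_{k+1}:=l$ preserves monotonicity. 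The subsequence $(y_{n_k})$ is nonincreasing, bounded from below by $x$, and converges to $x$, so hypothesis~\eqref{ip-noninc} forces $x\in C$. The dual argument, with $y_k:=-2\epsilon_k+x_k$, proves \eqref{ip-nondec}~$\Rightarrow$~\eqref{ip-closed}.

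The only real subtlety is the scaling factor strictly greater than $1$ (here, $2$) in the definition of $y_k$. This creates the quantitative lower margin $y_k\ge \epsilon_k+x$ that reduces the extraction of a monotone subsequence to a trivial threshold condition on the $\epsilon_l$. With the naive choice $y_k:=\epsilon_k+x_k$ one would only obtain $y_k\ge x$ without any positive gap, and the extraction would break down.
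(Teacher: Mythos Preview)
Your proof is correct and takes essentially the same approach as the paper. The only cosmetic difference is the logical scheme: the paper proves the cycle \eqref{ip-closed}$\Rightarrow$\eqref{ip-nondec}$\Rightarrow$\eqref{ip-noninc}$\Rightarrow$\eqref{ip-closed}, whereas you prove \eqref{ip-closed}$\Leftrightarrow$\eqref{ip-noninc} and \eqref{ip-closed}$\Leftrightarrow$\eqref{ip-nondec} directly; the substantive step \eqref{ip-noninc}$\Rightarrow$\eqref{ip-closed} uses the identical shift $y_k=2\|x_k-x\|_\infty+x_k$ and the same monotone-subsequence extraction, and your quantitative threshold $\epsilon_l\le\epsilon_{n_k}/3$ makes the extraction more explicit than the paper's ``arguing as in the proof of the previous implication''.
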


\begin{proof}
  The implication  (\ref{ip-closed})$\Rightarrow$(\ref{ip-nondec}) is trivial.

  We next show that (\ref{ip-nondec})$\Rightarrow$(\ref{ip-noninc}).
  Let $x_k$ be a bounded nonincreasing sequence of elements of $C$
  converging to $x\in \R^n$.
  Consider the sequence $y_k:= x_k -2\|x-x_k\|_\infty e\in C$.
  We have $y_k \leq - \|x-x_k\|_\infty e +x$. Moreover, $y_k$
  also converges to $x$. It follows that for all $k$, %
  we can
  find an index $l\geq k$ such that $y_l \geq y_k$. Hence, we
  can construct a nondecreasing subsequence $y_{n_k}$ converging to $x$.
Applying (\ref{ip-nondec}), we conclude that $x\in C$.

  We finally show that (\ref{ip-noninc})$\Rightarrow$(\ref{ip-closed}).
  Suppose $x_k$ is a sequence of elements of $C$
  converging to $x\in \R^n$. Consider now $y_k:= 2\|x-x_k\|_\infty +x_k $.
  Then, arguing as in the proof of the previous implication,
  we deduce that we can construct a nonincreasing
  subsequence $y_{n_k}$ still converging to $x$. Applying~(\ref{ip-noninc}),
  we conclude that $x\in C$.
  \end{proof}

\begin{cor}\label{lem-closed}
  Let $E$ denote a non-empty subset of $\R^n$. Then,
  $E^{\max}$ is a closed tropical cone. Similarly,
 $E^{\min}$ is a closed dual tropical cone.
\end{cor}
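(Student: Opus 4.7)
The plan is to verify the three defining properties of a closed tropical cone for $E^{\max}$: invariance under translation by real constants, stability under binary (hence finite) suprema, and closedness in the Euclidean topology. The dual statement for $E^{\min}$ will follow by symmetry, replacing $E$ by $-E$.

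The first two properties are almost immediate from the definition. If $x=\sup\{\lambda_f+f\mid f\in E\}$ and $\mu\in\R$, then $\mu+x=\sup\{(\mu+\lambda_f)+f\mid f\in E\}$, and the shifted family is still bounded above and non-trivially $\tmax$-valued. If additionally $y=\sup\{\nu_f+f\mid f\in E\}\in E^{\max}$, then $x\vee y=\sup\{(\lambda_f\vee\nu_f)+f\mid f\in E\}$, and the new family is bounded above by $x\vee y\in\R^n$. Hence $E^{\max}$ is a tropical cone. Since the choice $\lambda_e=0$ and $\lambda_f=-\infty$ for $f\neq e$ shows $E\subset E^{\max}$, the set is also nonempty.

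The main step is closedness. By \Cref{prop-severalclosure} it suffices to show that $E^{\max}$ is stable under limits of bounded nondecreasing sequences, which is the natural regime given the sup-representation. Let $(x_k)$ be such a sequence in $E^{\max}$ converging to some $x\in\R^n$, and fix representations $x_k=\sup\{\lambda_f^{(k)}+f\mid f\in E\}$. The key idea is to pass to a \emph{compatibilized} sequence of representations by setting $\tilde\lambda_f^{(k)}:=\max_{j\leq k}\lambda_f^{(j)}$; monotonicity of $(x_k)$ then gives $\sup_f\{\tilde\lambda_f^{(k)}+f\}=\sup_{j\leq k}x_j=x_k$. Define
\[
\mu_f:=\sup_{k\geq 1}\tilde\lambda_f^{(k)}\in\tmax\cup\{+\infty\}.
\]
Each $\mu_f$ is actually finite: for every $k$ we have $\tilde\lambda_f^{(k)}+f\leq x_k\leq x$, so $\mu_f\leq\min_{i\in[n]}(x_i-f_i)<+\infty$. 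Taking the supremum over $k$ and $f$ yields $\sup\{\mu_f+f\}\leq x$; conversely, $\tilde\lambda_f^{(k)}\leq\mu_f$ gives $x_k\leq\sup\{\mu_f+f\}$, and passing to the limit in $k$ yields the reverse inequality. Thus $x=\sup\{\mu_f+f\}$. Non-triviality of $(\mu_f)_f$ is inherited from non-triviality of $(\lambda_f^{(1)})_f$, so $x\in E^{\max}$.

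The main subtlety is precisely this representation-compatibilization step: a priori the representations of the $x_k$ are completely unrelated, and one must engineer monotonicity in the coefficient family in order for $\sup_k\tilde\lambda_f^{(k)}$ to be meaningful. Once this is in place, the statement that $E^{\min}$ is a closed dual tropical cone follows by applying what has been proved to $-E$ and using the order-reversing isomorphism $\tmax\to\tmin$, $t\mapsto -t$, which exchanges the roles of $\sup$ and $\inf$ and of $\lowerclosure{\,\cdot\,}$ and $\upperclosure{\,\cdot\,}$.
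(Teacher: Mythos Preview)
Your proof is correct and follows essentially the same route as the paper's: reduce closedness to stability under bounded nondecreasing limits via \Cref{prop-severalclosure}, then represent the limit as $\sup_f\{\mu_f+f\}$ with $\mu_f=\sup_k\lambda_f^{(k)}$.

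One remark: the ``compatibilization'' step you flag as the main subtlety is actually unnecessary. Observe that $\sup_{k}\tilde\lambda_f^{(k)}=\sup_{k}\max_{j\leq k}\lambda_f^{(j)}=\sup_{k}\lambda_f^{(k)}$, so your $\mu_f$ coincides with the one you would get without compatibilizing. The paper's proof exploits this directly: from $x=\sup_k x_k=\sup_k\sup_f(\lambda_f^{(k)}+f)$, commutativity of iterated suprema gives $x=\sup_f(\sup_k\lambda_f^{(k)}+f)=\sup_f(\mu_f+f)$ in one line. The representations of the $x_k$ being ``unrelated'' is not an obstacle, because the supremum over $k$ and over $f$ can simply be exchanged.
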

\begin{proof}
  By definition, $E^{\max}$ is a tropical cone.
  Let us consider a bounded nondecreasing sequence $x_k\in E^{\max}$.
  We can write $x_k = \sup\{ \lambda_f^k + f \mid f\in E\}$
  where for each $k$, the family $(\lambda_f^k)_{f\in F}$ is not identically
  $-\infty$.  Let $x:=\lim_k x_k=\sup_k x_k \in \R^n$.
  From $\lambda_f^k + f\leq x_k \leq x$, we deduce that  $\lambda_f^k \leq \mybot(x-f)$. So, the sequence $(\lambda_f)_{k\geq 1}$ is bounded from above. It follows
  that $\lambda_f:= \sup\{\lambda_f^k\mid k \geq 1\}<+\infty$. Moreover,
  using the associativity of the supremum operation, we get $x= \sup x_k =\sup \{\lambda_f + f\mid f\in E\}\in E^{\max}$. Hence, $E^{\max}$ is stable by limits
  of nondecreasing sequences. It follows from \Cref{prop-severalclosure}
  that $E^{\max}$ is closed in the Euclidean topology.
  A dual argument applies to $E^{\min}$.
  \end{proof}
  
  If $C$ is a closed tropical cone, then $C$
is closed by tropical linear combinations,
and it is also closed by taking the supremum
of nondecreasing sequences, it follows
that the supremum $\supc$ relative to $C$
coincides with the supremum %
of $\R^n$. We deduce the following result.

\begin{lem}\label{lem-compar}
  If $C$ is a closed
  tropical cone, then $\qm{C} \leq I$. Similarly,
  if $C$ is a
  closed
  dual tropical cone, then $\qp{C} \geq I$.\hfill\qed
\end{lem}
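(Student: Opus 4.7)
The definition $\qm{C}(x)=\supc\{y\in C\mid y\leq x\}$ means the target inequality $\qm{C}(x)\leq x$ reduces to showing that the $C$-supremum of the set $S_x:=\{y\in C\mid y\leq x\}$ is bounded above by $x$ in $\R^n$. The plan is to exploit the parenthetical remark made just before the statement: for a closed tropical cone $C$, the supremum $\supc$ on bounded subsets of $C$ coincides with the usual supremum in $\R^n$. Once this identification is established, the conclusion is immediate since every $y\in S_x$ satisfies $y\leq x$, so $\sup S_x\leq x$ in $\R^n$.

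Concretely, I would first observe that $S_x$ is nonempty: pick any $z\in C$ and set $y:=z+\mybot(x-z)$; this belongs to $C$ by additive invariance and satisfies $y\leq x$. Then I would justify the identification $\supc S_x=\sup S_x$. Using that $C$ is stable under finite tropical sums (i.e.\ coordinatewise $\vee$) and that $S_x$ is bounded above by $x$ in $\R^n$, the net of finite suprema of elements of $S_x$ is a nondecreasing net in $S_x$, bounded above by $x$; by extracting a nondecreasing sequence along a countable subset of coordinates realizing the coordinatewise supremum and invoking \Cref{prop-severalclosure}\eqref{ip-nondec}, the $\R^n$-supremum $s:=\sup S_x$ lies in $C$. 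Since $s\leq x$ and $s$ is an upper bound of $S_x$ in $C$, it equals $\supc S_x$, so $\qm{C}(x)=s\leq x$.

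For the dual statement, I would run the symmetric argument: if $C$ is a closed dual tropical cone, the set $\{y\in C\mid y\geq x\}$ is nonempty and bounded below, stable under finite $\wedge$, and its $\R^n$-infimum lies in $C$ by the dual of \Cref{prop-severalclosure}. This infimum coincides with $\infc$ and is $\geq x$, yielding $\qp{C}(x)\geq x$. Alternatively one can simply apply the flip $x\mapsto -x$, which exchanges tropical and dual tropical cones and exchanges $\qm{}$ with $\qp{}$.

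There is no real obstacle here beyond making the identification $\supc=\sup$ rigorous; the paragraph preceding the lemma already points to this fact, so the proof will essentially be a one-line invocation of it, together with the trivial observation that the $\R^n$-supremum of a family of vectors bounded above by $x$ is bounded above by $x$.
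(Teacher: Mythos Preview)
Your proposal is correct and follows essentially the same approach as the paper: the paragraph immediately preceding the lemma explains that for a closed tropical cone the induced supremum $\supc$ coincides with the ordinary supremum of $\R^n$, from which $\qm{C}(x)=\sup\{y\in C\mid y\leq x\}\leq x$ is immediate (and dually for $\qp{C}$). The paper accordingly gives no separate proof, marking the lemma with a \qed; your write-up simply spells out the details behind that one-line justification.
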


\begin{cor}
  Let $E$ denote a non-empty subset of $\R^n$. 
  Then, $\bar{E}^{\max}$ is a closed subsemimodule of $\tmaxn{n}$.
  Similarly, $\bar{E}^{\min}$ is a closed subsemimodule of $\tminn{n}$
\end{cor}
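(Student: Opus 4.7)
The plan is to derive this as an almost immediate consequence of the previous \Cref{lem-closed} together with \Cref{prop-related}, which sets up the bijective correspondence between closed tropical cones in $\R^n$ and closed subsemimodules of $\tmaxn{n}$ that meet $\R^n$.

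First I would check that $E^{\max}$ is a non-empty closed tropical cone in $\R^n$. Non-emptiness is immediate: any element $f\in E$ (which exists since $E$ is non-empty) gives $f = 0 + f \in E^{\max}$. That $E^{\max}$ is a tropical cone follows from its defining formula~\eqref{e-complete-trop}, since a finite supremum of possibly-infinite suprema of shifted elements of $E$ can be rewritten as a single supremum of that form, and constants can be absorbed into the coefficients $\lambda_f$. Closedness in the Euclidean topology is exactly the content of \Cref{lem-closed}.

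Next I would apply \Cref{prop-related} to the closed tropical cone $C := E^{\max} \subset \R^n$. That proposition says the map $C \mapsto \lowerclosure{C}$ is a bijection between the non-empty closed tropical cones in $\R^n$ and the closed subsemimodules of $\tmaxn{n}$ whose intersection with $\R^n$ is non-empty. In particular, $\lowerclosure{E^{\max}}$ is a closed subsemimodule of $\tmaxn{n}$. By the definition $\bar{E}^{\max} := \lowerclosure{E^{\max}}$, this is precisely the desired conclusion.

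For the dual statement concerning $\bar{E}^{\min} = \upperclosure{E^{\min}}$, I would invoke the dual form of \Cref{prop-related} (noted in the paragraph just after its statement, exchanging $\lowerclosure{\cdot}$ and $\tmaxn{n}$ with $\upperclosure{\cdot}$ and $\tminn{n}$) applied to the non-empty closed dual tropical cone $E^{\min}$ provided by \Cref{lem-closed}. There is no real obstacle here: everything is a matter of unwinding definitions and citing the structural results already in place. The only point worth a line of verification is that $E^{\max}\cap \R^n \neq \emptyset$ (automatic, since $E\subset E^{\max}\cap\R^n$), so that \Cref{prop-related} applies cleanly.
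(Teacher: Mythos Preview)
Your proposal is correct and follows essentially the same approach as the paper: invoke \Cref{lem-closed} to see that $E^{\max}$ is a closed tropical cone, then apply \Cref{prop-related} to conclude that $\bar{E}^{\max}=\lowerclosure{E^{\max}}$ is a closed subsemimodule of $\tmaxn{n}$, with the dual argument for $\bar{E}^{\min}$. The paper's proof is slightly terser but identical in substance.
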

\begin{proof}
 By definition, $E^{\max}$ is a tropical cone, and 
 by \Cref{lem-closed}, it is a closed subset of $\R^n$.
 Then, it follows from \Cref{prop-related} that
 $\bar{E}^{\max}=\lowerclosure E^{\max}$ is a closed
 subsemimodule of $\tmaxn{n}$.
 \end{proof}
For all nonempty subsets $E$ of $\R^n$, and for all $x\in \R^n$, we define the tropical projections
\[\pmax{E}(x):= \sup\{ y \in E^{\max} \mid   y\leq x\}\enspace,
\qquad
\pmin{E}(x) := \inf\{y \in E^{\min}\mid  y \geq x\}
\enspace .
\]
This is a specialization
of the notion of projectors $\qm{C}$
and $\qp{C}$ to $C=E^{\max}$ or $C=E^{\min}$,
introduced in~\eqref{e-def-proj}.
Indeed, if $C=E^{\max}$, the operation $\supc$ coincides
with the ordinary supremum $\sup$ of $\R^n$. The dual
property holds for $C=E^{\min}$. If $G$ is any tropical generating
set of $E^{\max}$, then, we have the explicit representation%
\begin{align}\label{explicit-pmax}
\pmax{E}(x) = \sup_{g\in G} \Big(\mathbf{b}(x-g) + g\Big)  \enspace , \qquad \forall x\in \R^n \enspace,
\end{align}
see~\cite[Th.~5]{cgq02}, and a dual formula applies to $\pmin{E}$.
The next proposition tabulates elementary properties of these projectors.
\begin{prop}\label{prop-systematic}
  Let $E$ be a nonempty subset of $\R^n$. Then, $\pmax{E}$ and $\pmin{E}$ are Shapley operators from $\R^n \to \R^n$ such that:
  \begin{align}
    \pmax{E} &\leq I, \qquad \pmin{E}
    \geq I  \enspace.
    \label{e-comparepmaxmin}
  \\
    \range \pmax{E} &= E^{\max}, \qquad \range\pmin{E}=E^{\min}\enspace .
    \label{range}
    \\
        \pmax{E}&=(\pmax{E})^2, \qquad
    \pmin{E}=(\pmin{E})^2 \enspace .\label{e-idempotent}
   \end{align}
\end{prop}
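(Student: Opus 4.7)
The plan is to reduce the proposition to \Cref{idem-shapley} and \Cref{lem-compar} via the identifications $\pmax{E}=\qm{E^{\max}}$ and $\pmin{E}=\qp{E^{\min}}$. First I would verify that $E^{\max}$ is a closed ambitropical cone. By \Cref{lem-closed}, $E^{\max}$ is a closed tropical cone, hence in particular an additive cone stable under coordinatewise joins $x\vee y$. It is a lattice in the induced order: the least upper bound of $x,y\in E^{\max}$ is simply $x\vee y\in E^{\max}$, while the greatest lower bound is $\sup\{z\in E^{\max}\mid z\leq x,\; z\leq y\}$, which lies in $E^{\max}$ because this family is nonempty (any element of $E^{\max}$ translated downward by a sufficiently large constant lies below $x\wedge y$) and bounded above, and closed tropical cones are stable under bounded suprema (by \Cref{prop-severalclosure}\,\myitem{ip-nondec} combined with stability under $\vee$ of finite families).

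Second, I would observe that under this ambitropical structure on $C=E^{\max}$, the lattice operation $\supc$ agrees with the ordinary supremum in $\R^n$: for any bounded subset of $E^{\max}$, its $\R^n$-supremum already lies in $E^{\max}$ and is therefore its least upper bound in $E^{\max}$. Consequently, the defining formula $\pmax{E}(x)=\sup\{y\in E^{\max}\mid y\leq x\}$ coincides literally with $\qm{E^{\max}}(x)$ in the sense of~\eqref{e-def-proj}.

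Third, \Cref{idem-shapley} applied to the closed ambitropical cone $C=E^{\max}$ then yields in one stroke that $\pmax{E}$ is a Shapley operator, that $\pmax{E}=(\pmax{E})^2$, and that $\range \pmax{E}=E^{\max}$, thereby proving the max parts of~\eqref{range} and~\eqref{e-idempotent}. The inequality $\pmax{E}\leq I$ in \eqref{e-comparepmaxmin} is then exactly \Cref{lem-compar}.

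The dual statements follow symmetrically: $E^{\min}$ is a closed dual tropical cone (dual of \Cref{lem-closed}), hence a closed ambitropical cone by the dual argument, $\pmin{E}=\qp{E^{\min}}$, and \Cref{idem-shapley} together with \Cref{lem-compar} (in its dual form) yields all the $\pmin{E}$ statements. No substantive obstacle is expected; the only point requiring care is step two, the identification of $\sup$ with $\supc$, which is precisely the feature that makes the definition in~\eqref{explicit-pmax} unambiguous.
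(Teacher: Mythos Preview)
Your proposal is correct and follows essentially the same approach as the paper. The paper already records, in the paragraph preceding the proposition, the identification $\pmax{E}=\qm{E^{\max}}$ (and its dual) that you carefully justify; its proof then invokes \Cref{lem-compar} for~\eqref{e-comparepmaxmin} exactly as you do, and argues idempotency and the range identity directly from the facts that $\pmax{E}$ fixes $E^{\max}$ and that $\pmax{E}(\R^n)\subset E^{\max}$, rather than routing through \Cref{idem-shapley} as you propose---but this is the same content, just unpacked rather than cited.
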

\begin{proof}
  The inequalities~\eqref{e-comparepmaxmin} follow from~\Cref{lem-compar}.
  By definition, $\pmax{E}$ fixes $E^{\max}$, and $\pmax{E}(\R^n)\subset E^{\max}$, so $\pmax{E}=(\pmax{E})^2$. The same property holds for $P^{\min}$,
  showing~\eqref{e-idempotent}. The last claim follows from
  the fact that $\pmax{E}$ fixes $E^{\max}\supset E$, and from the same property
  for $\pmin{E}$.
\end{proof}
The maps $\barqm{E}$ and $\barqp{E}$ defined in the next proposition will
play a key role.
\begin{prop}
\label{prop-systematic2}  Let $E$ be a nonempty subset of $\R^n$. 
 Then, the maps
  \[ \barqm{E}:= \pmin{E}\circ \pmax{E}, \quad\text{and}
  \quad
  \barqp{E}:= \pmax{E}\circ \pmin{E}
  \]
satisfy the following properties
  \begin{enumerate}[label=(\roman*)]%
\item \label{shapley} $\barqm{E}$; $\barqp{E}$ are Shapley operators;
\item \label{fix} $\barqm{E}$ and $\barqp{E}$ fix $E$;
\item\label{idemqm}
  \label{idemqp} $(\barqm{E})^2=
    \barqm{E}$; 
    $(\barqp{E})^2=
    \barqp{E}$;
  \item\label{regular}\label{regular2}
    $\barqp{E}\circ \barqm{E} \circ \barqp{E} = \barqp{E}$;
  $\barqm{E}\circ \barqp{E} \circ \barqm{E} = \barqm{E}$;
  \item \label{item-ineq} \label{item-ineq-dual}
    $\barqp{E} \leq \barqm{E}\circ \barqp{E}$;
$\barqm{E} \geq \barqp{E}\circ \barqm{E}$.
    \end{enumerate}
\end{prop}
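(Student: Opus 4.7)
The plan is to derive all five statements from the elementary structural properties of $P := \pmax{E}$ and $Q := \pmin{E}$ established in \Cref{prop-systematic}, namely $P \leq I \leq Q$, $P^2 = P$, $Q^2 = Q$, and the fact that both $P$ and $Q$ fix $E$ (with $P$ fixing $E^{\max}$ and $Q$ fixing $E^{\min}$). In this notation, $\barqm{E} = QP$ and $\barqp{E} = PQ$. Items \ref{shapley} and \ref{fix} are then immediate: a composition of Shapley operators is a Shapley operator (order-preservation and commutation with addition of a constant are both stable under composition), and if $x \in E$ then $Px = x$ so $QPx = Qx = x$, and similarly for $PQ$.

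The crux of the proof is the idempotency in \ref{idemqm}. I would argue as follows. Fix $x \in \R^n$, set $y := P(x) \in E^{\max}$ and $z := Q(y) = QP(x) \in E^{\min}$. Then $QP(QP(x)) = Q(P(z))$, and I claim $Q(P(z)) = z$. From $Q \geq I$ we have $z = Q(y) \geq y$; applying $P$, which is monotone and fixes $E^{\max}$, gives $P(z) \geq P(y) = y$, and from $P \leq I$ we get $P(z) \leq z$. Applying $Q$ to the two-sided estimate $y \leq P(z) \leq z$, and using that $Q$ is monotone and fixes $E^{\min}$ (so $Q(z) = z$ and $Q(y) = z$), we conclude $z \leq Q(P(z)) \leq z$, hence $Q(P(z)) = z = QP(x)$. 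This shows $(QP)^2 = QP$; the statement $(PQ)^2 = PQ$ is proved by a strictly dual argument, exchanging the roles of $P$ and $Q$ and reversing the inequalities.

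Items \ref{regular} and \ref{item-ineq} then reduce to purely algebraic manipulations. For \ref{regular}, using $P^2 = P$, $Q^2 = Q$ and \ref{idemqm},
\[
\barqp{E} \circ \barqm{E} \circ \barqp{E} \;=\; (PQ)(QP)(PQ) \;=\; P Q^2 P^2 Q \;=\; PQPQ \;=\; (PQ)^2 \;=\; PQ \;=\; \barqp{E},
\]
and symmetrically $\barqm{E} \circ \barqp{E} \circ \barqm{E} = \barqm{E}$. For \ref{item-ineq}, compute $\barqm{E} \circ \barqp{E} = QPPQ = QPQ$; since $Q \geq I$, applying $Q$ on the left of $PQ$ yields $QPQ \geq PQ$, i.e.\ $\barqp{E} \leq \barqm{E} \circ \barqp{E}$. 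Dually, $\barqp{E} \circ \barqm{E} = PQQP = PQP \leq QP = \barqm{E}$, since $P \leq I$.

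The main obstacle is the idempotency step: one must avoid the temptation to use that $P$ or $Q$ are tropically linear (they are not, in general, when restricted to the other cone), and instead carefully exploit only the order relations $P \leq I \leq Q$ together with the invariance of $E^{\max}$ under $P$ and of $E^{\min}$ under $Q$. Once this is handled, the remaining assertions are almost formal consequences of the semigroup identities $P^2 = P$ and $Q^2 = Q$.
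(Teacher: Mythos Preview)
Your proof is correct and follows essentially the same approach as the paper's. The only cosmetic difference is in the idempotency step \ref{idemqm}: the paper argues directly at the operator level, using $Q\geq I$ and monotonicity to get $QPQP \geq QPP = QP$, and $P\leq I$ to get $QPQP \leq QQP = QP$, whereas you present the same two-sided estimate as a pointwise sandwich argument tracking membership of $y\in E^{\max}$ and $z\in E^{\min}$; the content is identical.
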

\begin{proof}

{\em \ref{shapley}}. We showed in \Cref{prop-systematic} that
  $\pmin{E}$ and $\pmax{E}$ are both Shapley operators. The collection of
  Shapley operators is stable by composition.

  {\em \ref{fix}}. This follows from the fact
  that $\pmin{E}$ and $\pmax{E}$ fix $E^{\max}$ and $E^{\min}$,
  respectively (\Cref{prop-systematic}), which both contain $E$.

  {\em\ref{idemqm}}.  Using the second inequality in~\eqref{e-comparepmaxmin},
  and the first equality in~\eqref{e-idempotent},
  we get $(\barqm{E})^2=\pmin{E}\circ \pmax{E}
  \circ \pmin{E}\circ \pmax{E}\geq \pmin{E}\circ \pmax{E}\circ \pmax{E}
  = \pmin{E}\circ \pmax{E}$.
  Using now the first inequality in~\eqref{e-comparepmaxmin}, and the second equality in~\eqref{e-idempotent}, we get
  $\pmin{E}\circ \pmax{E}
  \circ \pmin{E}\circ \pmax{E}\leq \pmin{E}\circ \pmin{E}\circ \pmax{E}
  = \pmin{E}\circ \pmax{E}$, showing that $(\barqm{E})^2=\barqm{E}$.
The second property in~{\em\ref{idemqm}} is dual.
  
    {\em\ref{regular}}.
    Using~\eqref{e-idempotent}, we get
    $\barqp{E}\circ \barqm{E} \circ \barqp{E} =
    \pmax{E}\circ \pmin{E}\circ \pmin{E}\circ \pmax{E}\circ \pmax{E}
    \circ \pmin{E}
    =
    \pmax{E}\circ \pmin{E}\circ \pmax{E}\circ \pmin{E}=
    (\barqp{E})^2= \barqp{E}$,
    by  {\em\ref{idemqp}}. The second property is dual.

  {\em\ref{item-ineq}}.
  We have that $\barqm{E}\circ \barqp{E}=\pmin{E} \circ \pmax{E} \circ \pmax{E} \circ \pmin{E} = \pmin{E} \circ \pmax{E} \circ \pmin{E} \geq \pmax{E} \circ \pmin{E} (z) = \barqp{E}$,
  using $\pmin{E} \geq I$. The second inequality is dual.
  \end{proof}

The next theorem motivates the introduction
of the operators $\barqm{E}$ and $\barqp{E}$
above, it deals with the situation in which $E$
is a closed ambitropical cone.

\begin{thm}[Factorizations of canonical retractions on closed ambitropical cones]
  \label{prop-caracq}
For all closed ambitropical cones $C$,
we have 
\[ \qm{C}  = \barqm{C}
= \pmin{C}\circ \pmax{C}
\quad\text{and}\quad
\qp{C}  = \barqp{C}
= \pmax{C}\circ \pmin{C}
\]
\end{thm}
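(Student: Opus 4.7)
The plan is to prove the identity $\qm{C}=\pmin{C}\circ\pmax{C}$ by matching extremal characterizations on both sides; the second equality then follows by the dual argument swapping $\pmax{C}$ with $\pmin{C}$ and $\sup$ with $\inf$ throughout. Fix $x\in\R^n$, set $R(x):=\pmin{C}(\pmax{C}(x))$ and $S:=\{y\in C\mid y\leq x\}$. My first step is to unfold the definition of $C^{\max}$ to identify $\pmax{C}(x)=\sup S$ in $\R^n$: any $y=\sup_{f\in C}(\lambda_f+f)$ lying below $x$ forces each finite-coefficient term $\lambda_f+f$ to lie below $x$, and such a term belongs to $C$ by additivity; conversely $S\subseteq C^{\max}$. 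Plugging this in, the set $A:=\{w\in C^{\min}\mid w\geq\pmax{C}(x)\}$ is exactly the set of $C^{\min}$-upper bounds of $S$, and $R(x)=\inf A$ in $\R^n$ by definition of $\pmin{C}$.

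Next I compare $\inf A$ with $\qm{C}(x)=\supc S$. The easy direction is $\inf A\leq\qm{C}(x)$: by \Cref{prop-closed}, $\qm{C}(x)\in C\subseteq C^{\min}$ is a $C$-upper bound of $S$, hence in particular $\qm{C}(x)\geq\sup S=\pmax{C}(x)$, so $\qm{C}(x)\in A$. For the reverse, I pick $w\in A$ and use the defining representation $w=\inf_{f\in C}(\lambda_f+f)$ with $\lambda_f\in\tmin$. For each $f$ with $\lambda_f$ finite, $\lambda_f+f\in C$ by additivity and $\lambda_f+f\geq w\geq g$ for every $g\in S$, so $\lambda_f+f$ is an upper bound of $S$ lying in $C$; by the extremal property defining $\qm{C}(x)$ as the least upper bound of $S$ in the lattice $C$, one has $\lambda_f+f\geq\qm{C}(x)$. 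Coefficients $\lambda_f=+\infty$ do not contribute to the infimum, so $w\geq\qm{C}(x)$, and therefore $\inf A\geq\qm{C}(x)$. Combining the two inequalities yields $R(x)=\qm{C}(x)$.

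The main subtlety lies in this representation step for $w\in C^{\min}$: one must verify that each finite-coefficient translate $\lambda_f+f$ genuinely belongs to the set of $C$-upper bounds of $S$, which rests on the prior identification of $\pmax{C}(x)$ with $\sup S$ together with the additivity of $C$. The conditional completeness of $C$ furnished by \Cref{prop-closed} is equally essential, as it is what makes $\qm{C}(x)$ a bona fide least upper bound usable in the extremal argument. As a bonus, the identity $R(x)=\qm{C}(x)\in C$ retroactively shows that $R$ takes values in $C$, thereby placing it in the lattice of Shapley retractions onto $C$.
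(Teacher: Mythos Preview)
Your proof is correct and follows essentially the same approach as the paper: both arguments rest on the observation that, because $C$ is an additive cone, every element of $C^{\max}$ bounded above by $x$ is a supremum of translates $\lambda_f+f\in C$ each lying below $x$ (yielding $\pmax{C}(x)=\sup S$), and dually for $C^{\min}$. The only cosmetic difference is that the paper first records the dual identity $\pmin{C}(z)=\inf\{y\in C\mid y\geq z\}$ for general $z$ and then composes, whereas you keep the set $A\subset C^{\min}$ and apply the representation argument directly in the comparison with $\qm{C}(x)$; the underlying logic is identical.
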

\begin{proof}
  Suppose $C$ is a closed ambitropical cone.
  Observe first that for all $z$ in $\R^n$,
  \begin{align}
  \pmax{C}(z) = \sup\{x\in C\mid x\leq z\}\enspace .\label{e-supr}
  \end{align}
  Indeed,
  $\pmax{C}(z) = \sup\{x\in C^{\max}\mid x\leq z\}
  \geq \sup\{x\in C\mid x\leq z\}$. However, an element
  $u\leq z$ of $C^{\max}$ can be written as $u=\vee_{y\in C} (\lambda_y + y)$
  with $\lambda_y \in \tmax$, and $\lambda_y +y\leq z$. So,
  $\lambda_y + y \leq \sup\{x\in C\mid x\leq z\}$,
  and so, $u\leq \sup\{x\in C\mid x\leq z\}$, which entails~\eqref{e-supr}.
  Dually, $\pmin{C}(z) = \inf\{x\in C\mid x\geq z\}$.

  Using~\eqref{e-supr}, and the dual property, we get
  \begin{align*}
\barqm{C}(x) = \pmin{C} \circ \pmax{C}(x)& =
 \inf \{y \in C \mid y \geq \pmax{C}(x)\}\\
    & =\inf\{y \in C\mid y\geq \sup \{ z\in C\mid z\leq x\} \} \\
    &= \inf \{y \in C\mid (z\leq x, z\in C) \implies z \leq y\}
    \\
    &= \supc \{z\in C\mid z\leq x\}= \qm{C}(x)
    \enspace.
  \end{align*}
  The proof that $\barqp{C}=\qp{C}$ is dual.
\end{proof}

For closed tropical cones $C$, the projection
$Q_C^- = P^{\max}_C$ has the property that $Q_C^-(x)$ is a point
of $C$ with minimal distance to $C$ with respect to Hilbert's
seminorm, see~\cite{cgq02}, hence, it is a tropical analogue
of the ``nearest point projection'' arising in Euclidean spaces.
A basic property of this projection on a closed
convex set of an Euclidean space is that it is sunny.
Recall that 
  a retraction $F $ from $\R^n$ to a subset of $\R^n$
  is \emph{sunny} if,
  given $x$, $y \in \R^n$,
 for every $z$ in the segment $[x,y]$,
$F(x) = y \implies F(z) = y$.
 The following result shows that the canonical projections on closed
 tropical or dual tropical cone are also sunny.

\begin{prop}\label{prop-sunny}
  If $C$ is a closed tropical cone, then, the projection
  $Q_C^- = P^{\max}_C$ is sunny.
  Similarly, if $C$ is a closed dual tropical cone,
  then, the projection $Q_C^+ = P^{\min}_C$ is sunny.
\end{prop}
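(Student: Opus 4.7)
The plan is to exploit the comparison inequality $P^{\max}_C\leq I$ from \Cref{lem-compar}, combined with the monotonicity of $P^{\max}_C$ (it is a Shapley operator by \Cref{prop-systematic}), via a simple sandwich argument on the Euclidean segment $[x,y]$.

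Fix $x\in\R^n$ and let $y:=P^{\max}_C(x)$. Since $C$ is a closed tropical cone, \Cref{lem-compar} yields $y\leq x$ coordinatewise. For any $\lambda\in[0,1]$, set $z:=\lambda x+(1-\lambda)y$. Because $y\leq x$ in the coordinatewise order of $\R^n$, the convex combination satisfies $y\leq z\leq x$ coordinatewise. Applying the order-preserving map $P^{\max}_C$, and using that $y\in C$ so $P^{\max}_C(y)=y$, together with $P^{\max}_C(x)=y$, we obtain
\[
y=P^{\max}_C(y)\leq P^{\max}_C(z)\leq P^{\max}_C(x)=y,
\]
hence $P^{\max}_C(z)=y$. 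This is exactly the sunniness condition for every point $z$ on the segment $[x,y]$.

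The dual statement for a closed dual tropical cone follows by the dual argument: \Cref{lem-compar} gives $P^{\min}_C\geq I$, so $y:=P^{\min}_C(x)$ satisfies $y\geq x$; any $z\in[x,y]$ then satisfies $x\leq z\leq y$; monotonicity of $P^{\min}_C$ together with $P^{\min}_C(y)=y$ squeezes $P^{\min}_C(z)=y$.

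I do not expect any serious obstacle. The only point to double-check is the coordinatewise monotonicity of the Euclidean convex combination between two ordered points, which is immediate. All the nontrivial ingredients (monotonicity, the comparison with the identity, and the fact that $P^{\max}_C$ fixes $C$) are already in place from \Cref{prop-systematic} and \Cref{lem-compar}.
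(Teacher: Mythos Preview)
Your argument is correct and is essentially identical to the paper's own proof: both set $y=P^{\max}_C(x)$, use $P^{\max}_C\leq I$ to get $y\leq x$, then apply monotonicity to the sandwich $y\leq z\leq x$ to conclude $P^{\max}_C(z)=y$. The only cosmetic difference is that the paper also records in one line why $Q_C^-=P^{\max}_C$ holds for a closed tropical cone (namely, $\pmin{C}\circ\pmax{C}=\pmax{C}$ since $\pmin{C}$ fixes $C=\range\pmax{C}$), which you take for granted from the statement.
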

\begin{proof}
   It is trivial that $Q_C^- = P^{\min}_C \circ P^{\max}_C= P^{\max}_C$.
  Let $y:=P^{\max}_C(x)$, so by \eqref{e-comparepmaxmin} $y\leq x$. 
  Then any point $z=(1-t)y+tx$ with $0<t<1$ satisfies $y\leq z\leq x$,
  and so, $y=P^{\max}_C(y)\leq P^{\max}_C(z)\leq P^{\max}(x)=y$,
  implying that $P^{\max}_C$ is a sunny retraction.
  \end{proof}

However, the following example shows that~\Cref{prop-sunny}
does not carry over to closed ambitropical cones, the canonical retractions $Q_C^-= \pmin{C}\circ \pmax{C}$
  and $  Q_C^+= \pmax{C}\circ \pmin{C}$ are generally not sunny. 
\begin{example}
  Consider the Shapley operator
  \[
  T\left(\begin{array}{c} x_1\\x_2\\x_3
\end{array}\right)=
  \left(\begin{array}{c} \max(x_1,x_3)\\ \max(x_2,x_3)
    \\ -\frac{1}{2}+\frac{x_1+x_2}2 \end{array}\right)
    \]
    The fixed point set of $T$ is the closed
    ambitropical cone $E = \{ x \in \R^{3} \mid \frac{x_1 + x_2}{2} = x_3 + \frac{1}{2}, x_1\geq x_3, x_2\geq x_3\}$ shown on ~\Cref{fig-cex-ma}. The cross section of $E$ in $\R^{2}$ defined by $x_3 = 0$ is displayed by the black segment,
    as well as the cross section of $E^{\max}$ (triangle in light gray above
    this segment) and the cross section of $E^{\min}$ (triangle in dark gray
    below this segment).  
 In this case $\pmin{E}\circ \pmax{E}$ is not a sunny retraction.
 Indeed, consider the point $x=(2,\alpha,0)$, with $0<\alpha<1$.
 We have $P^{\max}_E(x)=u:=(1,\alpha,0)$,
 and $P^{\min}_E(u)=y:=(1-\alpha/2,\alpha/2,0)$.
 However, consider the mid-point $x':=(y+x)/2 = (3/2 - \alpha/4, 3 \alpha / 4, 0)$.
 We have $u':=P^{\max}_E(x')= (1, 3 \alpha /4, 0)$ and $y':=P^{\min}_E(u') = ( 1 - 3 \alpha /8, 3 \alpha / 8, 0)
 \neq y$ showing that $\pmin{E}\circ \pmax{E}$
 is not sunny.  This is illustrated in the figure.

\end{example}
 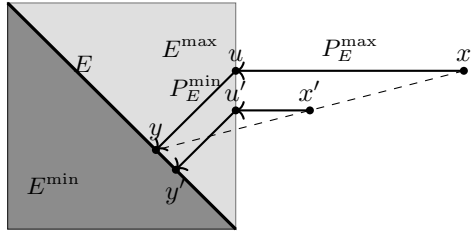
\begin{figure}[h!]
 \begin{center}
 \begin{tikzpicture}[scale = 3]
\coordinate %
(A) at (0,0);
\coordinate %
(B) at (1,0);
\coordinate %
(C) at (0,1);
\coordinate %
(P) at (1,1);
\filldraw[gray,draw=black,opacity=0.9] (A) -- (B) -- (C) -- cycle;
\filldraw[lightgray,draw=black,opacity=0.5] (B) -- (C) -- (P) -- cycle;
\draw[very thick] (C) -- (B);
\coordinate [label={[font=\small]
center:$E^{\max}$}] (Q) at (0.8,0.8);
\coordinate [label={[font=\small]
center:$E^{\min}$}] (K) at (0.2,0.2);
\coordinate [label={[font=\small]
center:$E$}] (E) at (0.33,0.73);
\draw coordinate [label=$x$] (x) at (2, 0.7);
\draw coordinate [label=$u$] (u) at (1, 0.7);
\fill (1, 0.7) circle (0.5pt);
\fill (2, 0.7) circle (0.5pt);
\draw coordinate [label=$y$] (y) at (13/20, 7/20);
\fill (13/20, 7/20) circle (0.5pt);
\draw [ ->, thick] (x) -- (1, 0.7)
node [pos=0.5, above, font=\small] {$\pmax{E}$};
\draw [ ->, thick] (1, 0.7) -- (y)
node [pos=0.5, above, font=\small] {$\pmin{E}$};
\draw[dashed] (x) -- (y);

\draw coordinate [label=$x'$] (x') at (53/40, 21/40);
\fill (53/40, 21/40) circle (0.5pt);
\draw coordinate [label=$u'$] (u') at (1, 21/40);
\fill (1, 21/40) circle (0.5pt);
\draw [ ->, thick] (x') -- (u');
\draw coordinate [label=below:$y'$] (y') at (59/80, 21/80);
\fill (59/80, 21/80) circle (0.5pt);
\draw [ ->,  thick] (u') -- (y');

\label{not-sunny}
\end{tikzpicture}
 \end{center}
 \caption{An ambitropical cone such that the canonical retractions $Q_E^\pm$ are not sunny.}\label{fig-cex-ma}
 \end{figure}

 \subsection{Characterization of ambitropical cones in terms of best co-approximation}
 We saw in~\Cref{prop-caracq} that the two canonical retractions
 $\qm{C}$ and $\qp{C}$ on a closed ambitropical cone can be decomposed
 in terms of the projection operators $\pmax{C}$ and $\pmin{C}$.
 In a perhaps surprising way, we shall see that this 
 leads to an order-theoretical analogue of a notion of best co-approximation,
 introduced by Papini and Singer~\cite{singer} to characterize nonexpansive retracts. If $E$ is a subset of a Banach space $(X,\|\cdot\|)$,
$E$ is said to be a \emph{set of existence of best co-approximation}
if, for all $z\in X$, the set
\[
B^{\|\cdot\|}_E(z):= \{x\in X\mid \| y-x\| \leq \|y-z\|,\;\forall y\in E\}
\]
contains an element of $E$. It is immediate that if $E$ is a nonexpansive
retract of $X$, then $E$ is a set of existence of best co-approximation.
The converse is known to hold in $L^p$ spaces with $1\leq p<\infty$,
see~\cite{westphal} and the references therein. Here we shall be interested in Shapley retracts of $\R^n$. 
By \Cref{prop-equiv}, these are precisely the images of $\R^n$ by idempotent
maps that are nonexpansive in the ``top''
hemi-metric $\mytop{x}=\max_{i\in [n]}x_i$. In view of this
property, we introduce the following analogue
of the set $B_E^{\|\cdot\|}(z)$.

\begin{defn}
Let $E$ be a subset of $\R^n$.
For any $y, z \in \R^n$ we define $B(y, z)= \{ x \in \R^n \mid y + \mybot(z-y) \leq x \leq y + \mytop (z-y) \}$ where  $\mytop(x) = \max_{i \in [n]} x_i$ and $\mybot (x)=\min_{i \in [n]}x_i$ for any $x \in \R^n$.
Then, we define $B_{E}(z)=\cap_{y \in E} B(y, z)$.
\end{defn}

\begin{defn}
Let $E$ be a subset of $\R^n$, we say that $E$ is a 	\emph{set of existence of best tropical co-approximation} if $B_{E}(z) \cap E \neq \emptyset$ for every $z \in \R^n$.
\end{defn}

\begin{lem}\label{lem-car-B}
We have $B_{E}(z)= \{ x \in \R^n \mid \pmax{E}(z) \leq x \leq \pmin{E}(z)\}$.
\end{lem}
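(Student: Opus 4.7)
The plan is to unpack the definition of $B_E(z)$ into two separate one-sided bounds and identify each bound with one of the tropical projections, using the explicit representation of $\pmax{E}$ and $\pmin{E}$ via generating sets given by formula~\eqref{explicit-pmax} and its dual.

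The first step is a scalar observation: for fixed $y \in \R^n$ and $z \in \R^n$, the real number $\lambda = \mytop(z-y)$ is precisely the smallest $\lambda \in \R$ such that $\lambda + y \geq z$, since $\lambda + y \geq z$ amounts to $\lambda \geq z_i - y_i$ for all $i$. Symmetrically, $\mybot(z-y)$ is the largest $\lambda$ with $\lambda + y \leq z$. Hence $y + \mytop(z-y)$ is the smallest additive translate of $y$ that dominates $z$, and $y + \mybot(z-y)$ is the largest additive translate of $y$ dominated by $z$.

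The second step is to rewrite $B_E(z)$: by definition, $x \in B_E(z)$ if and only if, for every $y \in E$,
\[
y + \mybot(z-y) \;\leq\; x \;\leq\; y + \mytop(z-y) .
\]
Quantifying the two sides separately, this becomes
\[
\sup_{y \in E}\bigl(y + \mybot(z-y)\bigr) \;\leq\; x \;\leq\; \inf_{y \in E}\bigl(y + \mytop(z-y)\bigr).
\]

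The third step is to recognise these two bounds as the tropical projections. Since $E$ is a tropical generating set of $E^{\max}$ (by definition of $E^{\max}$ as the set of tropical linear combinations of elements of $E$, completed by suprema), the explicit formula~\eqref{explicit-pmax} applied with $G = E$ yields
\[
\pmax{E}(z) \;=\; \sup_{y \in E}\bigl(\mybot(z-y) + y\bigr).
\]
Dually, $E$ is a dual tropical generating set of $E^{\min}$, and the dual form of~\eqref{explicit-pmax} gives
\[
\pmin{E}(z) \;=\; \inf_{y \in E}\bigl(\mytop(z-y) + y\bigr).
\]
Substituting these two identities into the bounds obtained in step two yields the announced equality $B_E(z) = \{x \in \R^n \mid \pmax{E}(z) \leq x \leq \pmin{E}(z)\}$.

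There is no serious obstacle here: the only point to be careful about is invoking the dual version of~\eqref{explicit-pmax} for $\pmin{E}$, which the paper has already justified as a consequence of~\cite[Th.~5]{cgq02}. All remaining manipulations are scalar.
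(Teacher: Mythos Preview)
The proposal is correct and takes essentially the same approach as the paper: both reduce the lemma to the identity $\pmax{E}(z)=\sup_{y\in E}(\mybot(z-y)+y)$ (and its dual), which you obtain by citing~\eqref{explicit-pmax} with $G=E$, while the paper re-derives it inline from the definition of $E^{\max}$. One minor terminological caveat: the paper defines a ``tropical generating set'' via \emph{finite} combinations, whereas $E^{\max}$ is built from possibly infinite suprema, so strictly speaking $E$ need not be a tropical generating set of $E^{\max}$ in that sense; however, the identity $\pmax{E}(z)=\sup_{y\in E}(\mybot(z-y)+y)$ follows immediately from the definition of $E^{\max}$ (exactly as in the paper's own argument), so this does not affect the validity of your proof.
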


\begin{proof}
We shall prove that $\sup_{y \in E} (\mybot(z-y) + y) = \pmax{E}(z)=\sup\{x \in E^{\max} \mid x \leq z\}$. Let $x \in E^{\max}$, $x=\sup_{y \in E} (\lambda + y)$. 
Suppose $x \leq z$, then we have that $\lambda  \leq \mybot (x-y) \leq \mybot (z-y)$.
In a similar way, we see that $\pmin{E}(z)=\inf_{y \in E} \mytop(z-y) + y$.
\end{proof}

\begin{lem}
Let $E$ be a subset of $\R^n$. If $E$ is a set of existence of best tropical co-approximation, then $\bar{B}_{E}(z):=[\barqm{E}(z), \barqp{E}(z)] \cap E \neq \emptyset$ for any $z \in \R^n$.
\end{lem}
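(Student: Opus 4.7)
The plan is to start from an element $x \in B_E(z)\cap E$, supplied by the hypothesis, and use monotonicity of the two tropical projectors $\pmax{E}$, $\pmin{E}$ together with the fact that both fix $E$ to push it inside the order interval $[\barqm{E}(z),\barqp{E}(z)]$.

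More precisely, let $z \in \R^n$ be arbitrary. By assumption there exists $x \in B_E(z)\cap E$. By \Cref{lem-car-B}, the condition $x \in B_E(z)$ unpacks as
\[
\pmax{E}(z) \;\le\; x \;\le\; \pmin{E}(z) \enspace .
\]
Now I would apply $\pmin{E}$, which is order-preserving (it is a Shapley operator by \Cref{prop-systematic}), to the left inequality. This yields $\barqm{E}(z) = \pmin{E}(\pmax{E}(z)) \le \pmin{E}(x)$. Since $E \subset E^{\min}$ and $\pmin{E}$ fixes $E^{\min}$ (again by \Cref{prop-systematic}), we have $\pmin{E}(x) = x$, hence $\barqm{E}(z) \le x$. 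Symmetrically, applying $\pmax{E}$ to $x \le \pmin{E}(z)$ and using that $\pmax{E}$ fixes $E \subset E^{\max}$, we obtain $x = \pmax{E}(x) \le \pmax{E}(\pmin{E}(z)) = \barqp{E}(z)$.

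Combining the two inequalities gives $\barqm{E}(z) \le x \le \barqp{E}(z)$, while $x \in E$ holds by the choice of $x$. Therefore $x \in [\barqm{E}(z),\barqp{E}(z)] \cap E = \bar{B}_E(z)$, which is consequently non-empty for every $z \in \R^n$.

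There is essentially no obstacle here: the argument is a one-line monotonicity chase, and the only conceptual input is the identification of $B_E(z)$ with the order interval $[\pmax{E}(z),\pmin{E}(z)]$ (\Cref{lem-car-B}) together with the fact that $E$ is fixed pointwise by both $\pmax{E}$ and $\pmin{E}$. The statement can therefore be read as saying that existence of best tropical co-approximation is already enough to force witnesses to land in the (a priori smaller) interval cut out by the composite projectors $\barqm{E}$ and $\barqp{E}$, a fact that will later be useful for characterizing closed ambitropical cones.
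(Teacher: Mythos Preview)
Your proof is correct and follows essentially the same approach as the paper: start from a witness $x\in B_E(z)\cap E$, use \Cref{lem-car-B} to obtain $\pmax{E}(z)\le x\le \pmin{E}(z)$, then apply $\pmin{E}$ to the left inequality and $\pmax{E}$ to the right, exploiting that both projectors fix $E$. The paper's argument is identical (with $u$ in place of $x$), only slightly terser.
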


\begin{proof}
  We know that, if $E$ is a set of best tropical co-approximation then for any $z \in \R^n$, there exists $u \in E$ such that $\pmax{E}(z) \leq u \leq \pmin{E}(z)$. Composing the first inequality by $\pmin{E}$, and composing the
  second inequality by $\pmax{E}$,
  we get $\barqm{E}(z)= \pmin{E}\circ \pmax{E}(z) \leq \pmin{E}(u)=u$,
  and $u=\pmax{E}(u) \leq \pmax{E}\circ \pmin{E}(z)=\barqp{E}(z)$.
\end{proof}

The following result completes \Cref{th-main0}, it characterizes
ambitropical cones in terms of best co-approximation and of the projections $\pmax{E}$ and $\pmin{E}$.
\begin{thm}\label{th-main1}
    Let $E$ be a subset of $\R^n$. The following assertions are equivalent
    \begin{enumerate}    %
          \item\label{shapleyretractnew}  $E$ is a closed ambitropical cone of $\R^n$;
\item\label{coapprox} $E$ is a set of existence of best tropical co-approximation;
\item\label{interval} for all $z\in \R^n$, $[\pmax{E}(z),\pmin{E}(z)]\cap E \neq\emptyset$;
\item\label{sara}
    $\pmin{E} (z) \in E$ holds for all $z \in E^{\max}$;
\item\label{saradual}
    $\pmax{E} (z) \in E$ holds for all $z \in E^{\min}$;

\item\label{fpqp} $E$ is the fixed point set of the operator $\barqp{E} = \pmax{E} \circ \pmin{E}$;

\item\label{fpqm}  $E$ is the fixed point set of the operator $\barqm{E}=\pmin{E} \circ \pmax{E}$.
  \end{enumerate}
\end{thm}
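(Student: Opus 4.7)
The plan is to close a cycle of implications among the seven conditions, relying on the properties of $\pmax{E}$ and $\pmin{E}$ established in Propositions \ref{prop-systematic} and \ref{prop-systematic2}. The equivalence \myitem{coapprox}$\Leftrightarrow$\myitem{interval} is immediate from Lemma \ref{lem-car-B}, which rewrites $B_E(z)$ as the order interval $[\pmax{E}(z),\pmin{E}(z)]$. For \myitem{shapleyretractnew}$\Rightarrow$\myitem{interval}, Theorem \ref{prop-caracq} yields the factorization $\qm{E}=\pmin{E}\circ \pmax{E}$; this produces a point of $E$ which, thanks to $\pmax{E}\leq I\leq \pmin{E}$ from Proposition \ref{prop-systematic} and the monotonicity of $\pmin{E}$, satisfies $\pmax{E}(z)\leq \qm{E}(z)\leq \pmin{E}(z)$, giving the interval condition.

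For \myitem{interval}$\Rightarrow$\myitem{sara} (the implication \myitem{interval}$\Rightarrow$\myitem{saradual} being dual), I would use a pinching argument. Suppose $z\in E^{\max}$, so that $\pmax{E}(z)=z$. By \myitem{interval} there exists $u\in [z,\pmin{E}(z)]\cap E$. Applying $\pmin{E}$ to $z\leq u$ gives $\pmin{E}(z)\leq \pmin{E}(u)=u$, where the equality uses that $\pmin{E}$ fixes $E\subset E^{\min}$. Combined with $u\leq \pmin{E}(z)$, this forces $u=\pmin{E}(z)$, which therefore belongs to $E$.

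The implications \myitem{sara}$\Rightarrow$\myitem{fpqm} and \myitem{saradual}$\Rightarrow$\myitem{fpqp} rely on the identity $\range\pmax{E}=E^{\max}$ from Proposition \ref{prop-systematic}. By Proposition \ref{prop-systematic2}, $\barqm{E}$ is idempotent and fixes $E$, so $E\subset \fixed\barqm{E}$. Conversely, for any $x\in \R^n$ we have $\pmax{E}(x)\in E^{\max}$, so hypothesis \myitem{sara} gives $\barqm{E}(x)=\pmin{E}(\pmax{E}(x))\in E$; hence every fixed point of $\barqm{E}$ lies in $E$. Finally, \myitem{fpqm}$\Rightarrow$\myitem{shapleyretractnew} and \myitem{fpqp}$\Rightarrow$\myitem{shapleyretractnew} follow from Theorem \ref{th-main0}, since $\barqm{E}$ and $\barqp{E}$ are Shapley operators.

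The one delicate step is the pinching in \myitem{interval}$\Rightarrow$\myitem{sara}: one must exploit both the idempotency of $\pmin{E}$ and the fact that $\pmin{E}$ acts as the identity on $E$, in order to force the element $u$ furnished by the interval condition to coincide exactly with $\pmin{E}(z)$, rather than merely to bracket it. The remaining implications are a clean orchestration of the elementary algebraic identities satisfied by the two projectors and their compositions.
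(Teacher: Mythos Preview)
Your proof is correct and relies on the same key ingredients as the paper: the pinching argument for \myitem{interval}$\Rightarrow$\myitem{sara}, the idempotency and range identities from Propositions~\ref{prop-systematic} and~\ref{prop-systematic2}, and Theorem~\ref{th-main0} to close the loop. The only organizational difference is that the paper runs a single linear cycle \myitem{shapleyretractnew}$\Rightarrow$\myitem{coapprox}$\Rightarrow\cdots\Rightarrow$\myitem{fpqm}$\Rightarrow$\myitem{shapleyretractnew} (using a generic Shapley retract from Theorem~\ref{th-main0} rather than Theorem~\ref{prop-caracq} for the first step, and passing through the somewhat indirect chain \myitem{sara}$\Rightarrow$\myitem{saradual}$\Rightarrow$\myitem{fpqp}$\Rightarrow$\myitem{fpqm}), whereas you branch symmetrically via \myitem{sara}$\Rightarrow$\myitem{fpqm} and \myitem{saradual}$\Rightarrow$\myitem{fpqp}; your arrangement is slightly cleaner but not materially different.
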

\begin{proof}

  \myitem{shapleyretractnew} $\Rightarrow$ \myitem{coapprox}.
  By~\Cref{th-main0}, we have that $E=P(\R^n)$ where $P=P^2$ is a Shapley operator. Then,
  for all $y\in E$, and for all $z\in \R^n$,
  $\mytop{(P(z)-y)}=\mytop{(P(z)-P(y))}\leq \mytop{(z-y)}$, i.e.,
  $P(z)\leq \mytop{(z-y)}+ y $, and dually,
  $P(z)\geq \mybot{(z-y)}+ y $, showing that
  $P(z) \in E \cap B_E(z)$.

  \myitem{coapprox} $\Rightarrow$ \myitem{interval}.
  This follows from~\Cref{lem-car-B}. 

  \myitem{interval} $\Rightarrow$ \myitem{sara}.
    We first observe that
  \begin{align}
    \forall z\in \R^n, \; [\pmax{E} (z),\pmin{E} (z)]\cap E \neq \emptyset \Rightarrow \forall z \in E^{\max}, [z, \pmin{E} (z)]\cap E \neq \emptyset \enspace .
    \label{iffselect}
  \end{align}
Now, %
the condition $[z, \pmin{E} (z)] \cap E \neq \emptyset$
is equivalent
to:
$\exists u \in E$ such that $z \leq u \leq \pmin{E} (z)$.
However, $\pmin{E}(z)$ is the minimal vector $v\in E^{\min}$ such that $v\geq z$,
it follows that $\pmin{E}(z)\leq u$, and so $\pmin{E}(z)=u\in E$.

\myitem{sara} $\Rightarrow$ \myitem{saradual}. 
By hypothesis, we have that for any $z \in E^{\max}$, $\pmin{E}(z) \in E$, so in particular $[z; \pmin{E}(z)] \cap E \neq \emptyset$. Consider an arbitrary $z \in \R^n$. Then, we have that $\pmax{E} (z) \in E^{\max}$ and, consequently 
$[\pmax{E} (z), \pmin{E}  (\pmax{E}(z))] \cap E \neq \emptyset$. Recalling that $[\pmax{E} (z), \pmin{E} (\pmax{E} (z))] \subseteq [\pmax{E} (z), \pmin{E} (z)]$ since $\pmax{E} \leq I$, we have that for any $z \in \R^n$, 
$[\pmax{E}(z); \pmin{E}(z)] \cap E \neq \emptyset$. In particular, let $z \in E^{\min}$, then we have that $[\pmax{E}(z), z] \cap E \neq \emptyset$ and by the dual argument of the previous implication we obtain that $\pmax{E}(z) \in E$.

  \myitem{saradual} $\Rightarrow$ \myitem{fpqp}.  
We will denote with $\operatorname{Fix}(\barqp{E})$ the fixed points set of $\barqp{E}$. Since any element of $E$ is fixed by $\barqp{E}$, $E \subseteq \operatorname{Fix}(\barqp{E})$. We shall now prove the other inclusion. Let $z \in \R^n$ such that $\pmax{E} (\pmin{E} (z)) = z$. Since $\pmin{E}(z) \in E^{\min}$, 
$z=\pmax{E}(\pmin{E}(z)) \in E$. 

  \myitem{fpqp} $\Rightarrow$ \myitem{fpqm}. By the idempotency $\barqp{E}$ we have that $\pmax{E} \circ \pmin{E}(y) \in E$ for every $y \in \R^n$, in particular for any $y \in E^{\min}$, $\pmax{E}(y) \in E$. As in the previous implication we know that $E \subseteq \operatorname{Fix}(\barqm{E})$ and we shall now prove the other inclusion. Let $z \in \R^n$ such that $\pmin{E} (\pmax{E}(z)) = z$, so $z \in E^{\min}$ and consequently $\pmax{E}(z) \in E$. Since $E$ is fixed by $\pmin{E}$, we have that $z= \pmin{E}(\pmax{E}(z)) \in E$. 

  \myitem{fpqm} $\Rightarrow$ \myitem{shapleyretractnew}. This follows
  from \Cref{prop-systematic2}, \myitem{idemqm} and~\Cref{th-main0}.

\end{proof}

\begin{cor}\label{cor-stable}
  Let $E$ be a non-empty subset of $\R^n$, included
  in a closed ambitropical cone $F$. Then,
  $E^{\max}\cap E^{\min} \subset F$.
\end{cor}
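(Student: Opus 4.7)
The plan is to exploit the obvious monotonicity of the constructions $E\mapsto E^{\max}$ and $E\mapsto E^{\min}$ with respect to inclusion, and then apply the characterization of closed ambitropical cones as fixed point sets of $\barqp{F}$ given by \Cref{th-main1}, \myitem{fpqp}.

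First I would observe that if $E\subset F$, then directly from the definition~\eqref{e-complete-trop} (each sup over elements of $E$ with coefficients in $\tmax$ is in particular a sup over elements of $F$ with coefficients in $\tmax$), we have $E^{\max}\subset F^{\max}$, and dually $E^{\min}\subset F^{\min}$. Hence any $z\in E^{\max}\cap E^{\min}$ satisfies $z\in F^{\max}\cap F^{\min}$.

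Next I would use that $\pmax{F}$ fixes $F^{\max}$ and $\pmin{F}$ fixes $F^{\min}$, which is part of~\Cref{prop-systematic}. Thus, for such a $z$, we have $\pmax{F}(z)=z$ and $\pmin{F}(z)=z$. Composing, this gives
\[
\barqp{F}(z) = \pmax{F}\circ\pmin{F}(z) = \pmax{F}(z) = z,
\]
so $z$ is a fixed point of $\barqp{F}$.

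Finally, since $F$ is a closed ambitropical cone, the equivalence \myitem{shapleyretractnew}$\Leftrightarrow$\myitem{fpqp} in \Cref{th-main1} identifies $F$ with the fixed point set of $\barqp{F}$, allowing me to conclude $z\in F$. There is no serious obstacle here: the whole argument is a short combination of monotonicity of the tropical hull operations, the idempotency/range properties of $\pmax{F}$ and $\pmin{F}$ recorded in \Cref{prop-systematic}, and one of the equivalent characterizations of closed ambitropical cones just established.
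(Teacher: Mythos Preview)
Your proof is correct and essentially the same as the paper's: both use the monotonicity $E^{\max}\subset F^{\max}$, $E^{\min}\subset F^{\min}$ to deduce that any $z\in E^{\max}\cap E^{\min}$ is fixed by both $\pmax{F}$ and $\pmin{F}$, and then invoke \Cref{th-main1} to conclude $z\in F$. The only cosmetic difference is that the paper composes in the other order, using $\barqm{F}=\pmin{F}\circ\pmax{F}$ and item~\myitem{fpqm} rather than $\barqp{F}$ and item~\myitem{fpqp}.
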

\begin{proof}
  The set $E^{\max}\cap E^{\min}$ is fixed
  both by $\pmax{E}$ and $\pmin{E}$. If $E\subset F$,
  the fixed point set of $\pmax{E}$ is included
  in the fixed point set of $\pmax{F}$. The same
  is true for $\pmin{E}$ and $\pmin{F}$. So, $E^{\max}\cap E^{\min}$
  is included in the fixed point set of $\pmin{F}\circ \pmax{F}$,
  which by \Cref{th-main1},\myitem{fpqm}, coincides with $F$.
\end{proof}
\begin{cor}
  Suppose $C$ is a closed ambitropical cone. Then $C=C^{\max}\cap C^{\min}$.
\end{cor}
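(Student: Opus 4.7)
The plan is to deduce both inclusions separately, with one direction being essentially immediate from the definitions and the other a direct application of the preceding \Cref{cor-stable}.

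For the inclusion $C \subseteq C^{\max} \cap C^{\min}$, I would simply observe that for any $x \in C$, the trivial one-element combination $x = \sup\{0 + x\}$ exhibits $x$ as an element of $C^{\max}$ (in the form~\eqref{e-complete-trop} with $\lambda_x = 0$ and all other coefficients equal to $-\infty$). The dual argument, using an infimum instead of a supremum, puts $x$ in $C^{\min}$. Hence $C \subseteq C^{\max}$ and $C \subseteq C^{\min}$, and their intersection still contains $C$.

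For the reverse inclusion $C^{\max} \cap C^{\min} \subseteq C$, I would apply \Cref{cor-stable} with $E = C$ and $F = C$: since $C$ is by hypothesis a closed ambitropical cone containing itself, the corollary directly yields $C^{\max} \cap C^{\min} \subseteq C$. Combining the two inclusions gives the desired equality.

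There is no real obstacle here; the statement is essentially a specialization of \Cref{cor-stable} to the case $E = F$, combined with the trivial containment $E \subseteq E^{\max} \cap E^{\min}$. The only minor point worth making explicit is that $C$ itself qualifies as the ambitropical cone $F$ in the hypothesis of \Cref{cor-stable}, which is immediate from the assumption that $C$ is a closed ambitropical cone.
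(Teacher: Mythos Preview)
Your proof is correct and follows exactly the same approach as the paper: the paper invokes \Cref{cor-stable} for the inclusion $C^{\max}\cap C^{\min}\subset C$ and declares the other inclusion trivial, which is precisely what you spell out.
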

\begin{proof}
  The inclusion $C^{\max}\cap C^{\min}\subset C$ follows from \Cref{cor-stable}.
The other inclusion is trivial.
\end{proof}

\begin{example}%
  Conversely, given an additive cone $E\subset\R^n$, the condition that $E = E^{\max}\cap E^{\min}$ does not imply that $E$ is an ambitropical cone. Consider, for example, the set $E=(a+\R) \cup (b+\R)$  where $a=(1,0,0)$ and $b=(0,1,0)$.
  This set, as well as the spaces $E^{\max}$ and $E^{\min}$, are shown
  on~\Cref{fig-hull}. We see that $E=E^{\max}\cap E^{\min}$
  but since $E$ is disconnected, it cannot be ambitropical.
\end{example}

\subsection{Ambitropical hull}
The intersection of ambitropical cones is generally not ambitropical,
so the notion of ambitropical hull of a set $E$ cannot be defined in the na\"\i ve manner, as the intersection of ambitropical cones containing $E$.
However, we shall see that there is a proper notion
of ambitropical hull, unique up to isomorphism.
\begin{definition}
  Let $E\subset \R^n$. We say that $\tilde{E}\subset \R^n$
  is a {\em ambitropical hull} of $E$ if $\tilde{E}$ is
  a closed ambitropical cone which is a superset of $E$ and if it is minimal with respect to inclusion. 
  \end{definition}
\begin{prop} \label{ambitropicalhull}
  For each nonempty subset $E\subset \R^n$, the sets
  $\range \barqm{E}$
  and $\range \barqp{E}$ are closed ambitropical cones
  containing $E$ that are isomorphic.
\end{prop}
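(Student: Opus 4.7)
The plan is to derive all three assertions of the proposition as near-immediate consequences of \Cref{prop-systematic2} together with \Cref{th-main0}, exploiting the fact that the algebraic identities already collected are exactly what is needed.

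First I would observe, using items \myitem{shapley} and \myitem{idemqm} of \Cref{prop-systematic2}, that $\barqm{E}$ and $\barqp{E}$ are idempotent Shapley operators from $\R^n$ to itself. Consequently each of their ranges is a Shapley retract of $\R^n$, and the implication \myitem{shapleyretract} $\Rightarrow$ \myitem{ambitropical} in \Cref{th-main0} yields that $\range \barqm{E}$ and $\range \barqp{E}$ are closed ambitropical cones. The containment $E \subset \range \barqm{E}$ and $E \subset \range \barqp{E}$ then follows from item \myitem{fix} of \Cref{prop-systematic2}, which says that both operators fix every point of $E$.

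The interesting step is the isomorphism. I would exhibit mutually inverse Shapley bijections between the two ranges by restriction of $\barqp{E}$ and $\barqm{E}$. For any $y \in \range \barqm{E}$, one has $y = \barqm{E}(y)$, and the regularity identity $\barqm{E} \circ \barqp{E} \circ \barqm{E} = \barqm{E}$ from item \myitem{regular} of \Cref{prop-systematic2} gives
\[
\barqm{E}(\barqp{E}(y)) = \barqm{E}(\barqp{E}(\barqm{E}(y))) = \barqm{E}(y) = y.
\]
Symmetrically, the identity $\barqp{E} \circ \barqm{E} \circ \barqp{E} = \barqp{E}$ yields $\barqp{E}(\barqm{E}(z)) = z$ for every $z \in \range \barqp{E}$. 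Thus $\barqp{E}|_{\range \barqm{E}} \colon \range \barqm{E} \to \range \barqp{E}$ and $\barqm{E}|_{\range \barqp{E}} \colon \range \barqp{E} \to \range \barqm{E}$ are mutually inverse Shapley bijections.

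The only point that deserves a brief comment is what \emph{isomorphic} is to mean here: since the natural morphisms between ambitropical cones are Shapley operators, a Shapley bijection whose set-theoretic inverse is itself a Shapley operator is automatically an isomorphism in this category — it is an order-isomorphism of the induced lattices and commutes with the addition of constants, hence preserves all the structure entering the definition of an ambitropical cone. No serious obstacle is anticipated; the proposition is essentially a formal consequence of the identities established in \Cref{prop-systematic2}.
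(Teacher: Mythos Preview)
Your proof is correct and follows essentially the same approach as the paper: both invoke \Cref{prop-systematic2} for idempotency and the Shapley property, then \Cref{th-main0} to conclude the ranges are closed ambitropical cones, and finally use the regularity identities from \Cref{prop-systematic2}\,\myitem{regular} to exhibit the mutually inverse Shapley bijections. Your version is a bit more explicit (in particular the closing remark on what ``isomorphic'' means here), but there is no substantive difference.
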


\begin{proof}
  If $E\subset \R^n$ is non-empty, the operator $\barqm{E}$
  maps $\R^n$ to $\R^n$, and it follows from its definition that
  it satisfies the axioms of Shapley operators (\Cref{def-shapley}).
  Moreover, we have seen in \Cref{prop-systematic2}, {\em\ref{idemqm}}
  that $\barqm{E}$ is idempotent. It follows that $\range \barqm{E}=
  \barqm{E}(\R^n)$ is a Shapley retract, and so $\range \barqm{E}$ is ambitropical. Moreover, $\range \barqm{E}\supset E$.
  By duality, the same is true for $\range \barqp{E}$.
  By Prop. \ref{prop-systematic2}, \ref{regular}, the map $\barqp{E}$ is a bijection from $\range \barqm{E}$ to $\range \barqp{E}$ with inverse map $\barqm{E}$.
\end{proof}
The next result shows that if $F$ is a closed ambitropical cone
containing $E$, then, there is a Shapley operator
which injects $\range \qm{E}$ into $F$. %

\begin{thm}[Ambitropical hulls]\label{prop-ambihull}
  Let $E$ be a non-empty subset of $\R^n$, and suppose
  that $F$ is a closed ambitropical cone containing $E$.
Then,
  \begin{enumerate}[label=(\roman*)]
 \item\label{e-inj1} $\barqp{E}\circ \barqm{F}\circ \barqp{E}= \barqp{E}$;
 \item\label{e-inj2} $\barqm{E}\circ \barqp{F}\circ \barqm{E}= \barqm{E}$;
  \end{enumerate}
  Moreover, $\range \barqp{E}$ is isomorphic to a subset $F^{+}$ of $F$, analogously $\range \barqm{E}$ is isomorphic to a subset $F^{-}$ of $F$ and $F^{+}$ is isomorphic to $F^{-}$.  In particular, both $\range\barqp{E}$ and $\range{\barqm{E}}$ are ambitropical hulls of $E$, and all the ambitropical hulls
  of $E$ are isomorphic.%
\end{thm}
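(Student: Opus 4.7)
The plan is to exploit two absorption identities that arise from the inclusion $E\subset F$. Since $E\subset F$ implies $E^{\max}\subset F^{\max}$ and $E^{\min}\subset F^{\min}$, I would first show
\[
\pmax{E}\circ\pmax{F}=\pmax{F}\circ\pmax{E}=\pmax{E}\quad\text{and}\quad\pmin{E}\circ\pmin{F}=\pmin{F}\circ\pmin{E}=\pmin{E}.
\]
For instance, $\pmax{F}\circ\pmax{E}=\pmax{E}$ because $\pmax{E}(x)\in E^{\max}\subset F^{\max}$ is fixed by $\pmax{F}$, while $\pmax{E}\circ\pmax{F}=\pmax{E}$ follows from monotonicity together with the bracketing $\pmax{E}(x)\leq\pmax{F}(x)\leq x$ and $\pmax{E}(x)\in E^{\max}$. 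Given these, part (i) reduces to a one-line calculation: expanding
\[
\barqp{E}\circ\barqm{F}\circ\barqp{E}=\pmax{E}\,\pmin{E}\,\pmin{F}\,\pmax{F}\,\pmax{E}\,\pmin{E},
\]
the absorption identities collapse the middle block $\pmin{E}\,\pmin{F}\,\pmax{F}\,\pmax{E}$ to $\pmin{E}\,\pmax{E}$, and what remains is $(\barqp{E})^{2}=\barqp{E}$ by the idempotency established in \Cref{prop-systematic2}. Part (ii) is entirely dual.

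For the isomorphism claim, applying $\barqm{F}$ to identity (i) yields $(\barqm{F}\circ\barqp{E})^{2}=\barqm{F}\circ\barqp{E}$, so this composition is an idempotent Shapley operator whose range $F^{+}$ is a closed ambitropical cone contained in $\range\barqm{F}=F$. Identity (i) itself then says exactly that $\barqm{F}$ and $\barqp{E}$ restrict to mutually inverse bijections between $\range\barqp{E}$ and $F^{+}$, yielding the isomorphism $\range\barqp{E}\cong F^{+}$. Since $\barqp{E}$ fixes $E$ (by \Cref{prop-systematic2}) and $\barqm{F}$ fixes $F\supset E$, we moreover have $E\subset F^{+}$. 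Dually, (ii) gives $\range\barqm{E}\cong F^{-}\subset F$, and composing with the isomorphism $\range\barqp{E}\cong\range\barqm{E}$ supplied by \Cref{ambitropicalhull} produces $F^{+}\cong F^{-}$.

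It then remains to verify that $\range\barqp{E}$ is indeed an ambitropical hull of $E$, and that any two ambitropical hulls of $E$ are isomorphic. For minimality: if $F$ is a closed ambitropical cone with $E\subset F\subset\range\barqp{E}$, then for every $z\in\range\barqp{E}$ identity (i) gives $\barqp{E}(\barqm{F}(z))=z$; but $\barqm{F}(z)\in F\subset\range\barqp{E}$ is already fixed by $\barqp{E}$, forcing $\barqm{F}(z)=z$ and hence $z\in F$, so $F=\range\barqp{E}$. For uniqueness up to isomorphism, given any ambitropical hull $\tilde E$, the construction of $F^{+}$ applied with $F=\tilde E$ produces a closed ambitropical cone $F^{+}\subset\tilde E$ containing $E$ and isomorphic to $\range\barqp{E}$; minimality of $\tilde E$ then forces $F^{+}=\tilde E$, whence $\tilde E\cong\range\barqp{E}$, and similarly for $\range\barqm{E}$. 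The only genuine obstacle is spotting the two absorption identities in the first paragraph; once they are in hand, (i) and (ii), the three isomorphisms, and the minimality/uniqueness arguments all follow essentially algebraically from \Cref{prop-systematic2}, \Cref{ambitropicalhull}, and the very identity (i).
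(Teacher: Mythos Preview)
Your proof is correct and follows essentially the same route as the paper: both establish the absorption identities $\pmin{E}\circ\pmin{F}=\pmin{E}$ and $\pmax{F}\circ\pmax{E}=\pmax{E}$ from $E\subset F$, collapse the expansion of $\barqp{E}\circ\barqm{F}\circ\barqp{E}$ to $(\barqp{E})^{2}=\barqp{E}$, and read off the isomorphism $\range\barqp{E}\cong F^{+}\subset F$ from (i). Your direct minimality argument---showing that $E\subset F\subset\range\barqp{E}$ forces $\barqm{F}(z)=z$ for every $z\in\range\barqp{E}$---is in fact more explicit than the paper's, which concludes minimality somewhat tersely from the final isomorphism statement.
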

\begin{proof}
  Observe first that since $E\subset F$,
  \begin{align}
    \pmin{E} \circ \pmin{F}=\pmin{E}
    \label{e-cc}
    \end{align}
  Indeed, $E\subset F$ implies that $\pmin{F}\leq \pmin{E}$.
  Hence, $\pmin{E}\circ \pmin{F}\leq \pmin{E}\circ \pmin{E} = \pmin{E}$.
  Moreover, since $\pmin{F}\geq I$,
  $\pmin{E}\circ \pmin{F}\geq \pmin{E}$, which shows~\eqref{e-cc}.
  Since $E^{\max}\subset F^{\max}$, $\pmax{F}$ which fixes $E^{\max} = \range \pmax{E}$, we have
    \begin{align}
    \pmax{F} \circ \pmax{E}=\pmax{E}
    \label{e-cc2}
    \end{align}
    Using~\eqref{e-cc} and~\eqref{e-cc2},
    we get $\barqp{E}\circ \barqm{F}\circ \barqp{E}
   =\pmax{E}\circ \pmin{E} \circ \pmin{F}
  \circ \pmax{F}
  \circ \pmax{E}\circ \pmin{E} =
  \pmax{E}\circ \pmin{E}
  \circ \pmax{E}\circ \pmin{E} = (\barqp{E})^2=\barqp{E}$,
  by \Cref{prop-systematic2}, {\em\ref{item-ineq}}.
  This shows {\em\ref{e-inj1}}. The proof of {\em\ref{e-inj2}} is dual.

  Let $F^{+} = \barqm{F} (\range \barqp{E}) \subseteq F$ and consider
  \[
  \barqm{F} : \range \barqp{E} \to F^{+},
 \qquad 
  \barqp{E} : F^{+} \to \range \barqp{E} \enspace;
  \]
  they are inverses to each other by {\em \ref{e-inj1}}, so $\range \barqp{E}$ and $F^{+}$ are isomorphic. 
  Analogously, by {\em\ref{e-inj1}}, $\range \barqm{E}$ is isomorphic to the subset of $F$, $F^{-} =  \barqp{F} (\range \barqm{E})$.
  Moreover, by Corollary \ref{ambitropicalhull},
  $\range \barqp{E} \cong \range \barqm{E}$ and consequently
  $F^{-} \cong F^{+}$. Finally, let $\tilde{E}$ be an ambitropical hull of $E$, then $\range \barqp{E}$ is isomorphic to a subset $\tilde{E}^+$ of $\tilde{E}$ but, since $\tilde{E}$ is minimal for inclusion, we have that $\tilde{E} \cong \tilde{E}^+ \cong \range \barqp{E}$.
 \end{proof}
We next point out an elementary metric property of ambitropical cones.
Recall that a {\em geodesic} between $x$ and $y$ with respect to a
seminorm $\|\cdot\|$ on $\R^n$ is a map $\gamma : [0,1]\to \R^n$
such that $\gamma(0)=x$, $\gamma(1)=y$, and $\|\gamma(t_2)-\gamma(t_1)\|
+ \|\gamma(t_3)-\gamma(t_2)\|= \|\gamma(t_3)-\gamma(t_1)\|$,
for all $0\leq t_1<t_2<t_3\leq 1$.
\begin{prop}\label{prop-geodesic}
  If $E$ is a closed ambitropical cone of $\R^n$, then, for any
  two points $x,y$ of $E$, there is a curve connecting $x$ and $y$ and included in $E$ that is a geodesic both in Hilbert's seminorm and in the sup-norm.
\end{prop}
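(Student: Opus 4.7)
My plan is to build the geodesic as the pushforward of the Euclidean segment by a Shapley retraction onto $E$. By \Cref{th-main0} there exists an idempotent Shapley operator $P\colon\R^n\to\R^n$ whose image is $E$ (for instance $P=\qm{E}$), and I set
\[
\gamma(t):=P\bigl((1-t)x+ty\bigr),\qquad t\in[0,1],
\]
so that $\gamma(0)=x$, $\gamma(1)=y$, and $\gamma([0,1])\subset E$.

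The crux of the plan is that $P$ is simultaneously nonexpansive for the sup-norm and for Hilbert's seminorm $\|z\|_H:=\mytop(z)-\mybot(z)$. The sup-norm nonexpansiveness is a standard property of Shapley operators; the Hilbert version follows at once from \Cref{prop-equiv}, since subtracting the inequalities $\mytop(P(u)-P(v))\leq\mytop(u-v)$ and $\mybot(P(u)-P(v))\geq\mybot(u-v)$ yields $\|P(u)-P(v)\|_H\leq\|u-v\|_H$. Because the Euclidean segment $\sigma(t):=(1-t)x+ty$ is linearly parametrized and hence $\|y-x\|$-Lipschitz in any seminorm, $\gamma$ inherits the Lipschitz bound $\|\gamma(t)-\gamma(s)\|\leq |t-s|\,\|y-x\|$ simultaneously for $\|\cdot\|=\|\cdot\|_\infty$ and $\|\cdot\|=\|\cdot\|_H$.

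To conclude I would use the endpoint-squeeze argument: fixing either $\|\cdot\|=\|\cdot\|_\infty$ or $\|\cdot\|=\|\cdot\|_H$, and taking $0\leq t_1\leq t_3\leq 1$, the triangle inequality combined with the Lipschitz bound applied to the two outer pieces gives
\[
\|y-x\|=\|\gamma(1)-\gamma(0)\|\leq(1-t_3)\|y-x\|+\|\gamma(t_3)-\gamma(t_1)\|+t_1\|y-x\|,
\]
which forces $\|\gamma(t_3)-\gamma(t_1)\|\geq (t_3-t_1)\|y-x\|$; combined with the reverse Lipschitz bound this yields equality. For any intermediate $t_1<t_2<t_3$ the same reasoning produces the additivity identity $\|\gamma(t_2)-\gamma(t_1)\|+\|\gamma(t_3)-\gamma(t_2)\|=\|\gamma(t_3)-\gamma(t_1)\|$, in both seminorms at once, showing that $\gamma$ is a geodesic for each.

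I do not foresee a substantive obstacle: the argument uses only two ingredients already at hand, namely existence of a Shapley retraction onto $E$ (\Cref{th-main0}) and the top/bottom hemi-norm nonexpansiveness of Shapley operators (\Cref{prop-equiv}). The only noteworthy feature is that a single curve $\gamma$ serves for both seminorms, but this is automatic because one and the same $P$ is nonexpansive for each.
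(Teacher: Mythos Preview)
Your proof is correct and follows precisely the same approach as the paper: push the Euclidean segment forward by a Shapley retraction $P$ onto $E$, and use that $P$ is nonexpansive in both the sup-norm and Hilbert's seminorm while fixing the endpoints. The paper's version is terser---it simply asserts that the nonexpansive image of the segment ``is still a geodesic''---whereas you have written out the endpoint-squeeze argument that justifies this claim, but the underlying idea is identical.
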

\begin{proof}
  Since $E$ is closed and ambitropical, there is an idempotent
  Shapley operator $P$ such that $E=P(\R^n)$. Let $x,y\in E$,
  and consider the ordinary line segment,
  $\gamma(s) = x + s(y-x)$, for $s\in [0,1]$, which
  connects $x$ and $y$ in $\R^n$, and is a geodesic in any semi-norm,
  in particular, in Hilbert's semi-norm and in the sup-norm.
 Then, since $P$ is nonexpansive
  both in Hilbert's seminorm and in the sup-norm, the map $s\mapsto P(\gamma(s))$ is still a geodesic, both in Hilbert's seminorm and in the sup-norm, and it is included in $E$. 
  \end{proof}

\section{Ambitropical convexity and hyperconvexity}
We relate here the above notions of ambitropical cones and of
ambitropical hull with hyperconvexity.
This notion was introduced by Aronszajn and Panitchpadki~\cite{aronszajn},
We refer the reader to  \cite{Isbell1964,Dress1984,baillon,Espinola2001} for insights on hyperconvex spaces and on the related notions of injective metric spaces and tight-span.

\begin{defn}\label{def-hyperconvex}
A metric space $M$ is said to be \emph{hyperconvex} if $\bigcap_{\alpha \in \Gamma} B(x_{\alpha}, r_{\alpha}) \neq \emptyset$ for any collection of points $\{x_{\alpha}\}_{\alpha \in \Gamma}$ in $M$ and positive numbers $\{r_{\alpha}\}_{\alpha \in \Gamma}$ such that $d(x_{\alpha}, x_{\beta}) \leq r_{\alpha} + r_{\beta}$ for any $\alpha$, $\beta \in \Gamma$. 
\end{defn}

We recall that a metric space $M$ is said to be \emph{injective} if for every $X, Y$ metric spaces, where $Y$ is a subspace of $X$ and $f \colon Y \to M$ nonexpansive, there exists an extension of $f$, $f' \colon X \to M$, that is nonexpansive. (Note that injective metric spaces do not coincide with injective objects in the category of metric spaces with nonexpansive maps. Indeed, it is true that every injective object is an injective metric space but the converse does not hold since the inverse of a nonexpansive map need not be nonexpansive.)
 Hyperconvex spaces are complete metric spaces and they are exactly injective metric spaces~\cite{Isbell1964,Dress1984}. %

 \Cref{th-main0} shows that closed ambitropical cones are %
Shapley retracts of $\R^n$.
Therefore, they are particular cases of  sup-norm nonexpansive retracts of $\R^n$, so we get by~\cite[Th.~9]{aronszajn}, that they are hyperconvex subsets of $\R^n$
with sup-norm metric, which are, in addition, additive cones.  We next show that the converse implication holds.

\begin{thm}\label{th-hyper}
  Let $E$ be an additive cone of $\R^n$. The following assertions are equivalent
  \begin{enumerate}    %
    \item\label{ambitropical-2} $E$ is a closed ambitropical cone;
\item\label{hyperconvex} $E$ is hyperconvex for the sup-norm metric.
  \end{enumerate}
\end{thm}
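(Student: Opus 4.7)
The direction \myitem{ambitropical-2} $\Rightarrow$ \myitem{hyperconvex} has already been established in the paragraph preceding the theorem: by \Cref{th-main0}, a closed ambitropical cone is a Shapley retract of $\R^n$, hence a sup-norm nonexpansive retract of $\R^n$, hence hyperconvex by Aronszajn–Panitchpakdi. So the real content is the converse \myitem{hyperconvex} $\Rightarrow$ \myitem{ambitropical-2}, which I would prove by verifying condition \myitem{interval} of \Cref{th-main1}; that is, I would show that for every $z\in\R^n$, $[\pmax{E}(z),\pmin{E}(z)]\cap E\neq\emptyset$, which by \Cref{lem-car-B} means exhibiting $u\in E$ with $\mytop(u-y)\leq \mytop(z-y)$ and $\mybot(u-y)\geq \mybot(z-y)$ for every $y\in E$. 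Since hyperconvex metric spaces are complete, $E$ will also be automatically closed in $\R^n$.

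The naive attempt is to apply hyperconvexity to the family of sup-norm balls $B_{\|\cdot\|_\infty}(y,\|z-y\|_\infty)$, $y\in E$: these balls are pairwise compatible (they all contain $z$) and so produce some $u\in E$ with $\|u-y\|_\infty\leq\|z-y\|_\infty$ for all $y\in E$. However, this symmetric sup-norm bound is strictly weaker than the two hemi-norm bounds I need: I could have $\mytop(u-y)>\mytop(z-y)$ so long as $\mytop(y-u)$ compensates. To break this symmetry, the key move is to exploit the additive cone structure by enlarging the family to include all translates of centers. Concretely, I would consider the family of balls indexed by $(y,\lambda)\in E\times\R$, each centered at $y+\lambda\in E$ with radius $r_{y,\lambda}:=\|z-y-\lambda\|_\infty$. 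Every such ball contains $z$, so the triangle inequality on centers is automatic, and hyperconvexity of $E$ supplies $u\in E$ with
\[ \|u-y-\lambda\|_\infty \;\leq\; \|z-y-\lambda\|_\infty, \qquad \forall (y,\lambda)\in E\times\R. \]

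Fixing $y$ and varying $\lambda$ then separates the two sides of the sup-norm. Writing
\[ \|u-y-\lambda\|_\infty=\max\bigl(\mytop(u-y)-\lambda,\lambda-\mybot(u-y)\bigr), \]
and similarly for $z$, I observe that for $\lambda$ large enough both maxima are realised by their second argument, whereupon the inequality collapses to $\mybot(u-y)\geq \mybot(z-y)$; dually, for $\lambda$ sufficiently negative, it collapses to $\mytop(u-y)\leq \mytop(z-y)$. This gives exactly the two hemi-norm inequalities needed to put $u\in B_E(z)\cap E$, so \myitem{interval} of \Cref{th-main1} holds and $E$ is a closed ambitropical cone.

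The main conceptual hurdle, in my view, is the one I just described: recognising that pure sup-norm hyperconvexity is a priori too weak (it characterises general sup-norm nonexpansive retracts, not Shapley retracts), and that the additive-cone hypothesis must be used to sharpen it into the asymmetric hemi-norm condition characteristic of Shapley operators. The enlargement from the family $\{B(y,\|z-y\|_\infty)\}_{y\in E}$ to the one-parameter extension $\{B(y+\lambda,\|z-y-\lambda\|_\infty)\}_{(y,\lambda)}$ is precisely where additive homogeneity is promoted into the extraction of both one-sided bounds via the $\lambda\to\pm\infty$ limits. Everything else is bookkeeping: checking that Definition \ref{def-hyperconvex} permits arbitrary (in particular, uncountable) families of balls, and that condition \myitem{interval} of \Cref{th-main1} indeed yields the closed ambitropical conclusion \myitem{ambitropical-2}.
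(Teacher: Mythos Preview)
Your argument is correct, but it follows a different route from the paper. The paper verifies the definition of ambitropical cone directly: given $x,y\in E$, it finds the greatest lower bound of $\{x,y\}$ in $E$ by applying hyperconvexity to a family of balls whose intersections with $E$ are the order intervals $B^r_s(u)=\{w\in E\mid u+s\leq w\leq u+r\}=B(u+(s+r)/2,(r-s)/2)\cap E$, with centers $x+r_x/2$, $y+r_y/2$, and $u+r_u/2$ for $u$ ranging over the lower bounds of $\{x,y\}$ in $E$. The additive cone hypothesis enters in the same place as in your proof---to keep the translated centers inside $E$---but the target is the lattice axiom itself rather than the co-approximation characterization of \Cref{th-main1}. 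Your approach is arguably cleaner in that the single $\lambda$-parameterized family and the limits $\lambda\to\pm\infty$ make transparent exactly how the additive homogeneity converts the symmetric sup-norm bound into the two one-sided hemi-norm bounds; the paper's approach, on the other hand, is self-contained and does not invoke \Cref{th-main1}. One small remark: when you write ``every such ball contains $z$'', the balls in \Cref{def-hyperconvex} are balls \emph{in $E$}, so $z$ need not lie in them; what you actually use (and what suffices) is the triangle inequality $\|(y+\lambda)-(y'+\lambda')\|_\infty\leq \|z-y-\lambda\|_\infty+\|z-y'-\lambda'\|_\infty$, which holds regardless of whether $z\in E$.
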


\begin{proof}
  We only need to show that
  $\eqref{hyperconvex} \implies \eqref{ambitropical-2}$.
  Suppose
  that $E$ is an additive cone which is hyperconvex in the sup-norm metric. 
  Since an hyperconvex space is complete, and $E$ is a subspace of $\R^n$,
  $E$ must be closed in the Euclidean topology. By \Cref{def-ambi}, it is enough to prove that $E$ is a lattice in the induced order. Let $x, y \in E$ and let, for any $s \leq r \in \R$,

\begin{equation}\label{rewrite-balls}
B^r_s(u)=\{x \in E \mid u+s \leq x \leq u +r\} = B(u + (s+r)/2, (r-s)/2)
\end{equation}

Let $U = \{u \in E \mid u \leq x, \quad u \leq y\}$. 
We want to prove that there exists a maximal element $z$ of $U$, that is an element $z \in E$ such that $z \leq x$, $z \leq y$ and $u \leq z$ for every $u \in U$. 

This is equivalent to $z \in B^0_{r_x}(x)$, $z \in B^0_{r_y}(y)$ for some $r_x$, $r_y < 0$ and, for every $u \in U$, $z \in B_0^{r_u}(u)$ for some $r_u > 0$.
Let us choose $r_x$ and $r_y$ such that $x + r_x/2 \leq y$, $y +  r_y/2 \leq x$,
and $r_u$ such that, for all $u\in U$, $x \leq u +r_u$ and $y \leq u +r_u$.
Then, we assume without loss of generality that $r_x\geq r_y$, and observe
that $x+r_x/2 \in B^0_{r_x}(x)\cap  B^0_{r_y}(y)$. Moreover,
$x\in B^0_{r_x}(x) \cap B_0^{r_u}(u)$ holds for all $u\in U$.
Hence, we have that the balls in the above family have pairwise nonempty intersections. Therefore, using~\eqref{rewrite-balls}, they can be rewritten as balls satisfying the conditions in \Cref{def-hyperconvex}. So, using the hyperconvexity of $E$, we obtain an element $z \in E$ belonging to the intersection of all the balls, and such an element is the maximal element of $U$. 
\end{proof}

\begin{defn}
Let $E$ be a metric space and let denote the metric by $d$. We call $E$ a \emph{metric space with a real action} if there is an action of the additive group $(\R,+,0)$ on $E$, denoted by $(\lambda, x)\in \R\times E \mapsto \lambda \cdot x\in E$, such that 
\begin{subequations}\label{real action}
\begin{align}
\label{real action_lambda} d(\lambda \cdot x, \lambda '\cdot x) = |\lambda-\lambda'|
\enspace , \\
\label{real action_x} 
d(\lambda \cdot x, \lambda \cdot x') = d(x, x')\enspace,
\end{align}
\end{subequations}
for any $x, x' \in E$ and any $\lambda,\lambda' \in \R$. 
\end{defn}

Let us consider the category of metric spaces with a real action, with the nonexpansive, action-preserving maps. In the same spirit as for metric spaces, we say that $C$ is an \emph{injective metric space with a real action} if for any metric space with a real action $Y$ and any subset $X$ of $Y$ which is closed by the real action on $Y$, every nonexpansive, action-preserving map from $X$ to $C$ has an extension from $Y$ to $C$
that is nonexpansive and action-preserving. Note that, as for metric spaces, this definition does not coincide with the usual definition of injective objects in the above category.%

A prominent example of hyperconvex space is given by the {\em tight span} of a metric space, which provides its hyperconvex or injective hull~\cite{Isbell1964,Dress1984}.
We now recall the definition and basic results regarding the tight span.

\begin{defn}[Tight span of a metric space~\cite{Isbell1964,Dress1984}]\label{def-tightspan}
  Let $X$ be any metric space with a metric $d$.
  The {\em tight span} $T(X)$ is the set of functions $f\colon X \to \R_{\geq 0}$ satisfying one of the two equivalent properties:
 \begin{enumerate}
\item $  f(x)=\sup_{y\in X} (d(x, y)-f(y))$, for all $x \in X$;
\item $f(x) +f(y) \geq d(x, y)$, for all $x, y \in X$, and $f$ is minimal among the functions with this property.
\end{enumerate}
\end{defn}
The second property in~\Cref{def-tightspan} was
used in the original construction
of Isbell, where a map $f$ satisfying this property is called an extremal function. The first property in~\Cref{def-tightspan} was used by Dress, who
noted the equivalence with the  second one and established a number of additional properties~\cite[Th.~3]{Dress1984}. The injectivity hull property of the tight span and some other properties were proved in~\cite[Section 2]{Isbell1964}.
We gather below some of the properties 
established in~\cite[Th.~3]{Dress1984} or in~\cite[Section2]{Isbell1964},
that will be used in the sequel.

We equip $T(X)$ with the supremum distance 
\[
d_{\infty}(f,g) = \sup_{x \in X} |f(x) - g(x)| \enspace.
\]
\begin{thm}[\protect{\cite[Th.~3]{Dress1984} and~\cite[Section 2]{Isbell1964}}]
\label{tightspanprop}
For any $x\in X$, we denote by $e(x)=d(x,\cdot)$ the map $y\in X\mapsto d(x,y)$.
We have the following properties
\begin{enumerate}
\item \label{tightspanprop1}
Any element $f$ of $T(X)$ is $1$-Lipschitz continuous,
that is $|f(x)-f(y)|\leq d(x,y)$.
\item \label{tightspanprop2}For all $x\in X$ and $f\in T(X)$, we have $d_{\infty}(e(x),f)=f(x)$.
\item \label{tightspanprop3}For all $x\in X$, $e(x)\in T(X)$, 
and the map  $e:X\to T(X),\; x\mapsto e(x)$ is an isometry.
Then, $X$ is isometric to the range $e(X)\subset T(X)$ of $e$, and 
can be identified to a subset of $T(X)$.
\item \label{tightspanprop4} Any nonexpansive map $\phi$ from $T(X)$ to itself that fixes $e(X)$
is the identity map.
\item \label{tightspanprop5}$T(X)$ is an hyperconvex, or equivalently, injective metric space.%
\item \label{tightspanprop6}$T(X)$ is the injective hull of $X$ in the category of metric spaces,  meaning that for any injective space $Y$ such that $X\subset Y$, or such that there is an isometry $\iota$ from $X$ to $Y$, there exists 
an isometry $\varphi$ from $T(X)$ to $Y$ such that $\iota=\varphi\circ e$.
\end{enumerate}
\end{thm}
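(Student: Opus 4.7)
The plan is to establish the six assertions essentially in order, since later parts genuinely depend on earlier ones. As a preliminary step, I would verify the equivalence of the two characterizations in \Cref{def-tightspan}: starting from the sup-equation, taking $y=z$ gives $f(x)\ge d(x,y)-f(y)$, hence the triangle inequality $f(x)+f(y)\ge d(x,y)$; conversely, if $g\le f$ is extremal (i.e.\ satisfies the triangle inequality and is minimal), then $g(x)\ge \sup_z(d(x,z)-g(z))\ge \sup_z(d(x,z)-f(z))=f(x)$, so $g=f$. This equivalence will be used repeatedly.

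For \ref{tightspanprop1}, I would use the sup-characterization together with the triangle inequality for $d$: $d(x,z)-f(z)\le d(x,y)+d(y,z)-f(z)$, and taking the sup over $z$ yields $f(x)\le d(x,y)+f(y)$; then $1$-Lipschitz continuity follows by symmetry. For \ref{tightspanprop2}, combining $d(x,y)-f(y)\le f(x)$ (triangle inequality form) with $f(y)-d(x,y)\le f(x)$ ($1$-Lipschitz) gives $|d(x,y)-f(y)|\le f(x)$, and equality is attained at $y=x$, so $d_\infty(e(x),f)=f(x)$. For \ref{tightspanprop3}, the function $e(x)$ trivially satisfies the triangle inequality, and is minimal because any extremal $g\le e(x)$ forces $g(x)\le 0$, whence $g(y)\ge d(x,y)-g(x)\ge d(x,y)=e(x)(y)$; the isometry $d_\infty(e(x),e(y))=d(x,y)$ is then immediate from \ref{tightspanprop2}.

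For \ref{tightspanprop4}, if $\phi$ is nonexpansive and fixes each $e(x)$, then $\phi(f)(x)=d_\infty(\phi(f),\phi(e(x)))\le d_\infty(f,e(x))=f(x)$ by \ref{tightspanprop2}, so $\phi(f)\le f$, and the minimality characterization forces $\phi(f)=f$. The main obstacle will be \ref{tightspanprop5}, the hyperconvexity of $T(X)$. Given a collection of closed balls $B(f_\alpha,r_\alpha)$ in $T(X)$ with $d_\infty(f_\alpha,f_\beta)\le r_\alpha+r_\beta$, I would set $h(x):=\sup_\alpha(f_\alpha(x)-r_\alpha)$ (noting $h\ge 0$ since $r_\alpha\ge f_\alpha$ would contradict compatibility unless $r_\alpha\ge 0$), then check that $h$ satisfies the triangle inequality by writing $f_\alpha(x)-r_\alpha+f_\beta(y)-r_\beta\le d(x,y)$ using $f_\alpha(y)\le f_\beta(y)+d_\infty(f_\alpha,f_\beta)\le f_\beta(y)+r_\alpha+r_\beta$ and the triangle inequality for $f_\alpha$. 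A Zorn's lemma argument applied to the family of functions satisfying the triangle inequality and lying below $h$ produces a minimal such function $f^*\in T(X)$. The delicate step is then verifying $d_\infty(f^*,f_\alpha)\le r_\alpha$ for every $\alpha$: the lower bound $f^*\ge f_\alpha-r_\alpha$ is inherited from $h$, while the upper bound $f^*\le f_\alpha+r_\alpha$ follows from the observation that replacing $f^*$ by $f^*\wedge(f_\alpha+r_\alpha)$ still satisfies the triangle inequality (using extremality of $f_\alpha$ together with compatibility of the $r_\beta$), so by minimality no such reduction is possible.

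For \ref{tightspanprop6}, once \ref{tightspanprop5} is established, the equivalence between hyperconvexity and injectivity gives injectivity of $T(X)$. For the universal property, given an isometry $\iota:X\to Y$ with $Y$ injective, I would extend $\iota$ to a nonexpansive $\varphi:T(X)\to Y$ by injectivity of $Y$ applied to $e(X)\hookrightarrow T(X)$, and then use injectivity of $T(X)$ to obtain a nonexpansive map $\psi:Y\to T(X)$ extending $e\circ\iota^{-1}:\iota(X)\to T(X)$; the composition $\psi\circ\varphi$ is a nonexpansive self-map of $T(X)$ fixing $e(X)$, so by \ref{tightspanprop4} it equals the identity, which forces $\varphi$ to be an isometric embedding, and surjectivity follows from minimality of the injective hull among hyperconvex spaces containing $e(X)$.
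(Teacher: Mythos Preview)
The paper does not actually prove this theorem: it is quoted from \cite{Dress1984,Isbell1964}, and the only remark the paper adds is that \ref{tightspanprop6} follows from \ref{tightspanprop4} and \ref{tightspanprop5}. Your arguments for \ref{tightspanprop1}--\ref{tightspanprop4} are correct and standard, and your derivation of \ref{tightspanprop6} from \ref{tightspanprop4} and \ref{tightspanprop5} is exactly the route the paper indicates (the final sentence about ``surjectivity'' is unnecessary: the statement only asks for an isometric embedding, which is what $\psi\circ\varphi=\mathrm{id}$ already gives).

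There is, however, a genuine error in your sketch of \ref{tightspanprop5}. The function $h(x)=\sup_\alpha(f_\alpha(x)-r_\alpha)$ does \emph{not} satisfy the extremal inequality $h(x)+h(y)\ge d(x,y)$; indeed it need not even be nonnegative (take a single $f_\alpha$ with $r_\alpha$ large). Your own displayed computation produces $f_\alpha(x)-r_\alpha+f_\beta(y)-r_\beta\le d(x,y)$, which is the wrong direction. Moreover, you then apply Zorn to functions ``lying below $h$'' but immediately afterwards claim the lower bound $f^*\ge f_\alpha-r_\alpha$ is ``inherited from $h$'', which would require $f^*\ge h$; the two are inconsistent. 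The standard fix is to work instead with $g(x)=\inf_\alpha(f_\alpha(x)+r_\alpha)$: one checks, using $|f_\alpha(y)-f_\beta(y)|\le r_\alpha+r_\beta$ and $f_\alpha(x)+f_\alpha(y)\ge d(x,y)$, that $g(x)+g(y)\ge d(x,y)$; Zorn then yields a minimal $f^*\le g$ in $T(X)$, so $f^*\le f_\alpha+r_\alpha$ automatically, and the reverse bound $f^*\ge f_\alpha-r_\alpha$ follows from extremality of $f^*$ via $f^*(x)=\sup_y(d(x,y)-f^*(y))\ge\sup_y(d(x,y)-f_\alpha(y)-r_\alpha)=f_\alpha(x)-r_\alpha$.
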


Recall also that in \Cref{tightspanprop}, \Cref{tightspanprop6} follows from \Cref{tightspanprop5} and \Cref{tightspanprop4}.

We now show that when $X$ is a metric space with a real action,
the tight span $T(X)$ is canonically equipped with a structure of metric space with real action. 

\begin{prop}\label{tightaction}
  Assume that $X$ is a metric space with a real action
  $(\lambda, x)\in \R\times X \mapsto \lambda \cdot x\in X$.
For all $f \in T(X)$ and $\lambda\in \R$, we set
\[
\lambda \cdot f:X\to \R, \quad y\in X\mapsto
(\lambda \cdot f)(y) = f((-\lambda)\cdot y) \enspace . 
\]
Then, $\lambda \cdot f\in T(X)$. Moreover,
the map $(\lambda,f)\in \R\times T(X)\mapsto \lambda \cdot f\in T(X)$
yields a real action on $T(X)$, and, together with the supremum distance
$d_\infty$, this equips $T(X)$ with a structure of metric space with real action.
Moreover, the map $e:X\to T(X)$ of \eqref{tightspanprop3} of \Cref{tightspanprop} is an action-preserving isometry.
\end{prop}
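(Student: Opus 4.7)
The plan is to verify successively: (a) $\lambda\cdot f\in T(X)$; (b) the formula defines a group action of $(\R,+,0)$ on $T(X)$; (c) the isometric invariance \eqref{real action_x} holds with $d_\infty$; (d) the translation formula \eqref{real action_lambda} holds with $d_\infty$; (e) $e$ is action-preserving. Items (a), (b), (c) and (e) are straightforward. For (a) I would check the first characterization of \Cref{def-tightspan}: after the change of variable $z=(-\lambda)\cdot y$ and the rewriting $d(x,y)=d((-\lambda)\cdot x,z)$ furnished by \eqref{real action_x}, the identity $f(u)=\sup_v\bigl(d(u,v)-f(v)\bigr)$ applied at $u=(-\lambda)\cdot x$ turns into the same identity for $\lambda\cdot f$ at $x$. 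Item (b) is a direct unfolding of the definition; (c) follows since $y\mapsto(-\lambda)\cdot y$ is a bijection of $X$; and (e) is immediate from $(\lambda\cdot e(x))(y)=d(x,(-\lambda)\cdot y)=d(\lambda\cdot x,y)=e(\lambda\cdot x)(y)$ via \eqref{real action_x}.

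The only genuinely nontrivial step is (d). Using (c) and the group-action property, it reduces to showing $d_\infty(\mu\cdot f, f)=|\mu|$ for every $f\in T(X)$ and $\mu\in\R$, which, after the usual change of variable, amounts to
\[
\sup_{z\in X}\bigl|f(\mu\cdot z)-f(z)\bigr|=|\mu|\enspace.
\]
The upper bound is immediate from the $1$-Lipschitz property of elements of $T(X)$ (\Cref{tightspanprop1}) combined with $d(z,\mu\cdot z)=|\mu|$ granted by \eqref{real action_lambda} on $X$. For the matching lower bound, I would fix an arbitrary $x_0\in X$ and restrict $f$ to the orbit through $x_0$ by setting $g(\lambda):=f(\lambda\cdot x_0)$. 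Then $g\colon\R\to\R_{\geq 0}$ is $1$-Lipschitz and, using the tight-span inequality $f(u)+f(v)\geq d(u,v)$ together with \eqref{real action_lambda} on $X$, satisfies $g(\lambda)+g(\lambda')\geq|\lambda-\lambda'|$, so in particular $g(\lambda)\geq|\lambda|-g(0)$. Introducing $\alpha(\lambda):=g(\lambda)-\lambda$, the $1$-Lipschitz bound $g(\lambda+\mu)-g(\lambda)\leq \mu$ for $\mu>0$ forces $\alpha$ to be nonincreasing on $\R$, while the inequality just displayed yields $\alpha\geq-g(0)$. Hence $\alpha$ has a finite limit at $+\infty$, which implies $\alpha(\lambda+\mu)-\alpha(\lambda)\to 0$ and therefore $g(\lambda+\mu)-g(\lambda)\to\mu$ as $\lambda\to+\infty$. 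Taking the supremum over $z$ on the orbit of $x_0$ produces the lower bound $|\mu|$ and closes the argument.

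The main obstacle is precisely this asymptotic analysis in the lower bound: a generic $f\in T(X)$ need not realize the maximal displacement rate $|\mu|$ on any bounded portion of an orbit, and one is forced to analyze the behavior at infinity, where the combination of the $1$-Lipschitz condition and the tight-span inequality pins down the asymptotic slope of $g$ to be exactly $1$.
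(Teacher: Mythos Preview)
Your proposal is correct and follows essentially the same route as the paper. The treatments of (a), (b), (c), and (e) are identical in substance. For the nontrivial step (d), both arguments reduce to the same two ingredients: the $1$-Lipschitz property of $f\in T(X)$ for the upper bound, and the tight-span inequality $f(u)+f(v)\geq d(u,v)$ applied along an orbit for the lower bound. The paper argues by contradiction: assuming $\nu:=\sup_x|f(\mu\cdot x)-f(x)|<|\mu|$, it iterates to get $f((n\mu)\cdot x)\leq f(x)+n\nu$, then contradicts $f((n\mu)\cdot x)+f(x)\geq n|\mu|$ as $n\to\infty$. Your direct approach---showing $\alpha(\lambda)=g(\lambda)-\lambda$ is nonincreasing and bounded below, hence convergent, hence $g(\lambda+\mu)-g(\lambda)\to\mu$---is a repackaging of the same mechanism: linear growth along orbits forced by the tight-span inequality. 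The two are interchangeable.
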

\begin{proof}
 Using that \eqref{real action_x} holds for the metric and action of $X$, we shall deduce that $\lambda \cdot f\in  T(X)$ holds for any $f\in T(X)$ and $\lambda\in\R$. Indeed, for any $x \in X$,
we have
\begin{align*}
& \sup_{y\in X} (d(x, y)-(\lambda \cdot f)(y))
= \sup_{y\in X} (d(x, y)-f((-\lambda )\cdot y))
= \sup_{z\in X} (d(x, \lambda \cdot z)-f( z))\\
&\quad = \sup_{z\in X} (d((-\lambda )\cdot x, z)-f( z))
= f((-\lambda )\cdot x)= (\lambda \cdot f)(x)\enspace ,\end{align*}
where the first equality in the second line follows from \eqref{real action_x},
and the second one holds since $f\in T(X)$. 
So, by~\Cref{def-tightspan} (first property), $\lambda\cdot f \in T(X)$.
It follows that the map $(\lambda,f) \mapsto \lambda \cdot f$ defines
an action of the group $(\R,+,0)$ on $T(X)$.

We now show that the axioms~\eqref{real action} are satisfied, i.e., in the present setting:
  \begin{subequations}\label{action on T}
\begin{align}
\label{action on T1}&d_{\infty} (\lambda \cdot f, \lambda' \cdot f)=|\lambda - \lambda'|\enspace ,\\
\label{action on T2}&d_{\infty} (\lambda \cdot f, \lambda \cdot f') = d_{\infty}(f,f')\enspace,
\end{align}
\end{subequations}
for all $f,f'\in T(X)$ and $\lambda,\lambda'\in \R$.
Property~\eqref{action on T2} follows from a trivial change of variable:
 \begin{align*}
   &d_{\infty} (\lambda \cdot f, \lambda \cdot f')
   = \sup_{x \in X} |f((-\lambda) \cdot x) - f'((-\lambda)\cdot x)| 
 = \sup_{x \in X}|f(x) - f'(x)| = d_{\infty}(f,f')\enspace .
 \end{align*}
It remains to show \eqref{action on T1}. We have 
\begin{align}
  &d_{\infty} (\lambda \cdot f, \lambda' \cdot f)
  =\sup_{x \in X} |f((-\lambda) \cdot x) - f((-\lambda')\cdot x)|
\leq \sup_{x \in X} d((-\lambda) \cdot x, (-\lambda')\cdot x)=|\lambda - \lambda'|\enspace ,\label{ineq-action-T}
\end{align}
where the inequality in \eqref{ineq-action-T} uses that any element of $T(X)$ is $1$-Lipschitz continuous,
see \Cref{tightspanprop1} of \Cref{tightspanprop}.
If this inequality is strict for some $\lambda,\lambda'\in\R$ and $f\in T(X)$,
then 
\[ \nu:= \sup_{x \in X} |f((-\lambda) \cdot x) - f((-\lambda')\cdot x)|
< |\lambda - \lambda'|\enspace .\]
Denote $\mu=\lambda'-\lambda$, then applying a change of variable, 
we obtain that 
\[ |f(\mu \cdot x)-f(x)|\leq \nu <|\mu|,\quad \text{for all}\; x\in X\enspace.
\]
In particular, $f(\mu \cdot x)\leq f(x)+ \nu$, and applying successively 
this inequality to the elements $(n \mu)\cdot x$ such that $n$ is an integer, we
get $f((n \mu) \cdot x)\leq f(x)+ n\nu$, for all $n\geq 1$.
Since $f\in T(X)$, we also have $f((n \mu) \cdot x)+f(x)\geq d((n \mu) \cdot x,x)= n|\mu|$, where the last equation follows from \eqref{real action_lambda}.
We deduce that $n|\mu|\leq 2 f(x) +n \nu$ for all $n\geq 1$, which
is impossible since $\nu<|\mu|$. Therefore, the inequality in 
\eqref{ineq-action-T} is an equality, whih proves \eqref{action on T1}.

By \eqref{tightspanprop3} of \Cref{tightspanprop}, the map $e:X\to T(X), x\mapsto d(x,\cdot)$ is an  isometry.
We also have that
$e(\lambda\cdot x)(y)= d(\lambda\cdot x,y)= d(x,(-\lambda \cdot y))=
e(x)(-\lambda \cdot y)=(\lambda \cdot e(x))(y)$.
Hence, $e(\lambda\cdot x)= \lambda \cdot e(x)$.
This shows that $e$ is also action-preserving.
\end{proof}

\begin{thm}\label{hyperconvexhull}
Injective metric spaces with a real action are exactly hyperconvex spaces with a real action. 
\end{thm}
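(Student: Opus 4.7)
The plan is to establish the two directions of the equivalence separately, in both cases adapting the classical Aronszajn--Panitchpakdi argument (hyperconvex iff injective in the category of metric spaces) so as to respect the real action.

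For the direction \emph{hyperconvex with real action implies injective with real action}, let $C$ be hyperconvex with a real action, let $X \subset Y$ be metric spaces with real action with $X$ closed under the $\R$-action on $Y$, and let $f : X \to C$ be nonexpansive and action-preserving. I would apply Zorn's lemma to the family of nonexpansive action-preserving extensions of $f$ to intermediate action-closed subsets $X \subset X' \subset Y$, ordered by extension, obtaining a maximal extension $\tilde{f} : X' \to C$. Suppose for contradiction that $X' \neq Y$ and pick $y_0 \in Y \setminus X'$; since $X'$ is action-stable, the whole orbit $\R \cdot y_0$ is disjoint from $X'$. For each $x \in X'$ the closed ball $B(f(x), d(y_0, x)) \subset C$ satisfies, for all $x, x' \in X'$,
\[
d(f(x), f(x')) \leq d(x, x') \leq d(y_0, x) + d(y_0, x'),
\]
so hyperconvexity of $C$ yields $z \in \bigcap_{x \in X'} B(f(x), d(y_0, x))$. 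Define $\tilde{f}(\lambda \cdot y_0) := \lambda \cdot z$; reducing every mixed distance to the form $d(z, \cdot)$ and $d(y_0, \cdot)$ via the action axioms shows that this extension is nonexpansive and action-preserving, contradicting maximality. Hence $X' = Y$.

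For the direction \emph{injective with real action implies hyperconvex}, let $\{B(x_\alpha, r_\alpha)\}_{\alpha \in \Gamma}$ be a collection of balls in $C$ with $d(x_\alpha, x_\beta) \leq r_\alpha + r_\beta$. Let $X := \bigcup_\alpha \R \cdot x_\alpha \subset C$ with inherited metric and action, so that $X$ is action-closed. I would construct an ambient space $Y := X \sqcup \R$ with real action, by adjoining one fresh orbit $\R \cdot y_*$ identified with $\R$ under its translation action, and defining distances between orbits by
\[
d(x, y_*) := \inf_\alpha \bigl(d(x, x_\alpha) + r_\alpha\bigr), \qquad d(x, \lambda \cdot y_*) := d\bigl((-\lambda) \cdot x,\, y_*\bigr).
\]
In particular $d(x_\alpha, y_*) \leq r_\alpha$, and the hypothesis $d(x_\alpha, x_\beta) \leq r_\alpha + r_\beta$ entails the triangle inequality on $Y$; the action axioms on $Y$ are built into the second formula. (In the degenerate case where $d(x, y_*) = 0$ for some $x \in X$, one quotients by the resulting equivalence, or observes directly that such an $x$ already lies in the intersection of all the balls.) The inclusion $X \hookrightarrow C$ is an isometry and action-preserving, hence by injectivity of $C$ it extends to a nonexpansive action-preserving map $\tilde{f} : Y \to C$. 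Then for every $\alpha$,
\[
d(\tilde{f}(y_*), x_\alpha) = d(\tilde{f}(y_*), \tilde{f}(x_\alpha)) \leq d(y_*, x_\alpha) \leq r_\alpha,
\]
so $\tilde{f}(y_*) \in \bigcap_\alpha B(x_\alpha, r_\alpha)$, proving hyperconvexity.

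The main technical obstacle I expect is verifying that the extended distance on $Y$ in the second direction is a genuine metric compatible with the real action: the inequality $d(x, y_*) \leq d(x, x') + d(x', y_*)$ follows directly from the definition as an infimum, whereas $d(x, x') \leq d(x, y_*) + d(x', y_*)$ requires combining the pairwise ball condition $d(x_\alpha, x_\beta) \leq r_\alpha + r_\beta$ with two applications of the triangle inequality in $X$. Compatibility with the action reduces to the corresponding properties on $X$ and on the new orbit and is forced by the second formula above. Once these verifications are in place, both implications reduce to the classical Aronszajn--Panitchpakdi-style arguments, carried out in the enriched category of metric spaces with a real action.
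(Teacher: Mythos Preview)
Your proof is correct. The first direction (hyperconvex $\Rightarrow$ injective) follows the same Zorn's-lemma-plus-orbit-extension argument as the paper, which in turn adapts \cite{Espinola2001}.

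The second direction is genuinely different. The paper proves injective $\Rightarrow$ hyperconvex by invoking the tight span: it uses \Cref{tightaction} to equip $T(X)$ with a real action, then applies injectivity to get a retraction $h:T(X)\to X$ and concludes via the rigidity property that any nonexpansive self-map of $T(X)$ fixing $e(X)$ is the identity. Your argument instead builds a single ad hoc extension $Y=X\sqcup(\R\cdot y_*)$ with distances tailored to the given ball family, and reads off a point in the intersection as the image of $y_*$ under the extended map. This is closer in spirit to the original Aronszajn--Panitchpakdi proof and is more elementary: it avoids developing the tight span with real action entirely. The paper's route, by contrast, yields \Cref{cor-injhull} (identification of the injective hull with $T(X)$) as a byproduct of the same machinery, which your approach does not immediately give. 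The verifications you flag (triangle inequalities on $Y$ and compatibility with the action) go through as you indicate; in particular the key mixed inequality $d(a,b)\le d(a,y_*)+d(b,y_*)$ for $a,b\in X$ follows from $d(a,b)\le d(a,x_\alpha)+d(x_\alpha,x_\beta)+d(x_\beta,b)\le d(a,x_\alpha)+r_\alpha+r_\beta+d(x_\beta,b)$ and taking infima.
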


\begin{proof} To show that hyperconvex spaces with a real action are injective,
  we proceed as in the proof of Theorem 4.2 of \cite{Espinola2001},
  which shows the analogous property in the absence of a real action. In the following we will outline the parts which require to be adjusted. 
Let $H$ be an hyperconvex metric space with a real action, we want to prove that $H$ is injective. 
We shall consider a metric space with real action $(M,d)$ and $D$, a subset of $M$ which is closed by the action. Let $(H,d')$ be a metric space with real action and $g \colon D \to H$ a nonexpansive, action preserving map. To prove that $H$ is injective we shall show that there exists a nonexpansive and action-preserving extension of $g$ from $M$ to $H$. %

Let define $\mathcal{C}$ as follows:
\[
\mathcal{C} = \{(T_F, F) \mid D \subseteq F \subseteq M,\; F \text{ is closed by the action of } M,\; T_F \colon F \to H,\; T_F \text{ preserves the action} \}
\]
$\mathcal{C}$ is nonempty since it contains $(g,D)$. It is ordered by 
$(T_F, F)\leq (T_{F'},F')$ if $F\subset F'$ and $T_{F'}$ is an extension of $T_F$ over $F'$.
We need to show that any maximal element $(T_{F_1}, F_1)$ of $\mathcal{C}$ is
such that $F_1=M$.  We proceed by contradiction, assuming that there exists
$z\in M\setminus F_1$, and constructing $(T_F,F)\in {\mathcal C}$,
such that $(T_{F_1}, F_1)< (T_F,F)$ and $z\in F$.
Since the set $F$ must be close under the action, we take $F=F_1 \cup \{\lambda \cdot z \mid \lambda \in \R \}$. Also, the extended function $T_F$ need to preserve the action. So we need to choose $x\in H$  and define
$T_F(\lambda \cdot z) = \lambda \cdot x$ for all $\lambda \in \R$. Since we are assuming that $z \notin F_1$, we have that for every $\lambda$, $\lambda \cdot z \notin F_1$, so we do not have ambiguities in the definition of $T_F$. Since $H$ is hyperconvex, the arguments of the proof of Theorem 4.2 of \cite{Espinola2001} shows that there exists $x\in H$ such that the map $T_F$ restricted to the set $F_1\cup\{z\}$ is nonexpansive. We need to prove that $T_F$ is nonexpansive on all the metric space $F$.
Since $T_F$ is already nonexpansive on $F_1$, we only need to check the following conditions:
\begin{align*}
& d'(T_F(\lambda\cdot z),T_F(\lambda'\cdot z))\leq d(\lambda \cdot z,
\lambda' \cdot z)\enspace ,\\
&d'(T_F(z_1),T_F(\lambda\cdot z)\leq d(z_1,\lambda \cdot z)
\enspace ,
\end{align*}
for all $z_1\in F_1$ and $\lambda,\lambda' \in \R$.
Since $T_F(\lambda \cdot z) = \lambda \cdot x$, the first inequality is
an equality and follows from   \eqref{real action_lambda}.
Since $T_F$ is preserving the action on $F$, $F_1$ is closed under the action,
and $T_F$ is nonexpansive on $F_1\cup\{z\}$, 
the first inequality follows from  \eqref{real action_x}.

To show the converse implication,
we shall make use of the properties of the tight span. 

Consider now a metric space with  real action $X$ and assume that it is injective. Then, by \eqref{tightspanprop5} of \Cref{tightspanprop} 
 and \Cref{tightaction}, $T(X)$ is a hyperconvex set 
with a real action, and the map $e:X\to T(X), x\mapsto d(x,\cdot)$ is an action-preserving isometry.
By injectivity of $X$ there exists an action-preserving nonexpansive map from $h \colon T(X) \to X$
such that $h\circ e$ is the identity. Indeed, we can consider the identity map on $X$ as the map which will be expanded. 
Then, the map $e\circ h:T(X)\to T(X)$ is a nonexpansive map which fixes $e(X)$.

By \Cref{tightspanprop4} of \Cref{tightspanprop}, we have that the map $e\circ h$ is the identity, so $X=T(X)$ and it is hyperconvex. 
\end{proof}

We deduce the following result.

\begin{cor}\label{cor-injhull}
  Assume that $X$ is a metric space with a real action. 
Then, $T(X)$ is the injective hull of $X$, meaning that for any injective metric space with a real action $Y$ such that $X\subset Y$, or such that there is an action preserving isometry $\iota$ from $X$ to $Y$, there exists 
an action preserving isometry $\varphi$ from $T(X)$ to $Y$ such that $\iota=\varphi\circ e$.
\end{cor}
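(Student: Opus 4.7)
The plan is to mimic the classical argument showing that the tight span is the injective hull in the category of metric spaces, which is the observation at the end of \Cref{tightspanprop} that \ref{tightspanprop6} follows from \ref{tightspanprop5} and \ref{tightspanprop4}. The key new ingredients are \Cref{tightaction}, which equips $T(X)$ with a real action making $e:X\to T(X)$ an action-preserving isometry, and \Cref{hyperconvexhull}, which tells us that $T(X)$ is injective in the category of metric spaces with real action.

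First, I would reduce to the case $X\subset Y$ with $e=\iota$ being an inclusion, by identifying $X$ with $\iota(X)\subset Y$ (which is licit since $\iota$ is an action-preserving isometry, so $\iota(X)$ is closed under the action of $Y$). Since $Y$ is injective in the category of metric spaces with real action, applying the defining property of injectivity to the inclusion $X\hookrightarrow T(X)$ (whose image $e(X)$ is closed under the action of $T(X)$ by \Cref{tightaction}) and to the action-preserving nonexpansive map $\iota\circ e^{-1}:e(X)\to Y$ yields an action-preserving nonexpansive extension $\varphi:T(X)\to Y$ such that $\varphi\circ e=\iota$.

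Next, symmetrically, $T(X)$ is injective in the same category by \Cref{hyperconvexhull} applied to the real action of \Cref{tightaction}. Applied to the inclusion $X\hookrightarrow Y$ and to the action-preserving nonexpansive map $e\circ \iota^{-1}: \iota(X)\to T(X)$, this produces an action-preserving nonexpansive map $\psi:Y\to T(X)$ such that $\psi\circ \iota=e$. Then $\psi\circ\varphi:T(X)\to T(X)$ is nonexpansive and fixes $e(X)$, since on $X$ we have $\psi\circ\varphi\circ e=\psi\circ\iota=e$. By \Cref{tightspanprop}, \ref{tightspanprop4}, this forces $\psi\circ\varphi=\operatorname{id}_{T(X)}$.

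The last step is to upgrade $\varphi$ from a nonexpansive map to an isometry. This follows from the identity $\psi\circ\varphi=\operatorname{id}_{T(X)}$ together with the nonexpansiveness of both $\varphi$ and $\psi$: for any $f,g\in T(X)$,
\[
d_\infty(f,g)=d_\infty(\psi(\varphi(f)),\psi(\varphi(g)))\leq d(\varphi(f),\varphi(g))\leq d_\infty(f,g),
\]
so all inequalities are equalities and $\varphi$ is an isometry. The main obstacle I anticipate is purely bookkeeping---checking that the image sets to which one extends are closed under the action so that the injectivity property of \Cref{hyperconvexhull} truly applies---but this is guaranteed by the fact that $e$ and $\iota$ are action preserving, so no further difficulty arises.
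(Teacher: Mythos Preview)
Your proof is correct and follows essentially the same approach as the paper's own proof: both use the injectivity of $Y$ to produce the map $\varphi$ (called $g$ in the paper), the injectivity of $T(X)$ (via \Cref{hyperconvexhull} and \Cref{tightaction}) to produce the reverse map $\psi$ (called $h$ in the paper), and then invoke \Cref{tightspanprop}\ref{tightspanprop4} on the composite $\psi\circ\varphi$ to conclude. You spell out more explicitly why $\varphi$ is an isometry, which the paper leaves implicit.
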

\begin{proof}
Let $X$ is a metric space with a real action, and let $Y$ be an injective metric space with a real action such that there is an action preserving isometry $\iota$ from $X$ to $Y$. 
As above, by \eqref{tightspanprop5} of \Cref{tightspanprop} 
 and \Cref{tightaction}, $T(X)$ is a hyperconvex set 
with a real action, and the map $e:X\to T(X), x\mapsto d(x,\cdot)$ is an action-preserving isometry. Moreover, by \Cref{hyperconvexhull}, $T(X)$ is injective.

By injectivity of $Y$ there exists an action-preserving nonexpansive map from $g \colon T(X) \to Y$ such that $g\circ e=\iota$.
By injectivity of $T(X)$ there exists an action-preserving nonexpansive map from $h \colon Y\to T(X)$ such that $h\circ \iota =e$.
Then, the map $h\circ g:T(X)\to T(X)$ is a nonexpansive map which 
satisfies $h\circ g\circ e= h\circ \iota= e$, so it fixes $e(X)$.

By \Cref{tightspanprop4} of \Cref{tightspanprop}, we have that the map $h\circ g$ is the identity, so $g$ is an isometry such that
$\iota=g\circ e$.
\end{proof}

\begin{thm}\label{Hypambihull}
Let $X \in \R^n$ be an additive cone. Then, the ambitropical hull of $X$ and the injective hull of $X$ are in bijective correspondence under an action-preserving isometry.
\end{thm}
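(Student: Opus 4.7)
The plan is to realise both sides as injective metric spaces with real action (in the sense of \Cref{hyperconvexhull}) and then invoke the universal property of the injective hull established in \Cref{cor-injhull}. Throughout, $\R^n$ is endowed with the sup-norm metric and with the real action $(\lambda,x)\mapsto \lambda+x$, whose axioms~\eqref{real action_lambda}–\eqref{real action_x} are immediate; every additive cone of $\R^n$ inherits this structure of metric space with real action.

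First I would fix an ambitropical hull $\tilde X$ of $X$, which exists by \Cref{prop-ambihull} (e.g.\ $\tilde X=\range \barqm{X}$). By \Cref{th-hyper}, $\tilde X$ is hyperconvex for the sup-norm; since it is also closed under the real action, \Cref{hyperconvexhull} implies that $\tilde X$ is an injective metric space with real action. The inclusion $\iota\colon X\hookrightarrow \tilde X$ is trivially an action-preserving isometry, so \Cref{cor-injhull} provides an action-preserving isometry $\varphi\colon T(X)\to \tilde X$ with $\iota=\varphi\circ e$.

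The remaining key step is to show that $\varphi$ is surjective. Its image $\varphi(T(X))$ is stable under the real action (since $\varphi$ is action-preserving), and it contains $X=\iota(X)$ because $\iota=\varphi\circ e$. As the isometric image of the hyperconvex space $T(X)$, it is itself hyperconvex; it is therefore complete and consequently closed in $\R^n$. By \Cref{th-hyper}, $\varphi(T(X))$ is a closed ambitropical cone sandwiched between $X$ and $\tilde X$, so minimality of $\tilde X$ forces $\varphi(T(X))=\tilde X$. Hence $\varphi$ is a bijective, action-preserving isometry between $T(X)$ and $\tilde X$.

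I expect the surjectivity argument to be the only nontrivial step; the rest is bookkeeping assembling \Cref{th-hyper}, \Cref{hyperconvexhull}, and \Cref{cor-injhull}. Independence of the choice of $\tilde X$ follows from \Cref{prop-ambihull}: any two ambitropical hulls are related by a Shapley operator whose inverse is also a Shapley operator, and two mutually nonexpansive maps in the sup-norm must be isometries, so such an isomorphism is automatically an action-preserving isometry.
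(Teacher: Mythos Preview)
Your proposal is correct and follows essentially the same route as the paper's own proof: pick an ambitropical hull $\tilde X$, use \Cref{th-hyper} and \Cref{hyperconvexhull} to see it is injective with real action, invoke \Cref{cor-injhull} to get the action-preserving isometric embedding $\varphi:T(X)\to\tilde X$, then observe that $\varphi(T(X))$ is a closed ambitropical cone containing $X$ and conclude surjectivity by minimality. Your write-up is in fact slightly more explicit than the paper's on why $\varphi(T(X))$ is an additive cone and is closed, and your final paragraph on independence of the choice of $\tilde X$ (via the observation that mutually inverse nonexpansive maps are isometries) is a correct addendum that the paper leaves implicit.
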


\begin{proof}
  Let $A\subset \R^n$ be an ambitropical hull of $X$, with $\imath:X\to A$ the canonical injection. By~\Cref{th-hyper}, $A$ is a hyperconvex additive cone,
  and by~\Cref{hyperconvexhull}, it is an injective space with real action,
  and by~\Cref{cor-injhull}, there is an action-preserving isometry $\varphi$ from $T(X)$ to a subset of $A$ and such that $\varphi \circ e = \imath$. Hence,
  $\varphi(T(X))$ is injective, and by~\Cref{th-hyper} and~\Cref{hyperconvexhull}, it is an ambitropical cone. By minimality of $A$, $A=\varphi(T(X))$ which entails that $T(X)$ and $A$ are in bijective correspondence under the action
  preserving isometry $\varphi$. 
  \todo{added details, to be checked}
\end{proof}
\todo[inline]{SG: perhaps move what follows near the examples or delete it}
In general, it is already a difficult problem to construct the tight span of finite metric spaces with more than a few points. 
A characterization of the tight span of arbitrary subsets of the plane with the maximum metric is given in \cite{KILIC2016693}. 
Thanks to the above theorem, we get a way to compute the tight span of an additive cone, since this last one is isometric to its ambitropical hull, see e.g. \Cref{fig-extreme} for an illustration.
\todo[inline]{SV: added Kilic reference here, added part in the introduction about the results of this section} 
\section{Special classes of ambitropical cones}
We next review several canonical
classes of sets in tropical geometry, showing these
are special cases of ambitropical cones,
that can be characterized by suitable reinforcements
of \Cref{th-main0}.

The simplest examples of ambitropical cones consist of alcoved
polyhedra, discussed
in \Cref{sec-prelim}.  Actually, the following
result shows that alcoved polyhedra
are characterized by the property of being sublattices
of $\R^n$. Observe that all properties but one do not
assume polyhedrality, polyhedrality comes as a
consequence of the other properties.
\begin{prop}\label{prop-equiv2} Let $C\subset \R^n$. The following statements
  are equivalent:
  \begin{enumerate}
  \item\label{e-item1} $C$ is an alcoved polyhedron;
  \item\label{e-item0} $C$ is a closed tropical cone and a closed
    dual tropical cone,
    \item\label{e-item2} $C$ is a closed ambitropical cone in which the
      infimum and supremum laws coincide with the ones of $\R^n$.
    \item \label{e-item3} There is a tropically linear Shapley operator
      $T$ such that $C=\{x\in \R^n\mid T(x)\leq x\}$.
        \item \label{e-item4} There is a dually tropically linear Shapley operator $T$ such that $C=\{x\in \R^n\mid T(x)\geq x\}$.
      \end{enumerate}
\end{prop}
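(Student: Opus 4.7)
The plan is to prove the ring of equivalences by separating four easy steps from the one substantive step. First I would dispatch $(1)\Leftrightarrow (4)$ and $(1)\Leftrightarrow (5)$. For $(1)\Rightarrow (4)$, given $C=\alcoved{M}$ I take $T(x):=M^{*}x$: this is tropically linear (matrix product), and a Shapley operator since $M^{*}\geq I$ forces each row of $M^{*}$ to contain a finite entry, while \Cref{prop-def-alcoved} gives $C=\{x\mid M^{*}x\leq x\}$. For $(4)\Rightarrow (1)$, any tropically linear Shapley operator has the form $T(x)_i=\max_j(N_{ij}+x_j)$ with each row of $N$ having a finite entry, and then $\{x\mid T(x)\leq x\}$ unfolds coordinate-wise to $\{x\mid x_i\geq N_{ij}+x_j\}=\alcoved{N}$. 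The equivalence $(1)\Leftrightarrow (5)$ is obtained dually, rewriting $\alcoved{M}=\{x\mid x_j\leq -M_{ij}+x_i\}$ and using the dual tropically linear Shapley operator $S(x)_j:=\min_i(-M_{ij}+x_i)$. Next, $(1)\Rightarrow (2)$ is a coordinate-wise verification: if $x,y\in\alcoved{M}$ then $(x\vee y)_i=\max(x_i,y_i)\geq M_{ij}+\max(x_j,y_j)=M_{ij}+(x\vee y)_j$ and dually for $\wedge$, so $\alcoved{M}$ is both a closed tropical and a closed dual tropical cone. Finally $(2)\Leftrightarrow (3)$ amounts to unfolding definitions: stability of $C$ under the $\R^n$-operations $\vee,\wedge$ is precisely the statement that the lattice operations of the ambitropical cone $C$ coincide with those of $\R^n$, with closedness matching conditional completeness via \Cref{prop-closed}.

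The substantive step is $(2)\Rightarrow (1)$. Set $M_{ij}:=\inf\{y_i-y_j\mid y\in C\}\in [-\infty,+\infty)$ for $i\neq j$ and $M_{ii}:=0$; the inclusion $C\subset\alcoved{M}$ is immediate. Conversely, fix $x\in\alcoved{M}$ and $\epsilon>0$. For each ordered pair $(i,j)$ with $i\neq j$, using $M_{ji}\leq x_j-x_i$ together with the definition of $M_{ji}$ as an infimum (or that $y_j-y_i$ is unbounded below on $C$ when $M_{ji}=-\infty$), pick $y^{(i,j,\epsilon)}\in C$ with $y^{(i,j,\epsilon)}_j-y^{(i,j,\epsilon)}_i<x_j-x_i+\epsilon$. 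By the additive cone property, translate $y^{(i,j,\epsilon)}$ by the scalar $x_i-y^{(i,j,\epsilon)}_i$ so that $y^{(i,j,\epsilon)}_i=x_i$ and hence $y^{(i,j,\epsilon)}_j<x_j+\epsilon$. Using $\wedge$-stability set $z^{(i,\epsilon)}:=\bigwedge_{j\neq i}y^{(i,j,\epsilon)}\in C$; this satisfies $z^{(i,\epsilon)}_i=x_i$ (all $y^{(i,j,\epsilon)}_i$ equal $x_i$), and for $k\neq i$, taking $j=k$ in the meet yields $z^{(i,\epsilon)}_k\leq y^{(i,k,\epsilon)}_k<x_k+\epsilon$. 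Using $\vee$-stability set $z^\epsilon:=\bigvee_i z^{(i,\epsilon)}\in C$; then $z^\epsilon_k\geq z^{(k,\epsilon)}_k=x_k$ and $z^\epsilon_k\leq x_k+\epsilon$, so $x\leq z^\epsilon\leq x+\epsilon$. Letting $\epsilon\downarrow 0$ gives $z^\epsilon\to x$, and closedness of $C$ yields $x\in C$.

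The main obstacle lies in the construction above. The infima defining $M_{ij}$ need not be attained, and the $n-2$ coordinates of a near-minimizer $y^{(i,j,\epsilon)}$ other than $i$ and $j$ are a priori uncontrolled. The key idea is to use $\wedge$-stability to trim those free coordinates one pair $(i,j)$ at a time, producing an element $z^{(i,\epsilon)}$ whose $i$-th coordinate is exactly $x_i$ and whose other coordinates are bounded above by $x_k+\epsilon$; then $\vee$-stability reassembles the family $\{z^{(i,\epsilon)}\}_i$ into an element $z^\epsilon$ tightly sandwiched between $x$ and $x+\epsilon$. The $\epsilon$-approximation combined with closedness of $C$ neatly bypasses the delicate question of whether the infima $M_{ij}$ are attained.
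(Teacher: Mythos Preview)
Your proof is correct and the overall structure matches the paper's, but the substantive step $(2)\Rightarrow(1)$ follows a genuinely different route. The paper defines the same matrix $M$ (writing $M_{ij}=\sup\{\lambda\mid v_i\geq \lambda+v_j,\ \forall v\in C\}$, which equals your $\inf\{y_i-y_j\mid y\in C\}$), observes $C\subset\alcoved{M}$, and then argues structurally: setting $C_j=\{v\in C\mid v_j=0\}$ and $u^j=\inf C_j$, the conditional completeness of $C$ forces each $\operatorname{Min}C_j$ to be the single point $u^j$, and \Cref{th-sum} (the tropical Carath\'eodory/extreme-generator theorem) then says every element of $C$ is a tropical combination of the $u^j$, i.e., of the columns of $M^*$, whence $C=\{M^*y\mid y\in\R^n\}=\alcoved{M}$. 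Your approach bypasses \Cref{th-sum} entirely: you build, for each $x\in\alcoved{M}$ and each $\epsilon>0$, an element $z^\epsilon\in C$ with $x\leq z^\epsilon\leq x+\epsilon$ by assembling translated near-minimizers via $\wedge$ then $\vee$, and conclude by closedness. This is more elementary and self-contained; the paper's argument, by contrast, exhibits explicitly the tropical generating family of $C$ (the columns of $M^*$), which is extra structural information your argument does not produce. One minor remark: your meet $z^{(i,\epsilon)}=\bigwedge_{j\neq i}y^{(i,j,\epsilon)}$ is empty when $n=1$, but that case is trivial since a nonempty closed additive cone of $\R$ is $\R=\alcoved{0}$.
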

\begin{proof}[Proof of \Cref{prop-equiv2}]
  An alcoved polyhedron is stable by pointwise supremum and pointwise infimum
  of vectors, so~\eqref{e-item1} implies~\eqref{e-item0}. Trivially,
  ~\eqref{e-item0} implies~\eqref{e-item2}.

  Suppose now that~\eqref{e-item2} holds. Then, by~\Cref{prop-closed}, $C$
  is a conditionally complete lattice, and the lattice
  operations of $C$ are the pointwise supremum and pointwise infimum
  of vectors. Define, for all $i,j\in [n]$,
  \[
  M_{ij}:= \sup \{\lambda \mid v_i\geq \lambda + v_j ,\qquad \forall v\in C\}
  \]
  The latter set is nonempty, it is closed
  and bounded from above, so that the supremum is achieved.
  In particular, we have $M_{ij}\in\tmax$. By construction, $C\subset \alcoved{M}$, and $M_{ij}=M_{ij}^*$.
  Observe also that the inequality $v_i \geq \lambda +v_j$, 
  is equivalent to $w_i \geq \lambda$ where $w:=v- v_j\delta _j\in C$
  is such that $w_j=0$, denoting by $\delta_j=(0,\dots,0,1,0,\dots 0)$ the $j$-th vector of the canonical basis of $\R^n$.
  It follows that:
  \[
  M_{ij} = \inf C_{ij},\;\text{where}\;
  C_{ij}:= \{ v_i \mid v\in C_j\}
\;\text{and}\;
  C_j:= \{ v \in C\mid  v_j =0 \} \enspace .
  \]
  Denoting by $u^j$ the $j$th column of the matrix $M$, we deduce that
  \[
  u^j =\inf C_j \in \lowerclosure{C}\enspace .
  \]
  Define, $A_j:= \{v\in \lowerclosure{C}\mid v_j =0\}$.
  Since $C$ is a conditionally complete lattice, the set
  $A_j\subset\lowerclosure{C}$ is stable by taking infima.
  Hence, the set $\operatorname{Min}A_j$  consists
  of a single point, $u^j$. By \Cref{th-sum},
  every element of $C$ is a tropical linear
  combination of vectors $u^j$.
  This
  implies that $\alcoved{M}=\{M^* y \mid y\in \R^n\}= C$.
  So~\eqref{e-item2} implies~\eqref{e-item1}.

  If $C=\alcoved{M}$ is an alcoved polyhedron,
  it follows from~\eqref{e-alcoved2} that 
  $C=\{x\in \R^n\mid T(x) \leq x \}$ where $T(x) =M^*x$
  is a Shapley operator. So~\eqref{e-item1}
  implies~\eqref{e-item3}.

  Conversely, if $C=\{x\in \R^n\mid T(x)\leq x \}$ for some 
  tropically linear Shapley operator, then, for all $x,y\in C$,
  since $T$ is order preserving,
  $T(x\wedge y)\leq T(x)\wedge T(y) \leq x\wedge y$,
  and since $T$ is tropically linear,
  $T(x\vee y)=T(x)\vee T(y) \leq x\vee y$,
  showing that $x\wedge y$ and $x\vee y$ belong to $C$.
  Moreover, $C$ is closed, since $T$ is continuous (in fact, $T$ is sup-norm
  nonexpansive). 
  This shows that~\eqref{e-item3} implies~\eqref{e-item2}.
  
  \eqref{e-item1} and \eqref{e-item4} are equivalent.
  Indeed, observe that $C$ is of the form $\{x\in \R^n\mid T(x)\leq x\}$
  for some tropically linear Shapley operator iff $-C$
  is of the form $\{x\in \R^n\mid P(x)\geq x\}$
  for some dually tropically linear Shapley operator
  (consider the involution $T\mapsto P, \; P(x):=-T(-x)$
  on the space of Shapley operators). Moreover, $C$ is
  an alcoved polyhedron iff $-C$ is an alcoved polyhedron.
  Hence, the announced equivalence follows from
  the equivalence of~\eqref{e-item1} and~\eqref{e-item3},
  already established.
\end{proof}

Note that in the above proof we made use of a particular instantiation of the "flip" application $X \to -X$ which sends ambitropical cones to ambitropical cones. 

Tropical cones, and dual tropical cones,
are also remarkable examples of ambitropical cones. The relation
between these cones and sub or super-fixed point sets
of Shapley operators was already noted in~\cite{AGGut10}.
\begin{prop}\label{prop-tropical}
   Let $C\subset \R^n$. The following statements
  are equivalent:
  \begin{enumerate}
  \item\label{it-cone} $C$ is a closed tropical cone;
  \item\label{it-ambi} $C$ is a closed ambitropical cone in which
    the supremum law coincides with the one of $\R^n$;
  \item\label{it-cone2} there is a Shapley operator $T$
    such that $C=\{x\in \R^n\mid T(x) \geq x\}$.
\end{enumerate}
\end{prop}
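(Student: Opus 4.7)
The plan is to prove this via the diagram (2) $\Leftrightarrow$ (1) $\Leftrightarrow$ (3), mirroring the structure of the already established \Cref{prop-equiv2}, with the simplification that only the $\vee$-operation is preserved and so a general Shapley operator (not a tropically linear one) will suffice in the inequality description.

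The equivalence (1) $\Leftrightarrow$ (2) is almost immediate in one direction: if the supremum in $C$ agrees with the ambient pointwise supremum, then stability of $C$ under $\vee$ is exactly the tropical cone axiom, and closedness is assumed. Conversely, for a closed tropical cone $C$ and $x,y\in C$, the pointwise supremum $x\vee y$ lies in $C$, and since any upper bound of $\{x,y\}$ in $C$ is also an upper bound in $\R^n$, it equals $x\vee^C y$. The step that needs attention is the existence of a greatest lower bound. I would consider $L:=\{z\in C\mid z\leq x,\ z\leq y\}$, which is nonempty (pick $\mu+x$ with $\mu:=\min(\mybot(y-x),0)$), bounded above in $\R^n$ by $x\wedge y$, and stable under pointwise $\vee$ since $C$ is tropical. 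By extracting from $L$ a nondecreasing sequence converging to $\sup L$, \Cref{prop-severalclosure} places the limit in $C$, thereby producing $x\wedge^C y$.

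For (1) $\Rightarrow$ (3) I would take $T:=\pmax{C}$, a Shapley operator by \Cref{prop-systematic}. Since $C$ is a closed tropical cone, \Cref{lem-compar} gives $T\leq I$, and $T$ is an idempotent retraction with range $C$. Hence $T(x)\geq x$ forces $T(x)=x$, i.e.\ $x\in C$, and conversely $x\in C$ implies $T(x)=x\geq x$, so $C=\{x\in\R^n\mid T(x)\geq x\}$. The reverse (3) $\Rightarrow$ (1) reads directly off the axioms of \Cref{def-shapley}: Shapley operators are sup-norm nonexpansive and hence continuous, so $C$ is closed; commutation with additive constants gives translation invariance; and monotonicity combined with $T(x)\geq x$, $T(y)\geq y$ yields $T(x\vee y)\geq T(x)\vee T(y)\geq x\vee y$, so $x\vee y\in C$.

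I do not foresee a substantial obstacle. The one subtlety is the meet construction in (1) $\Rightarrow$ (2): without closedness the pointwise supremum of $L$ need not belong to $C$, and it is precisely \Cref{prop-severalclosure} that converts closedness into stability under bounded monotone limits, thereby securing the meet.
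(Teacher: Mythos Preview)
Your proposal is correct and follows essentially the same approach as the paper. The paper argues in a cycle \eqref{it-cone}$\Rightarrow$\eqref{it-ambi}$\Rightarrow$\eqref{it-cone2}$\Rightarrow$\eqref{it-cone}, constructing the infimum in $C$ exactly as you do (as the supremum in $\R^n$ of the set of common lower bounds in $C$), and for the Shapley operator it uses $\qm{C}$ rather than $\pmax{C}$; since $C=C^{\max}$ for a closed tropical cone, these two operators coincide, so the difference is purely cosmetic.
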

\begin{proof}
  \eqref{it-cone}$\Rightarrow$\eqref{it-ambi}.
  Suppose that $C$ is a closed tropical cone,
  and let $X$ denote a non-empty subset of $C$ bounded
  from above by an element of $\R^n$. Then,
  for all finite subsets $F\in\finitesets{X}$,
  $\sup F$ belongs to $C$, because $C$ is stable
  by supremum, and $\sup X = \lim_{F\in \finitesets{X}} \sup F\in C$ because
  $C$ is closed. It follows that $X$ has a supremum in $C$ which
  coincides with its supremum in $\R^n$.
  Suppose now that $X$ is bounded from below by an element $z$ of $\R^n$.
  Consider $Y:= \{y\in C\mid y \leq x,\forall x\in X\}$. Then, $Y$ is non-empty
  and it is bounded from above. It follows from the previous
  observation that $\sup Y$ is the supremum of $Y$, in $C$.
  Moreover, $\sup Y$ is precisely the infimum of $X$ in $C$,
  showing that $C$ is ambitropical.

  \eqref{it-ambi}$\Rightarrow$\eqref{it-cone2}.
  Since $C$ is a closed ambitropical cone,
  then, it is the fixed point set of $z\mapsto \qm{C}(z)=\sup\{x\in C\mid x\leq z\}$, and $\qm{C}\leq I$. So, $C=\{z\in \R^n\mid \qm{C}(z) =z\}
  = \{z\in \R^n\mid \qm{C}(z)\geq z\}$.

  \eqref{it-cone2}$\Rightarrow$\eqref{it-cone}.
  Suppose that $C=\{x\in \R^n\mid T(x)\geq x\}$.
  Since $T$ is continuous, $C$ is closed. Moreover, since $T$
  is order preserving,
  for all $x,y\in C$, $T(x\vee y) \geq T(x)\vee T(y) \geq x\vee y$, showing that $x \vee y \in C$. 
\end{proof}
We state the following dual version of~\Cref{prop-tropical}.
\begin{prop}
   Let $C\subset \R^n$. The following statements
  are equivalent:
  \begin{enumerate}
  \item $C$ is a closed dual tropical cone;
  \item $C$ is an ambitropical cone in which
    the infimum law coincides with the one of $\R^n$;
      \item\label{it-cone2-dual} there is a Shapley operator $T$
    such that $C=\{x\in \R^n\mid T(x) \leq x\}$.\hfill\qed
\end{enumerate}
\end{prop}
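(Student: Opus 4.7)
The plan is to deduce this dual statement directly from \Cref{prop-tropical} by applying the flip involution $\sigma: \R^n \to \R^n$, $\sigma(x) = -x$. The key observations supporting this reduction are: (i) $C$ is a closed dual tropical cone if and only if $-C := \sigma(C)$ is a closed tropical cone, since $\sigma$ swaps the roles of $\vee$ and $\wedge$ while preserving closedness and the additive cone structure; (ii) $C$ is ambitropical if and only if $-C$ is ambitropical (the class is invariant under the flip, as mentioned in \Cref{sec-ambitropical}); and (iii) if $T$ is a Shapley operator on $\R^n$, then the conjugate map $T^\sharp(y) := -T(-y)$ is again a Shapley operator, since both monotonicity and additive homogeneity are preserved under this conjugation.

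With these observations in hand, I would argue as follows. Given the set $C$, introduce $C' = -C$. Each of the three conditions in the dual proposition translates, under the flip, into the corresponding condition of \Cref{prop-tropical} applied to $C'$. Specifically: (1) $C$ is a closed dual tropical cone iff $C'$ is a closed tropical cone, by (i); (2) $C$ is ambitropical with infimum coinciding with the infimum of $\R^n$ iff $C'$ is ambitropical with supremum coinciding with the supremum of $\R^n$, since the flip transforms the pointwise infimum of $\R^n$ into the pointwise supremum and vice versa, and similarly for the induced lattice operations; (3) $C = \{x \mid T(x) \leq x\}$ for some Shapley operator $T$ iff, setting $y = -x$ and $T^\sharp(y) = -T(-y)$, we have $C' = \{y \mid T^\sharp(y) \geq y\}$, and $T^\sharp$ is a Shapley operator by (iii).

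Thus the three conditions for $C$ are equivalent, respectively, to the three conditions of \Cref{prop-tropical} for $C'$, which are equivalent. This yields the desired equivalence. The only point requiring any care is checking that the translation of condition (2) is exactly the one needed in \Cref{prop-tropical}; but since the lattice structure induced on $C$ from $(\R^n, \leq)$ is mapped by $\sigma$ precisely to the lattice structure induced on $C'$ from $(\R^n, \leq)$ with the roles of join and meet exchanged, this is straightforward. No step presents a genuine obstacle, as the whole proof is an application of duality to an already proven statement.
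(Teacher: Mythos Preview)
Your proposal is correct and is exactly the approach the paper intends: the proposition is stated as the ``dual version'' of \Cref{prop-tropical} and left without proof (marked with \qed), precisely because it follows by the flip $x\mapsto -x$ that you describe. The paper even notes explicitly (after the proof of \Cref{prop-equiv2}) that this flip sends ambitropical cones to ambitropical cones and exchanges tropical and dual tropical structures.
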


We next define the subclass of {\em homogeneous} tropical cones
-- which will arise as tangent spaces or recession sets of
ordinary tropical cones.
\begin{defn}\label{def-homogeneous}
  An ambitropical cone $C$ is {\em homogeneous}
  if for all $\alpha>0$ and for all $x\in C$,
  $\alpha x\in C$.
  Recall that a Shapley operator $T:\R^n\to\R^n$ is {\em homogeneous}
  if $T(\alpha x) = \alpha T(x)$ holds
  for all $\alpha>0$ and for all $x\in \R^n$.
\end{defn}

\begin{prop}\label{prop-homogenous}
   Let $C\subset \R^n$. The following statements
  are equivalent:
  \begin{enumerate}
  \item\label{it-homogeneous} $C$ is a closed homogeneous ambitropical cone;
  \item\label{it-homretract} there is an idempotent homogeneous Shapley
    operator whose fixed point set is $C$;
  \item\label{it-homfp} there is a homogeneous Shapley operator
    whose fixed point set is $C$.
\end{enumerate}
\end{prop}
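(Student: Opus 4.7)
The plan is to establish the cycle \myitem{it-homogeneous} $\Rightarrow$ \myitem{it-homretract} $\Rightarrow$ \myitem{it-homfp} $\Rightarrow$ \myitem{it-homogeneous}. The implication \myitem{it-homretract} $\Rightarrow$ \myitem{it-homfp} is immediate since an idempotent Shapley operator is a Shapley operator, and its fixed point set coincides with its range. For \myitem{it-homfp} $\Rightarrow$ \myitem{it-homogeneous}, I would apply \Cref{th-main0} to conclude that $C$ is a closed ambitropical cone, and then for $\alpha>0$ and $x\in C$ observe that homogeneity of $T$ gives $T(\alpha x)=\alpha T(x)=\alpha x$, so $\alpha x\in C$.

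The substantive implication is \myitem{it-homogeneous} $\Rightarrow$ \myitem{it-homretract}. The natural candidate is the canonical retraction $\qm{C}$ introduced in~\eqref{e-def-proj}. By~\Cref{idem-shapley}, $\qm{C}$ is already known to be an idempotent Shapley operator with range $C$; the only thing to verify is homogeneity. I would argue as follows: for $\alpha>0$, the map $y\mapsto \alpha y$ is an order-preserving bijection of $\R^n$ that preserves $C$ by homogeneity of $C$. Hence for every $X\subset C$ admitting a least upper bound in $C$, one has $\supc (\alpha X)=\alpha\supc X$, since $u\in C$ is an upper bound of $X$ in $C$ if and only if $\alpha u\in C$ is an upper bound of $\alpha X$ in $C$. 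Applying this to $X=\{y\in C\mid y\leq x\}$ and observing that $\{y\in C\mid y\leq \alpha x\}=\alpha X$ yields
\[
\qm{C}(\alpha x)=\supc\{y\in C\mid y\leq \alpha x\}=\supc (\alpha X)=\alpha \supc X = \alpha \qm{C}(x)\enspace.
\]
This shows $\qm{C}$ is a homogeneous Shapley operator, completing the proof.

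I do not anticipate a real obstacle here: the argument boils down to checking that the supremum $\supc$ in a conditionally complete lattice commutes with the homogeneous bijection $y\mapsto \alpha y$ that stabilizes $C$, which is a direct unfolding of the definition of least upper bound. The alternative retraction $\qp{C}$ could equally well be used by a dual argument.
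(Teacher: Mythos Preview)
Your proof is correct and follows essentially the same route as the paper: the same cycle \myitem{it-homogeneous} $\Rightarrow$ \myitem{it-homretract} $\Rightarrow$ \myitem{it-homfp} $\Rightarrow$ \myitem{it-homogeneous}, with the substantive step proved by showing that the canonical retraction $\qm{C}$ is homogeneous via the order-preserving bijection $y\mapsto \alpha y$ of $C$. The paper's argument is virtually identical, phrased in terms of the inverse bijection $y\mapsto \alpha^{-1}y$.
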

\begin{proof}
  The implication \myitem{it-homretract}$\Rightarrow$\myitem{it-homfp}
  is immediate. If $C$ is the fixed point set of an
  homogeneous Shapley operator $T$, then, we know from \Cref{th-main0}
  that $C$ is an ambitropical cone, and it follows from the homogeneity
  of $T$ that $C$ is homogeneous. This shows the implication \myitem{it-homfp}$\Rightarrow$\myitem{it-homogeneous}. If $C$ is a closed homogeneous
  ambitropical cone, we know from \Cref{th-main0} that $C$
  is the range of the idempotent Shapley operator $\barqm{C}=\qm{C}$.
  Observe that, for all $\alpha>0$ and $x\in \R^n$,
  $\qm{C}(\alpha x)=\supc \{y\in C\mid y\leq \alpha x\}
  = \supc\{\alpha \alpha^{-1} y\mid y\in C, \;\alpha^{-1}y \leq x\}=
  \supc\{\alpha z\mid z\in C, \; z \leq x\}=
  \alpha \qm{C}(x)$
  since $y\mapsto \alpha^{-1}y$ is a bijection from $C$ to $C$,
  which is order preserving and whose inverse also preserves the order.
  This shows the implication \myitem{it-homogeneous}$\Rightarrow$\myitem{it-homretract}.
\end{proof}

\section{Correspondence between fixed points of Shapley operators and calibrated policies}
\todo[inline]{section to be revised}
\subsection{Finitely generated Shapley operators}
The following
definition is taken from~\cite{gg}.
\begin{defn}
  A {\em min-max function} in the variables $x_1,\dots,x_n$
  is a map $f:\R^n\to \R$, defined by a term
  in the context-free grammar $X\to x_1,\dots,x_n, X\vee X, X\wedge X, X+c$
  where $c$ stands for any real constant.
\end{defn}
For instance, $f(x_1,x_2,x_3) = ((x_1\vee (x_2+3))\wedge ((x_3-1) \vee x_1))\vee x_2$ is a min-max function. It follows readily from the definition
that the set of min-max functions is stable by the operations
$\vee$, $\wedge$, and by the translation by a constant. Using
the distributivity law, we may always rewrite
a min-max function in conjunctive normal form,
i.e.,
\begin{align}
  \label{e-cnf}
  f(x) = \wedge_{k\in [K]} \vee_{i\in[n]} (c_{ki}+x_i)
\end{align}
for some integer $K$ and coefficients $c_{ki}\in \tmax$,
such that for all $k\in [K]$, $c_{ki}\neq -\infty$ for some $i\in[n]$.
Similarly, we may rewrite $f$ in disjunctive normal form:
\begin{align}
  \label{e-dnf}
  f(x) = \vee_{k\in [K']} \wedge_{i\in[n]} (c'_{ki}+x_i)
\end{align}
for some integer $K'$ and coefficients $c'_{ki}\in \tmin$,
such that for all $k\in [K']$, $c'_{ki}\neq +\infty$ for some $i\in[n]$.
{\em Monotone Boolean functions} are special cases of min-max functions,
they are obtained by restricting the above grammar rule to exclude
the derivation $X\mapsto X+c$. For instance, $(x_1\vee x_2)\wedge x_3$
is a monotone Boolean function.
\begin{defn}
  A Shapley operator is {\em finitely generated}
  if its coordinates are min-max functions.
\end{defn}
It will be convenient to write
finitely generated Shapley operators
in an algebraic way, along the lines of~\cite{AGGut10},
making apparent the game interpretation.
Let $A\in \tmaxn{m\times p}$. The {\em adjoint}
of the tropically linear map associated with the matrix $A$ is the dual tropically linear map 
\[
y\mapsto A^\sharp y,\qquad  (A^\sharp y )_k=\wedge_{i\in [m]} (-A_{ik}+y_i) ,
\qquad k\in [p]\enspace ,
\]
which sends $\R^m\to \R^p$ if $A$ has no identically $-\infty$ column.
Observe that, for $x\in \R^p$ and $y\in \R^m$,
\[
Ax \leq y \iff x\leq A^\sharp y \enspace .
\]
If $B\in \tmaxn{m\times n}$
The tropically linear map
\[
x\mapsto Bx, \qquad (Bx)_i = \vee_{j\in [n]} (B_{ij}+x_j), \qquad i\in [m]
\]
sends $\R^n$ to $\R^m$ if $B$ has not identically $-\infty$ row.
This motivates the following definition.
\begin{defn}
  We say that a pair of matrices $A\in \tmaxn{m\times p}, B\in \tmaxn{m\times n}$
  is {\em proper} if $A$ has no identically $-\infty$ column,
  and $B$ has no identically $-\infty$ row.
\end{defn}
Given a proper pair of matrices $A\in \tmaxn{m\times p}, B\in \tmaxn{m\times n}$,
we consider the operator $T=A^\sharp \circ B$, with coordinates
\begin{align}
T_i(x) = \wedge_{j\in [m]} \Big(-A_{ji} + \vee_{k\in [n]} (B_{jk} + x_k) \Big),\qquad i\in [p] \enspace .\label{e-def-fgshapley}
\end{align}
This is a finitely generated Shapley operator from $\R^n\to \R^p$.

\begin{prop}\label{prop-explicitform}
  Every finitely generated Shapley operator can be
  written as $T=A^\sharp\circ B$ for some proper pair of
  matrices $A,B$.
  \end{prop}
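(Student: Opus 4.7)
The plan is to start from the conjunctive normal form of each coordinate of $T$. Since $T$ is finitely generated, each coordinate $T_i$ (for $i\in[p]$) is a min-max function in $x_1,\dots,x_n$, and by the CNF representation~\eqref{e-cnf} recalled in the paper, we may write
\[
T_i(x) = \bigwedge_{k\in[K_i]} \bigvee_{j\in[n]} \bigl(c^{(i)}_{kj} + x_j\bigr),
\]
where $K_i\geq 1$ and, for every $i\in[p]$ and $k\in[K_i]$, at least one coefficient $c^{(i)}_{kj}$ is finite.

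Next, I would stack these conjunctions into a single matrix. Let $m:=\sum_{i\in[p]} K_i$ and index the rows of the matrices to be constructed by pairs $(i,k)$ with $i\in[p]$, $k\in[K_i]$. Define $B\in\tmaxn{m\times n}$ by $B_{(i,k),j}:=c^{(i)}_{kj}$, so that $(Bx)_{(i,k)}=\bigvee_{j\in[n]}(c^{(i)}_{kj}+x_j)$ is exactly the inner disjunction. Define $A\in\tmaxn{m\times p}$ as the ``selector'' matrix
\[
A_{(i',k),i} := \begin{cases} 0 & \text{if } i'=i,\\ -\infty & \text{otherwise.} \end{cases}
\]
Then for any $y\in\R^m$,
\[
(A^\sharp y)_i = \bigwedge_{(i',k)}\bigl(-A_{(i',k),i} + y_{(i',k)}\bigr) = \bigwedge_{k\in[K_i]} y_{(i,k)},
\]
because every summand with $i'\neq i$ equals $+\infty$ and is absorbed by the infimum. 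Composing, $(A^\sharp\circ B)_i(x)=\bigwedge_{k\in[K_i]}\bigvee_{j\in[n]}(c^{(i)}_{kj}+x_j)=T_i(x)$, which gives the desired decomposition $T=A^\sharp\circ B$.

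It remains to verify properness. For each $i\in[p]$, the column $i$ of $A$ contains the entry $0$ at every row $(i,k)$ for $k\in[K_i]$, and $K_i\geq 1$; hence no column of $A$ is identically $-\infty$. For each $(i,k)$, the row $(i,k)$ of $B$ consists of the coefficients $c^{(i)}_{kj}$, $j\in[n]$, which by the CNF representation contain at least one finite entry; hence no row of $B$ is identically $-\infty$. Thus $(A,B)$ is a proper pair, completing the argument.

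There is no real technical obstacle here; the only ``trick'' is the row-indexing $(i,k)$, which lets the adjoint $A^\sharp$ route each block of CNF conjuncts to the correct output coordinate. In particular, no properties of $T$ beyond its termwise CNF expressibility are needed.
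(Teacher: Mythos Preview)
Your proof is correct and follows essentially the same approach as the paper: write each coordinate of $T$ in conjunctive normal form and read off the matrices, with $A$ having entries in $\{0,-\infty\}$ acting as a selector. Your version is simply a fully detailed expansion of the paper's one-sentence argument; the row-indexing by pairs $(i,k)$ is exactly the mechanism implicit in the paper's observation that the representation~\eqref{e-def-fgshapley} holds with $A_{ij}\in\{0,-\infty\}$.
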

\begin{proof}
  Every coordinate of $T$ can be represented by a min-max function
  in disjunctive normal form, and this representation
  is of the form~\eqref{e-def-fgshapley}
  (with $A_{ij}\in \{0,-\infty\}$).
\end{proof}
Recall that a map $T:\R^n\to\R^p$ is (positively) {\em homogeneous} (of degree $1$) if $T(\alpha x)=\alpha T(x)$ holds for all $\alpha>0$
and $x\in \R^n$.
\begin{prop}\label{prop-monboolshapley}
  If $T$ is a finitely generated homogeneous Shapley operator,
  then the coordinates of $T$ are given by monotone Boolean functions,
  and all the finitely generated homogeneous Shapley operators
  arise is this way.
\end{prop}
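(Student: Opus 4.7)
The plan is to prove both implications of the proposition. The converse direction (monotone Boolean functions give finitely generated homogeneous Shapley operators) is a structural verification; the forward direction is the substantive one and admits a short computation based on a rescaling limit.

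For the easy direction, I would observe that a monotone Boolean function is a min-max function by definition, since the grammar restriction simply forbids the $X\to X+c$ production; hence any $T$ whose coordinates are monotone Boolean functions is finitely generated. A structural induction on the syntax tree shows that each such coordinate is order preserving and commutes with the addition of a constant, using the distributive identities $(x\vee y)+c=(x+c)\vee(y+c)$ and $(x\wedge y)+c=(x+c)\wedge(y+c)$; so \Cref{def-shapley}.\ref{i-mon}-\ref{i-hom} hold. Homogeneity is preserved similarly, via $\alpha(x\vee y)=(\alpha x)\vee(\alpha y)$ and $\alpha(x\wedge y)=(\alpha x)\wedge(\alpha y)$ for $\alpha>0$, with the base cases $x_j$ being homogeneous.

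For the forward direction, let $T$ be a finitely generated homogeneous Shapley operator and fix a coordinate $f:=T_\ell$. Since $f$ is a min-max function, I would rewrite it in disjunctive normal form~\eqref{e-dnf} as
\[
f(x)=\bigvee_{k\in[K']}\bigwedge_{i\in S_k}(c'_{ki}+x_i),
\]
where $S_k:=\{i\in[n]\mid c'_{ki}<+\infty\}$. Each $S_k$ is nonempty by the hypothesis on the DNF, and restricting the inner meet to $S_k$ is harmless because entries with $c'_{ki}=+\infty$ do not contribute to a minimum. The key step is to pass to a ``projective limit at infinity'': since $\vee$ and $\wedge$ commute with division by $t>0$,
\[
\frac{f(tx)}{t}
=\bigvee_{k}\bigwedge_{i\in S_k}\Bigl(\frac{c'_{ki}}{t}+x_i\Bigr)
\xrightarrow{\,t\to+\infty\,}
\bigvee_{k}\bigwedge_{i\in S_k}x_i,
\]
where the convergence follows from the continuity of $\vee$ and $\wedge$ together with $c'_{ki}/t\to 0$ for $i\in S_k$. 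By homogeneity, $f(tx)/t=f(x)$ for every $t>0$, so letting $t\to+\infty$ yields the identity $f(x)=\vee_k\wedge_{i\in S_k}x_i$, which is a monotone Boolean function by construction.

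I do not anticipate substantial obstacles: the argument amounts to an explicit procedure that asymptotically strips the additive constants from the DNF, which is forced on us by homogeneity. The only care required is to confirm that the restriction of the meet to $S_k$ is legitimate and that $S_k\neq\emptyset$, both of which follow from the normal form. A dual computation using the conjunctive normal form~\eqref{e-cnf} yields the alternative representation $f(x)=\wedge_k\vee_{i\in I_k}x_i$, consistent with the duality between CNF and DNF for monotone Boolean functions.
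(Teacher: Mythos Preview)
Your proof is correct and follows essentially the same approach as the paper: use homogeneity to write $f(x)=t^{-1}f(tx)$, substitute a normal form, and let $t\to\infty$ so that the additive constants are killed. The only cosmetic difference is that the paper works with the conjunctive form $T=A^\sharp\circ B$ from \Cref{prop-explicitform} rather than the DNF~\eqref{e-dnf}, and it leaves the easy converse direction implicit.
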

\begin{proof} Suppose that $T=A^\sharp\circ B$ as above. Then,
$T_i(x) = \alpha^{-1}T_i(\alpha x)$, and so, 
  \begin{align*}
  T_i(x) &= \lim_{\alpha\to\infty}
  \wedge_{j\in [m]} \Big(-\alpha^{-1} A_{ji} + \vee_{k\in [n]} (\alpha^{-1} B_{jk} + x_k) \Big)\\
  & =   \wedge_{j\in [m],\; A_{ij}\neq -\infty} \big(\vee_{k\in [n],\;B_{jk}\neq -\infty}  x_k \big)
  \end{align*}
  which is a monotone Boolean function.
  \end{proof}
\begin{prop}
  Let $T_{1}$, $T_{2}$ be two finitely generated Shapley operators $\R^n\to \R^p$. Then the operators $T_{1} \vee T_{2}$ and $T_{1} \wedge T_{2}$ are finitely generated. Suppose now that $T_2$ is a finitely generated
  Shapley operator from $\R^q\to \R^n$. Then,
  $T_{1} \circ T_{2}$ is a finitely generated Shapley operator.
\end{prop}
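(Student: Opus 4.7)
The plan is to reduce everything to a single structural property of the grammar defining min-max functions: the set $\mathcal{M}_n$ of min-max functions in $n$ variables is closed under the operations $\vee$, $\wedge$, addition of a real constant, and substitution. The first three closures are immediate from the inductive definition of the grammar (each operation is a production rule). The substitution closure states that if $f\in \mathcal{M}_n$ and $g_1,\dots,g_n\in\mathcal{M}_q$, then $f(g_1,\dots,g_n)\in\mathcal{M}_q$. I would prove this by structural induction on the term defining $f$: the base case $f=x_i$ gives $g_i\in\mathcal{M}_q$; the inductive cases $f=X\vee X'$, $f=X\wedge X'$, $f=X+c$ each reduce to applying the stability of $\mathcal{M}_q$ under $\vee$, $\wedge$, and translation to the inductively obtained min-max functions.

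For the first assertion, the coordinates satisfy $(T_1\vee T_2)_i = (T_1)_i \vee (T_2)_i$ and $(T_1\wedge T_2)_i = (T_1)_i \wedge (T_2)_i$. Since $(T_1)_i$ and $(T_2)_i$ are min-max functions by assumption, applying the closure of $\mathcal{M}_n$ under $\vee$ and $\wedge$ shows that the coordinates of $T_1\vee T_2$ and $T_1\wedge T_2$ are min-max functions. The maps $T_1\vee T_2$ and $T_1\wedge T_2$ are easily seen to be Shapley operators (order-preservation and commutation with additive constants are preserved under pointwise $\vee$ and $\wedge$), hence they are finitely generated.

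For the second assertion, we compute coordinatewise: $(T_1\circ T_2)_i(x) = (T_1)_i\big((T_2)_1(x),\dots,(T_2)_n(x)\big)$. Each $(T_1)_i$ is a min-max function in $n$ variables and each $(T_2)_j$ is a min-max function in $q$ variables, so the substitution closure property directly shows that $(T_1\circ T_2)_i\in \mathcal{M}_q$. Composition of Shapley operators is again a Shapley operator, so $T_1\circ T_2$ is finitely generated.

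The main work is really just formulating the substitution closure cleanly and verifying it by induction on the grammar; no step presents a genuine obstacle, and no use of the more refined canonical form $T=A^\sharp\circ B$ from \Cref{prop-explicitform} is needed (although one could alternatively expand compositions in that representation using distributivity of $\vee$ over $\wedge$ and vice versa, at the cost of a less transparent argument).
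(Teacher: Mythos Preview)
Your proof is correct and follows essentially the same approach as the paper: both arguments rest on the observation that the class of min-max functions is stable under $\vee$, $\wedge$, and substitution (composition), and then apply this coordinatewise. The paper's proof is more terse, simply stating the substitution closure without spelling out the structural induction, but the content is the same.
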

\begin{proof}
  By definition, min-max functions are stable by the laws $\vee $ and $\wedge$.
  They are also stable by composition, meaning that if $f(x_1,\dots,x_n)$
  is a min-max function in the variables $x_1,\dots,x_n$, and
  if for all $i\in [n]$, $g_i(y_1,\dots,y_q)$ is a min-max function
  in the variables $y_1,\dots,y_q$, then $f(g_1(y_1,\dots,y_q),\dots,
  g_n(y_1,\dots,y_q))$ is a min-max function in the same variables.
  This implies the announced properties.  
  \end{proof}
\subsection{Eigenvectors of Shapley operators and Deterministic Mean Payoff Games} \label{Eigenvectors}
We next recall that
the operators of the form~\eqref{e-def-fgshapley} are precisely the dynamic
programming operators of {\em deterministic} zero-sum
games without discount, referring the reader to~\cite{AGGut10}
for background. We shall also present an equivalence between
the fixed points of the Shapley operator and a remarkable class
of optimal positional policies, {\em calibrated} policies.

We associate to the proper pair of matrices $(A,B)$ a game
in which two players, Max and Min, move alternatively a token in a digraph.
The set of states is the disjoint union of the sets $[n]$ and $[m]$.
If the token is in a state $i\in [n]$, Player Min chooses a new state $j\in [m]$
such that $A_{ji}$ is finite,  moves the token to this state, and receives a
payment $A_{ji}$ from Max. If the token is in a state $j\in[m]$, Player Max chooses a new state $k\in[n]$ such that $B_{jk}$ is finite, moves the token to this state, and receives a payment $B_{jk}$ from Min. The assumption that the pair
$(A,B)$ is proper guarantees that each player has always at least one available
action. Given an initial state $i\in[n]$, and an integer $k$, 
one can consider the {\em game in horizon $k$}, in which each the two
players makes $k$ moves, alternatively, so that the total
payment received by Player Max is
\[
R^k = -A_{j_1i_0}+B_{j_1i_1} - A_{j_2i_1}+ B_{j_2i_2} + \dots -A_{j_ki_{k-1}}+B_{j_ki_k}
\]
where $i_0,j_1,i_1,j_2,\dots,j_k,i_k$ is the sequence of states that are visited
and $i_0=i$ is the initial state.
A {\em history} of the game, at a given stage, consists of the sequence
of visited states, so if $l$ turns have been played, and if it Min's turn
to play, the history is $i_0,j_1,i_1,j_2,\dots,j_l,i_l$ whereas
if it is Max's turn to play, the history is $i_0,j_1,i_1,j_2,\dots,j_l,i_l,j_{l+1}$. We assume that the game is in perfect information, meaning that the
two players observe the history. 
A (pure) {\em strategy} of a player is a map which associates an action
to each history. Therefore, the total payment
is a function of the strategies $\sigma$ and $\pi$ of the
two players and of the initial state $i$, i.e., $R^k= R^k_i(\sigma,\pi)$. It follows
from dynamic programming arguments, 
that the game in horizon $k$ with initial state $i$ has a value,
$v_i^k$, and that there are associated optimal strategies $\sigma^*$
and $\pi^*$, meaning that the following saddle point
property holds:
\[
R_i^k(\sigma^*,\pi) \leq v_i^k
\leq R_i^k(\sigma,\pi^*)
\]
for all strategies $\sigma$ of Player Min and $\pi$ of Player Max,
which entails in particular that $v_i^k=R_i^k(\sigma^*,\pi^*)$.
Indeed, the {\em value vector} $v^k=(v^k_i)_{i\in [n]}\in \R^n$ satisfies
the dynamic programming equation
\[
v^0=0,\qquad v^k=T(v^{k-1}) \enspace ,
\]
and the actions that achieve the min or max in the
expression $T_i(v^{k-1})$ provide optimal decisions
for the two players, depending only on the current state
and on the time remaining to play.
We refer the reader to~\cite{sorinetal} for background on game theory
and on the dynamic programming approach.

A {\em deterministic policy} of Player Min (resp.\ Max) is a map
$\sigma: [n] \to  [m]$ (resp.\ $\pi: [m]\to [n]$).
A deterministic policy of Player Min induces
a stationary positional strategy, obtained by moving
to state $\sigma(i)$ when in state $i\in[n]$,
and similarly for Player Max, the state being now $j\in[m]$. 

We are interested in the {\em mean payoff game}, in which Player Min
wants to minimize the mean payment per time unit made to Player
Max, and Player Max wants to maximize it. Liggett and Lippman~\cite{liggettlippman}, and Ehrenfeucht and Mycielski~\cite{ehrenfeucht}
showed that there exists a vector $\chi=(\chi_i)_{i\in[n]}\in \R^n$ and
deterministic policies $\sigma^*$ and $\pi^*$, of Player Min and Max,
respectively,
such that, for all strategies $\sigma$ and $\pi$ of these
two players,  and for all initial states $i\in[n]$,
    \begin{equation}
    \limsup_{k\to\infty}k^{-1} R_{i}^k(\sigma^*,\pi)
    \leq \chi_{i}
    \leq     \liminf_{k\to\infty}k^{-1} R_{i}^k(\sigma,\pi^*)
    \enspace .
    \label{e-saddle}\end{equation}
    The number $\chi_i$ is known as the {\em value} of the mean payoff game
    with initial state $i$. Moreover, the vector $\chi$ coincides with the limit
    $\lim_{k\to \infty} v^k/k$. 

    A remarkable case arises when the non-linear eigenproblem
    \[
    T(u) = \lambda  + u, \qquad u\in \R^n, \lambda \in \R
    \]
    is solvable. Then, the value of the mean payoff game $\chi_i$
    is independent of the initial state $i$, and optimal
    deterministic policies can be obtained from
    the eigenvector $u$. To explain this relation,
    it will be convenient to extend the notion of policy
    as follows: a {\em nondeterministic policy} of Player Min (resp.\ Max) is a map
$\sigma: [n] \to \powerset{[m]} \setminus \emptyset$,
such that $\sigma(i)\subset \{j\in [m]\mid A_{ji} \,\text{finite}\}$
for all $i\in[n]$. Similarly, 
 a {\em nondeterministic policy} of Player Max is a map
 $\pi: [m] \to \powerset{[n]} \setminus \emptyset$,
such that $\pi(j)\subset \{i\in [n]\mid B_{ji}\, \text{finite}\}$
for all $j\in[m]$. A nondeterministic policy of Player Min
{\em induces} a whole collection of strategies (not necessarily
stationary or positional), obtained by restricting
the moves of Min to the $i\to j\in \sigma(i)$.
In particular, it induces {\em deterministic policies},
obtained by selecting, for all $i\in[n]$, a single
element in $\sigma(i)$.
We denote by $\sigmaset$ the set of nondeterministic
policies of Player Min, and by $\piset$ the set of nondeterministic
policies of Player Max. %
The following definition extends to the two-player case
the notion of {\em calibrated trajectory} introduced by Fathi
in the context of weak-KAM theory, see~\cite{Fathi2004,Fat}.

\begin{definition}[calibrated policies]
  Given $u\in \R^n$, we say that a pair of non-deterministic
  policies $(\sigma^u,\pi^u)$
  is {\em $u$-calibrated} if, for some $\lambda\in \R$,
      \begin{enumerate}[label=\roman*)]
    \item By playing any strategy induced by $\sigma^u$, Player Min can guarantee
      that, whatever strategy Max chooses, and for all horizons $k$
      and initial states $i_0$, 
      \begin{align}
    -A_{j_0i_0}+B_{j_0i_1}+ \dots  -A_{j_{j-1}i_{k-1}}+B_{j_{k-1}i_{k}} \leq u_{i_0}-u_{i_k} + k\lambda  \enspace, 
    \label{e-primal}\end{align}
        \item By playing any strategy induced by $\pi^u$, Player Max can guarantee
          that, whatever strategy Min chooses,  and for all horizons $k$ and initial states $i_0$,
      \begin{align}   -A_{j_0i_0}+B_{j_0i_1}+ \dots  -A_{j_{j-1}i_{k-1}}+B_{j_{k-1}i_{k}}
\geq  u_{i_0}-u_{i_k} + k\lambda  \enspace; \label{e-dual}
\end{align}
 \end{enumerate}
where $i_0,j_0,i_1,j_1\dots,i_k$ is the sequence of states that are visited.
\end{definition}
In the special one-player case, assuming for instance that Player Max is a dummy
(with only one possible action in each state), we can replace the
inequality by an equality in~\eqref{e-primal},
and then, we recover the original notion of calibrated trajectory.
In particular, Fathi established a correspondence between
the global viscosity solutions of the
ergodic Hamilton-Jacobi PDE and calibrated trajectories, see~\cite[Prop.~3.5]{Fathi2004}
and~\cite[Prop~4.1.10]{Fat}. The following elementary proposition
states an analogous property in the two-player
setting. %

\begin{prop}\label{prop-calibrated}
  Suppose that $T(u) = \lambda  + u$, define
  $\pi^*(j)= \argmax_{i} B_{ji}+u_i$ and $\sigma^*(i)=\argmin_{j}-A_{ji}+(Bu)_j$.
  Then, the pair of policies $(\sigma^*,\pi^*)$ is $u$-calibrated.
  Moreover, all pairs of $u$-calibrated policies arise in this way.
\end{prop}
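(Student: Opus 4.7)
The plan is to reduce the proposition to two ingredients: elementary pointwise inequalities extracted from the fixed-point equation $T(u)=\lambda+u$, and a telescoping argument over any play of the game. Expanding $T=A^\sharp\circ B$, the eigenvalue equation reads, for each $i$,
\[
\min_{j\in[m]}\bigl(-A_{ji}+(Bu)_j\bigr)=\lambda+u_i,\qquad (Bu)_j=\max_{k\in[n]}(B_{jk}+u_k).
\]
From this I would read off two universal inequalities, $(Bu)_j-u_i\ge A_{ji}+\lambda$ and $(Bu)_j\ge B_{ji}+u_i$, noting that the first is saturated exactly when $j\in\sigma^*(i)$ and the second exactly when $i\in\pi^*(j)$. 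This is the only algebraic input needed.

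For the forward direction, I would verify \eqref{e-primal} by assuming Min plays any strategy induced by $\sigma^*$, so $j_l\in\sigma^*(i_l)$ along the visited history; then combining the equality on $\sigma^*$ with the inequality on $\pi^*$ rewrites each one-stage payoff as
\[
-A_{j_l i_l}+B_{j_l i_{l+1}}=\bigl(\lambda+u_{i_l}-(Bu)_{j_l}\bigr)+B_{j_l i_{l+1}}\le \lambda+u_{i_l}-u_{i_{l+1}},
\]
and summing over $l$ telescopes to the desired bound $R^k\le u_{i_0}-u_{i_k}+k\lambda$ for any Max response. The argument for \eqref{e-dual} is dual: when $i_{l+1}\in\pi^*(j_l)$, the saturation of the second inequality gives $B_{j_l i_{l+1}}=-u_{i_{l+1}}+(Bu)_{j_l}$, and combined with the universal inequality on $\sigma$ I obtain the reverse one-stage bound, which telescopes.

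For the converse, I would specialize calibration to horizon $k=1$. If $\sigma$ satisfies \eqref{e-primal}, then for any $j_0\in\sigma(i_0)$ and Max response $i_1$, the inequality $-A_{j_0i_0}+B_{j_0i_1}\le u_{i_0}-u_{i_1}+\lambda$ after maximizing over $i_1$ gives $-A_{j_0 i_0}+(Bu)_{j_0}\le\lambda+u_{i_0}$; combined with the setup inequality this forces equality, so $\sigma(i_0)\subset\sigma^*(i_0)$. Dually, if $\pi$ satisfies \eqref{e-dual}, then for $j_0$ and $i_1\in\pi(j_0)$, applying the bound with an auxiliary initial state $i_0$ satisfying $j_0\in\sigma^*(i_0)$ makes $A_{j_0i_0}+u_{i_0}+\lambda=(Bu)_{j_0}$, so that the dual inequality collapses to $B_{j_0i_1}+u_{i_1}\ge(Bu)_{j_0}$, forcing $i_1\in\pi^*(j_0)$.

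The main obstacle is a mild interpretive one in the converse for $\pi$: the argument constrains $\pi(j_0)$ only for those $j_0$ that are selected by $\sigma^*(i_0)$ for some $i_0$, i.e.\ those $j_0\in\bigcup_i\sigma^*(i)$. For the remaining states $j_0$ (which are never visited when Min plays $\sigma^*$), no restriction is imposed. The phrase ``all $u$-calibrated policies arise in this way'' therefore has to be understood up to this essentially irrelevant indeterminacy, which I would make explicit in the final write-up; all the rest is a direct consequence of the fixed-point equation and the definition of $(\sigma^*,\pi^*)$ as the argmin/argmax of its constituent pieces.
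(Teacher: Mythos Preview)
Your forward direction is correct and coincides with the paper's argument. The paper phrases it via the one-player operators $T^\sigma$ and ${}^{\pi}T$: for any deterministic $\sigma$ induced by $\sigma^*$ one has $T^\sigma(u)=\lambda+u$, hence $(T^\sigma)^k(u)=k\lambda+u$, which unpacks to exactly your one-stage equality $-A_{j_l i_l}=\lambda+u_{i_l}-(Bu)_{j_l}$ combined with the universal inequality $B_{j_l i_{l+1}}\le (Bu)_{j_l}-u_{i_{l+1}}$ and a telescoping sum; the dual half is identical. So no difference in substance, only in presentation.

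For the converse, you actually go further than the paper. The paper's own converse merely specializes \eqref{e-primal} and \eqref{e-dual} to $k=1$ to obtain $T(u)\le\lambda+u$ and $T(u)\ge\lambda+u$; it does \emph{not} argue that a given $u$-calibrated pair $(\sigma,\pi)$ must satisfy $\sigma(i)\subset\sigma^*(i)$ and $\pi(j)\subset\pi^*(j)$. Your argument does establish $\sigma(i)\subset\sigma^*(i)$ cleanly, and it establishes $\pi(j)\subset\pi^*(j)$ for those $j\in\bigcup_i\sigma^*(i)$. Your remark that the remaining $j$ are unconstrained is not an artifact of the method but a genuine phenomenon: at a Max node $j_0\notin\bigcup_i\sigma^*(i)$ one has the strict inequality $(Au)_{j_0}+\lambda<(Bu)_{j_0}$, and Max can select any $i_1$ with $B_{j_0 i_1}+u_{i_1}\ge(Au)_{j_0}+\lambda$ while still satisfying \eqref{e-dual} for every horizon (Min's one-step loss $\delta>0$ in reaching $j_0$ absorbs Max's suboptimality, and subsequent play is controlled by the forward direction). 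So the literal reading ``every $u$-calibrated $\pi$ is contained in $\pi^*$'' fails at such nodes, and your caveat is the right way to state it; the paper's proof does not claim otherwise, it simply proves the weaker converse.
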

To establish this result, it will be convenient to introduce,
for all pair of deterministic policies $(\sigma,\pi)$
of the two players, the operators
$T^\sigma$ and $^{\pi}T$, $\R^n\to\R^n$, such that
\[
T^\sigma_i(x) = -A_{\sigma(i)i}  + \vee_{k}(B_{\sigma(i)k} +x_k),
\qquad
^{\pi}T_i(x) = \wedge_{j} (-A_{ji}  + B_{j\pi(j)} +x_{\pi(j)}) \enspace .
\]
These operators represent the one-player games obtained
by fixing a policy of one of the players.\todo{SG: perhaps could be defined before}
\begin{proof}
  If $T(u) = \lambda  + u$, by definition
  of $\sigma^*$ and $\pi^*$, we have, for all deterministic
  policies $\sigma$ and $\pi$ compatible with $\sigma^*$ and $\pi^*$,
  \[
  T^\sigma(u) = \lambda + u,\qquad
    ^{\pi}T(u) = \lambda + u \enspace.
  \]
  It follows that for all $k$, $(T^\sigma)^k(u) = k\lambda  +u$,
  which implies that~\eqref{e-primal} holds. Similarly,
  $(^{\pi}T)^k(u) = k\lambda  +u$ yields~\eqref{e-dual}.

  Conversely, if $\sigma^*$ and $\pi^*$ are $u$-calibrated, by specializing~\eqref{e-primal}
  to $k=1$, we deduce that $T(u)\leq \lambda +u$, and similarly,
    we deduce from~\eqref{e-dual} that $T(u)\geq \lambda +u$.
\end{proof}

Any deterministic policies $(\sigma^*,\pi^*)$ induced 
by a pair $(\sigma^u,\pi^u)$ of $u$-calibrated policies
are optimal in the mean payoff game, meaning they satisfy the saddle
point property~\eqref{e-saddle}.  However, being $u$-calibrated is a finer
property than being optimal in the mean payoff game, since it involves
not only the mean payoff but also the deviation to the mean payoff.
This is easily
seen in the one-player case. Then, playing a deterministic policy induces
state trajectories which ultimately cycle. As long as the the ultimate cycle
reached from each initial state is unchanged, the mean-payoff optimality
is preserved, but not the property of being calibrated.
\begin{example}
Consider the one player game shown in \Cref{fig-fathi},
  in which the Player is Max. This can be represented
  by the pair of matrices $(A,B)$ where $A$ is the tropical
  identity map, meaning that player Min is a dummy, and $B$
  is the matrix of payments shown in the figure. The value
  of the mean payoff game is equal to $1$, since it
  is optimal for Player Max to reach the cycle
  $4\to 4$ which has the best weight-to-length ratio,
  equal to $1$.
  The matrix $B$, and so, the operator $T=A^\sharp \circ B$, has only one
  eigenvector up to an additive constant,  given by
  $u = (-2, -2, -1, 0)$, and so, by Proposition \ref{prop-calibrated}, there is only one $u$-calibrated strategy, namely $1 \rightarrow 3$, $3 \rightarrow 4$ and $2 \rightarrow 4$. However, the strategy $1 \rightarrow 2$, $3 \rightarrow 4$ and $2 \rightarrow 4$ is also optimal, since the same ultimate cycle is reached by any initial state,
but it is not $u$-calibrated. 
 \end{example}

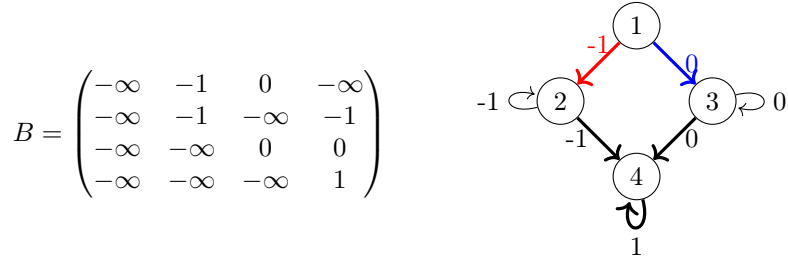
\begin{figure}[htbp]
  \begin{tabular}{cc}
\begin{minipage}[c]{0.4\textwidth}
\[
B =
\begin{pmatrix}
   -\infty & -1 & 0  & - \infty \\
    - \infty & -1 & - \infty  & -1 \\
    - \infty & - \infty & 0 & 0 \\
    - \infty & - \infty & - \infty & 1
\end{pmatrix}
\]
\end{minipage}
& 
\begin{minipage}[c]{0.4\textwidth}
 \begin{tikzpicture}[->]
    \node[draw,circle] (1) at (1,1) {1};
    \node[draw,circle] (2) at (0,0) {2};
    \node[draw,circle] (3) at (2,0) {3};
    \node[draw,circle] (4) at (1,-1) {4};

    \path[color=red, very thick] (1) edge              node[above] {-1} (2);
    \path[color=blue, very thick] (1) edge              node[above,right] {0} (3);
    \path[color=black, very thick] (2) edge              node[above,left] {-1} (4);
    \path[color=black, very thick] (3) edge              node[above, right] {0} (4);
    \path (2) edge [loop left] node {-1} (2);
    \path (3) edge [loop right] node {0} (3);
    \path[very thick] (4) edge [loop below] node {1} (4);
  \end{tikzpicture}
\end{minipage}
\end{tabular}
  \caption{A one player game (maximizing the average cost), with two optimal policies, only one of which
    (going right, in blue) is calibrated.}
\label{fig-fathi}
\end{figure}

\begin{example}
  An example of calibrated policies arises from the notion of Blackwell
  optimality, originally introduced in the one player setting~\cite{blackwell}. Let $0<\alpha<1$ be a discount factor, and consider the value vector $v^\alpha$ of the
  discounted version of the game, so that $v^\alpha$ is the unique solution of the fixed
  point equation $v^\alpha =T(\alpha v^\alpha)$.  It follows from the result
  of Kohlberg~\cite{Koh80} that, as soon as the mean payoff of the game is independent of the initial
  state, $v^\alpha$ has a Laurent series expansion satisfying in particular $v^\alpha= \lambda /(1-\alpha)  + u +O(1-\alpha)$, where $\lambda\in \R$ is the mean payoff, as $\alpha \to 1^-$, and we have $T(u) = \lambda +u$. There are policies $\sigma^*,\pi^*$ such that $T(\alpha v^\alpha)= T^{\sigma^*}(\alpha v^\alpha)= {}^{\pi^*} T(\alpha v^\alpha)$ for all values of $\alpha$ close enough to $1$, meaning that $\sigma^*,\pi^*$
  are optimal in all the discounted games with a discount factor sufficiently close to one (see~\cite[Th.~8]{entropygames} for a proof of this property in the two player setting). These policies are said to be Blackwell optimal. They are $u$-calibrated.
\end{example}

\section{Ambitropical polyhedra}
We now consider ambitropical cones with a polyhedral structure. 
\begin{defn}
  An {\em ambitropical polyhedron} is an ambitropical cone that is a finite
  union of alcoved polyhedra. An {\em ambitropical polytope} is an ambitropical polyhedron that is bounded in Hilbert's seminorm.
\end{defn}
Observe that
an ambitropical polyhedron is closed.
We shall first show that fixed point sets of finitely generated
Shapley operators are ambitropical polyhedra,
and then, we will show that all ambitropical polyhedra
arise in this way and provide a notion of generating family.
\subsection{Polyhedral structure of the fixed point sets of finitely generated Shapley operators}

Recall that a \emph{polyhedral complex} $\mathcal{K}$ is a set of polyhedra, called \emph{cells}, that satisfies the following conditions:
  every face of a polyhedron from $\mathcal{K}$ is also in $\mathcal{K}$;
  the intersection of any two polyhedra $\sigma_{1}, \sigma_{2} \in \mathcal {K}$ is a face of both $\sigma_{1}$ and $\sigma_{2}$.
  A polyhedral complex is a \emph{fan} if every cell is a cone.
  The {\em support} of a polyhedral complex is the union of its cells.
(Here, cone is understood in the sense of convex analysis, not
in the sense of ordered additive cones.)

Suppose $E$ is the fixed point set of a finitely generated Shapley
operator. So $E=\{x \mid  x = A^{\sharp}\circ Bx\}$, for some proper pair of matrices $A,B\in \tmaxn{m\times n}$.
We will show that $E$ is the support of a polyhedral
complex, and that the cells of this complex correspond
to {\em calibrated policies} of the two players.

\todo[inline]{to be cleaned}

It will be convenient to lift the ambitropical cone, considering
\[
F:= \{(x,y)\in \R^n\times \R^m\mid x=A^\sharp y,\; y= Bx \}
\]
so that $E=\operatorname{proj}_x(F)$, where $\operatorname{proj}_x$
is the projection $(x,y)\mapsto x$ from $\R^n\times \R^m$ to $\R^n$.

Given any pair of nondeterministic
policies $(\sigma,\pi)\in\sigmaset\times\piset$
(see~\S\ref{Eigenvectors}), we denote by $Z_{\sigma,\pi}$ the set 
of couples $(x,y)\in \R^n\times \R^m$
verifying the following relations
\begin{subequations}
\begin{align}
  x &\leq A^{\sharp}y,\qquad \qquad   x_{i} \geq -A_{ji} + y_{j},\quad \forall i\in[n], \forall j \in \sigma(i) \label{e-ab}\\
   y & \geq Bx \qquad \qquad 
y_{j} \leq B_{jk} + x_{k},\quad \forall j\in[m], \forall k \in \pi(j)
\enspace .\label{e-ab2}
\end{align}\label{e-def-Z}
\end{subequations}

Let us consider $\sigma^{-1}(j)=\{ i \in [n] \mid  j \in \sigma(i)\}$ and  $\pi^{-1}(i)=\{ j \in [m] \mid  i \in \pi(j)\}$. Observe
that $\cup_{j \in [m]} \sigma^{-1}(j)=[n]$ and $\cup_{i \in [n]} \pi^{-1}(i)=[m]$,
because $\sigma(i)$ and $\pi(j)$ are non-empty, for all $i\in [n]$
and $j\in [m]$.

\begin{prop}
  We have 
 \begin{align}
F = \bigcup_{(\sigma,\pi) \in \sigmaset\times \piset} Z_{\sigma,\pi} \enspace .
\label{e-cover}
\end{align}
\end{prop}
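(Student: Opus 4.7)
The plan is to establish the two inclusions separately, each being a direct consequence of the definitions of the adjoint $A^{\sharp}$, the tropical product, and of nondeterministic policies.

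For the easy inclusion $\bigcup_{(\sigma,\pi)} Z_{\sigma,\pi} \subset F$, I would take $(x,y) \in Z_{\sigma,\pi}$ and verify the two equalities $x = A^{\sharp}y$ and $y = Bx$. The inequality $x \leq A^{\sharp}y$ is part of~\eqref{e-ab}. For the converse, I would fix $i \in [n]$ and use the fact that $\sigma(i)$ is non-empty to pick any $j \in \sigma(i)$; then~\eqref{e-ab} gives $x_i \geq -A_{ji}+y_j \geq \wedge_{j'\in [m]}(-A_{j'i}+y_{j'}) = (A^{\sharp}y)_i$. The symmetric argument using~\eqref{e-ab2} and the non-emptiness of $\pi(j)$ yields $y = Bx$.

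For the reverse inclusion $F \subset \bigcup Z_{\sigma,\pi}$, the key idea is to build $(\sigma,\pi)$ from a point $(x,y) \in F$ by selecting the actions that achieve the min in $A^{\sharp}y$ and the max in $Bx$. Concretely, I would set
\[
\sigma(i) := \{j \in [m]\mid A_{ji} \text{ finite and } x_i = -A_{ji}+y_j\},\qquad
\pi(j) := \{k \in [n]\mid B_{jk} \text{ finite and } y_j = B_{jk}+x_k\}.
\]
These are non-empty: since $x = A^{\sharp}y \in \R^n$, the minimum $\wedge_{j}(-A_{ji}+y_j) = x_i$ is finite, hence attained at some $j$ with $A_{ji}$ finite, so $\sigma(i) \neq \emptyset$; the dual argument, using that the pair $(A,B)$ is proper and $y = Bx \in \R^m$, shows $\pi(j) \neq \emptyset$. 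Hence $(\sigma,\pi) \in \sigmaset \times \piset$.

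It then remains to check the four conditions in~\eqref{e-def-Z}. The two global inequalities $x \leq A^{\sharp}y$ and $y \geq Bx$ are immediate (they are in fact equalities). The two pointwise inequalities $x_i \geq -A_{ji}+y_j$ for $j \in \sigma(i)$ and $y_j \leq B_{jk}+x_k$ for $k \in \pi(j)$ hold by the very definition of $\sigma$ and $\pi$ (they are, again, equalities). This places $(x,y)$ in $Z_{\sigma,\pi}$.

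The argument is essentially bookkeeping; there is no substantial obstacle. The only subtle point, easily dispatched, is the verification that properness of $(A,B)$ together with $(x,y) \in \R^n \times \R^m$ guarantees that the argmin and argmax sets defining $\sigma$ and $\pi$ are non-empty within the subsets of finite entries of $A$ and $B$, as required by the definition of a nondeterministic policy.
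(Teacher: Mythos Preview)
Your proof is correct and follows essentially the same approach as the paper: both inclusions are handled by the same ideas, with the reverse inclusion obtained by defining $\sigma(i)$ and $\pi(j)$ as the argmin and argmax sets. Your version is slightly more detailed in verifying that these sets are non-empty and lie within the finite entries of $A$ and $B$, but the underlying argument is identical.
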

\begin{proof}
  The relations~\eqref{e-ab} entail that $x=A^\sharp y$,
  and similarly~\eqref{e-ab2} entail that $y=B x$.
  So, for all $(\sigma,\pi)\in\sigmaset\times\piset$,
  $Z_{\sigma,\pi}\subset F$.
  Moreover, for all $(x,y)\in F$, taking $\sigma(i):=\argmin_{j\in [m]} (-A_{ji}+y_j)$, we can check that~\eqref{e-ab} is satisfied by $x  =A^\sharp y$.
Similarly, for all $x$ in $\R^n$, taking $\pi(j):=\argmax_{i\in [n]} (B_{ji}+x_i)$, we can check that~\eqref{e-ab2} is satisfied by $y=Bx$. So, $(x,y)\in Z_{\sigma,\pi}$. 
\end{proof}
\begin{prop}\label{prop-projZ}
  The image $X_{\sigma,\pi}:= \operatorname{proj}_x(Z_{\sigma,\pi})$
is characterized by the relations
\begin{subequations}\begin{align}
[(A \vee B)x]_{j} &\leq B_{jk} + x_{k}, \qquad  \forall j \in[m],\forall k \in \pi(j) \label{e-revised-i}\\
[(A \vee B)x]_{j} &\leq A_{ji} + x_{i},\qquad  \forall j \in[m],\forall i\in \sigma^{-1}(j) \enspace .\label{e-revised-ii}
  \end{align}\label{e-revised}
  \end{subequations}
  Moreover, both $Z_{\sigma\pi}$ and $X_{\sigma,\pi}$ are alcoved polyhedra.
\end{prop}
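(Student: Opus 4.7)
The plan is to first rewrite the defining inequalities of $Z_{\sigma,\pi}$ in a form that makes their alcoved structure manifest, and then to carry out an elimination of the $y$-variable to obtain the characterization~\eqref{e-revised} of $X_{\sigma,\pi}$. Using the Galois adjunction between $B$ and $A^\sharp$, the inequality $x \leq A^{\sharp} y$ is equivalent to $y_j \geq A_{ji} + x_i$ for all $i\in[n], j\in[m]$; similarly $y \geq Bx$ becomes $y_j \geq B_{jk} + x_k$ for all $j\in[m], k\in[n]$. The policy constraints in~\eqref{e-def-Z} correspond to the reverse upper bounds $y_j \leq A_{ji}+x_i$ for $i\in\sigma^{-1}(j)$ and $y_j \leq B_{jk}+x_k$ for $k\in\pi(j)$. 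Every resulting inequality involves only a difference of two coordinates of $(x,y)\in\R^{n+m}$ together with a tropical constant, and so $Z_{\sigma,\pi}$ is directly recognised as an alcoved polyhedron in $\R^{n+m}$.

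Taking the supremum over $i$ and $k$ of the lower bounds gives the sharp consequence $y_j\geq [(A\vee B)x]_j$ for every $j\in[m]$. Chaining this with the upper bounds coming from $\sigma$ and $\pi$ yields immediately both~\eqref{e-revised-i} and~\eqref{e-revised-ii}, which shows that $X_{\sigma,\pi}$ is contained in the set described by~\eqref{e-revised}.

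For the converse inclusion, the plan is to exhibit an explicit lift of $x$ to $(x,y)\in Z_{\sigma,\pi}$. Given $x\in \R^n$ satisfying~\eqref{e-revised}, I define $y_j := [(A\vee B)x]_j$ for each $j\in[m]$. The properness of the pair $(A,B)$---specifically, the absence of an identically $-\infty$ row in $B$---ensures that $y_j\in\R$. By construction one has $y\geq Bx$ and $A_{ji}+x_i\leq y_j$ for all $i,j$, so both relations of~\eqref{e-def-Z} not involving the policies are automatically satisfied. The two hypotheses~\eqref{e-revised-i} and~\eqref{e-revised-ii} are precisely the remaining upper bounds $y_j\leq B_{jk}+x_k$ for $k\in\pi(j)$ and $y_j\leq A_{ji}+x_i$ for $i\in\sigma^{-1}(j)$. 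Hence $(x,y)\in Z_{\sigma,\pi}$, completing the characterization.

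Finally, to see that $X_{\sigma,\pi}$ is itself an alcoved polyhedron, I observe that each inequality of the form $[(A\vee B)x]_j \leq C + x_k$ unfolds as the finite conjunction of inequalities $(A_{j\ell}\vee B_{j\ell})+x_\ell \leq C+x_k$ over those $\ell\in[n]$ for which $A_{j\ell}\vee B_{j\ell}>-\infty$. Each of these is an alcoved inequality of the form $x_\ell - x_k \leq C-(A_{j\ell}\vee B_{j\ell})$, so the full system~\eqref{e-revised} describing $X_{\sigma,\pi}$ is a finite conjunction of alcoved inequalities, which is the required conclusion. No step of the argument looks particularly delicate; the only care is to handle tropical $-\infty$ entries properly and to verify that the chosen witness $y$ is finite, both of which follow from the properness assumption.
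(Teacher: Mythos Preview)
Your proof is correct and follows essentially the same route as the paper's: rewrite the constraints of $Z_{\sigma,\pi}$ as two-variable alcoved inequalities, combine lower and upper bounds on $y_j$ to get~\eqref{e-revised}, and for the converse exhibit an explicit lift. The only cosmetic difference is that the paper takes $y:=Bx$ as the witness and then observes, from~\eqref{e-revised-i}, that $[Bx]_j=[(A\vee B)x]_j$, whereas you set $y_j:=[(A\vee B)x]_j$ directly; under the constraints these coincide, so the two arguments are the same.
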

\begin{proof}
  Let $(x,y)\in Z_{\sigma,\pi}$.
  Since $x \leq A^{\#}y$ is equivalent to $Ax \leq y$,
  we deduce that  $(A\vee B) x\leq y $.
Moreover,~\eqref{e-def-Z} entail that
\begin{align}
  A_{ji}+ x_{i} & \geq y_{j},\quad \forall j\in[m], \forall i \in \sigma^{-1}(i) \enspace .
\end{align}
It follows that $x$ satisfies~\eqref{e-revised}. Conversely, suppose
that~\eqref{e-revised} holds, and let $y:=Bx$. The inequalities
in~\eqref{e-revised-i} entail that $[Bx]_j\leq [(A\vee B)x]_j
\leq B_{jk}+x_k \leq [Bx]_j$
for all $j\in [m]$ and $k\in\pi(j)$, so that $B_{jk}+x_k=y_j=[(A\vee B)x]_j$.
Then,
it follows from these inequalities that $Ax\leq y$, and so $x\leq A^\sharp y$.
Then, we deduce from~\eqref{e-revised-ii} that
$y_j \leq A_{ji}+ x_i$ for all $j\in[n]$ and $i\in \sigma^{-1}(j)$,
which can be rewritten as $-A_{ji}+y_j \leq x_i$ for
all $i\in [n] $ and $j\in \sigma(i)$, and so $A^\sharp y\leq x$, which
shows that $(x,y)\in Z_{\sigma,\pi}$.

It is immediate from the form of the constraints in~\eqref{e-def-Z} and~\eqref{e-revised} that $Z_{\sigma,\pi}$ and $X_{\sigma,\pi}$ are alcoved
polyhedra.
\end{proof}

We define the {\em type} of a point $x\in \R^n$
to be the pair of partially defined maps $\tau:=(\sigma,\pi)$
where for all $j\in [m]$,
$\pi(j)$ denotes the set of
$k\in[n]$ such that the
relation~\eqref{e-revised-i} holds,
and $\sigma^{-1}(j)$ denotes the set
of $i\in [n]$ such that~\eqref{e-revised-ii} holds.
We say that a type is {\em proper}
if both $\sigma$ and $\pi$ are policies
(this requires the maps $\sigma$ and $\pi$
to be totally defined).
We denote by $\mathcal{T}$ the set of proper types associated
to points $x\in\R^n$.

\begin{thm}\label{th-polycomplex}
  Suppose $E$ is the fixed point set of the finitely generated
  Shapley operator. Then, the collection of alcoved polyhedra
  $(X_{\tau})_{\tau \in \tauset}$
  constitutes a polyhedral complex
  whose support is $E$. Moreover, the cell $X_\tau$ consists
  precisely of those fixed points $u$ such that the pair $\tau$
  of nondeterministic policies in the mean payoff game associated
  to $T$ is $u$-calibrated. %
\end{thm}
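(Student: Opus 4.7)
The plan is to show in sequence: $X_\tau \subseteq E$ for every $\tau \in \tauset$; every $u \in E$ lies in some such $X_\tau$; the collection $(X_\tau)_{\tau \in \tauset}$ satisfies the axioms of a polyhedral complex; and calibration matches membership in $X_\tau$. For the first, given $x \in X_\tau$ with $\tau=(\sigma,\pi)\in\tauset$, set $y:=(A\vee B)x$. Since the opposite inequalities $((A\vee B)x)_j\geq A_{ji}+x_i$ and $((A\vee B)x)_j\geq B_{jk}+x_k$ hold identically, the inequalities in~\eqref{e-revised} are in fact equalities. Using $\pi(j)\neq\emptyset$ and picking $k\in\pi(j)$ gives $y_j=B_{jk}+x_k$, which combined with $y\geq Bx$ forces $y_j=(Bx)_j$, so $y=Bx$. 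Using $\sigma(i)\neq\emptyset$ and picking $j\in\sigma(i)$ gives $x_i=-A_{ji}+y_j$, while $y\geq Ax$ forces $(A^\sharp y)_i\geq x_i$, so $x=A^\sharp y=T(x)\in E$. Conversely, if $u\in E$ and $y:=Bu$, then $u_i=\min_j(-A_{ji}+y_j)$ and $y_j=\max_k(B_{jk}+u_k)$ are attained, and $(A\vee B)u=Bu$ follows from $y_j\geq A_{ji}+u_i$; hence the type $\tau(u)$ of $u$ satisfies $\sigma(i)\neq\emptyset$ and $\pi(j)\neq\emptyset$, so it is proper and lies in $\tauset$, with $u\in X_{\tau(u)}$ by construction. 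This gives $E=\bigcup_{\tau\in\tauset} X_\tau$.

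To verify the polyhedral complex axioms, I would unfold the max: $X_\tau$ is cut out of $\R^n$ by the linear inequalities $A_{ji'}+x_{i'}\leq A_{ji}+x_i$ and $B_{jk'}+x_{k'}\leq A_{ji}+x_i$ for each $(j,i)\in\sigma^{-1}$ and every admissible $i',k'$, together with the symmetric system indexed by $\pi$. From this explicit description, $X_{\tau_1}\cap X_{\tau_2}=X_{\tau_1\cup\tau_2}$ (using pointwise unions of the $\sigma$- and $\pi$-parts), and this intersection is obtained from $X_{\tau_1}$ by saturating the additional constraints indexed by $\tau_2\setminus\tau_1$, hence is a polyhedral face of $X_{\tau_1}$ and symmetrically of $X_{\tau_2}$. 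Conversely, every face of $X_\tau$ arises by saturating some subset of the above inequalities; each saturation is equivalent to adjoining a pair to $\sigma^{-1}$ or $\pi$, so the face equals $X_{\tau'}$ for the enlarged type $\tau'$. This $\tau'$ is still a policy, and is realized as the type of any point in the relative interior of the face, so it belongs to $\tauset$.

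For the calibration statement, I would apply~\Cref{prop-calibrated} with $\lambda=0$: unpacking its proof shows that a pair $(\sigma,\pi)$ of nondeterministic policies is $u$-calibrated if and only if $\sigma(i)\subseteq\sigma^*(i):=\argmin_j(-A_{ji}+(Bu)_j)$ and $\pi(j)\subseteq\pi^*(j):=\argmax_k(B_{jk}+u_k)$ for all $i,j$. Using $(A\vee B)u=Bu$ on $E$, the pair $(\sigma^*,\pi^*)$ coincides with the type $\tau(u)$ computed in the first paragraph. Hence $u\in X_\tau$ if and only if the equalities defining $X_\tau$ all hold at $u$, if and only if $\tau\subseteq\tau(u)$, if and only if $\tau$ is $u$-calibrated. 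The main obstacle will be in the second paragraph: the single tropical equality $((A\vee B)x)_j=A_{ji}+x_i$ unfolds into many linear inequalities, and making the correspondence between polyhedral faces of $X_\tau$ and type enlargements in $\tauset$ precise requires careful accounting of which atomic inequalities are saturated.
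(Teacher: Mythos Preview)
Your proposal is correct and follows essentially the same route as the paper. The paper establishes $E=\bigcup_{\tau\in\tauset} X_\tau$ by going through the lifted set $F\subset\R^n\times\R^m$ and \Cref{prop-projZ}, whereas you give a direct argument working only in $\R^n$; your computation (set $y=(A\vee B)x$, show $y=Bx$ and $x=A^\sharp y$) is precisely the content of the proof of \Cref{prop-projZ}, so you are simply inlining that step rather than invoking it. For the polyhedral complex axioms, both you and the paper argue that $X_{\tau_1}\cap X_{\tau_2}=X_{\tau_1\cup\tau_2}$ and that faces correspond to type enlargements; the paper handles the subtlety you flag (that the naive $\tau'$ from a saturation may not itself lie in $\tauset$) in the same way you suggest, by passing to the type $\tau'''$ of a point in the relative interior and checking $X_{\tau''}=X_{\tau'''}$. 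Your treatment of the calibration statement via \Cref{prop-calibrated} is also what the paper does, though the paper phrases the characterization as $u=T(u)=T^{\sigma_d}(u)={}^{\pi_d}T(u)$ for all induced deterministic policies rather than as the containment $\tau\subseteq\tau(u)$; these are equivalent once one unpacks the definitions.
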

\begin{proof}
  It follows from~\eqref{e-cover}, $E=\operatorname{proj}_x(F)$
  and~\Cref{prop-projZ} that
  $E=\bigcup_{\tau} X_\tau$. We have to show that
  the collection of polyhedra $\{X_\tau\}_{\tau \in \tauset}$ is a polyhedral complex.

  Consider $\tau=(\sigma,\pi)\in\mathcal{T}$ and
  $\tau'=(\sigma',\pi')\in \mathcal{T}$. Let
  $\pi''\in\piset$ be such that $\pi''(j)=\pi(j)\cup\pi'(j)$
  for all $j\in[m]$, and let $\sigma''\in\sigmaset$ be such
  that $\sigma''(i)=\sigma(i)\cup\sigma'(i)$ for all
  $i\in [n]$. It is immediate from~\eqref{e-revised} that
  $X_\tau\cap X_{\tau'}= X_{\tau''}$. 
  Let now $x$ be a point in the relative
  interior of $X_{\tau''}$. For $j\in [m]$, let $\pi'''(j)$ be defined
  as the set of $k$ such that $[(A\vee B)x)]_j\leq B_{jk}+x_k$.
  For $i\in [n]$, let $\sigma'''(i)$ be defined as the set
  of $j$ such that $[(A\vee B)x]_j\leq A_{ji}+x_i$,
  so that $\tau'''$ is the type of $x$.
  Observe that $\sigma'''(i)\supset \sigma''(i)\supset \sigma'(i)\neq \emptyset$, which entails that $\sigma'''$ is proper. Similarly, $\pi'''$ is proper.
  Moreover, the inclusion $X_\tau \cap X_{\tau'} \supset X_{\tau'''}$ is trivial.
  The reverse inclusion follows from the observation that
  $\operatorname{relint}(X_\tau\cap X_{\tau'}) \subset X_{\tau'''}$.
  This follows from the fact that $\tau'''$ is the type of any
  point $x\in   \operatorname{relint}(X_\tau\cap X_{\tau'}) $.

  Finally, observing that a face $F'$ of $X_\tau$ is obtained by saturating
  some of the inequalities~\eqref{e-revised}, and taking for $\tau'$
  the type of an arbitrary point in the relative interior of this face,
  it is immediate that $F'=X_{\tau'}$.

  Moreover, considering the proof of~\Cref{prop-calibrated}, we see
  that a pair of nondeterministic policies $\tau=(\sigma,\pi)\in \mathcal{T}$
  is $u$-calibrated if and only if, for all
   deterministic policies $\sigma_{\operatorname{d}}$, $\pi_{\operatorname{d}}$
  induced by $\sigma$ and $\pi$, we have
  $u=T(u)=T^{\sigma_{\operatorname{d}}}(u)={}^{\pi_{\operatorname{d}}}T(u)$,
  and this means precisely that $u\in X_\tau$.\todo{SG: to be checked}
\end{proof}
\begin{remark}
  The polyhedral complex of~\Cref{th-polycomplex} generalizes
  the complex introduced by Develin and Sturmfels to represent
  tropical polyhedra~\cite{TropConv}. The latter
  complex is recovered by considering the special case in which $A=B$,
  so that $T=B^\sharp \circ B$. Then, the range of $T$ is precisely the
  dual tropical cone generated by the opposite of the columns
  of $B$. By~\Cref{prop-projZ}, the cell $X^{\sigma,\pi}$ is given by $\{x\mid (Bx)_{j} \leq B_{jk}+x_k,\;k\in \pi(j), \; (Bx)_j \leq B_{ji}+x_i, \; i\in \sigma^{-1}(j)\}$, and then, we see that this cell coincides
  with $X^{\bar{\pi}^{-1},\bar{\pi}}$, in which $\bar{\pi}$ is the nondeterministic
  policy whose graphs is the union of those of $\pi$ and $\sigma^{-1}$.
  The cells $X^{\bar{\pi}^{-1},\bar{\pi}}$ are precisely the ones
    that constitute the polyhedral complex of~\cite{TropConv}
    and the policy $\bar{\pi}$ is equivalent to the {\em combinatorial type}
    defined there.
  \todo[inline]{SG: to be checked. Add that if the cell is of non-empty interior, then the type consists of deterministic policies. Link with the develin-sturmfels complex when $T=V V^\sharp$. }
\end{remark}
\todo[inline]{SG: add discussion of~\cite{sturmfels2012combinatorial,tran2014polytropes}}

\subsection{Polyhedral complexes associated with ambitropical polyhedra}
\todo[inline]{SG: to be adapted to the move}
Whereas ambitropical cones arise as fixed point sets
of Shapley operators, we shall see that ambitropical polyhedra
arise as fixed point sets of finitely generated Shapley operators.

In the special case of alcoved polyhedra,
the following lemma shows that these Shapley operators are simple, and
its proof shows that they are associated with one-player games.
\begin{lem}\label{say-fg}
Let $E\subset \R^n$ be an alcoved polyhedron, $\qm{E}(x)=\supe \{y \in E; y \leq x\}$ and $\qp{E}(x)=\infe \{y \in E; y \geq x\}$. Then $\qm{E}$ and $\qp{E}$ are finitely generated Shapley operators. 
\end{lem}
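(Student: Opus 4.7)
The plan is to exploit the fact, established in \Cref{prop-equiv2}, that an alcoved polyhedron $E$ is simultaneously a closed tropical cone and a closed dual tropical cone, with $\supc$ and $\infc$ agreeing with the pointwise operations of $\R^n$. In particular, $E^{\max}\cap\R^n = E$ and $E^{\min}\cap\R^n = E$, so by inspection of the definitions (compare also~\Cref{prop-caracq}) the canonical retractions reduce to the tropical projectors:
\[
\qm{E} = \pmax{E}, \qquad \qp{E} = \pmin{E} \enspace .
\]
So it suffices to show that $\pmax{E}$ and $\pmin{E}$ are finitely generated Shapley operators.

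For $\qm{E}$, I would use the representation $E=\alcoved{M} = \{M^* y \mid y\in \R^n\}$ from~\Cref{prop-def-alcoved}. Since $E$ is non-empty, $M^*$ is well defined with entries in $\tmax$, and every column has a finite entry on the diagonal (because $(M^*)_{jj}\geq 0$), so the adjoint $(M^*)^\sharp$ is well defined on $\R^n$. Using the adjunction $M^*y\leq x \iff y\leq (M^*)^\sharp x$, together with the sup-preservation and monotonicity of $M^*$, the supremum
\[
\qm{E}(x) = \sup\{M^* y \mid y\in \R^n,\; M^*y \leq x\}
\]
is attained at $y = (M^*)^\sharp x$, yielding the closed form $\qm{E}(x) = M^*\cdot (M^*)^\sharp x$. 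Writing this in coordinates,
\[
(\qm{E}(x))_i = \max_{j\in[n]}\Big( (M^*)_{ij} + \min_{k\in[n]} \bigl(-(M^*)_{kj} + x_k\bigr)\Big),
\]
which is manifestly a min-max function (the entries $-\infty$ are absorbed, since they contribute $-\infty$ to a $\max$ or $+\infty$ to a $\min$ and can be dropped). Hence $\qm{E}$ is a finitely generated Shapley operator.

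For $\qp{E}$, I would argue by duality via the involution $x\mapsto -x$. The set $-E$ equals $\alcoved{M^T}$ and is therefore itself an alcoved polyhedron, and a direct change of variable gives $\qp{E}(x) = -\qm{-E}(-x)$. By the previous step $\qm{-E}$ is finitely generated, and since the class of min-max functions is stable under the flip $f\mapsto (x\mapsto -f(-x))$ (which only swaps $\vee$ and $\wedge$), we conclude that $\qp{E}$ is also finitely generated.

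The only real point to verify is the manipulation of the $-\infty$ entries of $M^*$: one must check that the expression $M^*\cdot (M^*)^\sharp x$ takes values in $\R^n$ when $x\in \R^n$ and fits the min-max grammar after discarding the trivial terms. This is straightforward since each row and column of $M^*$ contains at least the finite diagonal entry, so a well-defined min-max representation always exists.
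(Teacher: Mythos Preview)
Your argument is correct, and both you and the paper rely on the representation $E=\alcoved{M}=\{M^*y\mid y\in\R^n\}$ from \Cref{prop-def-alcoved}. The difference is in which retraction is computed directly and how simple the resulting formula is.

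You compute $\qm{E}$ first, via the adjunction $M^*y\leq x\iff y\leq (M^*)^\sharp x$, obtaining $\qm{E}(x)=M^*\,(M^*)^\sharp x$, a genuine min--max expression. The paper instead computes $\qp{E}$ first and gets the cleaner formula $\qp{E}(x)=M^*x$: one checks $M^*x\in\alcoved{M}$ and $M^*x\geq x$ (since $M^*\geq I$), and if $z\in E$ with $z\geq x$ then $z=M^*z\geq M^*x$, so $M^*x$ is the least such $z$. Thus $\qp{E}$ is not merely finitely generated but \emph{tropically linear}; this is precisely what the paper alludes to when it says the proof shows the operators are ``associated with one-player games''. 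The dual argument then gives $\qm{E}$ as a \emph{dual} tropically linear (min-plus) operator.

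So your route works, but the paper's choice of starting with $\qp{E}$ buys a simpler closed form and the extra structural information that each retraction is (dual) tropically linear, not just a composite of a min and a max.
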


\begin{proof}
  Observe first that $\supe$ coincides with the $\sup$
  law of $\R^n$ and that similarly $\infe$ coincides with the
  $\inf$ law of $\R^n$, because $E$ is an alcoved polyhedron
  (stable by these sup and inf laws). We can find a matrix $M\in \tmaxn{n\times n}$ such that
  $E= \alcoved{M}$, i.e., $E=\{x\in \R^n\mid x\geq Mx\}$. We
  claim that $\qp{E} (x) = M^* x$. Indeed, by~\Cref{prop-def-alcoved}, $M^*x\in \alcoved{M}$.
  Moreover, since $M^*\geq I$, $M^*x \geq x$, and so $\qp{E}(x) \leq M^*x$.
  Now, if $z\geq x$ for some $z\in\alcoved{M}$,
  we have $z=M^*z\geq M^*x$, showing that $M^*x\leq \qp{E}(x)$.
  The operator $x\mapsto M^* x$ is finitely generated.
  A dual argument shows that $\qm{E}$
  which
  is also finitely generated.
\end{proof}
We now compute a finitely generated Shapley retraction
on an ambitropical polyhedron represented as a union of alcoved polyhedra.
\begin{lem}
  \label{lem-explicit0}
  Suppose that $E$ is the union of a finite family of alcoved polyhedra
  $(E_k)_{k\in K}$. Then,
    \[
  \pmax{E} = \sup_{l\in K} \qm{E_l},\qquad 
  \pmin{E} = \inf_{l\in K} \qp{E_l} \enspace .
  \]
\end{lem}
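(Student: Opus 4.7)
The plan is to leverage the explicit ``best approximation'' formula~\eqref{explicit-pmax}. Since $E$ is, by definition, a tropical generating set of $E^{\max}$ (and dually for $E^{\min}$), one has
\[
\pmax{E}(x)=\sup_{f\in E}\big(\mathbf{b}(x-f)+f\big),\qquad \pmin{E}(x)=\inf_{f\in E}\big(\mathbf{t}(x-f)+f\big),
\]
for all $x\in\R^n$. Since $E=\bigcup_{l\in K}E_l$, splitting the index set over $l$ gives
\[
\pmax{E}(x)=\sup_{l\in K}\sup_{f\in E_l}\big(\mathbf{b}(x-f)+f\big)=\sup_{l\in K}\pmax{E_l}(x),
\]
and dually $\pmin{E}(x)=\inf_{l\in K}\pmin{E_l}(x)$. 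Note that the supremum over $l$ is a \emph{finite} supremum (since $K$ is finite), so there is no issue of boundedness.

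It remains to identify $\pmax{E_l}$ with $\qm{E_l}$ (and, dually, $\pmin{E_l}$ with $\qp{E_l}$) for each alcoved polyhedron $E_l$. By~\Cref{prop-equiv2}, an alcoved polyhedron is simultaneously a closed tropical cone, a closed dual tropical cone, and a closed ambitropical cone whose lattice operations $\supc$ and $\infc$ agree with the pointwise $\sup$ and $\inf$ of $\R^n$. The closedness of $E_l$ as a tropical cone together with \Cref{lem-closed} (and~\Cref{prop-severalclosure}) yields $E_l^{\max}=E_l$: any bounded tropical linear combination of elements of $E_l$ is the limit of a bounded nondecreasing net of finite tropical combinations, all of which lie in $E_l$, hence so does the limit. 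Consequently,
\[
\pmax{E_l}(x)=\sup\{y\in E_l^{\max}\mid y\leq x\}=\sup\{y\in E_l\mid y\leq x\}=\supc\{y\in E_l\mid y\leq x\}=\qm{E_l}(x),
\]
and the dual identity $\pmin{E_l}=\qp{E_l}$ is obtained by reversing every inequality. Substituting into the formulas from the first paragraph gives $\pmax{E}=\sup_{l\in K}\qm{E_l}$ and $\pmin{E}=\inf_{l\in K}\qp{E_l}$, as claimed.

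The only delicate step is the identification $E_l^{\max}=E_l$, which hinges on the b-completeness of a closed tropical cone; everything else is bookkeeping based on results already established in~\Cref{sec-prelim} and in the section on the decomposition of canonical retractions.
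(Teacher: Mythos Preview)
Your proof is correct and follows essentially the same route as the paper's: both arguments rest on the fact that each alcoved polyhedron $E_l$ is a closed tropical cone, so that $E_l^{\max}=E_l$ and hence $\qm{E_l}(x)\in E_l$ with $\qm{E_l}(x)\leq x$. The only difference is packaging---you invoke the explicit formula~\eqref{explicit-pmax} and then split the supremum over $l$, whereas the paper verifies the two inequalities $\pmax{E}\geq \sup_l\qm{E_l}$ and $\pmax{E}\leq \sup_l\qm{E_l}$ directly from the definitions; the underlying content is the same. One small terminological point: in the paper, ``tropical generating set'' refers to \emph{finite} tropical linear combinations, so $E$ is not literally a tropical generating set of $E^{\max}$ in that sense, but the formula~\eqref{explicit-pmax} holds for $G=E$ by the very definition of $E^{\max}$, so your use of it is legitimate.
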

\begin{proof}
  By definition of $\qm{E_l}$, for all $x\in \R^n$,
  $E_l\ni \qm{E_l}(x)\leq x$.
  Since $E_l\subset E^{\max}$ and $E^{\max}$ is stable by supremum,
  we deduce that $E^{\max}\ni \sup_{l\in K} \qm{E_l}(x)\leq x$,
  and so, $\pmax{E}(x) \geq \sup_{l\in K} \qm{E_l}(x)$.
  Moreover, any element $z$ of $E^{\max}$ can be written as
  $z=\sup_{l\in K}z^l$ for some $z^l\in E_l$. If $z\leq x$,
  it follows that $z^l\leq \qm{E_l}(x)$, from which we deduce
  that $z\leq \sup_{l\in K}\qm{E_l}(x)$. Since this holds
  for all $E^{\max}\ni z\leq x$, it follows that $P^{\max}(x) \leq
  \sup_{l\in K}\qm{E_l}(x)$. The proof of the characterization
  of $\pmin{E}$ is dual.
\end{proof}
\begin{cor}\label{lem-explicit}
  Let $E$ be the union of a finite family of alcoved
  polyhedra $(E_k)_{k\in K}$. Then
  \begin{align}
    \qm{E}(x) & =
    \inf_{k \in K} \qp{E_k}(\sup_{l\in K} \qm{E_l}(x)),
   \qquad 
    \qp{E}(x) =
    \sup_{k \in K} \qm{E_k}(\inf_{l\in K} \qp{E_l}(x)) \enspace .
    \label{e-c1}
  \end{align}
\end{cor}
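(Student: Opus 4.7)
The plan is to derive this as an immediate consequence of \Cref{prop-caracq} and \Cref{lem-explicit0}. Implicitly here $E$ is an ambitropical polyhedron (otherwise $\qm{E}$ and $\qp{E}$ would not be defined in the sense of~\eqref{e-def-proj}), so in particular $E$ is a closed ambitropical cone and the factorizations of \Cref{prop-caracq} apply.

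First, I would invoke \Cref{prop-caracq} to write $\qm{E}=\pmin{E}\circ\pmax{E}$ and $\qp{E}=\pmax{E}\circ\pmin{E}$. Then I would substitute the explicit expressions for $\pmax{E}$ and $\pmin{E}$ given by \Cref{lem-explicit0}, namely $\pmax{E}=\sup_{l\in K}\qm{E_l}$ and $\pmin{E}=\inf_{k\in K}\qp{E_k}$. Evaluating the composition $\pmin{E}\circ\pmax{E}$ at $x\in\R^n$ immediately yields
\[
\qm{E}(x)=\pmin{E}\bigl(\pmax{E}(x)\bigr)=\inf_{k\in K}\qp{E_k}\Bigl(\sup_{l\in K}\qm{E_l}(x)\Bigr),
\]
which is the first identity in~\eqref{e-c1}. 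The second identity follows by the dual computation $\pmax{E}\circ\pmin{E}$.

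There is essentially no obstacle: once the two cited results are in place, this corollary is a one-line substitution. The only point worth highlighting is that the hypothesis that $E$ be an ambitropical polyhedron (and not just any finite union of alcoved polyhedra) is what legitimizes both the use of the canonical retractions $\qm{E},\qp{E}$ and the factorization theorem. In particular, the right-hand sides of~\eqref{e-c1} are finitely generated Shapley operators since each $\qm{E_l}$ and $\qp{E_k}$ is finitely generated by \Cref{say-fg}, and finite suprema, infima, and compositions of finitely generated Shapley operators remain finitely generated.
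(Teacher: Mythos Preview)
Your proposal is correct and follows exactly the same approach as the paper: the paper's proof reads simply ``This follows from~\Cref{prop-caracq} and~\Cref{lem-explicit0},'' and you have spelled out precisely that substitution. Your remark that the hypothesis implicitly requires $E$ to be a closed ambitropical cone (so that $\qm{E}$, $\qp{E}$ are defined and \Cref{prop-caracq} applies) is a worthwhile clarification that the paper leaves tacit.
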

\begin{proof}
  This follows from~\Cref{prop-caracq} and~\Cref{lem-explicit0}.
\end{proof}

\begin{thm}\label{th-ambitropicalpoly}
Let $E$ be a subset of $\mathbb{R}^{n}$, then the following are equivalent:
\begin{enumerate}
\item\label{ie-2} $E$ is an ambitropical polyhedron
\item\label{ie-3} There exists a finitely generated Shapley operator $P$ such that $P=P^{2}$ and $E=\{ x \in \mathbb{R}^{n} | x = P(x) \}$.
\item\label{ie-1} $E$ is the fixed point set of a finitely generated
  Shapley operator;
\item\label{ie-4} $E$ is a closed ambitropical cone and $\bar{E}^{\max}$
  and $\bar{E}^{\min}$ are finitely generated as modules over $\tmax$ and $\tmin$, respectively.
\end{enumerate}
\end{thm}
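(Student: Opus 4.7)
The plan is to prove $(1)\Rightarrow(2)\Rightarrow(3)\Rightarrow(1)$ to obtain equivalence of the first three conditions, and then to connect $(4)$ to this cycle via $(1)\Rightarrow(4)\Rightarrow(2)$.

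For $(1)\Rightarrow(2)$, I would write $E=\bigcup_{k\in K}E_k$ as a finite union of alcoved polyhedra. Since $E$ is by hypothesis an ambitropical cone and is closed (as a finite union of closed sets), the canonical retraction $\qm{E}$ is well-defined, and \Cref{lem-explicit} gives
\[
\qm{E}(x)=\inf_{k\in K}\qp{E_k}\bigl(\sup_{l\in K}\qm{E_l}(x)\bigr)\enspace.
\]
By \Cref{say-fg}, each factor $\qm{E_l}$ and $\qp{E_k}$ is a finitely generated Shapley operator, and min-max functions are closed under $\vee$, $\wedge$, and composition; hence $\qm{E}$ is finitely generated. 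By \Cref{idem-shapley}, $\qm{E}$ is idempotent with range (and therefore fixed point set) equal to $E$, yielding $(2)$. The implication $(2)\Rightarrow(3)$ is immediate.

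For $(3)\Rightarrow(1)$, let $T$ be a finitely generated Shapley operator with fixed point set $E$. By \Cref{prop-explicitform} we may write $T=A^\sharp\circ B$ for some proper pair $(A,B)$, and \Cref{th-polycomplex} then represents $E$ as the support of the polyhedral complex $\{X_\tau\}_{\tau\in\mathcal{T}}$, whose cells are alcoved polyhedra. A type $\tau=(\sigma,\pi)$ is a pair of nondeterministic policies drawn from the finite sets $\sigmaset$ and $\piset$, so $\mathcal{T}$ is finite and $E$ is a finite union of alcoved polyhedra. Combining with \Cref{th-main0}, which ensures $E$ is a closed ambitropical cone, gives $(1)$.

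Finally, for $(1)\Rightarrow(4)$: an ambitropical polyhedron is by construction a closed ambitropical cone, and, writing $E=\bigcup_k\alcoved{M_k}$, \Cref{prop-def-alcoved} supplies a finite tropical generating set $G_k$ of $\lowerclosure{\alcoved{M_k}}$ in terms of columns of $M_k^*$. Since each alcoved $E_k$ is a closed tropical cone (so $E_k=E_k^{\max}$) and $E^{\max}$ is the tropical cone generated by $\bigcup_k E_k$, the finite union $\bigcup_k G_k$ tropically generates $\bar{E}^{\max}=\lowerclosure{E^{\max}}$, and dually for $\bar{E}^{\min}$. For $(4)\Rightarrow(2)$, given a finite tropical generating set $G$ of $\bar{E}^{\max}$, formula~\eqref{explicit-pmax} yields the coordinate-wise expression
\[
\bigl(\pmax{E}(x)\bigr)_j=\sup_{g\in G,\;g_j>-\infty}\Bigl(g_j+\min_{i\colon g_i>-\infty}(x_i-g_i)\Bigr)\enspace,
\]
a finite min-max function, so $\pmax{E}$ is a finitely generated Shapley operator, and dually so is $\pmin{E}$. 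By \Cref{prop-caracq}, $\qm{E}=\pmin{E}\circ\pmax{E}$ is therefore finitely generated; \Cref{prop-systematic2}~\myitem{idemqm} gives idempotency and \Cref{th-main1}~\myitem{fpqm} identifies $E$ as its fixed point set, establishing $(2)$. The most delicate point is precisely this last step: one must verify that formula~\eqref{explicit-pmax} still produces a genuine min-max function when generators of $\bar{E}^{\max}$ have $-\infty$ coordinates, which is made rigorous by the explicit coordinate-wise unfolding above (the $-\infty$ entries simply restrict the indices appearing in the min and max).
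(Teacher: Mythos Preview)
Your proof is correct and follows essentially the same approach as the paper: both rely on \Cref{say-fg} and \Cref{lem-explicit} for $(1)\Rightarrow(2)$, on \Cref{th-polycomplex} to pass from a finitely generated Shapley operator to a finite union of alcoved cells, and on the explicit generator formula~\eqref{explicit-pmax} (together with \Cref{prop-def-alcoved}) to handle the finitely generated module condition. The only structural difference is that the paper runs a single cycle $(1)\Rightarrow(2)\Rightarrow(3)\Rightarrow(4)\Rightarrow(1)$, whereas you close $(1)\Rightarrow(2)\Rightarrow(3)\Rightarrow(1)$ first and then attach $(4)$ via $(1)\Rightarrow(4)\Rightarrow(2)$; your treatment of $(4)\Rightarrow(2)$ via the coordinate-wise unfolding of~\eqref{explicit-pmax} is in fact more explicit than the paper's, which invokes \Cref{lem-explicit} and \Cref{say-fg} at that step even though no alcoved decomposition of $E$ is yet available.
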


\begin{proof}

\eqref{ie-2}$\Rightarrow$\eqref{ie-3}.
The set $E$ is the range of the idempotent
Shapley operator $\barqm{E}$, and 
it follows from \Cref{lem-explicit}
and \Cref{say-fg} that this Shapley operator is finitely generated.

$\eqref{ie-3} \Rightarrow \eqref{ie-1}$ is obvious.

$\eqref{ie-1} \Rightarrow \eqref{ie-4}$. Since $E$ is the fixed
point set of a Shapley operator, $E$ is a closed ambitropical cone.
If this Shapley operator is finitely generated, then by~\Cref{th-polycomplex},
$E$ can be written as the union of a finite family of alcoved polyhedra $(X_\tau)_{\tau \in\tauset}$.
Then, $\bar{E}^{\max}=\lowerclosure{E^{\max}}$ coincides with the union of the lower closures 
$\lowerclosure{X_\tau}$, for $\tau \in \tauset$. By~\Cref{prop-def-alcoved},
every $\lowerclosure{X_\tau}$ is a finitely generated $\tmax$-semimodule.
It follows that $\bar{E}^{\max}$ is a finitely generated $\tmax$-semimodule.

$\eqref{ie-4} \Rightarrow \eqref{ie-2}$.
By~\Cref{lem-explicit}, $E$ is the fixed point set of the operator
$\barqm{E}$ given in~\eqref{e-c1}. By~\Cref{say-fg},
every operator $Q_{E_k}^\pm$ is finitely generated.
Hence, $\barqm{E}$ is finitely generated. Then, by~\Cref{th-polycomplex},
$E$ is a finite union of alcoved polyhedra.
\end{proof}
The description of an ambitropical polyhedron as the fixed
point set of a Shapley operator is analogous
to the ``external'' description of a polyhedron. We next
show that ambitropical polyhedra admits an alternative
description, by generators.
\begin{definition}
  A {\em description by generators} of a closed ambitropical
  cone $E$ consists of a pair $(U^{\max},U^{\min})$
  such that $U^{\max}$ is a tropical generating set
  of $\bar{E}^{\max}$ and $U^{\min}$ is a dual tropical generating set
  of $\bar{E}^{\min}$.  We say that the description
  is {\em finite} if the sets $U^{\max}$ and $U^{\min}$ are finite.
\end{definition}
The generating sets $U^{\max}$ and $U^{\min}$ uniquely determine
$E^{\max}$ and $E^{\min}$, and so they uniquely determine the ambitropical cone
$E$, which coincides with $\range{P^{\max}_E\circ P^{\min}_E}$.
\begin{cor}\label{cor-fingen}
A closed ambitropical cone
is an ambitropical polyhedron if and only if it has a finite
description by generators. Moreover, it is an ambitropical
polytope if and only if these generators belong to $\R^n$.
\end{cor}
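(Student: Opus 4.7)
The plan is to derive the first equivalence directly from \Cref{th-ambitropicalpoly}: by definition, a finite description by generators $(U^{\max}, U^{\min})$ of a closed ambitropical cone $E$ is a pair of finite generating sets of $\bar{E}^{\max}$ and $\bar{E}^{\min}$, so the existence of such a description is precisely condition~\eqref{ie-4} of that theorem, which characterizes ambitropical polyhedra among closed ambitropical cones.

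For the second equivalence, I would prove both directions separately. For the ``if'' direction, suppose $U^{\max} = \{u^{(1)}, \dots, u^{(k)}\} \subset \R^n$ tropically generates $\bar{E}^{\max}$. Since $E \subset \bar{E}^{\max}$, any $x \in E$ admits a representation $x = \sup_j(\lambda_j + u^{(j)})$ with $\lambda_j \in \tmax$. After translating $x$ by a constant so that $\max_j \lambda_j = 0$, attained at some index $j^{*}$, one gets the componentwise bound $u^{(j^{*})} \leq x \leq \sup_j u^{(j)}$, yielding $\mytop(x) - \mybot(x) \leq \max_{i,j} u^{(j)}_i - \min_{i,j} u^{(j)}_i$, a finite bound uniform in $x$. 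Hence $E$ is bounded in Hilbert's seminorm, i.e. an ambitropical polytope.

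For the ``only if'' direction, I would use that by definition $E = \bigcup_k E_k$ is a finite union of alcoved polyhedra $E_k = \alcoved{M_k}$, each of which inherits Hilbert-boundedness from $E$. By \Cref{prop-def-alcoved}, a finite tropical generating set $U^{\max}_k$ of $\bar{E}_k^{\max}$ is provided by the columns of $M_k^*$ indexed by representatives $i_j$ of the strongly connected components of the critical digraph. The key step is to verify each such column has all entries in $\R$. Assuming toward contradiction that a generator column $(M_k^*)_{\cdot, j}$ had a $-\infty$ entry at row $i_0$, I would take $y^N \in \R^n$ with $y^N_j = 0$ and $y^N_l = -N$ for $l \neq j$ and set $x^N := M_k^* y^N \in E_k$. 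Using $(M_k^*)_{jj} = (M_k^*)_{i_0, i_0} = 0$, one checks that $x^N_j = 0$ for $N$ large, while $x^N_{i_0} = \max_{l \neq j} (M_k^*)_{i_0,l} - N \to -\infty$ (the contribution from $l = j$ being $-\infty$), forcing $\mytop(x^N) - \mybot(x^N) \to \infty$, contradicting Hilbert-boundedness of $E_k$. Finally, $U^{\max} := \bigcup_k U^{\max}_k$ is a finite generating set of $\bar{E}^{\max}$ in $\R^n$, because $\bar{E}^{\max}$ is the closed subsemimodule of $\tmaxn{n}$ generated by $\bigcup_k \bar{E}_k^{\max}$; the dual argument produces $U^{\min} \subset \R^n$.

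The main obstacle is the ``only if'' direction, where the global Hilbert-boundedness of $E$ must be transferred to finiteness of the entries of the chosen generator columns. The sequence argument $y^N$ above handles this uniformly, independently of which specific columns are selected as representatives in \Cref{prop-def-alcoved}.
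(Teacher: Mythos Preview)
Your proof is correct. The first equivalence is handled essentially as in the paper, by direct appeal to \Cref{th-ambitropicalpoly} (the paper spells out the converse via~\eqref{explicit-pmax} and condition~\eqref{ie-3}, but this is subsumed by the equivalence \eqref{ie-2}$\Leftrightarrow$\eqref{ie-4} you invoke).

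For the polytope characterization, your ``if'' direction parallels the paper's: the paper phrases it as invariance of Hilbert-seminorm balls under pointwise supremum combined with~\eqref{explicit-pmax}, but the content is the same bound you derive directly. The ``only if'' direction, however, is genuinely different. The paper argues abstractly: since $E$ is Hilbert-bounded, so is $E^{\max}$ (by the ball-invariance observation), and hence any nonincreasing sequence in $E^{\max}$ either stays bounded---and converges in $E^{\max}$, which is closed---or has all coordinates tending to $-\infty$ simultaneously; this yields $\bar{E}^{\max}=E^{\max}\cup\{(-\infty,\dots,-\infty)\}$, so \emph{any} tropical generating set of $\bar{E}^{\max}$, once the zero vector is discarded, already lies in $E^{\max}\subset\R^n$. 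Your approach instead decomposes $E$ into alcoved cells and verifies finiteness of the selected Kleene-star columns via an explicit unbounded sequence $x^N$. Both are valid; the paper's route is shorter and shows that no special construction of generators is needed, while yours has the virtue of exhibiting them explicitly from the cell decomposition.
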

\begin{proof}
  If $E$ is an ambitropical polyhedron, it follows from~\Cref{th-ambitropicalpoly}, \eqref{ie-4} that $E$ admits a finite description by generators.

  Conversely, if $E$ is a closed ambitropical cone that
  admits a finite description by generators,
  $(U^{\max},U^{\min})$, by~\eqref{explicit-pmax} and its dual,
  $P^{\max}_E$ and $P^{\min}_E$ are finitely generated
  Shapley operators, and so, $E$ is the fixed
  point set of the finitely generated Shapley operator
  $P^{\max}_E\circ P^{\min}_E$. So, $E$ is an ambitropical polyhedron by Theorem \ref{th-ambitropicalpoly} (\ref{ie-3}).

  Now, if $E$ is bounded in Hilbert's seminorm, we have
$\bar{E}^{\max}=E^{\max}\cup \{(-\infty,\dots,-\infty)\}$,
  and dually, $\bar{E}^{\min}=E^{\min}\cup \{(+\infty,\dots,+\infty)\}$,
  we get $U^{\max}\subset E^{\max}$ and $E^{\min}\subset U^{\min}$,
  showing that the generators belong to $\R^n$. Conversely,
  suppose that the elements of $U^{\max}$ and $U^{\min}$ belong
  to $\R^n$, and let $R$ denote the maximal Hilbert seminorm
  of these elements. We observe that balls in Hilbert's seminorm
  are invariant by the operations of suprema and infima, and
  so, considering the formula~\eqref{explicit-pmax},
  we deduce that $E\subset \range{P_E^{\max}}$ is included
  in the ball of radius $R$ in Hilbert's seminorm.\todo{SG: to be checked - SV:checked}
  \end{proof}
When $E$ is an ambitropical polytope, for any description
by generators $(U^{\max}, U^{\min})$, $U^{\max}$ and $U^{\min}$
are necessarily subsets of $E$, and so we actually get
a proper notion of {\em internal} representation of $E$ by generators.

In particular, we have
the following characterization of ambitropical polytopes.
\todo[inline]{SG: I revised this part of the ms and added the following cor, check}
\begin{cor}\label{cor-ambihull}
  Every finite subset of $\R^n$ admits an ambitropical hull
  that is an ambitropical polytope, and all the ambitropical polytopes arise
  in this way.  %
\end{cor}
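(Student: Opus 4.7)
The plan is to prove the two implications separately, relying on the canonical realization $\tilde{E}=\range \barqp{E}$ of the ambitropical hull from \Cref{prop-ambihull}, and on the characterization of ambitropical polytopes by finite descriptions by generators from \Cref{cor-fingen}.

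For the forward direction, let $E\subset\R^n$ be finite. I first take $\tilde{E}:=\range\barqp{E}=\pmax{E}\circ\pmin{E}(\R^n)$, which is an ambitropical hull of $E$ by \Cref{prop-ambihull}. The key step is to identify the generating structure of $\tilde{E}$. On the one hand, $\tilde{E}\supset E$ forces $\tilde{E}^{\max}\supset E^{\max}$ (and similarly on the $\min$ side). On the other hand, $\tilde{E}\subset\range\pmax{E}=E^{\max}$, and since $E^{\max}$ is stable by arbitrary (bounded) tropical suprema, $\tilde{E}^{\max}\subset E^{\max}$. Hence $\tilde{E}^{\max}=E^{\max}$ and $\bar{\tilde{E}}^{\max}=\bar{E}^{\max}$, and dually for $\min$. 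Since $E$ is finite with elements in $\R^n$, the set $E$ (together with the bottom element $(-\infty,\dots,-\infty)$, which arises in $\bar{E}^{\max}$ as the limit of $\lambda+f$ as $\lambda\to-\infty$) is a finite tropical generating family of $\bar{\tilde{E}}^{\max}$; dually for $\bar{\tilde{E}}^{\min}$. By \Cref{th-ambitropicalpoly}\myitem{ie-4}, $\tilde{E}$ is an ambitropical polyhedron, and since the generators belong to $\R^n$, \Cref{cor-fingen} yields that $\tilde{E}$ is an ambitropical polytope.

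For the converse, let $P\subset\R^n$ be an ambitropical polytope. By \Cref{cor-fingen}, $P$ admits a finite description by generators $(U^{\max},U^{\min})$; the internal-representation remark following that corollary guarantees $U^{\max},U^{\min}\subset P$. Set $G:=U^{\max}\cup U^{\min}\subset P$, which is finite. Since $U^{\max}$ tropically generates $\bar{P}^{\max}$ and $G\subset P$, we get the double inclusion $\bar{G}^{\max}=\bar{P}^{\max}$, and dually $\bar{G}^{\min}=\bar{P}^{\min}$. Consequently $\pmax{G}=\pmax{P}$ and $\pmin{G}=\pmin{P}$, so $\barqp{G}=\barqp{P}$. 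By \Cref{th-main1}\myitem{fpqp}, $P$ coincides with the fixed point set of $\barqp{P}$, and by idempotency of $\barqp{G}$ (see \Cref{prop-systematic2}) this fixed point set equals $\range\barqp{G}$. Hence $P=\range\barqp{G}$ is the canonical realization of the ambitropical hull of the finite set $G$, and by uniqueness of the hull up to isomorphism (\Cref{prop-ambihull}), $P$ is an ambitropical hull of $G$.

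The main obstacle will be verifying the identity $\tilde{E}^{\max}=E^{\max}$ cleanly together with the claim that a finite $E\subset\R^n$ is a finite tropical generating family of $\bar{E}^{\max}$ (not merely of $E^{\max}$); here one has to be careful that lower-closure limits with $-\infty$ coordinates may appear, but they are handled by allowing some $\lambda_f=-\infty$ in the tropical combinations $\vee_{f\in E}(\lambda_f+f)$, possibly together with the adjunction of the bottom vector. Everything else is a routine combination of the characterizations already established in the paper.
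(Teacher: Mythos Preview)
Your proof is correct and follows essentially the same strategy as the paper: both directions hinge on the canonical hull $\range\barqp{E}$ and on the characterization of ambitropical polytopes via finite descriptions by generators. Your forward direction is a bit more explicit than the paper's---you carefully establish $\tilde{E}^{\max}=E^{\max}$ and then invoke \Cref{th-ambitropicalpoly}\myitem{ie-4}, whereas the paper argues more tersely that $\barqp{E}$ is a finitely generated Shapley operator (via~\eqref{explicit-pmax}) and applies \Cref{cor-fingen} directly for boundedness; both routes are valid and rest on the same underlying facts. For the converse, your use of \Cref{cor-fingen} to extract $(U^{\max},U^{\min})\subset P$ directly is actually slightly cleaner than the paper's approach, which goes through the decomposition of $P$ into alcoved polyhedra $F_k$ to build the generating sets; either way one arrives at $\barqp{G}=\barqp{P}$ for a finite $G\subset P$.

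Your final caveat about $E$ generating $\bar{E}^{\max}$ (and not just $E^{\max}$) is well taken but not a genuine obstacle: for finite $E\subset\R^n$, the $\tmax$-semimodule spanned by $E$ in $\tmaxn{n}$ is closed, and by the bijection of \Cref{prop-related} it coincides with $\lowerclosure{E^{\max}}=\bar{E}^{\max}$.
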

\begin{proof}
  By~\Cref{prop-ambihull}, an ambitropical hull of a finite subset $E=\{u^1,\dots,u^k\}$ of $\R^n$
  is given by the range of the finitely generator
  operator $\barqp{E}= \pmax{E} \circ \pmin{E}$,
  which, by~\Cref{cor-fingen}, is bounded in Hilbert's seminorm,
  and so, it is an ambitropical polytope.

  Conversely, suppose that $F$ is an ambitropical polytope. Then, $F$
  is a finite union of alcoved polyhedra $F_k$ that are bounded in Hilbert
  seminorm. Each of these alcoved polyhedra $F_k$ has a finite set
  of tropical generators. By taking the union of these finite sets
  we obtain a (possibly redundant) finite set $F^+$
  of tropical generators of $E^{\max}$. The dual constructions yields
  a finite set $F^-$ of dual tropical generators
  of $E^{\min}$.  Observe that the
  explicit construction  of the retraction $\barqp{E}$ given in~\Cref{lem-explicit} involves elementary operators $\qpm{E_k}$ which only depend on the
  primal and dual tropical generators of $E_k$.  Hence, by taking
  for $E$ the union of the two sets $F^\pm$, we get that $\barqp{E}=\barqp{F}$,
  showing that $F$ is an ambitropical hull of the finite set $E$.
\end{proof}
\begin{example}
  \Cref{cor-ambihull} is illustrated in \Cref{fig-extreme}, showing
  an ambitropical hull of the points $a_1,\dots,a_9$ given by the columns
  of the matrix
  \[
  \bordermatrix{
   ~& a_1&a_2&a_3& a_4&a_5&a_6&a_7&a_8&a_9\cr
    ~&4 & 5 & 3  & 1 &0  & 0&  0&  0&4\cr
    ~&0 & 2 & 4  & 3 &4  & 2&  2& -1&0\cr
    ~& 0 & 0 & 0 & 0  &2  & 4&  2&  0&3
}
  \]
  \end{example}
\begin{figure}[htbp]

\def\coord#1#2#3{{-sqrt(3)/2*(#1-#2)} ,{ -(1/2)*#1 - (1/2)*#2 + #3}}

\begin{tikzpicture}[scale=0.5]

\coordinate (a1) at (\coord{4}{0}{0});
\coordinate (a12u) at (\coord{4}{1}{0});
\coordinate (a12l) at (\coord{4}{0}{-1});
\coordinate (a2) at (\coord{5}{2}{0});
\coordinate (a23u) at (\coord{5}{4}{0});
\coordinate (a23l) at (\coord{3}{2}{0});
\coordinate (a3) at (\coord{3}{4}{0});
\coordinate (a34u) at (\coord{2}{3}{0});
\coordinate (a35l) at (\coord{0}{4}{0});
\coordinate (a4) at (\coord{1}{3}{0});
\coordinate (a45u) at (\coord{0}{2}{0});
\coordinate (a5) at (\coord{0}{4}{2});
\coordinate (a56u) at (\coord{0}{4}{4});
\coordinate (a6) at (\coord{0}{2}{4});
\coordinate (a68u) at (\coord{1}{0}{2});
\coordinate (a7) at (\coord{0}{2}{2});
\coordinate (a8) at (\coord{0}{-1}{0});
\coordinate (a89u) at (\coord{1}{-1}{0});
\coordinate (a69l) at (\coord{0}{1}{4});
\coordinate (a9) at (\coord{4}{0}{3});
\coordinate (orig) at (\coord{0}{0}{0});

\filldraw[gray!20,draw=black!80,opacity=0.3, thick] (a1) -- (a12u) -- (a2) -- (a23u) -- (a3) -- (a34u) -- (a4) -- (a45u) -- (a5) -- (a56u) -- (a6) -- (a68u) -- (a8) --(a89u) -- (a9) -- cycle;


\filldraw (a1) circle (1.5pt) node[left] {$a_1$};
\filldraw (a2) circle (1.5pt) node[left] {$a_2$};
\filldraw (a3) circle (1.5pt) node[left] {$a_3$};
\filldraw (a4) circle (1.5pt) node[left] {$a_4$};
\filldraw (a5) circle (1.5pt) node[left] {$a_5$};
\filldraw (a6) circle (1.5pt) node[left] {$a_6$};
\filldraw (a7) circle (1.5pt) node[left] {$a_7$};
\filldraw (a8) circle (1.5pt) node[left] {$a_8$};
\filldraw (a9) circle (1.5pt) node[left] {$a_9$};

\draw[dashed,->] (\coord{0}{0}{0}) -- (\coord{6}{0}{0}) node[above] {$x_1$};
\draw[dashed,->] (\coord{0}{0}{0}) -- (\coord{0}{6}{0}) node[above] {$x_2$};
\draw[dashed,->] (\coord{0}{0}{0}) -- (\coord{0}{0}{3}) node[above, left] {$x_3$};
\filldraw[black] (\coord{0}{0}{0}) circle (1.5pt) node[below,right] {0};

\end{tikzpicture}
\begin{tikzpicture}[scale=0.5]

\coordinate (a1) at (\coord{4}{0}{0});
\coordinate (a12u) at (\coord{4}{1}{0});
\coordinate (a12l) at (\coord{4}{0}{-1});
\coordinate (a2) at (\coord{5}{2}{0});
\coordinate (a23u) at (\coord{5}{4}{0});
\coordinate (a23l) at (\coord{3}{2}{0});
\coordinate (a3) at (\coord{3}{4}{0});
\coordinate (a34u) at (\coord{2}{3}{0});
\coordinate (a35l) at (\coord{0}{4}{0});
\coordinate (a4) at (\coord{1}{3}{0});
\coordinate (a45u) at (\coord{0}{2}{0});
\coordinate (a5) at (\coord{0}{4}{2});
\coordinate (a56u) at (\coord{0}{4}{4});
\coordinate (a6) at (\coord{0}{2}{4});
\coordinate (a68u) at (\coord{1}{0}{2});
\coordinate (a7) at (\coord{0}{2}{2});
\coordinate (a8) at (\coord{0}{-1}{0});
\coordinate (a89u) at (\coord{1}{-1}{0});
\coordinate (a69l) at (\coord{0}{1}{4});
\coordinate (a9) at (\coord{4}{0}{3});
\coordinate (orig) at (\coord{0}{0}{0});


\filldraw[gray!20,draw=blue!80,opacity=0.3, thick] (a1) -- (a12l) -- (a2) -- (a23l) -- (a3) -- (a35l) -- (a5) -- (a7) -- (a6) -- (a69l) -- (a9) -- cycle;

\filldraw (a1) circle (1.5pt) node[left] {$a_1$};
\filldraw (a2) circle (1.5pt) node[left] {$a_2$};
\filldraw (a3) circle (1.5pt) node[left] {$a_3$};
\filldraw (a4) circle (1.5pt) node[left] {$a_4$};
\filldraw (a5) circle (1.5pt) node[left] {$a_5$};
\filldraw (a6) circle (1.5pt) node[left] {$a_6$};
\filldraw (a7) circle (1.5pt) node[left] {$a_7$};
\filldraw (a8) circle (1.5pt) node[left] {$a_8$};
\filldraw (a9) circle (1.5pt) node[left] {$a_9$};

\draw[dashed,->] (\coord{0}{0}{0}) -- (\coord{6}{0}{0}) node[above] {$x_1$};
\draw[dashed,->] (\coord{0}{0}{0}) -- (\coord{0}{6}{0}) node[above] {$x_2$};
\draw[dashed,->] (\coord{0}{0}{0}) -- (\coord{0}{0}{3}) node[above, left] {$x_3$};
\filldraw[black] (\coord{0}{0}{0}) circle (1.5pt) node[below,right] {0};

\end{tikzpicture}
\begin{tikzpicture}[scale=0.5]

\coordinate (a1) at (\coord{4}{0}{0});
\coordinate (a12u) at (\coord{4}{1}{0});
\coordinate (a12l) at (\coord{4}{0}{-1});
\coordinate (a2) at (\coord{5}{2}{0});
\coordinate (a23u) at (\coord{5}{4}{0});
\coordinate (a23l) at (\coord{3}{2}{0});
\coordinate (a3) at (\coord{3}{4}{0});
\coordinate (a34u) at (\coord{2}{3}{0});
\coordinate (a35l) at (\coord{0}{4}{0});
\coordinate (a4) at (\coord{1}{3}{0});
\coordinate (a45u) at (\coord{0}{2}{0});
\coordinate (a5) at (\coord{0}{4}{2});
\coordinate (a56u) at (\coord{0}{4}{4});
\coordinate (a6) at (\coord{0}{2}{4});
\coordinate (a68u) at (\coord{1}{0}{2});
\coordinate (a7) at (\coord{0}{2}{2});
\coordinate (a8) at (\coord{0}{-1}{0});
\coordinate (a89u) at (\coord{1}{-1}{0});
\coordinate (a69l) at (\coord{0}{1}{4});
\coordinate (a9) at (\coord{4}{0}{3});
\coordinate (orig) at (\coord{0}{0}{0});

\filldraw[gray!20,draw=black!80,opacity=0.3, thick] (a1) -- (a12u) -- (a2) -- (a23l) -- (a3) -- (a34u) -- (a4) -- (a45u) -- (a5) -- (a7) -- (a6) -- (a68u) -- (a8) --(a89u) -- (a9) -- cycle;


\filldraw (a1) circle (1.5pt) node[left] {$a_1$};
\filldraw (a2) circle (1.5pt) node[left] {$a_2$};
\filldraw (a3) circle (1.5pt) node[left] {$a_3$};
\filldraw (a4) circle (1.5pt) node[left] {$a_4$};
\filldraw (a5) circle (1.5pt) node[left] {$a_5$};
\filldraw (a6) circle (1.5pt) node[left] {$a_6$};
\filldraw (a7) circle (1.5pt) node[left] {$a_7$};
\filldraw (a8) circle (1.5pt) node[left] {$a_8$};
\filldraw (a9) circle (1.5pt) node[left] {$a_9$};

\draw[dashed,->] (\coord{0}{0}{0}) -- (\coord{6}{0}{0}) node[above] {$x_1$};
\draw[dashed,->] (\coord{0}{0}{0}) -- (\coord{0}{6}{0}) node[above] {$x_2$};
\draw[dashed,->] (\coord{0}{0}{0}) -- (\coord{0}{0}{3}) node[above, left] {$x_3$};
\filldraw[black] (\coord{0}{0}{0}) circle (1.5pt) node[below,right] {0};

\end{tikzpicture}
  \caption{A finite collection of points $E=\{a_1,\dots,a_9\}$; the tropical cone $E^{\max}$ that it generates (left); the dual tropical cone $E^{\min}$ (right); and the ambitropical hull $\range{P_E^{\max}\circ P_E^{\min}} $ (middle).}
  \label{fig-extreme}
  \end{figure}
\section{Homogeneous ambitropical polyhedra}
We next study the class of ambitropical polyhedra
that are {\em homogeneous} in the sense of~\Cref{def-homogeneous}.
We shall see that such polyhedra arise when studying ``locally'' 
ambitropical polyhedra, or when considering their behavior at infinity.
Moreover, they admit a combinatorial characterization,
in terms of posets. 
\begin{defn}
  Let $C$ be an ambitropical polyhedron in $\R^n$ and $u\in C$. The {\em tangent cone} of $C$ at point $u$, denoted by $\tangent_u(C)$, is the set of vectors $v$ such that $u+sv\in C$ holds for all $s\geq 0$ small enough.
\end{defn}
Let us recall the following definition from variational analysis.
\begin{defn}
  A function $T: \R^n\to \R^p$ is {\em semidifferentiable}
  at point $u\in \R^n$ if there exists a continuous map $T'_u$,
  (positively) homogeneous (so $T'_u(\alpha x) = \alpha T'_u(x)$ holds for all $\alpha>0$ and $x\in \R^n$) such that
  \begin{align}\label{e-exp}
  T(u+h) = T(u) + T'_u(h) + o(\|h\|) \enspace .
  \end{align}
\end{defn}
Then, $T'_u(h)$ must coincide with the 
one sided
directional derivative:
\[
T'_u(h) = \lim_{s\to 0^+}s^{-1}(T(u+sh)-T(u))  \enspace.
\]
Conversely, if $T$ is Lipschitz continuous,
an application of Ascoli's theorem shows that if this directional derivate exists for all $h\in \R^n$, then, $T$ is semidifferentiable at point $u$,
see e.g.~\cite[Lemma~3.2]{agn12}.
We shall be consider especially the situation in which
$T$ is continuous and {\em piecewise linear},
  meaning that $\R^n$ can be covered by finitely
many polyhedra on each of which the restriction of $T$
is an affine map. Then, $T$ is automatically
Lipschitz continuous, and the directional derivative
always exists, showing that $T$ is semidifferentiable.
In this case, the semiderivative $h\mapsto T'_u(h)$ is also
piecewise linear, and this entails that the local expansion~\eqref{e-exp}
is exact for $h$ small enough:
\begin{prop} \label{exact}
  Let $T: \R^n \to \R^n$ be piecewise linear, and let $u\in \R^n$. Then,
  there exists a neighborhood $V$ of $0$ such that, for all
  $h\in V$,
  \begin{align}
  T(u+h) = T(u)+T'_u(h) \enspace .\label{e-texact}
  \end{align}\hfill\qed
\end{prop}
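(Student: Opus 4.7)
\smallskip

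\noindent\textbf{Proof plan.} The plan is to exploit the finite polyhedral structure of $T$ to reduce the claim to affine behavior on each relevant piece. Let $P_1,\dots,P_N$ be a finite collection of polyhedra covering $\R^n$ on which $T$ is affine, say $T|_{P_i}(x)=A_i x+b_i$. Let $\mathcal{I}:=\{i\in[N]\mid u\in P_i\}$. Since each $P_j$ is closed and only finitely many $P_j$ do not contain $u$, there exists $\varepsilon>0$ such that the open Euclidean ball $B(u,\varepsilon)$ does not meet any $P_j$ with $j\notin\mathcal{I}$. I will take $V:=B(0,\varepsilon)$.

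For $h\in V$, the point $u+h$ lies in $\bigcup_i P_i$, hence in some $P_i$ with $i\in \mathcal{I}$. Since $P_i$ is convex and contains both $u$ and $u+h$, the whole segment $\{u+sh\mid s\in[0,1]\}$ lies in $P_i$. On this segment we have the exact identity
\[
T(u+sh)-T(u)=A_i(u+sh)+b_i-(A_iu+b_i)=sA_ih, \qquad 0\le s\le 1.
\]
Dividing by $s>0$ and passing to the limit $s\to 0^+$ gives the one-sided directional derivative $T'_u(h)=A_ih$; comparing with the $s=1$ case of the above identity yields
\[
T(u+h)=T(u)+A_ih=T(u)+T'_u(h),
\]
which is exactly~\eqref{e-texact}. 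The existence of $T'_u$ as a (positively) homogeneous continuous map is already guaranteed by the Lipschitz-continuity argument recalled before the statement (or, more directly, follows from the expression above, which shows that $h\mapsto T'_u(h)$ is piecewise linear and in particular continuous and positively homogeneous).

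The only delicate point, and the one requiring some care, is the selection of $\varepsilon$: without restricting to the pieces containing $u$, the polyhedron $P_i$ which happens to contain $u+h$ could have $u\notin P_i$, in which case $T(u)\neq A_i u+b_i$ and the computation above breaks down. Choosing $\varepsilon$ small enough to separate $u$ from the pieces $P_j$ with $j\notin\mathcal I$ is precisely what forces $i\in\mathcal I$, and this is the key step; all the rest is routine affine bookkeeping. Note also that convexity of the pieces $P_i$ is essential to guarantee that the segment from $u$ to $u+h$ stays inside a single piece, so that the two values $T(u)$ and $T(u+h)$ are computed with the same affine map $A_ix+b_i$.
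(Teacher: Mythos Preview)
Your argument is correct. The paper does not actually give a proof of this proposition: it is stated with a trailing \qed, treating the result as a standard and straightforward consequence of the definition of piecewise linearity. Your write-up supplies exactly the natural argument one would expect---restrict to the finitely many affine pieces containing $u$, use their closedness to find a neighborhood avoiding the others, and then exploit convexity of the piece to compute the directional derivative explicitly as $A_i h$. There is nothing to add.
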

The chain rule extends to semidifferentiable
maps: if $f,g$ are Lipschitz continuous, if $f$ is semidifferentiable
at point $u$, and if $g$ is semidifferentiable at point $f(u)$,
then \begin{align}
  (g\circ f)'_u=g'_{f(u)}\circ f'_u\label{chain} \enspace,
\end{align}
see Lemma~3.4 of~\cite{agn12}.

Let us also recall the rule of computation of directional derivatives
of suprema and infima.
  If $f$ is a function $\R^n\to \R$ that can be written
  as a supremum of a finite family of functions $f = \max_{i\in I} f_i$
  and if each $f_i$ has one sided directional derivatives
  at point $u$, then
  \begin{align}\label{semiderivative-max}
  f'_u(h) = \max_{i\in I^*(u)} (f'_i)_u(h) ,\qquad
  \text{where}\;I^*(u) = \{i\in I\mid f(u)=f_i(u)\} \enspace,
  \end{align}
  see~\cite[Exercise~10.27]{rock98}.
  A dual rule applies to a function $f = \min_{i\in I} f_i$.

  We call {\em homogeneous ambitropical polyhedron}
  an ambitropical polyhedron that is a homogeneous
  ambitropical cone. 
\begin{prop}
  Suppose $C$ is an ambitropical polyhedron, and let $u\in C$. Then,
  the tangent cone $\tangent_u(C)$ is a homogeneous ambitropical polyhedron.
\end{prop}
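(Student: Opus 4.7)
The plan is to reduce the statement to the fixed-point characterization of ambitropical polyhedra in~\Cref{th-ambitropicalpoly} by computing the tangent cone in terms of the semiderivative of a Shapley operator whose fixed-point set is $C$.

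First I would invoke~\Cref{th-ambitropicalpoly}, \myitem{ie-3}, to pick an idempotent finitely generated Shapley operator $P:\R^n\to\R^n$ with $\fixed P = C$. Since $P$ has coordinates given by min-max functions, it is piecewise linear, hence Lipschitz, and in particular semidifferentiable at $u$. Using the rule~\eqref{semiderivative-max} for directional derivatives of pointwise maxima and minima, together with the chain rule~\eqref{chain}, one sees inductively on the term structure of the min-max expressions defining the coordinates of $P$ that $P'_u$ itself has coordinates given by min-max functions (with the coefficients taken from the branches that are active at $u$). Thus $P'_u$ is a finitely generated Shapley operator; it is moreover homogeneous by construction of the semiderivative.

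Next, I would apply~\Cref{exact}: there is a neighborhood $V$ of $0$ such that $P(u+h) = P(u) + P'_u(h) = u + P'_u(h)$ for all $h\in V$ (using $P(u)=u$). From this I claim
\[
\tangent_u(C) = \fixed{P'_u} \enspace .
\]
Indeed, if $v\in \tangent_u(C)$, then for every sufficiently small $s>0$ the point $u+sv$ lies in $V\cap C$, so $u+sv = P(u+sv) = u + sP'_u(v)$, whence $v=P'_u(v)$. Conversely, if $P'_u(v)=v$, then for $s\geq 0$ small enough, $sv\in V$ and $P(u+sv) = u+sP'_u(v) = u+sv$, so $u+sv\in C$, proving $v\in\tangent_u(C)$.

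Finally, since $P'_u$ is a finitely generated Shapley operator, $\fixed{P'_u}$ is an ambitropical polyhedron by~\Cref{th-ambitropicalpoly}, \myitem{ie-1}; and since $P'_u$ is additionally homogeneous, its fixed-point set is a homogeneous ambitropical cone by~\Cref{prop-homogenous}, \myitem{it-homfp}. Combining these, $\tangent_u(C)$ is a homogeneous ambitropical polyhedron. The only non-routine point is the stability of the class of min-max functions under taking semiderivatives — this is where~\eqref{semiderivative-max} together with the chain rule does the essential work, and it is also the reason why finite generation is preserved when passing to the tangent cone.
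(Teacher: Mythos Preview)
Your proof is correct and follows essentially the same route as the paper: represent $C$ as the fixed-point set of a finitely generated Shapley operator, use the exactness of the semiderivative expansion (\Cref{exact}) to identify $\tangent_u(C)$ with the fixed-point set of $T'_u$, and conclude via the rules~\eqref{semiderivative-max} and~\eqref{chain} that $T'_u$ is again finitely generated and homogeneous. The paper does not bother to insist on idempotence of the operator, nor does it cite~\Cref{prop-homogenous} explicitly at the end, but these are cosmetic differences.
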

\begin{proof}
  It follows from the definition of $\tangent_u(C)$ that
  if $v\in \tangent_u(C)$, then $sv\in \tangent_u(C)$ for all $s>0$.
  Since $C$ is an ambitropical polyhedron, we have $C=\{x\in \R^n\mid T(x)=x\}$
  where $T$ is a finitely generated Shapley operator.
  We claim that
  \[ \tangent_u(C)= \{h\in \R^n\mid T'_u(h)=h\} \enspace .
  \]
Let $h \in \R^n$ such that $T'_u(h)=h$. For $s>0$ small enough such that $sh$ belongs to the neighborhood $V$ of Proposition \ref{exact}, we have that $T(u+sh)= T(u) + T'_u(sh) =  u+s T'_u(h) = u + sh$. 
It follows that
  $h\in \tangent_u(C)$. Conversely
  if $h\in \tangent_u(C)$, $u+sh\in C$ holds for all $s$ small
  enough, hence, $T(u+sh)=u+sh$ holds for all such $s$,
  and using~\eqref{e-texact}, we deduce that $T'_u(h)=h$.
  Therefore, $\tangent_u(C)$ is the fixed point set
  of the homogeneous Shapley operator $T'_u$.
  Using the chain rule~\eqref{chain}, the
  rule of semidifferentiation of suprema~\eqref{semiderivative-max},
  and the dual rule of semidifferentiation of infima,
  we deduce that 
  $T'_u$ is finitely generated. 

  \end{proof}
\begin{defn}
  Suppose $C$ is a finite union of (ordinary)
  polyhedra. Then, the {\em recession cone}
  of $C$, $\hat{C}$, is the set of vectors $v$ such that
  there is a vector $x\in C$ such that $x+sv$ belongs to $C$ for
  all $s\geq 0$.
\end{defn}
If $T$ is piecewise linear $\R^n\to\R^p$, an in particular
if $T$ is finitely generated, then the {\em recession function}
\[
\hat{T}(x):= \lim_{s\to \infty} s^{-1}T(sx) 
\]
is well defined. Observe that $\hat{T}$ is finitely generated
as soon as $T$ is finitely generated
(this follows from the proof of~\Cref{prop-monboolshapley}).
\begin{prop}
  Suppose $C$ is an ambitropical polyhedron. Then, the recession
  cone $\hat{C}$ is a homogeneous ambitropical polyhedron.
\end{prop}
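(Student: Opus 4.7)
The plan is to mimic the strategy used for tangent cones in the preceding proposition: identify $\hat{C}$ with the fixed point set of the recession of a finitely generated idempotent Shapley operator whose fixed point set is $C$, and then invoke \Cref{th-ambitropicalpoly} and \Cref{prop-homogenous}.

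Since $C$ is an ambitropical polyhedron, \Cref{th-ambitropicalpoly}, implication \eqref{ie-2}$\Rightarrow$\eqref{ie-3}, supplies a finitely generated idempotent Shapley operator $P:\R^n\to\R^n$ with $\fixed{P}=\range P=C$ (concretely, $P=\qm{C}$ works by \Cref{prop-caracq} combined with \Cref{lem-explicit} and \Cref{say-fg}). The excerpt already observes that the recession function $\hat{P}$ is again finitely generated, and the identity $\hat{P}(\alpha v)=\alpha \hat{P}(v)$ for $\alpha>0$ is immediate from the limit definition, so $\hat{P}$ is a finitely generated homogeneous Shapley operator. The heart of the argument will be to prove the identity
\[
  \hat{C}=\fixed{\hat{P}}.
\]

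For the forward inclusion $\hat{C}\subseteq\fixed{\hat{P}}$, I would start from $v\in\hat{C}$, pick $y\in C$ with $y+sv\in C$ for every $s\ge 0$, so that $P(y+sv)=y+sv$. The sup-norm nonexpansiveness of $P$ given by \Cref{prop-equiv} yields $\|s^{-1}P(sv)-s^{-1}P(y+sv)\|_\infty \le s^{-1}\|y\|_\infty$, whence
\[
  \hat{P}(v)=\lim_{s\to\infty}s^{-1}P(sv)=\lim_{s\to\infty}s^{-1}P(y+sv)=\lim_{s\to\infty}(s^{-1}y+v)=v.
\]
For the reverse inclusion $\fixed{\hat{P}}\subseteq\hat{C}$, I would exploit the piecewise affine structure of $P$: for $v$ with $\hat{P}(v)=v$ and $s\ge s_0$ sufficiently large, the ray $\{sv\}$ lies in a single affine piece of $P$ on which $P(x)=Ax+b$, so $Av=\hat{P}(v)=v$ and therefore $P(sv)=sv+b$ for every $s\ge s_0$. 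Since $\range P=C$, setting $x:=s_0v+b\in C$ one obtains, for every $t\ge 0$, $x+tv=(s_0+t)v+b=P((s_0+t)v)\in C$, which shows $v\in\hat{C}$.

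Once the equality $\hat{C}=\fixed{\hat{P}}$ is established, \Cref{th-ambitropicalpoly} applied to the finitely generated Shapley operator $\hat{P}$ yields that $\hat{C}$ is an ambitropical polyhedron, and the homogeneity of $\hat{P}$ together with \Cref{prop-homogenous} upgrades this to the conclusion that $\hat{C}$ is a \emph{homogeneous} ambitropical polyhedron. The main delicate point is the reverse inclusion: one must verify that for $s$ sufficiently large the ray $\{sv\}$ enters a single maximal affine piece of $P$, so that the global recession $\hat{P}(v)$ genuinely coincides with the local linear action $Av$, and that the constant shift $b$ produced on this piece realizes a bona fide ray inside $C$. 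This is the recession-at-infinity analogue of the exactness statement \Cref{exact} that was used for tangent cones.
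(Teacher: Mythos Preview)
Your proposal is correct and follows the same overall strategy as the paper: identify $\hat{C}$ with the fixed-point set of the recession $\hat{T}$ of a finitely generated Shapley operator whose fixed-point set is $C$, then invoke the characterizations of (homogeneous) ambitropical polyhedra. The forward inclusion via nonexpansiveness is identical to the paper's.

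The one noteworthy difference is your insistence on an \emph{idempotent} operator $P$ (so that $\range P=C$), whereas the paper works with an arbitrary finitely generated $T$. For the reverse inclusion the paper picks $y$ so that the half-ray $s\mapsto y+sv$ lies in one affine region, obtains $T(y+sv)=T(y)+s\hat{T}(v)=T(y)+sv$, and then concludes $T(y+sv)=y+sv$; this last step tacitly requires $T(y)=y$, a point the paper does not justify. Your argument sidesteps this entirely: since $P(sv)\in\range P=C$ for all $s\ge s_0$, the whole ray $s\mapsto sv+b$ lies in $C$ automatically, with no condition on a base point. So your route is essentially the paper's, but exploiting idempotency makes the reverse inclusion cleaner and closes a small gap in the paper's presentation.
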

\begin{proof}
  If $v\in \hat{C}$, then $\alpha v\in \hat{C}$ holds for
  all $\alpha>0$, showing that $C$ is homogeneous. Suppose
  that $C=\{x\in \R^n\mid T(x)=x\}$, where $T$ is a finitely generated
  Shapley operator. We claim
  that $\hat{C}=\{v\in \R^n\mid \hat{T}(v)=v\}$.
  Let $v\in\hat{C}$.
  Then, the exists $y\in C$ such that $y+sv\in C$ holds for
  all $s\geq 0$. So $T(y+sv)=y+sv$. Dividing by $s$, using
  the nonexpansive character of $T$, and letting $s$ tend
  to infinity, we deduce that $\hat{T}(v)=v$.
  Conversely, suppose that $\hat{T}(v)=v$. Then,
  we can find a vector $y$ such that the ray
  $[0,\infty)\ni s\mapsto y+sv$ is included in a
    region in which $T$ is affine. Then, $T(y+sv)=w+sCv$
    for some matrix $C$ and for some vector $w$.
    Specializing at $s=0$, we deduce that $w=T(y)$.
    We also have $T(y+sv)/s\to Cv$ as $s\to\infty$, and so $\hat{T}(v)=Cv$.
    It follows that $T(y+sv) = T(y)+ s\hat{T}(v)$.
    Hence, $T(y+sv)=y+sv$, showing that $y+sv\in C$,
    for all $s\geq 0$, and so $v\in \hat{C}$. Since
    $\hat{T}$ is a finitely generated homogeneous Shapley operator,
    this entails that $\hat{C}$ is a homogeneous ambitropical polyhedron.
  \end{proof}

\begin{thm}\label{th-homogeneousambitropicalpoly}
Let $E$ be a subset of $\mathbb{R}^{n}$, then the following are equivalent:
\begin{enumerate}
\item\label{e-2} $E$ is a homogeneous ambitropical polyhedron;
  \item\label{e-3} There exists a homogeneous  finitely generated Shapley operator such that $P=P^{2}$ and $E=\{ x \in \mathbb{R}^{n} | x = P(x) \}$;
\item\label{e-1} There exists a homogeneous finitely generated Shapley operator $T$ such that $E=\{ x \in \mathbb{R}^{n} | x = T(x) \}$.
\end{enumerate}

\end{thm}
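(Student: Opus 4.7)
The plan is to combine the characterization of ambitropical polyhedra in \Cref{th-ambitropicalpoly} with the characterization of homogeneity in \Cref{prop-homogenous}, following the same three-way loop structure as in \Cref{th-ambitropicalpoly}.

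For the implication \myitem{e-2}$\Rightarrow$\myitem{e-3}, I would take a homogeneous ambitropical polyhedron $E$ and produce the candidate retraction $P:=\barqm{E}$. Since $E$ is in particular an ambitropical polyhedron, \Cref{th-ambitropicalpoly} (more precisely the implication \myitem{ie-2}$\Rightarrow$\myitem{ie-3} there) guarantees that $\barqm{E}$ is finitely generated, idempotent, and has fixed point set $E$. The remaining point is homogeneity of $\barqm{E}$. By \Cref{prop-caracq} we have $\barqm{E}=\qm{E}$, and the proof of \Cref{prop-homogenous} already contains the calculation showing that $\qm{E}$ is positively homogeneous when $E$ is a homogeneous ambitropical cone (this simply uses that $y\mapsto \alpha^{-1}y$ is an order isomorphism of $E$ for $\alpha>0$). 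Thus $P$ satisfies all the required properties.

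The implication \myitem{e-3}$\Rightarrow$\myitem{e-1} is immediate since every idempotent is in particular a Shapley operator fixing its range.

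For \myitem{e-1}$\Rightarrow$\myitem{e-2}, suppose $T$ is a homogeneous finitely generated Shapley operator with $E=\fixed T$. By \Cref{th-ambitropicalpoly} (implication \myitem{ie-1}$\Rightarrow$\myitem{ie-2}), $E$ is an ambitropical polyhedron; and by \Cref{prop-homogenous} (implication \myitem{it-homfp}$\Rightarrow$\myitem{it-homogeneous}), $E$ is a homogeneous ambitropical cone. Hence $E$ is a homogeneous ambitropical polyhedron, closing the loop. No step presents a real obstacle here: the two ingredients (polyhedral structure and homogeneity) are essentially independent and have already been isolated in the previous results, so the proof is short and mechanical, the only minor care being the identification $\barqm{E}=\qm{E}$ needed to transport the homogeneity of $\qm{E}$ to the canonical retraction used in \Cref{th-ambitropicalpoly}.
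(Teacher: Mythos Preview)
Your argument is correct and follows the same three-implication loop as the paper. The implications \myitem{e-3}$\Rightarrow$\myitem{e-1} and \myitem{e-1}$\Rightarrow$\myitem{e-2} are handled identically.

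For \myitem{e-2}$\Rightarrow$\myitem{e-3} there is a genuine, if minor, difference in how homogeneity of the retraction is obtained. The paper first writes the homogeneous ambitropical polyhedron $E$ as a finite union of \emph{homogeneous} alcoved polyhedra $E_k$, observes that each $Q_{E_k}^{\pm}$ is then homogeneous and finitely generated, and concludes via the explicit formula in \Cref{lem-explicit}. You instead take the retraction $P=\barqm{E}$ produced by \Cref{th-ambitropicalpoly}, invoke \Cref{prop-caracq} to identify it with $\qm{E}$, and then appeal to the computation inside the proof of \Cref{prop-homogenous} to get homogeneity of $\qm{E}$ directly from the homogeneity of $E$. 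Your route is slightly more economical: it bypasses the need to justify that a homogeneous ambitropical polyhedron admits a decomposition into \emph{homogeneous} alcoved pieces (a fact the paper asserts but only substantiates later, in \Cref{lem-weyl}). The paper's route, on the other hand, keeps everything at the level of the explicit building-block formula and does not need to cite \Cref{prop-caracq}.
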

\begin{proof}
  \eqref{e-2}$\Rightarrow$\eqref{e-3}. If $E$ is a homogeneous ambitropical polyhedron,
  we can write $E$ as a finite union $\cup_{k} E_k$ where the $E_k$ are homogeneous alcoved
  polyhedra. Then, the operators $Q_{E_k}^\pm$ are homogeneous and finitely generated.
  We conclude as in the proof of the implication~\eqref{ie-2}$\Rightarrow$\eqref{ie-3}
  of~\Cref{th-ambitropicalpoly}.

  \eqref{e-3}$\Rightarrow$\eqref{e-1}: trivial.

  \eqref{e-1}$\Rightarrow$\eqref{e-2}: by~\Cref{th-ambitropicalpoly}, $E$ is an ambitropical
  polyhedron. Since $E=\{ x \in \mathbb{R}^{n} | x = T(x) \}$, and $T$ is homogeneous,
  $E$ is homogeneous.

  \end{proof}

Given a homogeneous ambitropical polyhedron $C$ of $\R^n$,
we define the {\em skeleton} of $C$, $\skeletton{C}$,
to be the intersection of $C$ with $\{0,1\}^{n}$.
Given an (ordered) partition $\mathcal{I}=(I_1,\dots,I_S)$ of $[n]$,
we define the Weyl cell of $C$ to be
\[
W^{\mathcal{I}} = \{x\in \R^n\mid (i\in I_r, j\in I_s, r\leq s )\implies x_i\leq x_j \}\enspace .
\]
E.g., $\{x\in \R^4\mid x_1\leq x_2=x_3\leq x_4\}$ is the Weyl cell corresponding to the partition $(\{1\}, \{2,3\}, \{4\})$ of the set $\{1, 2, 3, 4\}$. 
When each of the sets $I_1,\dots,I_S$ has exactly one element,
$W^{\mathcal{I}}$ is a Weyl chamber of $A_n$ type,
i.e., a set of the form $\{x\in \R^n\mid x_{\sigma(1)}\leq \dots \leq x_{\sigma(n)}\}$ for some permutation $\sigma$.

We shall need the following observation, which is a variation
on the construction of the canonical triangulation of order polytopes
by Stanley~\cite[\S~5]{stanley86}.
\begin{lem}\label{lem-weyl}
  Any homogeneous ambitropical polyhedron is a union of Weyl cells.
\end{lem}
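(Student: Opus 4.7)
By \Cref{th-homogeneousambitropicalpoly} we may write $C = \operatorname{Fix}(T)$ for some homogeneous, finitely generated Shapley operator $T \colon \R^n \to \R^n$. By \Cref{prop-monboolshapley}, each coordinate $T_i$ is a monotone Boolean function, which we put in conjunctive normal form
\[
T_i(x) = \bigwedge_{k \in K_i} \bigvee_{l \in S_k^i} x_l \enspace .
\]
The strategy is to analyze $T$ Weyl cell by Weyl cell: within a single cell, $T$ will collapse to a linear map of a very restricted form, and the fixed-point equation will reduce to a linear system whose solution set is itself a Weyl cell.

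Fix an ordered partition $\mathcal{I} = (I_1, \dots, I_S)$ of $[n]$ and parametrize a point $x \in W^{\mathcal{I}}$ by its block values $v_1 \leq v_2 \leq \dots \leq v_S$, so that $x_l = v_r$ whenever $l \in I_r$. On $W^{\mathcal{I}}$ the inner supremum in the expression of $T_i$ collapses, since for any $S \subseteq [n]$ one has $\bigvee_{l \in S} x_l = v_{M}$ with $M = \max\{r : I_r \cap S \neq \emptyset\}$; by monotonicity of the $v_r$, the outer infimum collapses as well, yielding
\[
T_i(x) = v_{m_i(\mathcal{I})}, \qquad m_i(\mathcal{I}) := \min_{k \in K_i} \max\{r : I_r \cap S_k^i \neq \emptyset\} \enspace .
\]
Thus $T\rvert_{W^{\mathcal{I}}}$ is linear in $v$ and picks in each coordinate a single block value. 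Writing $s(i)$ for the unique index with $i \in I_{s(i)}$, the fixed-point equation $T(x) = x$ becomes on $W^{\mathcal{I}}$ the linear system of block identifications $v_{m_i(\mathcal{I})} = v_{s(i)}$ for $i \in [n]$.

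Combined with $v_1 \leq \dots \leq v_S$, any such identification $v_a = v_b$ with $a<b$ forces $v_a = v_{a+1} = \dots = v_b$ by a sandwich argument, so the equivalence relation on $[S]$ generated by the pairs $(m_i(\mathcal{I}), s(i))$ has interval classes $J_1, \dots, J_{S'}$. Setting $I'_k := \bigcup_{r \in J_k} I_r$ yields a coarser ordered partition $\mathcal{I}'(\mathcal{I}) = (I'_1, \dots, I'_{S'})$ of $[n]$ for which $C \cap W^{\mathcal{I}} = W^{\mathcal{I}'(\mathcal{I})}$. Since every $x \in \R^n$ belongs to the Weyl cell attached to its order pattern, we have $\R^n = \bigcup_{\mathcal{I}} W^{\mathcal{I}}$, whence
\[
C = \bigcup_{\mathcal{I}} \bigl( C \cap W^{\mathcal{I}} \bigr) = \bigcup_{\mathcal{I}} W^{\mathcal{I}'(\mathcal{I})}
\]
is a union of Weyl cells. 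The only delicate step is the interval-closure passage from a finite set of coordinate identifications to an honest coarsening of $\mathcal{I}$; this is what guarantees that the fixed-point locus inside each Weyl cell is again a Weyl cell, rather than some more exotic polyhedral object.
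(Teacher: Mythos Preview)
Your proof is correct and takes a genuinely different route from the paper's.

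The paper's argument is structural: it first reduces to the case of a single \emph{homogeneous alcoved polyhedron} $E=\{x\mid x_i\leq x_j,\ (i,j)\in L\}$, treats $L$ as a preorder, forms the quotient partial order on its equivalence classes, and observes that each linear extension of this partial order gives a Weyl cell contained in $E$; conversely every $x\in E$ lies in one of these cells. This is essentially Stanley's canonical triangulation of order polytopes, transported to the conical setting.

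Your argument bypasses the alcoved decomposition entirely. You invoke \Cref{th-homogeneousambitropicalpoly} to write $C=\operatorname{Fix}(T)$ with $T$ homogeneous and finitely generated, use \Cref{prop-monboolshapley} to realize each $T_i$ as a monotone Boolean function, and then analyze $T$ cell by cell: on a fixed Weyl cell $W^{\mathcal{I}}$, each coordinate of $T$ collapses to a single block value $v_{m_i(\mathcal{I})}$, so the fixed-point equation becomes a system of block identifications which, by the sandwich argument you note, forces a coarsening $\mathcal{I}'$ with $C\cap W^{\mathcal{I}}=W^{\mathcal{I}'}$.

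What each approach buys: the paper's argument highlights the connection with linear extensions and the classical combinatorics of order cones, and treats each alcoved piece independently. Your argument is more direct and algorithmically explicit (it gives a concrete recipe for $\mathcal{I}'(\mathcal{I})$ in terms of the Boolean formula for $T$), and it does not rely on the preliminary assertion---stated without proof in the paper---that a homogeneous ambitropical polyhedron can be decomposed into \emph{homogeneous} alcoved pieces. Both are valid; yours leans more on \Cref{th-homogeneousambitropicalpoly} while the paper's leans on the definition and the order-polytope toolkit.
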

\begin{proof}
  Any ambitropical polyhedron is a finite union of alcoved polyhedra,
  and if this polyhedron is homogeneous, the alcoved polyhedra
  must be homogeneous. It suffices to show that a homogeneous
  alcoved polyhedron is a finite union of Weyl cells.
  A homogeneous alcoved polyhedron is of the form $E=\{x\mid x_i\leq x_j,\forall (i,j)\in L\}$
  where $L$ is a subset of $[n]\times [n]$. We shall think
  of $L$ as a relation on the set $[n]$, and,
  since, $E$ is unchanged if $L$ is
  replaced by its reflexive and transitive closure, we assume that $L$ is a preorder.
  Then, we define
  the equivalence relation 
  $\equiv_L$, on $[n]$, such that for any $i, j \in [n]$, $i \equiv_L j$ if and only if $(i,j) \in L$ and $(j, i) \in L$. 
  The equivalence classes determined
  by this relation are nonempty subsets of $[n]$,
that constitute a partition of $[n]$. The relation $L$ determines a partial
order $\leq_L$ on the set of these equivalence classes,
the order being defined by $I \leq_L J$ if $(i,j)\in L$ for all $i\in I$ and $j\in J$,
and for all equivalence classes $I,J$.
We choose a linear extension of this partial order, allowing us to
write the equivalence classes as $I_1,\dots,I_S$, in such a way
that $I_{k}\leq_L I_{l}\implies k\leq l$.
  Such a linear extension determines then a Weyl cell $W^{\mathcal{I}}$
  with $\mathcal{I}= (I_{1},\dots,I_{S})$. %
  By construction, $W^{\mathcal{I}} \subset E$. Moreover,
  if $x\in E$, then,  taking $J_1:=\argmin_{i\in [n]}x_i$, we see
  that $J_1$ must be a union of equivalence classes
  $I_{i_1},\dots,I_{i_{m_1}}$. Similarly,
  $J_2= \argmin_{i\in [n]\setminus I_{1}} x_i$ must be a union
  of equivalence classes $I_{i_{m_1+1}},\dots,I_{i_{m_2}}$. Continuing
  in this way, setting $J_k:= \argmin_{i\in [n]\setminus I_{k-1}} x_i=I_{i_{m_{k-1}}}\cup \dots \cup I_{m_k}$until $J_1\cup\dots \cup J_k=[n]$, we get that $x\in W^{\mathcal{I}}$
    with $\mathcal{I}= (I_{i_1},\dots,I_{i_s})$.
    This shows that $E$ is the union of the Weyl cells $W^{\mathcal{I}}$ arising from all
    the linear extensions of the order $\leq_L$. Note that different extensions
    give different Weyl cells.
\end{proof}

The following theorem characterizes the polyhedral
complexes associated with homogeneous ambitropical polyhedra,
showing that they are in bijection with lattices
included in $\{0,1\}^n$. These lattices have been studied by Crapo~\cite{crapo}.%
\todo[inline]{check that it works for cells not necessarily of maximal dimension, them it describes the full polyhedral complex. Explain how a chain yields a cell (this part of proof is missing)}
\begin{thm}[Homogeneous ambitropical cones are equivalent to lattices in $\{0,1\}^n$]\label{th-mainweyl}
  The map $C\mapsto \skeletton C$ establishes
  a bijective correspondence between homogeneous ambitropical polyhedra of $\R^n$
  and subsets of the partially ordered set $(\{0,1\}^n,\leq)$ that contain the bottom and the top element, and that are lattices in the induced order.
  Moreover, there is a one-to-one correspondence
  between the chains in $\skeletton C$ and the Weyl cells included in $C$;
  the cardinality of each of these chains coincides with the dimension of the corresponding Weyl cell plus one unit; and the collection of these Weyl cells constitutes a polyhedral
  fan with support $C$. 
\end{thm}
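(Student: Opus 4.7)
The plan is to combine the Shapley-operator description of \Cref{th-homogeneousambitropicalpoly} (with coordinates monotone Boolean by \Cref{prop-monboolshapley}) with the Weyl-cell decomposition of \Cref{lem-weyl}, so that the bijection reduces to an essentially combinatorial statement about chains. The main obstacle will be, in the converse direction, verifying ambitropicality of the candidate cone $C(L)$ built from a lattice $L\subset\{0,1\}^n$.

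\emph{$\skeletton C$ is a lattice containing $0$ and $\mathbf{1}$.} Homogeneity and closedness of $C$ force $0\in C$ (take $\alpha x\to 0$) and hence $\mathbf{1}\in C$. Writing $C=\fixed T$ for a Shapley operator $T$ with monotone Boolean coordinates, the hypercube $\{0,1\}^n$ is $T$-stable. For $x,y\in\skeletton C$, the nondecreasing sequence $(T^k(x\vee y))_k\subset\{0,1\}^n$ is bounded above by $\mathbf{1}$, hence stabilizes at some $z\in\{0,1\}^n\cap\fixed T=\skeletton C$; by the argument \myitem{fpshapley}$\Rightarrow$\myitem{ambitropical} of \Cref{th-main0}, $z=x\vee^C y$. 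Meets are dual. Thus $\skeletton C$ is a lattice in the induced order containing both endpoints.

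\emph{Chain/cell correspondence.} An ordered partition $\mathcal{I}=(I_1,\dots,I_S)$ of $[n]$ yields the $S$-dimensional cone
\begin{equation*}
W^{\mathcal{I}}=\Bigl\{c_1\mathbf{1}+\sum_{k=2}^S d_k\mathbf{1}_{J_k}\;\Big|\;c_1\in\R,\;d_2,\dots,d_S\ge 0\Bigr\},\qquad J_k:=I_k\cup\cdots\cup I_S,
\end{equation*}
whose $\{0,1\}$-vectors are precisely $\{\mathbf{1}_{J_k}\}_{k=1}^{S+1}$ (with $J_{S+1}=\emptyset$), forming a chain $0=\mathbf{1}_{\emptyset}<\mathbf{1}_{J_S}<\cdots<\mathbf{1}_{J_1}=\mathbf{1}$ of cardinality $S+1$; the map $\mathcal{I}\mapsto\{\mathbf{1}_{J_k}\}_k$ is a bijection between ordered partitions of $[n]$ and chains from $0$ to $\mathbf{1}$ in $\{0,1\}^n$. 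By \Cref{lem-weyl}, $C$ is a union of Weyl cells, and the restriction of $T$ to any Weyl cell is linear (the min/max operations reduce to coordinate selection once the ordering of coordinates is fixed) and commutes with additive constants; hence $T$ fixes $W^{\mathcal{I}}$ iff it fixes every generator $\mathbf{1}_{J_k}$, iff the associated chain lies in $\skeletton C$. This gives the desired bijection, with dimension equal to cardinality $-1$.

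\emph{Surjectivity and fan structure.} Given a lattice-in-induced-order $L\subset\{0,1\}^n$ with $0,\mathbf{1}\in L$, define
\begin{equation*}
C(L):=\bigcup_{c\text{ chain in }L\text{ from }0\text{ to }\mathbf{1}}W^{\mathcal{I}(c)}.
\end{equation*}
A point $x\in\R^n$ belongs to $C(L)$ iff every upper level indicator $\mathbf{1}_{\{i:x_i\ge t\}}$ ($t\in\R$) lies in $L$. Ambitropicality then follows from an explicit construction of the $C(L)$-join: given $x,y\in C(L)$, set $V(t):=\mathbf{1}_{\{i:x_i\ge t\}}\vee^L\mathbf{1}_{\{i:y_i\ge t\}}$, the join in $L$, which is nonincreasing in $t$ by monotonicity of $\vee^L$; define $z\in\R^n$ as the (piecewise constant) function whose upper level indicators are the sets $V(t)$. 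Then $z\in C(L)$, $z\ge x,y$, and any other upper bound $z'\in C(L)$ of $\{x,y\}$ satisfies $\mathbf{1}_{\{i:z'_i\ge t\}}\ge V(t)$ by definition of $\vee^L$, hence $z'\ge z$. Meets are dual. Being a finite union of alcoved cones that is a lattice in the induced order, $C(L)$ is a homogeneous ambitropical polyhedron. The identity $\skeletton C(L)=L$ is immediate from the parametrization, giving surjectivity; injectivity of $C\mapsto\skeletton C$ follows since $C$ is determined by its fan of Weyl cells, itself determined by the chains in $\skeletton C$. Finally the fan axioms are combinatorial: faces of $W^{\mathcal{I}(c)}$ correspond to subchains $c'\subset c$ (still chains in $L$), and $W^{\mathcal{I}(c_1)}\cap W^{\mathcal{I}(c_2)}=W^{\mathcal{I}(c_1\cap c_2)}$, with $c_1\cap c_2$ still a chain from $0$ to $\mathbf{1}$ since both endpoints belong to every chain.
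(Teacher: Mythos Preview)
Your proof is correct and largely parallels the paper's, but the surjectivity step is genuinely different. Given a lattice $L\subset\{0,1\}^n$, the paper constructs the order-preserving self-map $T(x)=\sup^L\{u\in L:u\le x\}$ on $\{0,1\}^n$, represents each coordinate as a monotone Boolean formula, and invokes \Cref{th-homogeneousambitropicalpoly} so that $\fixed T$ is a homogeneous ambitropical polyhedron with skeleton $L$. You instead define $C(L)$ directly as the union of Weyl cells indexed by chains in $L$ and verify the lattice-in-induced-order property by the level-set construction $V(t)=\mathbf{1}_{\{x_i\ge t\}}\vee^L\mathbf{1}_{\{y_i\ge t\}}$, reading off $z$ from its upper level sets. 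This is a clean combinatorial argument absent from the paper; it avoids the detour through monotone Boolean representations and makes transparent how the lattice operations on $L$ lift to $C(L)$, whereas the paper's route keeps the construction inside the Shapley-operator framework used throughout. The remaining parts (that $\skeletton C$ is a lattice via iterating $T$ on $x\vee y$, linearity of $T$ on Weyl cells giving the chain/cell bijection and hence injectivity, and the fan axioms via subchains and chain intersections) match the paper's arguments, modulo phrasing. One small point worth making explicit: your ``nondecreasing'' claim for $(T^k(x\vee y))_k$ holds because $x,y\le T(x\vee y)$ forces $x\vee y\le T(x\vee y)$, exactly as in the proof of \Cref{th-main0}.
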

\begin{proof}
  If $C$ is a homogeneous ambitropical polyhedron,
  then, by~\Cref{th-homogeneousambitropicalpoly},
  there is a homogeneous finitely generated
  Shapley operator $P=P^2:\R^n\to\R^n$ such that $C=\{x\in \R^n\mid x=P(x)\}$.
  The coordinates of a homogeneous finitely generated Shapley operator can be written as  min-max formula without additive translations (i.e., as a monotone
  Boolean formula),  and so, $P$ admits a restriction $\{0,1\}^n\to \{0,1\}^n$.
  Since $\skeletton(C)= C\cap \{0,1\}^n$,
  this entails that $\skeletton(C)=P(\{0,1\}^n)$ is an order
  preserving retract of $\{0,1\}^n$. Moreover, since $P$ is positive homogeneous, the identically zero vector is fixed by $P$, and since $P$ commutes
  with the addition of a constant vector, the unit vector is also fixed by $P$.
This implies $P(\{0,1\}^n)$ is a lattice in the induced order of $\{0,1\}^n$,
containing the bottom and top elements.

We now claim that for all permutations $\sigma$ of $[n]$,
the action of $P$ on the chamber
$W^{\sigma}:= \{x\mid x_{\sigma(1)}\leq \dots \leq x_{\sigma(n)}\}$
is uniquely determined by its action on $\{0,1\}^n$.
In fact, $P$ is linear on the chamber
$W^\sigma$, and since this chamber is a cone with a generating
family consisting of vectors in $\{0,1\}^n$, it follows
that $P$ is uniquely determined by its restriction
to $\{0,1\}^n$. In particular, $P$ fixes
the full chamber $W^\sigma$ if and only if
it fixes each generator of each chamber belonging
to $\{0,1\}^n$. So, the fixed point set of $P$,
which is $C$, is uniquely determined by
the fixed point set of $P$ restricted
to $\{0,1\}^n$, which is $\skeletton(C)$.
This shows that the correspondence between
a homogeneous ambitropical polyhedron and
its skeleton is bijective.

We now show the following claim: every subset $S$ of $\{0,1\}^n$
that is a lattice in the induced order
and contains the bottom and top elements
of $\{0,1\}^n$, can be realized
as a skeleton of a homogeneous
ambitropical polyhedron. The map $T(x):=\sup_S \{u\in \{0,1\}^n\mid u\leq x\}$
is an order preserving self-map of $\{0,1\}^n$
such that
$S=\{x\in \{0,1\}^n\mid x=T(x)\}$.%
Now, by a standard result, any order
preserving map $f$ from $\{0,1\}^n$ to $\{0,1\}$
such that $f(0,\dots,0)=0$ and $f(1,\dots,1)=1$
can be represented by a monotone Boolean function.
Indeed, let $F:=\{y\in \{0,1\}^n\mid f(y)= 1\}$,
and consider the monotone Boolean function
\[
g(x) := \bigvee_{y\in F} \bigwedge_{i\in [n],\,y_i =1} x_i \enspace.
\]
We have that $f(x)=g(x)$ holds for all $x\in \{0,1\}^n$. This establishes
the claim. 

By~\Cref{lem-weyl}, $C$ is a finite union of Weyl cells.
We observe that the intersection of the Weyl cell $W^{\mathcal{I}}$ with $\{0,1\}^n$
is a chain. Indeed, the bottom element of $W^{\mathcal{I}}$ is the zero vector. The smallest
non-zero element of $W^{\mathcal{I}}$ is the vector $x$ gotten by setting $x_i=0$ for all $i\in \cup_{s<S} I_s$
  and $x_i=1$ for all $i\in I_S$. The smallest
  element of $W^{\mathcal{I}}$ greater than $x$ is the vector $y$ gotten by setting $y_i=0$ for all $i\in \cup_{s<S-1} I_s$
  and $y_i=1$ for all $i\in \cup_{s\geq S-1} I_s$, etc.
  This yields a chain of length $S+1$. 
  Moreover, consider the linear space $H^{\mathcal{I}}=\{x\in \R^n\mid
  x_i =x_j, \forall i,j\in I_s,\forall s\in [S]\}$. Every set $I_s$
  yields $|I_s|-1$ independent linear relations, so, $H^{\mathcal{I}}$
  is of dimension $n-(\sum_{s=1}^S|I_s|-1)=n+S-\sum_{s=1}^S|I_s|=S$.
  We have $W^{\mathcal{I}}\subset H^{\mathcal{I}}$, and since for all $0<\alpha_1<\dots<\alpha_S$,
  the vector $x$ such that $x_i = \alpha_s$ for all $i\in I_s$, we deduce that $W^{\mathcal{I}}$
  is of dimension at least $S$.  It follows that $W^{\mathcal{I}}$ is of dimension
  equal to $S$.

  Finally, a face of the Weyl cell associated with an ordered
  partition $I_1,\dots,I_S$ is again a Weyl cell, associated
  with a new partition obtained by merging several
  consecutive sets of the partition in a single class
  (this corresponds to the operation of taking a subchain
  in the skeleton).
  Moreover, taking the intersection of Weyl cells corresponds
  to taking the intersection of the associated chains, which entails
  that the collection of these Weyl cells constitutes
  a polyhedral fan.
  \todo{SV. I checked the proof and it is ok, maybe we should recall the definition of polyhedral fan? Moreover, should we mention that any Weyl cell is a polyhedral cone?}
\end{proof}

We deduce that the class of ambitropical sets is not closed under projection:
\begin{example}\label{not-closed-projection}
Consider the subset $L$ of $\{0,1\}^5$ given by bottom, top and the following elements $(0,1,0,0,1),\\ (0,0,1,0,1), (0,1,1,1,0), (1,1,1,0,1)$, with induced order. The set $L$ is a lattice, so we know by the previous result that it corresponds to an homogeneous ambitropical polyhedron $C$ of $\R^5$, of which it is the skeleton $L=C\cap \{0,1\}^5$.  Let us consider now the projection $\proj(C)$ on $\R^4$ of $C$ which is obtained by taking the first 4 coordinates, and observe that it is a homogeneous polyedron. Assume that it is an ambitropical cone. Then, by \Cref{th-mainweyl} again,
its skeleton would be a lattice.
But the skeleton of $\proj(C)$ is the projection  of the skeleton $L$ of $C$, which is not a lattice since the sup of the two elements $(0,1,0,0)$ and $(0,0,1,0)$ is not well defined, because the two elements $(0,1,1,1)$ and $(1,1,1,0)$ are
minimal upper bounds.
This shows that $\proj(C)$ is not an ambitropical cone.
\end{example}

\begin{example}\label{example1}
  Consider the finitely generated Shapley operator $T:\R^3\to \R^3$,
  \[ T(y)=\big( (x_1\wedge x_2) \vee (x_1\wedge x_3) \vee (x_2 \wedge x_3) ,x_2,x_3\big) \enspace .
  \]
  The fixed point set $E$ of $T$ is the butterfly shaped polyhedral complex with
  two full dimensional cells $E_1$ and $E_2$, shown in \Cref{fig-example}.
  Explicitly,
$  E_1 = \{x\in \R^3\mid x_2\geq x_1 \geq x_3\}$,
  $  E_2 = \{x\in \R^3\mid x_3\geq x_1 \geq x_2\}$.
  Using~\Cref{lem-explicit0},
  we get that the tropical projections are given by:
\begin{align*}
\pmax{E}
\begin{pmatrix}
x_1 \\
x_2 \\
x_3
\end{pmatrix}
=
\begin{pmatrix}
(x_1 \wedge (x_2 \vee x_3)) \\
x_2 \\
x_3 
\end{pmatrix} \text{ and }
\pmin{E}
\begin{pmatrix}
x_1 \\
x_2 \\
x_3
\end{pmatrix}
=
\begin{pmatrix}
(x_1 \vee (x_2 \wedge x_3)) \\
x_2 \\
x_3 
\end{pmatrix}
\end{align*}

\end{example}
\begin{example}
  A union of Weyl cells that is not ambitropical is shown
  at the left of~\Cref{fig-example2b}. 
This union is not ambitropical because
  it does not contain the unique geodesic between two specific
  points of this union, contradicting the conclusion of~\Cref{prop-geodesic}.
  \end{example}
\begin{example}\label{example2}
Consider the ambitropical polyhedron $E$ in Figure~\ref{fig-example2}. We have that in this case 
\[
Q^{-}_{E}
\begin{pmatrix}
x_1 \\
x_2 \\
x_3
\end{pmatrix}
=
\begin{pmatrix}
(x_1 \wedge x_2 \wedge (1+x_3)) \vee (x_1 \wedge (1+x_2) \wedge x_3) \vee (x_2 \wedge x_3 \wedge (1+x_1)) \\
x_2 \wedge (1+x_1) \wedge (1+x_3) \\
x_3 \wedge (1+x_2) \wedge (1+x_1)
\end{pmatrix}
\]
   The construction of the sets $E^{\max}$ and $E^{\min}$,
   as well as~\Cref{th-main1}, showing that $P^{\max}\circ P^{\min}$
   and $P^{\min}\circ P^{\max}$ are retractions
   on an ambitropical set $E$, are illustrated in the figure.
   \end{example}
\colorlet{mygreen}{gray!50!black}
\renewcommand{\baryx}{x_1}
\renewcommand{\baryy}{x_2}
\renewcommand{\baryz}{x_3}

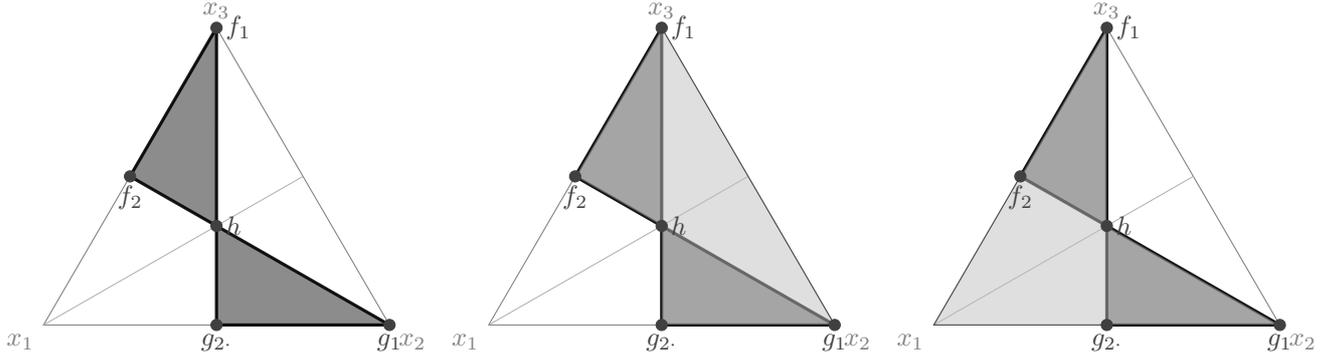
\begin{figure}[htbp]
\begin{center}
\begin{minipage}[b]{0.25\textwidth}
\begin{tikzpicture}%
[scale=0.65,>=triangle 45
,vtx/.style={mygreen},
ray/.style={myred}]
\equilateral{7}{100};
\barycenter{g1}{\expo{-100}}{\expo{0}}{\expo{-100}};
\barycenter{g2}{\expo{0}}{\expo{0}}{\expo{-100}};
\barycenter{h}{\expo{0}}{\expo{0}}{\expo{0}};
\barycenter{f1}{\expo{-100}}{\expo{-100}}{\expo{0}};
\barycenter{f2}{\expo{0}}{\expo{-100}}{\expo{0}};
\filldraw[gray,draw=black,opacity=0.9,very thick] (g1) -- (g2) -- (h) -- cycle;
\filldraw[gray,draw=black,opacity=0.9,very thick] (f1) -- (f2) -- (h) -- cycle;
\filldraw[vtx] (g1) circle (0.75ex) node[below] {$g_{1}$};
\filldraw[vtx] (g2) circle (0.75ex) node[below] {$g_{2\cdot }$};
\filldraw[vtx] (h) circle (0.75ex) node[right] {$h$};
\filldraw[vtx] (f1) circle (0.75ex) node[right] {$f_1$};
\filldraw[vtx] (f2) circle (0.75ex) node[below] {$f_2$};
\end{tikzpicture}
\end{minipage}
\hskip 5em
\begin{minipage}[b]{0.25\textwidth}
\begin{tikzpicture}%
[scale=0.65,>=triangle 45
,vtx/.style={mygreen},
ray/.style={myred}]
\equilateral{7}{100};
\barycenter{g1}{\expo{-100}}{\expo{0}}{\expo{-100}};
\barycenter{g2}{\expo{0}}{\expo{0}}{\expo{-100}};
\barycenter{h}{\expo{0}}{\expo{0}}{\expo{0}};
\barycenter{f1}{\expo{-100}}{\expo{-100}}{\expo{0}};
\barycenter{f2}{\expo{0}}{\expo{-100}}{\expo{0}};
\filldraw[gray,draw=black,opacity=0.9,very thick] (g1) -- (g2) -- (h) -- cycle;
\filldraw[gray,draw=black,opacity=0.9,very thick] (f1) -- (f2) -- (h) -- cycle;
\filldraw[lightgray,draw=black,opacity=0.5] (g1) -- (g2) -- (h) -- (f2) -- (f1) -- (g1) -- cycle;
\filldraw[vtx] (g1) circle (0.75ex) node[below] {$g_{1}$};
\filldraw[vtx] (g2) circle (0.75ex) node[below] {$g_{2\cdot }$};
\filldraw[vtx] (h) circle (0.75ex) node[right] {$h$};
\filldraw[vtx] (f1) circle (0.75ex) node[right] {$f_1$};
\filldraw[vtx] (f2) circle (0.75ex) node[below] {$f_2$};

\end{tikzpicture}
\end{minipage}
\hskip 5em
\begin{minipage}[b]{0.25\textwidth}
\begin{tikzpicture}%
[scale=0.65,>=triangle 45
,vtx/.style={mygreen},
ray/.style={myred}]
\equilateral{7}{100};
\barycenter{e1}{\expo{0}}{0}{0};
\barycenter{e2}{0}{\expo{0}}{0};
\barycenter{e3}{0}{0}{\expo{0}};
\barycenter{g1}{\expo{-100}}{\expo{0}}{\expo{-100}};
\barycenter{g2}{\expo{0}}{\expo{0}}{\expo{-100}};
\barycenter{h}{\expo{0}}{\expo{0}}{\expo{0}};
\barycenter{f1}{\expo{-100}}{\expo{-100}}{\expo{0}};
\barycenter{f2}{\expo{0}}{\expo{-100}}{\expo{0}};
\filldraw[gray,draw=black,opacity=0.9,very thick] (g1) -- (g2) -- (h) -- cycle;
\filldraw[gray,draw=black,opacity=0.9,very thick] (f1) -- (f2) -- (h) -- cycle;
\filldraw[lightgray,draw=black,opacity=0.5] (g1) -- (e1) -- (e3) -- (h) -- (g1) -- cycle;
\filldraw[vtx] (g1) circle (0.75ex) node[below] {$g_{1}$};
\filldraw[vtx] (g2) circle (0.75ex) node[below] {$g_{2\cdot }$};
\filldraw[vtx] (h) circle (0.75ex) node[right] {$h$};
\filldraw[vtx] (f1) circle (0.75ex) node[right] {$f_1$};
\filldraw[vtx] (f2) circle (0.75ex) node[below] {$f_2$};

\end{tikzpicture}
\end{minipage}
\end{center}
\caption{A homogeneous ambitropical polyhedron $E$ consisting of two unbounded alcoved polyhedra (left). The tropical polyhedral cones $E^{\max}$ (center) and $E^{\min}$ (right). See \Cref{example1}.}
\label{fig-example}
\end{figure}
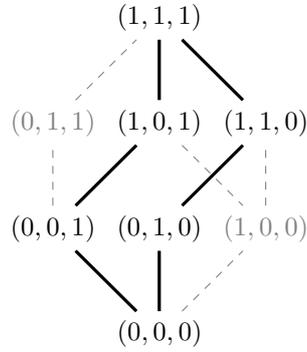
\begin{figure}\begin{center}
\begin{tikzpicture}[scale=0.7]
  \node (max) at (0,4) {{$(1,1,1)$}};
  \node (a) at (-2,2) {{\color{gray}$(0,1,1)$}};
  \node (b) at (0,2) {$(1,0,1)$};
  \node (c) at (2,2) {$(1,1,0)$};
  \node (d) at (-2,0) {$(0,0,1)$};
  \node (e) at (0,0) {$(0,1,0)$};
  \node (f) at (2,0) {{\color{gray}$(1,0,0)$}};
  \node (min) at (0,-2) {{$\mathbf(0,0,0)$}};
  \draw[dashed,gray] (min) -- (d) -- (a) -- (max) -- (b) -- (f)
  (e) -- (min) -- (f) -- (c) -- (max)
  (d) -- (b);
  \draw[very thick] (min) --(d) -- (b) -- (max);
    \draw[very thick] (min) --(e) -- (c) -- (max) ;
\end{tikzpicture}
  \end{center}
  \caption{The skeleton (in bold) of the ambitropical polyhedron of \Cref{fig-example} (the two nodes $(1,0,0)$ and $(0,1,1)$ in gray do not belong to the skeleton). The two maximal chains (of length $4$) yield the representation of $C$ as the union of two Weyl cells (each being of dimension $3$).}
  \end{figure}

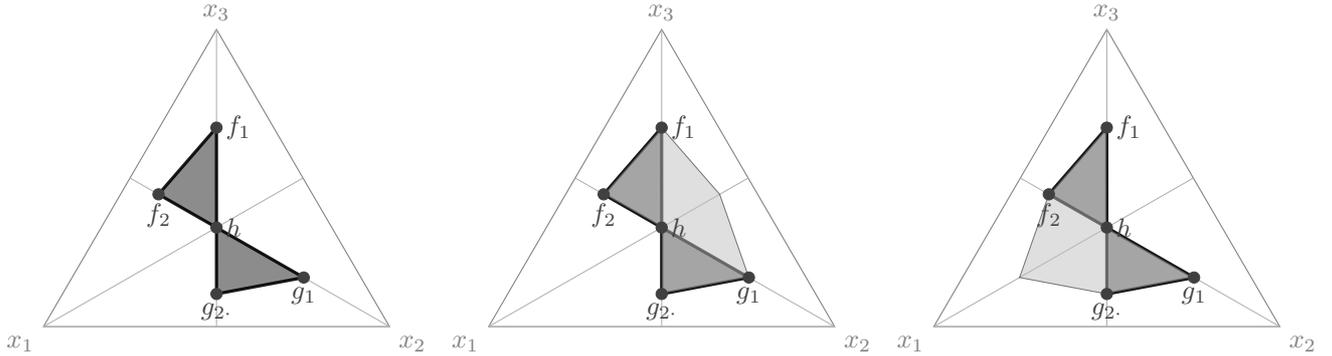
\begin{figure}
\begin{center}
\begin{minipage}[b]{0.25\textwidth}
\begin{tikzpicture}%
[scale=0.65,>=triangle 45,vtx/.style={mygreen},
ray/.style={myred}]
\equilateral{7}{100};
\barycenter{g1}{\expo{-2}}{\expo{0}}{\expo{-2}};
\barycenter{g2}{\expo{0}}{\expo{0}}{\expo{-2}};
\barycenter{h}{\expo{0}}{\expo{0}}{\expo{0}};
\barycenter{f1}{\expo{-2}}{\expo{-2}}{\expo{0}};
\barycenter{f2}{\expo{0}}{\expo{-2}}{\expo{0}};
\filldraw[gray,draw=black,opacity=0.9,very thick] (g1) -- (g2) -- (h) -- cycle;
\filldraw[gray,draw=black,opacity=0.9,very thick] (f1) -- (f2) -- (h) -- cycle;
\filldraw[vtx] (g1) circle (0.75ex) node[below] {$g_{1}$};
\filldraw[vtx] (g2) circle (0.75ex) node[below] {$g_{2\cdot }$};
\filldraw[vtx] (h) circle (0.75ex) node[right] {$h$};
\filldraw[vtx] (f1) circle (0.75ex) node[right] {$f_1$};
\filldraw[vtx] (f2) circle (0.75ex) node[below] {$f_2$};
\end{tikzpicture}
\end{minipage}
\hskip 5em
\begin{minipage}[b]{0.25\textwidth}
\begin{tikzpicture}%
[scale=0.65,>=triangle 45
,vtx/.style={mygreen},
ray/.style={myred}]
\equilateral{7}{100};
\barycenter{g1}{\expo{-2}}{\expo{0}}{\expo{-2}};
\barycenter{g2}{\expo{0}}{\expo{0}}{\expo{-2}};
\barycenter{h}{\expo{0}}{\expo{0}}{\expo{0}};
\barycenter{f1}{\expo{-2}}{\expo{-2}}{\expo{0}};
\barycenter{f2}{\expo{0}}{\expo{-2}}{\expo{0}};
\barycenter{f3}{\expo{-2}}{\expo{0}}{\expo{0}};

\filldraw[gray,draw=black,opacity=0.9,very thick] (g1) -- (g2) -- (h) -- cycle;
\filldraw[gray,draw=black,opacity=0.9,very thick] (f1) -- (f2) -- (h) -- cycle;
\filldraw[lightgray,draw=black,opacity=0.5] (g1) -- (g2) -- (h) -- (f2) -- (f1) -- (f3) -- (g1) -- cycle;
\filldraw[vtx] (g1) circle (0.75ex) node[below] {$g_{1}$};
\filldraw[vtx] (g2) circle (0.75ex) node[below] {$g_{2\cdot }$};
\filldraw[vtx] (h) circle (0.75ex) node[right] {$h$};
\filldraw[vtx] (f1) circle (0.75ex) node[right] {$f_1$};
\filldraw[vtx] (f2) circle (0.75ex) node[below] {$f_2$};

\end{tikzpicture}
\end{minipage}
\hskip 5em
\begin{minipage}[b]{0.25\textwidth}
\begin{tikzpicture}%
[scale=0.65,>=triangle 45
,vtx/.style={mygreen},
ray/.style={myred}]
\equilateral{7}{100};
\barycenter{e1}{\expo{0}}{0}{0};
\barycenter{e2}{0}{\expo{0}}{0};
\barycenter{e3}{0}{0}{\expo{0}};
\barycenter{g1}{\expo{-2}}{\expo{0}}{\expo{-2}};
\barycenter{g2}{\expo{0}}{\expo{0}}{\expo{-2}};
\barycenter{h}{\expo{0}}{\expo{0}}{\expo{0}};
\barycenter{f1}{\expo{-2}}{\expo{-2}}{\expo{0}};
\barycenter{f2}{\expo{0}}{\expo{-2}}{\expo{0}};
\barycenter{f4}{\expo{2}}{\expo{0}}{\expo{0}};

\filldraw[gray,draw=black,opacity=0.9,very thick] (g1) -- (g2) -- (h) -- cycle;
\filldraw[gray,draw=black,opacity=0.9,very thick] (f1) -- (f2) -- (h) -- cycle;
\filldraw[lightgray,draw=black,opacity=0.5] (g1) -- (g2) -- (f4) -- (f2) -- (f1) -- (h) -- (g1) -- cycle;
\filldraw[vtx] (g1) circle (0.75ex) node[below] {$g_{1}$};
\filldraw[vtx] (g2) circle (0.75ex) node[below] {$g_{2\cdot }$};
\filldraw[vtx] (h) circle (0.75ex) node[right] {$h$};
\filldraw[vtx] (f1) circle (0.75ex) node[right] {$f_1$};
\filldraw[vtx] (f2) circle (0.75ex) node[below] {$f_2$};

\end{tikzpicture}
\end{minipage}
\end{center}

\caption{An ambitropical polyhedron $E$ consisting of two alcoved polyhedra (left). The tropical polyhedral cones $E^{\max}$ (center) and $E^{\min}$ (right). The homogeneous ambitropical polyhedron of \Cref{fig-example} is precisely the tangent cone $\mathcal{T}_{(0,0,0)}E$.}
\label{fig-example2}
\end{figure}
\begin{figure}
\begin{center}
\begin{minipage}[b]{0.25\textwidth}
\begin{tikzpicture}%
[scale=0.65,>=triangle 45,vtx/.style={mygreen},
ray/.style={myred}]
\equilateral{7}{100};
\barycenter{g1}{\expo{-100}}{\expo{0}}{\expo{-100}};
\barycenter{g2}{\expo{0}}{\expo{0}}{\expo{-100}};
\barycenter{h}{\expo{0}}{\expo{0}}{\expo{0}};
\barycenter{f1}{\expo{-100}}{\expo{-100}}{\expo{0}};
\barycenter{f2}{\expo{-100}}{\expo{0}}{\expo{0}};
\barycenter{a}{\expo{-2}}{\expo{0}}{\expo{0}};
\barycenter{b}{\expo{0}}{\expo{2}}{\expo{0}};
\filldraw[gray,draw=black,opacity=0.9,thick] (g1) -- (g2) -- (h) -- cycle;
\filldraw[gray,draw=black,opacity=0.9,thick] (f1) -- (f2) -- (h) -- cycle;
\draw[dashed,draw=black,opacity=0.9,thick] (a) -- (b);
\filldraw[vtx] (g1) circle (0.75ex) node[below] {$g_{1}$};
\filldraw[vtx] (g2) circle (0.75ex) node[below] {$g_{2\cdot }$};
\filldraw[vtx] (a) circle (0.75ex) node[below] {$a$};
\filldraw[vtx] (b) circle (0.75ex) node[above] {$b$};
\filldraw[vtx] (h) circle (0.75ex) node[right] {$h$};
\filldraw[vtx] (f1) circle (0.75ex) node[right] {$f_1$};
\filldraw[vtx] (f2) circle (0.75ex) node[below] {$f_2$};
\end{tikzpicture}
\end{minipage}
\hskip 5em
\begin{minipage}[b]{0.25\textwidth}
\begin{tikzpicture}%
[scale=0.65,>=triangle 45
,vtx/.style={mygreen},
ray/.style={myred}]
\equilateral{7}{100};
\barycenter{g1}{\expo{0.6*-2}}{\expo{0.6*0}}{\expo{0.6*-2}};
\barycenter{g2}{\expo{0.6*0}}{\expo{0.6*0}}{\expo{0.6*-2}};
\barycenter{h}{\expo{0.6*0}}{\expo{0.6*0}}{\expo{0.6*0}};
\barycenter{f1}{\expo{0.6*-2}}{\expo{0.6*-2}}{\expo{0.6*0}};
\barycenter{f2}{\expo{0.6*0}}{\expo{0.6*-2}}{\expo{0.6*0}};
\barycenter{f3}{\expo{0.6*-2}}{\expo{0.6*0}}{\expo{0.6*0}};

\barycenter{a1}{\expo{0.6*0}}{\expo{0.6*-5}}{\expo{0.6*3}};
\barycenter{a2}{\expo{0.6*0}}{\expo{0.6*-2}}{\expo{0.6*3}};
\barycenter{a3}{\expo{0.6*0}}{\expo{0.6*0}}{\expo{0.6*3}};
\barycenter{a4}{\expo{0.6*0}}{\expo{0.6*0}}{\expo{0.6*0}};
\barycenter{a5}{\expo{0.6*0}}{\expo{0.6*1}}{\expo{0.6*0}};
\barycenter{a6}{\expo{0.6*0}}{\expo{0.6*1}}{\expo{0.6*-1}};
\barycenter{a7}{\expo{0.6*0}}{\expo{0.6*2}}{\expo{0.6*-1}};
\barycenter{a8}{\expo{0.6*0}}{\expo{0.6*2}}{\expo{0.6*-3}};

\barycenter{b1}{\expo{0.6*0}}{\expo{0.6*-2}}{\expo{0.6*2}};
\barycenter{b1p}{\expo{0.6*0}}{\expo{0.6*-1}}{\expo{0.6*2}};
\barycenter{b2}{\expo{0.6*0}}{\expo{0.6*-1}}{\expo{0.6*1}};
\barycenter{b2p}{\expo{0.6*0}}{\expo{0.6*0}}{\expo{0.6*1}};
\barycenter{b3}{\expo{0.6*0}}{\expo{0.6*0}}{\expo{0.6*-2}};
\barycenter{b4}{\expo{0.6*0}}{\expo{0.6*1}}{\expo{0.6*-2}};
\barycenter{b5}{\expo{0.6*0}}{\expo{0.6*1}}{\expo{0.6*-3}};

\filldraw[lightgray,draw=black,opacity=0.9, thick] (a1) -- (a2) -- (a3) -- (b2p) -- (a4) -- (b2p) -- (b2) -- (b1p) -- (b1) -- (a2) --cycle;

\filldraw[lightgray,draw=black,opacity=0.9, thick] (a4) -- (a5) -- (a6) -- (a7)  -- (a8) -- (b5) -- (b4) -- (b3) --cycle;

\draw[blue,opacity=0.5,very thick] (a1) -- (a2) -- (a3) -- (b2p) -- (a4) -- (a5) -- (a6) -- (a7)  -- (a8);

\filldraw[vtx] (a1) circle (0.75ex) node[above] {$a$};
\filldraw[vtx] (a8) circle (0.75ex) node[below] {$b$};

\end{tikzpicture}
\end{minipage}
\hskip 5em
\begin{minipage}[b]{0.25\textwidth}
\begin{tikzpicture}%
[scale=0.65,>=triangle 45
,vtx/.style={mygreen},
ray/.style={myred}]
\equilateral{7}{100};
\barycenter{g1}{\expo{0.6*-2}}{\expo{0.6*0}}{\expo{0.6*-2}};
\barycenter{g2}{\expo{0.6*0}}{\expo{0.6*0}}{\expo{0.6*-2}};
\barycenter{h}{\expo{0.6*0}}{\expo{0.6*0}}{\expo{0.6*0}};
\barycenter{f1}{\expo{0.6*-2}}{\expo{0.6*-2}}{\expo{0.6*0}};
\barycenter{f2}{\expo{0.6*0}}{\expo{0.6*-2}}{\expo{0.6*0}};
\barycenter{f3}{\expo{0.6*-2}}{\expo{0.6*0}}{\expo{0.6*0}};

\barycenter{a1}{\expo{0.6*0}}{\expo{0.6*-5}}{\expo{0.6*3}};
\barycenter{a2}{\expo{0.6*0}}{\expo{0.6*-2}}{\expo{0.6*3}};
\barycenter{a3}{\expo{0.6*0}}{\expo{0.6*0}}{\expo{0.6*3}};
\barycenter{a4}{\expo{0.6*0}}{\expo{0.6*0}}{\expo{0.6*0}};
\barycenter{a5}{\expo{0.6*0}}{\expo{0.6*1}}{\expo{0.6*0}};
\barycenter{a6}{\expo{0.6*0}}{\expo{0.6*1}}{\expo{0.6*-1}};
\barycenter{a7}{\expo{0.6*0}}{\expo{0.6*2}}{\expo{0.6*-1}};
\barycenter{a8}{\expo{0.6*0}}{\expo{0.6*2}}{\expo{0.6*-3}};

\barycenter{b1}{\expo{0.6*0}}{\expo{0.6*-2}}{\expo{0.6*2}};
\barycenter{b1p}{\expo{0.6*0}}{\expo{0.6*-1}}{\expo{0.6*2}};
\barycenter{b2}{\expo{0.6*0}}{\expo{0.6*-1}}{\expo{0.6*1}};
\barycenter{b2p}{\expo{0.6*0}}{\expo{0.6*0}}{\expo{0.6*1}};
\barycenter{b3}{\expo{0.6*0}}{\expo{0.6*0}}{\expo{0.6*-2}};
\barycenter{b4}{\expo{0.6*0}}{\expo{0.6*1}}{\expo{0.6*-2}};
\barycenter{b5}{\expo{0.6*0}}{\expo{0.6*1}}{\expo{0.6*-3}};

\filldraw[lightgray,draw=black,opacity=0.9, thick] (a1) -- (a2) -- (a3) -- (b2p) -- (a4) -- (b2p) -- (b2) -- (b1p) -- (b1) -- (a2) --cycle;

\filldraw[lightgray,draw=black,opacity=0.9, thick] (a4) -- (a5) -- (a6) -- (a7)  -- (a8) -- (b5) -- (b4) -- (b3) --cycle;

\draw[blue,opacity=0.5,very thick] (a1) -- (a2) -- (b1) -- (b1p) -- (b2) -- (b2p) -- (a4) -- (b3)  -- (b4) -- (b5) -- (a8);

\filldraw[vtx] (a1) circle (0.75ex) node[above] {$a$};
\filldraw[vtx] (a8) circle (0.75ex) node[below] {$b$};

\end{tikzpicture}
\end{minipage}
\end{center}

\caption{Illustrating the metric convexity property of ambitropical cones. A fan which is not ambitropical (left). The geodesic in Hilbert's projective metric connecting the two points $a$ and $b$ (dotted segment) is unique, but it is not included in the fan. An ambitropical fan (middle, and right). An example of geodesic connecting $a$ and $b$, included in the fan is shown (dark blue broken line, middle). Another example of such a geodesic is shown at right.}
\label{fig-example2b}
\end{figure}
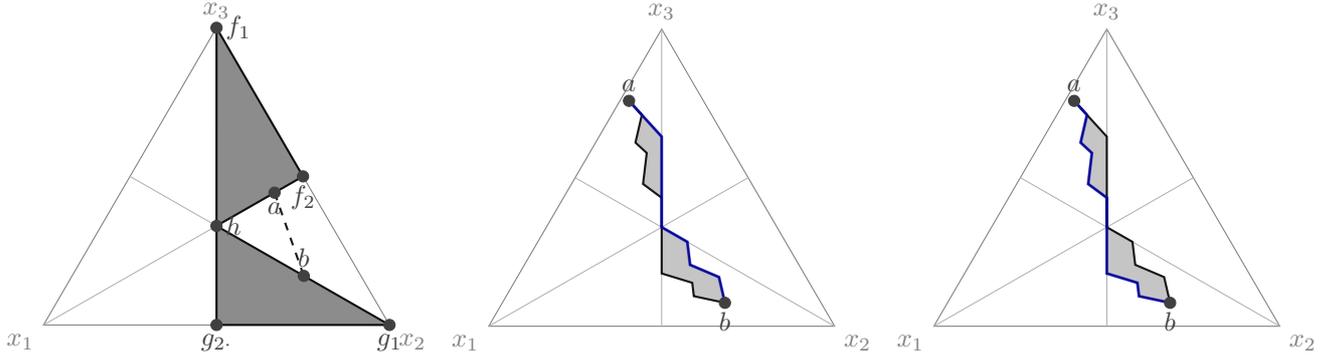

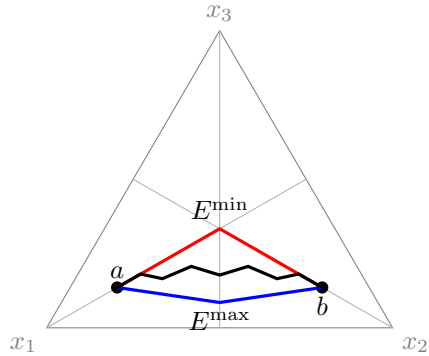
\begin{figure}
\begin{center}
\begin{minipage}[b]{0.25\textwidth}
\begin{tikzpicture}%
[scale=0.65,>=triangle 45
,vtx/.style={mygreen},
ray/.style={myred}]
\equilateral{7}{100};

\barycenter{a}{\expo{0.6*4}}{\expo{0.6*0}}{\expo{0.6*0}};
\barycenter{a1}{\expo{0.6*3}}{\expo{0.6*0}}{\expo{0.6*0}};
\barycenter{a2}{\expo{0.6*3}}{\expo{0.6*1}}{\expo{0.6*0}};
\barycenter{a3}{\expo{0.6*2}}{\expo{0.6*1}}{\expo{0.6*0}};
\barycenter{a4}{\expo{0.6*2}}{\expo{0.6*2}}{\expo{0.6*0}};
\barycenter{a5}{\expo{0.6*1}}{\expo{0.6*2}}{\expo{0.6*0}};
\barycenter{a6}{\expo{0.6*1}}{\expo{0.6*3}}{\expo{0.6*0}};
\barycenter{a7}{\expo{0.6*0}}{\expo{0.6*3}}{\expo{0.6*0}};
\barycenter{b}{\expo{0.6*0}}{\expo{0.6*4}}{\expo{0.6*0}};

\barycenter{o}{\expo{0.6*0}}{\expo{0.6*0}}{\expo{0.6*0}};
\barycenter{p}{\expo{0.6*4}}{\expo{0.6*4}}{\expo{0.6*0}};

\filldraw (a) circle (0.75ex) node[above] {$a$};
\filldraw (b) circle (0.75ex) node[below] {$b$};
\filldraw (p) node[below] {$E^{\max}$};
\filldraw (o) node[above] {$E^{\min}$};

\draw[red,very thick] (a) -- (o) -- (b);
\draw[blue,very thick] (a) -- (p) -- (b);
\draw[black,very thick] (a) -- (a1) -- (a2) -- (a3) -- (a4) -- (a5) -- (a6) -- (a7) -- (b);

\end{tikzpicture}
\end{minipage}
\end{center}

\caption{Three examples of ambitropical hulls of a set with two elements $E=\{a,b\}$. The range of $\barqp{E}$ is the tropical cone $E^{\max}$ shown in blue, whereas the range of $\barqm{E}$ is the dual tropical cone $E^{\min}$ shown in red. Another ambitropical hull is represented by the zigzag line (in black). By~\Cref{prop-ambihull}, all these ambitropical hulls are isomorphic.}
\label{fig-hull}
\end{figure}
\begin{example}
  The construction of the ambitropical hulls by means of~\Cref{prop-ambihull}
  is illustrated in~\Cref{fig-hull}. Here, $E=\{a,b\}$
  where $a=(1,0,0)$ and $b=(0,1,0)$. In this special case,
  the sets $E^{\max}$ and $E^{\min}$ are the ranges of $\barqp{E}$
  and $\barqm{E}$, respectively, and so, they provides
  ambitropical hulls of $\{a,b\}$. There is an infinite family
  of ambitropical hulls interpolating between $E^{\max}$ and $E^{\min}$.
    One element of this family is shown in black. It constitutes a polyhedral
    complex whose cells are cones, and whose rays are generated
    by the vectors $a,b$ and by the integer vectors
    $(i,4-j,0)$ for $i=1,2,3$ and $(i-1,4-j,0)$
    for $i=1,\dots,4$.
\end{example}
\begin{example}
We now give an example in dimension $4$. Consider
\[
E^I = \{x\in \R^4\mid x_1\leq x_2\leq x_3\leq x_4\}
\]
and for the circular permutation $\gamma$ with cycle $(1,2,3,4)$,
\[
E^\gamma=\{x\in \R^4\mid x_4\leq x_1\leq x_2\leq x_3 \} \enspace .
\]
The union of the chambers $E^I$ and $E^\gamma$ is shown in \Cref{fig-3d},
as well as the range of $\qm{E}$, which is larger
than this union, implying that $E$ is not ambitropical.
AN example of non-trivial ambitropical cone in $\R^4$ is shown in~\Cref{fig-3d5}.
\end{example}

\begin{figure}
\begin{center}
\begin{tikzpicture}[line join = round, line cap = round]
\pgfmathsetmacro{\factor}{1/sqrt(2)};
\coordinate [label=right:$x_1$] (e1) at (2,0,-2*\factor);
\coordinate [label=left:$x_2$] (e2) at (-2,0,-2*\factor);
\coordinate [label=above:$x_3$] (e3) at (0,2,2*\factor);
\coordinate [label=below:$x_4$] (e4) at (0,-2,2*\factor);
\coordinate (e) at (0,0,0*\factor);
\coordinate (f12) at (0,0,-2*\factor);
\coordinate (f34) at (0,0,2*\factor);
\coordinate (f23) at (-1,1,0*\factor);
\coordinate (g234) at (-2/3,0,2*\factor/3);
\coordinate (g123) at (0,2/3,-2*\factor/3);
\draw[-, fill=blue!30, opacity=.7] (e)--(e4)--(f34)--cycle;
\draw[-, fill=blue!30, opacity=.7] (e)--(f34)--(g234)--cycle;
\draw[-, fill=blue!30, opacity=.7] (e)--(e4)--(g234)--cycle;
\draw[-, fill=blue!30, opacity=.7] (f34)--(e4)--(g234)--cycle;
\draw[-, fill=blue!30, opacity=.7] (e)--(e3)--(f23)--cycle;
\draw[-, fill=blue!30, opacity=.7] (e)--(f23)--(g123)--cycle;
\draw[-, fill=blue!30, opacity=.7] (e)--(e3)--(g123)--cycle;
\draw[-, fill=blue!30, opacity=.7] (f23)--(e3)--(g123)--cycle;

\foreach \i in {e1,e2,e3,e4}
    \draw[dashed] (0,0)--(\i);
\draw[-, fill=red!30, opacity=.3] (e1)--(e4)--(e2)--cycle;
\draw[-, fill=green!30, opacity=.3] (e1) --(e4)--(e3)--cycle;
\draw[-, fill=purple!30, opacity=.3] (e2)--(e4)--(e3)--cycle;
\end{tikzpicture}
\begin{tikzpicture}[line join = round, line cap = round]
\pgfmathsetmacro{\factor}{1/sqrt(2)};
\coordinate [label=right:$x_1$] (e1) at (2,0,-2*\factor);
\coordinate [label=left:$x_2$] (e2) at (-2,0,-2*\factor);
\coordinate [label=above:$x_3$] (e3) at (0,2,2*\factor);
\coordinate [label=below:$x_4$] (e4) at (0,-2,2*\factor);
\coordinate (e) at (0,0,0*\factor);
\coordinate (f12) at (0,0,-2*\factor);
\coordinate (f34) at (0,0,2*\factor);
\coordinate (f23) at (-1,1,0*\factor);
\coordinate (g234) at (-2/3,0,2*\factor/3);
\coordinate (g123) at (0,2/3,-2*\factor/3);
\draw[-, fill=blue!30, opacity=.7] (e)--(e4)--(f34)--cycle;
\draw[-, fill=blue!30, opacity=.7] (e)--(f34)--(g234)--cycle;
\draw[-, fill=blue!30, opacity=.7] (e)--(e4)--(g234)--cycle;
\draw[-, fill=blue!30, opacity=.7] (f34)--(e4)--(g234)--cycle;
\draw[-, fill=blue!30, opacity=.7] (e)--(e3)--(f23)--cycle;
\draw[-, fill=blue!30, opacity=.7] (e)--(f23)--(g123)--cycle;
\draw[-, fill=blue!30, opacity=.7] (e)--(e3)--(g123)--cycle;
\draw[-, fill=blue!30, opacity=.7] (f23)--(e3)--(g123)--cycle;

\draw[-, fill=red!30, opacity=.5] (e)--(e3)--(f34)--cycle;
\draw[-, fill=red!30, opacity=.5] (e)--(f34)--(g234)--cycle;
\draw[-, fill=red!30, opacity=.5] (e)--(e3)--(g234)--cycle;
\draw[-, fill=red!30, opacity=.5] (f34)--(e3)--(g234)--cycle;

\draw[-, fill=green!30, opacity=.5] (e)--(e3)--(f23)--cycle;
\draw[-, fill=green!30, opacity=.5] (e)--(f23)--(g234)--cycle;
\draw[-, fill=green!30, opacity=.5] (e)--(e3)--(g234)--cycle;
\draw[-, fill=green!30, opacity=.5] (f23)--(e3)--(g234)--cycle;

\foreach \i in {e1,e2,e3,e4}
    \draw[dashed] (0,0)--(\i);
\draw[-, fill=red!30, opacity=.3] (e1)--(e4)--(e2)--cycle;
\draw[-, fill=green!30, opacity=.3] (e1) --(e4)--(e3)--cycle;
\draw[-, fill=purple!30, opacity=.3] (e2)--(e4)--(e3)--cycle;
\end{tikzpicture}
\end{center}
\caption{The union $E$ of the chambers $x_4\geq x_3\geq x_2\geq x_1$ and $x_3\geq x_2 \geq x_1\geq x_4$ (left) is not ambitropical. Indeed, the fixed point set of $\qm{E}$ (middle) is the union of these chambers with the chambers $x_3\geq x_4\geq x_2\geq x_1$ and $x_3\geq x_2 \geq x_4\geq x_1$, which coincides with the alcoved polyhedron defined by $x_3\geq x_2\geq x_1$.  The latter coincides with the range of $\pmax{E}$ and with the range of $\pmin{E}$.} 
\label{fig-3d}
\end{figure}
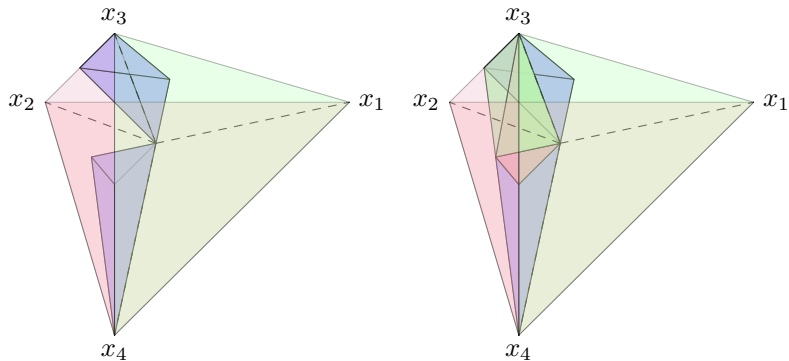

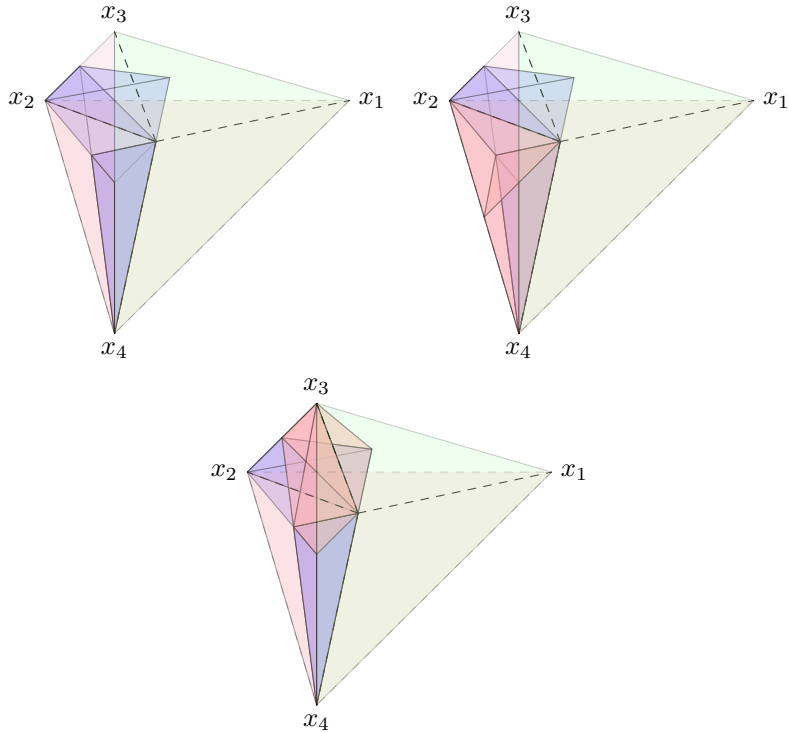
\begin{figure}
\begin{center}
\begin{tikzpicture}[line join = round, line cap = round]
\pgfmathsetmacro{\factor}{1/sqrt(2)};
\coordinate [label=right:$x_1$] (e1) at (2,0,-2*\factor);
\coordinate [label=left:$x_2$] (e2) at (-2,0,-2*\factor);
\coordinate [label=above:$x_3$] (e3) at (0,2,2*\factor);
\coordinate [label=below:$x_4$] (e4) at (0,-2,2*\factor);
\coordinate (e) at (0,0,0*\factor);
\coordinate (f12) at (0,0,-2*\factor);
\coordinate (f24) at (-1,-1,0*\factor);
\coordinate (f34) at (0,0,2*\factor);
\coordinate (f23) at (-1,1,0*\factor);
\coordinate (g234) at (-2/3,0,2*\factor/3);
\coordinate (g123) at (0,2/3,-2*\factor/3);
\coordinate (g124) at (0,-2/3,-2*\factor/3);
\coordinate (g134) at (2/3,0,2*\factor/3);
\draw[-, fill=blue!30, opacity=.8] (e)--(e4)--(f34)--cycle;
\draw[-, fill=blue!30, opacity=.8] (e)--(f34)--(g234)--cycle;
\draw[-, fill=blue!30, opacity=.8] (e)--(e4)--(g234)--cycle;
\draw[-, fill=blue!30, opacity=.8] (f34)--(e4)--(g234)--cycle;
\draw[-, fill=blue!30, opacity=.3] (e)--(e2)--(f23)--cycle;
\draw[-, fill=blue!30, opacity=.3] (e)--(f23)--(g234)--cycle;
\draw[-, fill=blue!30, opacity=.3] (e)--(e2)--(g234)--cycle;
\draw[-, fill=blue!30, opacity=.3] (f23)--(e2)--(g234)--cycle;

\draw[-, fill=blue!30, opacity=.3] (e)--(e2)--(f23)--cycle;
\draw[-, fill=blue!30, opacity=.3] (e)--(f23)--(g123)--cycle;
\draw[-, fill=blue!30, opacity=.3] (e)--(e2)--(g123)--cycle;
\draw[-, fill=blue!30, opacity=.3] (f23)--(e2)--(g123)--cycle;

\foreach \i in {e1,e2,e3,e4}
    \draw[dashed] (0,0)--(\i);
    \draw[dashed, fill=red!30, opacity=.2] (e1)--(e4)--(e2)--cycle;
    \draw[-,opacity=.2] (e2) -- (e4);
\draw[-, fill=green!30, opacity=.2] (e1) --(e4)--(e3)--cycle;
\draw[-, fill=purple!30, opacity=.2] (e2)--(e4)--(e3)--cycle;
\end{tikzpicture}
\begin{tikzpicture}[line join = round, line cap = round]
\pgfmathsetmacro{\factor}{1/sqrt(2)};
\coordinate [label=right:$x_1$] (e1) at (2,0,-2*\factor);
\coordinate [label=left:$x_2$] (e2) at (-2,0,-2*\factor);
\coordinate [label=above:$x_3$] (e3) at (0,2,2*\factor);
\coordinate [label=below:$x_4$] (e4) at (0,-2,2*\factor);
\coordinate (e) at (0,0,0*\factor);
\coordinate (f12) at (0,0,-2*\factor);
\coordinate (f24) at (-1,-1,0*\factor);
\coordinate (f34) at (0,0,2*\factor);
\coordinate (f23) at (-1,1,0*\factor);
\coordinate (g234) at (-2/3,0,2*\factor/3);
\coordinate (g123) at (0,2/3,-2*\factor/3);
\coordinate (g124) at (0,-2/3,-2*\factor/3);
\coordinate (g134) at (2/3,0,2*\factor/3);

\draw[-, fill=blue!30, opacity=.8] (e)--(e4)--(f34)--cycle;
\draw[-, fill=blue!30, opacity=.8] (e)--(f34)--(g234)--cycle;
\draw[-, fill=blue!30, opacity=.8] (e)--(e4)--(g234)--cycle;
\draw[-, fill=blue!30, opacity=.8] (f34)--(e4)--(g234)--cycle;
\draw[-, fill=blue!30, opacity=.3] (e)--(e2)--(f23)--cycle;
\draw[-, fill=blue!30, opacity=.3] (e)--(f23)--(g234)--cycle;
\draw[-, fill=blue!30, opacity=.3] (e)--(e2)--(g234)--cycle;
\draw[-, fill=blue!30, opacity=.3] (f23)--(e2)--(g234)--cycle;

\draw[-, fill=blue!30, opacity=.3] (e)--(e2)--(f23)--cycle;
\draw[-, fill=blue!30, opacity=.3] (e)--(f23)--(g123)--cycle;
\draw[-, fill=blue!30, opacity=.3] (e)--(e2)--(g123)--cycle;
\draw[-, fill=blue!30, opacity=.3] (f23)--(e2)--(g123)--cycle;

\draw[-, fill=red!30, opacity=.3] (e)--(e4)--(f24)--cycle;
\draw[-, fill=red!30, opacity=.3] (e)--(f24)--(g234)--cycle;
\draw[-, fill=red!30, opacity=.3] (e)--(e4)--(g234)--cycle;
\draw[-, fill=red!30, opacity=.3] (f24)--(e4)--(g234)--cycle;

\draw[-, fill=red!30, opacity=.3] (e)--(e2)--(f24)--cycle;
\draw[-, fill=red!30, opacity=.3] (e)--(f24)--(g234)--cycle;
\draw[-, fill=red!30, opacity=.3] (e)--(e2)--(g234)--cycle;
\draw[-, fill=red!30, opacity=.3] (f24)--(e2)--(g234)--cycle;

\foreach \i in {e1,e2,e3,e4}
\draw[dashed] (0,0)--(\i);
    \draw[dashed, fill=red!30, opacity=.2] (e1)--(e4)--(e2)--cycle;
    \draw[-,opacity=.2] (e2) -- (e4);
\draw[-, fill=green!30, opacity=.2] (e1) --(e4)--(e3)--cycle;
\draw[-, fill=purple!30, opacity=.2] (e2)--(e4)--(e3)--cycle;
\end{tikzpicture}

\begin{tikzpicture}[line join = round, line cap = round]
\pgfmathsetmacro{\factor}{1/sqrt(2)};
\coordinate [label=right:$x_1$] (e1) at (2,0,-2*\factor);
\coordinate [label=left:$x_2$] (e2) at (-2,0,-2*\factor);
\coordinate [label=above:$x_3$] (e3) at (0,2,2*\factor);
\coordinate [label=below:$x_4$] (e4) at (0,-2,2*\factor);
\coordinate (e) at (0,0,0*\factor);
\coordinate (f12) at (0,0,-2*\factor);
\coordinate (f24) at (-1,-1,0*\factor);
\coordinate (f34) at (0,0,2*\factor);
\coordinate (f23) at (-1,1,0*\factor);
\coordinate (g234) at (-2/3,0,2*\factor/3);
\coordinate (g123) at (0,2/3,-2*\factor/3);
\coordinate (g124) at (0,-2/3,-2*\factor/3);
\draw[-, fill=blue!30, opacity=.8] (e)--(e4)--(f34)--cycle;
\draw[-, fill=blue!30, opacity=.8] (e)--(f34)--(g234)--cycle;
\draw[-, fill=blue!30, opacity=.8] (e)--(e4)--(g234)--cycle;
\draw[-, fill=blue!30, opacity=.8] (f34)--(e4)--(g234)--cycle;
\draw[-, fill=blue!30, opacity=.3] (e)--(e2)--(f23)--cycle;
\draw[-, fill=blue!30, opacity=.3] (e)--(f23)--(g234)--cycle;
\draw[-, fill=blue!30, opacity=.3] (e)--(e2)--(g234)--cycle;
\draw[-, fill=blue!30, opacity=.3] (f23)--(e2)--(g234)--cycle;

\draw[-, fill=blue!30, opacity=.3] (e)--(e2)--(f23)--cycle;
\draw[-, fill=blue!30, opacity=.3] (e)--(f23)--(g123)--cycle;
\draw[-, fill=blue!30, opacity=.3] (e)--(e2)--(g123)--cycle;
\draw[-, fill=blue!30, opacity=.3] (f23)--(e2)--(g123)--cycle;

\draw[-, fill=red!30, opacity=.3] (e)--(e3)--(f23)--cycle;
\draw[-, fill=red!30, opacity=.3] (e)--(f23)--(g123)--cycle;
\draw[-, fill=red!30, opacity=.3] (e)--(e3)--(g123)--cycle;
\draw[-, fill=red!30, opacity=.3] (f23)--(e3)--(g123)--cycle;

\draw[-, fill=red!30, opacity=.3] (e)--(e3)--(f34)--cycle;
\draw[-, fill=red!30, opacity=.3] (e)--(f34)--(g234)--cycle;
\draw[-, fill=red!30, opacity=.3] (e)--(e3)--(g234)--cycle;
\draw[-, fill=red!30, opacity=.3] (f34)--(e3)--(g234)--cycle;

\draw[-, fill=red!30, opacity=.3] (e)--(e3)--(f23)--cycle;
\draw[-, fill=red!30, opacity=.3] (e)--(f23)--(g234)--cycle;
\draw[-, fill=red!30, opacity=.3] (e)--(e3)--(g234)--cycle;
\draw[-, fill=red!30, opacity=.3] (f23)--(e3)--(g234)--cycle;

\foreach \i in {e1,e2,e3,e4}
\draw[dashed] (0,0)--(\i);
    \draw[dashed, fill=red!30, opacity=.2] (e1)--(e4)--(e2)--cycle;
    \draw[-,opacity=.2] (e2) -- (e4);
\draw[-, fill=green!30, opacity=.2] (e1) --(e4)--(e3)--cycle;
\draw[-, fill=purple!30, opacity=.2] (e2)--(e4)--(e3)--cycle;
\end{tikzpicture}

\end{center}
\caption{The union $E$ of the three chambers $\{x_4\geq x_3\geq x_2\geq x_1\}$, $\{x_2\geq x_3 \geq x_4\geq x_1\}$ and
  $\{x_2\geq x_3 \geq x_1\geq x_4\}$ (left) is an ambitropical cone. Tropical cone $E^{\max}$ (middle) and $E^{\min}$ (right).   } 
\label{fig-3d5}
\end{figure}

\section*{Acknowledgments}
The second author thanks Jean-Bernard Baillon, for
having pointed out related works concerning hyperconvexity,
and he also thanks Gleb Koshevoy for helpful comments and references.
The first two authors thank Antoine Hochart
for helpful discussions.
\bibliographystyle{alpha}
\bibliography{biblio,tropicalvolume,zhengbiblio,references-hochart}
\end{document}